\DeclareMathOperator{\Teich}{\mathcal{T}}        %Teichmüllerraum
\DeclareMathOperator{\HH}{\mathbb{H}}            %upper half space
\DeclareMathOperator{\PP}{\mathbb{P}}            %projective space
\DeclareMathOperator{\Proj}{\mathcal{P}}    
\DeclareMathOperator{\Teichm}{\mathcal{T}} 
\DeclareMathOperator{\qfuchs}{\mathcal{QF}} 
\DeclareMathOperator{\CC}{\mathbb{C}}		  %complex numbers
\DeclareMathOperator{\RR}{\mathbb{R}}		  %reals
\DeclareMathOperator{\NN}{\mathbb{N}}		  %natural numbers
\DeclareMathOperator{\QQ}{\mathbb{Q}}		  %rationals
\DeclareMathOperator{\err}{\mathrm{Err}}  
\DeclareMathOperator{\ev}{\mathrm{ev}}  
\DeclareMathOperator{\Tol}{\mathrm{Tol}}  
\DeclareMathOperator{\GL}{\mathrm{GL}}  
\DeclareMathOperator{\SU}{\mathrm{SU}}  
\DeclareMathOperator{\PU}{\mathrm{PU}}           %General linear group
\DeclareMathOperator{\U}{\mathrm{U}}           %unitary group
\DeclareMathOperator{\Aut}{\mathrm{Aut}}         %Automorphismengruppe
\DeclareMathOperator{\Hom}{\mathrm{Hom}}         %Homomorphismengruppe
\DeclareMathOperator{\PSL}{\mathrm{PSL}}         %Projektive spezielle lineare Gruppe
\DeclareMathOperator{\SL}{\mathrm{SL}}           %Spezielle lineare Gruppe
\DeclareMathOperator{\Sp}{\mathrm{Sp}}           %Symplektische Gruppe
\DeclareMathOperator{\SO}{\mathrm{SO}}           %Spezielle orthogonale Gruppe
\DeclareMathOperator{\OO}{\mathrm{O}}           %Spezielle orthogonale Gruppe
\DeclareMathOperator{\dev}{\mathrm{dev}}   
\DeclareMathOperator{\hol}{\mathrm{hol}} 
\DeclareMathOperator{\tbad}{\mathrm{T_{bad}}}
\DeclareMathOperator{\tbadL}{\mathrm{T^{\mathcal{L}}_{bad}}}
\DeclareMathOperator{\tbadE}{\mathrm{T^{\mathcal{E}}_{bad}}}
\DeclareMathOperator{\grass}{\mathrm{Grass}} 
\DeclareMathOperator{\Op}{\mathrm{Op}} 
\DeclareMathOperator{\gr}{\mathrm{gr}}
\DeclareMathOperator{\pardeg}{{\mathrm{deg}}_{\mathrm{par}}}  
\DeclareMathOperator{\betti}{\mathcal{M}_{\mathrm{B}}^{(n)}}  
\DeclareMathOperator{\drham}{\mathcal{M}_{\mathrm{DR}}^{(n)}(C)}  
\DeclareMathOperator{\higgs}{\mathcal{M}_{\mathrm{H}}^{(n)}(C)}  
\DeclareMathOperator{\bettitwo}{\mathcal{M}_{\mathrm{B}}^{(2)}}  
\DeclareMathOperator{\drhamtwo}{\mathcal{M}_{\mathrm{DR}}^{(2)}(C)}  
\newcommand{\dd}{\mathrm{d}}			 	%dx
\renewcommand{\Im}{\mathrm{Im\,}}       %Imaginärteil
\DeclareMathOperator{\vol}{\mathrm{vol}}        %Volume
\DeclareMathOperator{\rk}{\mathrm{rk}}         %Rang
\DeclareMathOperator{\hyp}{\mathrm{hyp}}         %hyperbolic
\DeclareMathOperator{\Ad}{\mathrm{Ad}}           %Adjoint
\DeclareMathOperator{\Gr}{\mathrm{Gr}} 
\DeclareMathOperator{\Symm}{\mathrm{Sym}} 
\DeclareMathOperator{\dhyp}{\mathrm{d}_{\mathrm{hyp}}} 
\DeclareMathOperator{\Hcoh}{\mathrm{H}}   
\theoremstyle{plain}
\newtheorem{prop}{Proposition}[section]
\newtheorem{thm}[prop]{Theorem}
\newtheorem{lem}[prop]{Lemma}
\newtheorem{cor}[prop]{Corollary}
\newtheorem{theorem}{Theorem}
\theoremstyle{definition}
\newtheorem{defn}[prop]{Definition}
\theoremstyle{remark}
\newtheorem{rem}[prop]{Remark}
\newtheorem*{conjecture}{Conjecture}
\newtheorem{thmdef}[prop]{Theorem/Definition}
\numberwithin{equation}{section}
\title[Lyapunov exponents and  de Rham moduli space ]{Lyapunov exponents, holomorphic flat bundles and de Rham moduli space}
\author[M. Costantini]{Matteo Costantini}
\begin{document}
	
	\begin{abstract}
		We consider Lyapunov exponents for flat bundles over hyperbolic curves defined  via parallel transport over the geodesic flow. We refine a lower bound obtained by Eskin, Kontsevich, Möller and Zorich showing that the sum of the first $k$ exponents is greater or equal than the sum of the degree of any rank $k$ holomorphic subbundle of the flat bundle and the asymptotic degree of its equivariant developing map. We also show that this inequality is an equality if the base curve is compact. We moreover relate the asymptotic degree to the dynamical degree defined by Daniel and Deroin. 
		
		We then use the previous results to study properties of Lyapunov exponents on variations of Hodge structures and on Shatz strata of the de Rham moduli space. In particular we show that the top Lyapunov exponent function is unbounded on the maximal Shatz stratum, the oper locus. In the final part of the work we specialize to the rank two case, generalizing a result of Deroin and Dujardin about Lyapunov exponents of holonomies of projective structures. 
	\end{abstract}
	
	\maketitle

%%%%%%%%%%%%%%%%%%%%%%%%%%%%%%%%%%%%%%%%%%%%%%%%%%%%%%%
\section{Introduction}
%%%%%%%%%%%%%%%%%%%%%%%%%%%%%%%%%%%%%%%%%%%%%%%%%%%%%%%%

Lyapunov exponents are characteristic numbers describing the behaviour of a cocycle over a dynamical system.  If the cocycle satisfies an integrability property, Oseledets theorem states that there is a decomposition of the underlying vector bundle such that the norm of vectors in each component grows with different speed along the flow. The different possible growth rates are called Lyapunov exponents.

An interesting instance of a dynamical system is given by playing billiard on tables of polygonal shape with angles that are rational multiples of $\pi$. Lyapunov exponents describe the diffusion rate of the trajectories of the ball. Even in this special case, Lyapunov exponents are very hard to compute using standard ergodic theoretic tools. There are two remarkable facts that allowed to get a hold onto these invariants. The first is that  the Lyapunov exponents given as diffusion rates of trajectories on a billiard given by a flat surface $(X,\omega)$ are the same as the ones of a completely different dynamical system, where the Lyapunov exponents are defined as the asymptotic growth rate of the Hodge norm of vectors  in  the variation of Hodge structures over the  flow in the affine invariant manifold $\overline{\SL_2(\RR)(X,\omega)}$. The second key tool used for computing Lyapunov exponents makes use of algebraic geometry. It is in \cite{ekz} where Eskin, Kontsevich and Zorich proved that the sum of positive Lyapunov exponents of the Kontsevich--Zorich cocycle over an affine invariant manifold can be computed by computing the normalized degree of the Hodge bundle restricted to the affine invariant manifold.  
Starting from billiards, algebraic geometry was used to  investigate Lyapunov exponents in more general settings. For example,  Kappes and Möller in \cite{martinandre} proved that a result analogous to the one of \cite{ekz} was true for weight one variations of Hodge structures over ball quotients. This result allowed  them to prove some results about commensurability questions for lattices.   Later, Filip \cite{simionk3} proved a similar result in the case of  variations of Hodge structures given by one dimensional families of K3 surfaces.
Variations of Hodge structures are a special case of flat vector bundles that are characterized by the existence of a special filtration and a compatible Hodge norm. However, Lyapunov exponents can be defined for a general flat vector bundle. Indeed,  a flat vector bundle defines a cocycle given by parallel transport over the geodesic flow.  In \cite{ekmz} Eskin, Kontsevich, M\"oller and Zorich proved that for a general flat vector bundle with non-expanding monodromy at the cusps, the sum of the first $k$ Lyapunov exponents is bounded from below by the normalized degree of any  rank $k$ holomorphic subbundle of the flat bundle. An analogous result was obtained by Daniel and Deroin in \cite{deroindaniel} in which they considered Lyapunov exponents obtained from Brownian motion over K\"ahler manifolds.

In this paper, we firstly refine the main inequality of  \cite{ekmz} by relating the sum of the first $k$ exponents to the sum of the normalized degree of a holomorphic subbundle and the asymptotic degree of its developing map. This is the first main  Theorem \ref{thm:lyapexp}.

\begin{theorem}
	Let  $\mathcal{V}$ be a  holomorphic flat bundle over a  finite area hyperbolic Riemann surface $C$ with non-expanding cusp monodromy. Let $\overline{C}$ be the completion of $C$ and $\Delta=\overline{C}\setminus C$ be the set of cusps. 
	For any  holomorphic subbundle   $\mathcal{E}\subset \mathcal{V} $ of rank $k$, it holds
	\begin{equation*}
		\sum_{i=1}^k \lambda_i\geq  \frac{2\pardeg(\Xi_h(\mathcal{E}))}{\deg(\Omega_{\overline{C}}^1(\log(\Delta))}  + \err^{\mathcal{E}}(u) 
	\end{equation*}
	for  almost all $x\in C$ and   Lebesgue almost all $u\in \bigwedge^{k} \mathcal{V}^{\vee}_x $. The error term is defined as
	\[ \err^{\mathcal{E}}(u)=\pi\lim_{T\to \infty}  \frac{1}{T}\int_{0}^{T}\frac{\sharp \{s_{\mathcal{E}}^{-1}(\ker u)\cap D_t\}}{\vol(D_t)} \dd t.\]
	Here we denoted by $\pardeg(\Xi_h(\mathcal{E}))$  the parabolic degree of the metric extension of $\mathcal{E}$ by some admissible metric $h$, by $s_{\mathcal{E}}:\HH\to \mathbb{P}(\bigwedge^{k} \mathcal{V}_x )$  the holomorphic classifying map defining $\mathcal{E}$  and by $D_t$  the hyperbolic ball of radius $t$ in the hyperbolic plane whose center is a lift of $x$. 
\end{theorem}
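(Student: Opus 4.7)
My strategy is to refine the curvature-integration argument of \cite{ekmz} by retaining the non-negative Poincar\'e--Lelong contribution that is otherwise discarded. First I reduce to the case of a line subbundle: since $\sum_{i=1}^k \lambda_i$ is the top Lyapunov exponent of the flat bundle $\bigwedge^k \mathcal{V}$ and $\bigwedge^k \mathcal{E} \subset \bigwedge^k \mathcal{V}$ is a holomorphic line subbundle, it suffices to prove the inequality with $\mathcal{V}$ replaced by $\bigwedge^k \mathcal{V}$ and $\mathcal{E}$ by $\bigwedge^k \mathcal{E}$. On the universal cover $\HH\to C$, the inclusion $\mathcal{E}\hookrightarrow \mathcal{V}$ is recorded by the equivariant holomorphic classifying map $s_\mathcal{E}\colon \HH \to \PP(\mathcal{V}_x)$, and for $u\in \mathcal{V}_x^{\dual}$ the pairing $f_u = u\circ s_\mathcal{E}$ is a holomorphic section whose zero locus equals $s_\mathcal{E}^{-1}(\ker u)$.

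The top Lyapunov exponent can then be written, as in \cite{ekmz}, as
\[
\lambda_1 \;=\; \lim_{T\to\infty}\frac{1}{T\,\vol(D_T)}\int_{D_T}\log\|f_u\|_h\,\dd\vol,
\]
where $\|\cdot\|_h$ is the admissible metric on $\mathcal{E}$ with extension $\Xi_h(\mathcal{E})$ to $\overline{C}$. I apply Jensen's formula on each hyperbolic disc $D_t\subset\HH$ to the holomorphic section $f_u$: it splits $\int_{D_T}\log\|f_u\|_h\,\dd\vol$ into a curvature integral of the Chern form of $h$ over $D_T$, plus a Green-weighted sum over the zeros of $f_u$ inside $D_T$. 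After a further Ces\`aro average in $T$ and the use of Chern--Gauss--Bonnet for the admissible extension $\Xi_h(\mathcal{E})$ on $(\overline{C},\Delta)$, the curvature part contributes $\frac{2\,\pardeg(\Xi_h(\mathcal{E}))}{\deg\Omega^1_{\overline{C}}(\log\Delta)}$, while the zero-counting part is, by the normalization of the hyperbolic Green's function, exactly $\err^{\mathcal{E}}(u)$ (this is where the factor $\pi$ in the statement arises). The non-expanding cusp monodromy hypothesis ensures that $\Xi_h(\mathcal{E})$ is a well-defined parabolic line bundle, that the boundary contributions at the cusps vanish in the limit, and that the logarithmic growth of $\|f_u\|_h$ at the cusps is controlled, as in \cite{ekmz}.

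The inequality, rather than an equality, arises because the parallel-transport norm on $\mathcal{V}$ restricted to vectors of $\mathcal{E}$ dominates the admissible norm $\|\cdot\|_h$, which yields a one-sided comparison between the logarithmic growth used to define $\lambda_1$ and the quantity computed via Jensen. The main obstacle I expect is the rigorous analytic justification of the Jensen formula in the cusped setting, together with a Fubini-type interchange of limits so that the Ces\`aro average of the zero-counting function converges for almost every $u$; in particular one must rule out that zeros of $f_u$ accumulate too fast near the cusps. Once this analytic control is in place, the compact-base case follows because there $\|\cdot\|_h$ and the parallel-transport norm on $\mathcal{E}$ are uniformly comparable, so no remainder survives and the inequality becomes an equality.
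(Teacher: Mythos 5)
Your plan follows essentially the same route as the paper's proof: reduce to a line subbundle via $\bigwedge^k$, compare the admissible norm with the $\mathcal{E}$-seminorm $\|u\|_{\mathcal{E}}=|u(\omega)|/\|\omega\|_h$ (the Cauchy--Schwarz step, which is exactly where the inequality comes from), and then use a Green/Jensen-type identity on hyperbolic discs to split $\Delta_{\hyp}\log\|u_z\|_{\mathcal{E}}$ into the curvature of $h$ on $\mathcal{E}$ (yielding $2\pardeg(\Xi_h(\mathcal{E}))/\deg\Omega^1_{\overline{C}}(\log\Delta)$ by equidistribution of large discs) plus the Poincar\'e--Lelong zero-counting current of $u_z(\omega_z)$ (yielding $\err^{\mathcal{E}}(u)$, with the factor $\pi$). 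The one imprecision is your displayed starting identity: $\lambda_1$ is the growth rate of the \emph{circle} average of $\log\|G_T r_\theta u\|_h$, and it is Green's formula applied to the $t$-derivative of that average that produces the Ces\`aro-in-$T$ average of $\frac{1}{\vol(D_t)}\int_{D_t}\Delta_{\hyp}\log\|u_z\|_{\mathcal{E}}$, rather than a volume average of $\log\|f_u\|$ over $D_T$; with that correction your outline coincides with the paper's argument.
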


The proof of the above theorem  generalizes to variations of Hodge structures of weight one over ball quotients or to the canonical variation of Hodge structures of weight one over affine invariant submanifolds of the Hodge bundle.

We say that  the  holomorphic flat bundle $\mathcal{V}$ is $k$-irreducible if the holomorphic flat bundle $\bigwedge^k \mathcal{V}$ is irreducible.
If the base curve is compact and if $\mathcal{V}$ is $k$-irreducible, we show in the other main  Theorem \ref{thm:conjcompact} that the sum of the first $k$ exponents is actually equal to the right hand side of the above inequality.

\begin{theorem}
	If the Riemann surface $C$ is compact and the  holomorphic flat bundle $\mathcal{V}$ is $k$-irreducible, the above inequality  is an equality.
\end{theorem}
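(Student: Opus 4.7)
The plan is to reduce to the case of a line subbundle via the wedge construction and then to sharpen the proof of Theorem \ref{thm:lyapexp} by showing that the proximity term which produced the inequality actually vanishes under the compactness and irreducibility hypotheses.

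First I would reduce to $k=1$. By hypothesis $\bigwedge^k \mathcal{V}$ is an irreducible holomorphic flat bundle on $C$, and its top Lyapunov exponent equals $\sum_{i=1}^k \lambda_i$. Any rank $k$ holomorphic subbundle $\mathcal{E} \subset \mathcal{V}$ gives a holomorphic sub-line-bundle $\det \mathcal{E} \subset \bigwedge^k \mathcal{V}$ with the same developing map $s_{\mathcal{E}}$, the same error term $\err^{\mathcal{E}}(u)$, and $\pardeg(\Xi_h(\det \mathcal{E})) = \pardeg(\Xi_h(\mathcal{E}))$ by compatibility of admissible metrics with determinants. It therefore suffices to establish the equality when $k=1$ and $\mathcal{V}$ itself is irreducible.

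Next I would revisit the derivation of the lower bound in Theorem \ref{thm:lyapexp} in the line-bundle case. The argument expresses the top Lyapunov exponent as a time average of $\log|\langle s_{\mathcal{E}}, u\rangle|$ along the geodesic flow, and then applies a Green--Jensen formula on the hyperbolic disks $D_t \subset \HH$. This decomposes $\lambda_1$ into three asymptotic contributions: a curvature term, producing $2\pardeg(\Xi_h(\mathcal{E}))/\deg(\Omega_{\overline{C}}^{1}(\log\Delta))$; a counting term $\err^{\mathcal{E}}(u)$, which records preimages of the hyperplane $\ker u$; and a boundary proximity term given by the circular averages of $-\log|\langle s_{\mathcal{E}}, u\rangle|$ on $\partial D_t$. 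The inequality of Theorem \ref{thm:lyapexp} arises because this last term has a definite sign after time-averaging.

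The task is thus to prove that the asymptotic proximity vanishes when $C$ is compact and $\mathcal{V}$ is irreducible. Since $\Delta = \emptyset$, the developing map $s_{\mathcal{E}} \colon \HH \to \mathbb{P}(\mathcal{V}_x)$ is equivariant under a cocompact surface group, so the pullback of the Fubini--Study distance to the hyperplane descends to an $L^1$ function on the compact unit tangent bundle $T^1 C$. Irreducibility of $\mathcal{V}$ forces the projective orbit of $s_{\mathcal{E}}$ to be Zariski dense in $\mathbb{P}(\mathcal{V}_x)$, so no positive-measure set of directions is absorbed by $\ker u$. Applying the Birkhoff ergodic theorem for the geodesic flow on $T^1 C$ together with a Furstenberg-type positivity argument for the monodromy action, one concludes that the time average of the proximity term tends to zero along Lebesgue almost every trajectory, yielding equality.

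The main obstacle lies in this last step: even though compactness of $T^1 C$ ensures integrability of the proximity function, one must rule out any persistent concentration of $s_{\mathcal{E}}$ near $\ker u$ along typical geodesics. This is precisely where the $k$-irreducibility hypothesis is essential, as it prevents any flat sub-line-bundle of $\bigwedge^k \mathcal{V}$ from trapping the developing direction and thereby forcing a non-trivial proximity contribution. Once the proximity term is shown to vanish, combining with the Green--Jensen decomposition produces the desired equality.
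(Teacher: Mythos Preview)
Your reduction to $k=1$ is correct and matches the paper. But the heart of your argument --- showing the ``proximity term'' vanishes via Birkhoff on $T^1C$ plus a Furstenberg-type argument --- has a genuine gap. For a \emph{fixed} $u\in\mathcal{V}_c^\vee$, the function $z\mapsto \phi(z,u):=\log\bigl(\|u_z\|_h/\|u_z\|_{\mathcal{L}}\bigr)$ is \emph{not} $\pi_1(C)$-invariant (it satisfies $\phi(\gamma z,u)=\phi(z,\rho(\gamma^{-1})u)$), so it does not descend to a function on $T^1C$ and Birkhoff on $T^1C$ does not apply. If instead you work on the projective bundle $\PP(\mathcal{V}^\vee)\to T^1C$, the fiberwise Lebesgue measure is not invariant under the lifted flow; the natural invariant measure is the one supported on the top Oseledets direction. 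Using that measure does give an equality, but only $\nu$-almost everywhere --- this is exactly Proposition~\ref{prop:eq} in the paper, presented there as a \emph{weaker} statement than the theorem. Your proposal does not explain how to upgrade from $\nu$-a.e.\ to Lebesgue-a.e., and the ``Furstenberg-type positivity'' reference is too vague to fill this.

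The paper's proof takes a completely different, hands-on route. It shows directly that the circular average $\frac{1}{t}\int_0^{2\pi}\phi(g_tr_\theta(\tilde c),u)\,d\theta\to 0$ by obtaining explicit pointwise bounds on $\phi$. Using the Weierstrass preparation theorem and compactness of $C$, it proves (Lemma~\ref{lem:tbad}) that $\phi$ is bounded by $M+N|\log\epsilon|$ outside an $\epsilon$-neighborhood of the bad locus $T(u)$, and by a sum of logarithmic poles inside such a neighborhood, with constants \emph{uniform in $u$}. The circle of radius $t$ is then split into the arcs near $T(u)$ and the complement; choosing $\epsilon=e^{-\epsilon' t}$ and estimating each piece (using that the number of bad points per fundamental domain is uniformly bounded, and that the error-term limit already converges) forces both contributions to vanish as $t\to\infty$. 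The $k$-irreducibility enters only through Lemma~\ref{lem:badlocusnottrivial}, guaranteeing $T(u)$ is discrete for \emph{every} $u$, which is needed for the uniform Weierstrass-preparation estimates. This is the missing analytic work in your proposal.
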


The above results can be compared to the main result of \cite{deroindaniel}, in which Daniel and Deroin get a similar equality in the context of the Brownian motion flow over a K\"ahler manifold. They prove that  the sum of the first  Lyapunov exponents is the same as  the sum of a normalized degree and a dynamical degree.  As a corollary we get that in the case of compact base curves the error term $\err^{\mathcal{E}}(u)$ defined above, which has the same shape as the covering degree defined in \cite{dd1} in the context of Lyapunov exponents for holonomies of projective structures, is the same as the dynamical degree defined in \cite{deroindaniel}.
%Since the  latter is defined in terms of the harmonic current defined by a harmonic measure, it is not easy to get a hold on it.

In the second part of the paper we consider Lyapunov exponents as invariants on the de Rham moduli space of flat vector bundles over a compact hyperbolic Riemann surface $C$. We first concentrate on the special subset of this moduli space given by variations of Hodge structures. We show a simplicity result for positive weight variations of Hodge structures and prove slightly generalized versions of the result of \cite{ekz} and \cite{simionk3} about weight one and weight two variations of Hodge structures. 
We then concentrate on other loci given by Shatz strata, defined by Harder--Narasimhan type. In particular, via the identification of the maximal stratum with the set of opers, in Proposition \ref{prop:lyapoper} and Theorem \ref{thm:shatzunbounded} we show the following.

\begin{theorem}
	If $\mathcal{V}$ is in the maximal Shatz stratum of the space of rank $n$ flat vector bundles, the oper locus, then 
	\[\sum_{i=1}^k \lambda_i(\mathcal{V})\geq k(n-k),\quad k=1,\dots,n.\]
	The above inequalities are sharp, since they are achieved in the special point corresponding to the only flat vector bundle of the maximal Shatz stratum  underlying a variation of Hodge structures, which also corresponds to the  $(n-1)$-th symmetric power of the uniformizing representation of $C$.
	
	Moreover, the top Lyapunov exponent function is unbounded on this  stratum and the function has logarithmic growth near its boundary.
\end{theorem}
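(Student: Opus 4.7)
The plan is to prove the three assertions---the lower bound on partial sums, sharpness at the uniformization oper, and the unboundedness with logarithmic growth---in turn, by combining Theorem \ref{thm:lyapexp} with the rank two result of Deroin and Dujardin.

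For the lower bound I would apply Theorem \ref{thm:lyapexp} directly to the Harder--Narasimhan filtration of the oper. The defining property of an $\SL_n$-oper forces this filtration $0 \subset V_1 \subset \cdots \subset V_n = \mathcal{V}$ to have line bundle graded pieces $V_k/V_{k-1} \cong K_C^{(n-2k+1)/2}$, from which a direct computation gives $\deg(V_k) = k(n-k)(g-1)$. Since $C$ is compact we have $\Delta = \emptyset$ and $\deg(\Omega^1_{\overline{C}}(\log\Delta)) = 2g-2$, so the parabolic degree reduces to the ordinary degree and Theorem \ref{thm:lyapexp} yields
\begin{equation*}
\sum_{i=1}^k \lambda_i(\mathcal{V}) \;\geq\; k(n-k) + \err^{V_k}(u) \;\geq\; k(n-k),
\end{equation*}
since the error term is nonnegative.

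For sharpness, the uniformization oper is by definition the $(n-1)$-th symmetric power of the rank two flat bundle $U$ attached to the uniformizing representation $\pi_1(C) \to \PSL_2(\mathbb{R})$. A classical computation identifies the Lyapunov spectrum of $U$ with respect to the geodesic flow on $C$ as $\{+1, -1\}$, and because Lyapunov exponents add on isotypic eigensheets under tensor products and hence under symmetric powers, the spectrum of $\Sym^{n-1}(U)$ is $\{n-1, n-3, \ldots, -(n-1)\}$. Summing the top $k$ entries gives exactly $k(n-k)$, so the inequality is saturated at this point. Uniqueness within the oper locus follows from the correspondence between VHS and $\mathbb{C}^*$-fixed points of the Hitchin flow on the de Rham moduli space, together with the rigidity of the oper grading.

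For the unboundedness with logarithmic growth I would reduce to the rank two case of Deroin and Dujardin via the symmetric power embedding. The assignment sending a rank two oper---equivalently, a projective structure on $C$---to its $(n-1)$-th symmetric power is a holomorphic embedding of the space of projective structures into the rank $n$ oper locus, and along this embedding the top Lyapunov exponent of the image is $(n-1)$ times that of the source, by the additivity argument used for sharpness. Since Deroin and Dujardin proved that the top Lyapunov exponent is unbounded with logarithmic growth at infinity on the affine space $H^0(C, K_C^{\otimes 2})$ of projective structures, the same holds on the rank $n$ oper locus. The main technical obstacle is to match the two notions of ``boundary'': growth at infinity in rank two corresponds via the symmetric power embedding to growth along a specific subvariety of the affine parameter space $\bigoplus_{k=2}^n H^0(C, K_C^{\otimes k})$, but since the embedding is $\mathbb{C}^*$-homogeneous, logarithmic growth of the source translates directly into logarithmic growth in a linear coordinate of the ambient affine space.
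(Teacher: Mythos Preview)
Your proof follows essentially the same strategy as the paper's (Proposition~\ref{prop:lyapoper} and Theorem~\ref{thm:shatzunbounded}): the lower bound comes from applying the degree inequality to the oper filtration pieces, sharpness from computing the spectrum of the uniformizing representation and passing to symmetric powers, and unboundedness from a rank two input transferred via the $(n-1)$-th symmetric power embedding into $\Op_n(C)$.

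Two small points of divergence are worth flagging. First, the rank two unboundedness and logarithmic growth result you invoke is due to Dujardin--Favre \cite{dujfavre}, not Deroin--Dujardin; the latter authors established the formula relating $\lambda_1$ to the covering degree of the developing map, but the asymptotic growth near the boundary of the character variety is a separate result about meromorphic families of non-elementary representations. Second, your closing remark about the symmetric power embedding being ``$\mathbb{C}^*$-homogeneous'' is not needed and is not obviously correct as stated (the map $H^0(C,K_C^2) \to \bigoplus_{j=2}^n H^0(C,K_C^j)$ induced by $\Sym^{n-1}$ is not linear in the differential-operator coordinates); all that matters is that $\lambda_1(\Sym^{n-1}\rho) = (n-1)\lambda_1(\rho)$, so logarithmic growth of the source along any one-parameter family gives logarithmic growth of the target along the image family, which is what the paper uses.
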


We finally focus on the study of the Lyapunov exponent functions in rank two. We describe special loci and some properties of the Lyapunov exponent functions. Moreover we show that the oper locus in rank two is the same as the set of holonomies of projective structures with the same underlying Riemann surface structures. With this point of view we retrieve the main results of \cite{deroin} from a different point of view. 
 
Finally with the support of computer experiments we state a conjecture which is a statement analogous to the last theorem for Hitchin components. In this setting would be interesting to relate Lyapunov exponents to other invariants like the critical exponent or the minimal area function.

\subsection{Structure of the paper}

In Section \ref{sec:deflyap} we state Oseledets multiplicative ergodic theorem and  define Lyapunov exponents for flat vector bundles. We set the notation for parabolic vector bundles and parabolic degree and state the main result of \cite{ekmz}.

In Section \ref{sec:mainresultlyap} we prove a refinement of the previous result of \cite{ekmz} showing in Theorem \ref{thm:lyapexp} that the  sum of the first $k$ exponents is greater or equal than the sum of the degree of any rank $k$ holomorphic subbundle of the flat bundle and the asymptotic degree of its equivariant developing map. We also state an analogous version of the theorem in the more general context where the base space is a ball quotient or an affine invariant manifold of abelian differentials. We finally give necessary conditions to ensure rationality of the sum of Lyapunov exponents.

In Section \ref{sec:equality} we present Theorem \ref{thm:conjcompact}, which states that  if the base curve is compact  the main inequality is actually an equality. This allows us to relate the asymptotic degree of our formula in the compact case  to the dynamical degree defined in \cite{deroindaniel}. 
%We also specialize the conditions of rationality of the sum of exponents for a compact base curve.

In Section \ref{sec:vhsrat} we then focus on other special flat bundles, namely the ones defining a variation of Hodge structures. Using  the above cited  general criterion for ensuring the rationality of the sum of exponents, we reprove slightly more general versions of the original results of \cite{ekz} and \cite{simionk3} about weight one and real weight two variations of Hodge structures. For a general variation of Hodge structures $\mathcal{V}$, we prove  that  the Lyapunov spectrum is trivial if and only if  $\mathcal{V}$ is unitary and we show that if the weight is positive we get a non-trivial bound on the sum of the first $\rk(\mathcal{V}^{n,0})$-exponents. 

In Section \ref{subsec:R-H} we consider Lyapunov exponents as functions on the De Rham moduli space and investigate the Shatz stratification of this moduli space. We then focus on the maximal stratum, which we identify with the oper locus, providing lower bounds for the Lyapunov exponent functions and showing the unboundedness of these functions.

 In Section \ref{sec:rank2} we focus on the moduli space of rank two flat bundles, giving a summarizing picture of important subsets of this moduli space. In particular we show that the oper locus is the same as the set of holonomies of projective structures underlying the same Riemann surface structure.  
 
In Section \ref{sec:lyapexpfctrank2} we describe how the Lyapunov exponent function behaves on these loci, proving  the main results of \cite{deroin} from our point of view. We also give alternative proofs of the known continuity of the Lyapunov exponent function and of the locus where this function is zero.

Finally,  in Section \ref{sec:unbounded} we conjecture,  with the support of computer experiments, that the top Lyapunov exponent function on the Hitchin components is unbouded from above and bounded from below  by the value of the top Lyapunov exponent of the appropriate symmetric power of the uniformizing representation of $C$. The lower bound would be a result analogous to the one of \cite{poitriesambarino} about the critical exponent.

%%%%%%%%%%%%%%%%%%%%%%%%%%%%%%%
\subsection*{Acknowledgements.}
%%%%%%%%%%%%%%%%%%%%%%%%%%%%%%%
We thank Martin M\"oller for suggesting this project to us and for many fruitful and valuable discussions. 
We  thank Simion Filip and Jeremy Daniels for important remarks about weight two variations of Hodge structures and Bertrand Deroin for the insights about branching projective structures.

%%%%%%%%%%%%%%%%%%%%%%%%%%%%%%%%%%%%%%%%%%%%%%%%%%%%%%%
\section{Lyapunov exponents for parabolic flat bundles}
%%%%%%%%%%%%%%%%%%%%%%%%%%%%%%%%%%%%%%%%%%%%%%%%%%%%%%%%
\label{sec:deflyap}

In this section we set the notation and recall the definition of Lyapunov exponents, focusing on the setting of parallel transport for parabolic flat vector bundles over the geodesic flow of a hyperbolic Riemann surface. We finally recall the main result of \cite{ekmz} that will be refined in the next section.

\subsection{Oseledets theorem}

We want to define the Lyapunov exponents associated to a flat vector bundle over a complete hyperbolic Riemann surface $C$ of finite area.

We first of all  recall Oseledets  multiplicative ergodic theorem and the definition of Lyapunov exponents for a cocycle over an ergodic flow.

\begin{thmdef}
	Let $g_t:(M,\mu)\to(M,\mu)$ be an ergodic flow on a space $M$ with finite measure $\mu$. Suppose that the action lifts equivariantly to a linear flow $G_t$ on some measurable real bundle $\mathcal{V}$ on $M$. Suppose there exists a (not equivariant) norm $\|\cdot\|$ on $\mathcal{V}$ such that the functions
	\begin{equation}
		x\mapsto \sup_{t\in [0,1]} \log^+\|G_t\|_x,\quad x\mapsto \sup_{t\in [0,1]} \log^+\|G_{1-t}\|_{g_t(x)}
	\end{equation}
	are in $L^1(M,\mu)$ (we call such a norm \textit{integrable}). Then there exist real constants $\lambda_1\geq \dots \geq \lambda_n$ and a decomposition 
	\[\mathcal{V}=\bigoplus_{i=1}^n \mathcal{V}_{\lambda_i}\]
	by measurable real vector bundles such that for a.a. $x\in M$ and all $v\in (\mathcal{V}_{\lambda_i})_x- \{0\}$, it holds
	\[\lambda_i=\lim_{t\to\pm\infty} \frac{1}{t}\log\|G_t(v)\|.\] 	
	The set of values $\lambda_i$, repeated with multiplicity $\dim \mathcal{V}_{\lambda_i}$, is called the set of \textit{Lyapunov exponents} or Lyapunov spectrum of $(M,\mu,g_t,\mathcal{V})$.
\end{thmdef}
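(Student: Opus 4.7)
The plan is to reduce the statement to the classical two-sided real Oseledets multiplicative ergodic theorem for discrete-time cocycles and then to upgrade the conclusion back to continuous time using the uniform integrability hypothesis. The two integrability conditions on $\sup_{t\in[0,1]}\log^+\|G_t\|$ and on its time-reversed analogue are tailored precisely so that both halves of this reduction go through.

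First, I would consider the time-one map $g_1$ and the invertible discrete cocycle $(G_1^m)_{m\in\ZZ}$ over it. Since $g_1$ is ergodic and the integrability hypothesis forces $\log^+\|G_1\|$ and $\log^+\|G_1^{-1}\|$ to lie in $L^1(M,\mu)$, the classical two-sided Oseledets theorem applies and produces real constants $\lambda_1\geq\cdots\geq\lambda_n$ together with a $g_1$-equivariant measurable splitting $\mathcal{V}=\bigoplus_{i=1}^n\mathcal{V}_{\lambda_i}$ such that for $\mu$-a.e.\ $x$ and every nonzero $v\in(\mathcal{V}_{\lambda_i})_x$ one has $\tfrac{1}{m}\log\|G_1^m v\|\to\lambda_i$ as $m\to\pm\infty$. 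The standard proof of this discrete version combines Kingman's subadditive ergodic theorem applied to the exterior-power cocycles $\log\|\bigwedge^k G_1^m\|$ (which directly produces the partial sums $\lambda_1+\cdots+\lambda_k$) with a polar-decomposition argument showing that $(G_1^{m\,*}G_1^m)^{1/(2m)}$ converges almost surely to a symmetric positive operator whose eigenvalues are $e^{\lambda_i}$ and whose eigenspaces realize the decomposition.

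Second, I would upgrade from integer to real time. For $t=m+s$ with $m\in\ZZ$ and $s\in[0,1)$, the cocycle identity $G_tv=G_s(G_1^mv)$ shows that $\bigl|\log\|G_tv\|-\log\|G_1^mv\|\bigr|$ is bounded by a quantity of order $\sup_{u\in[0,1]}\log^+\|G_u\|_{g_m(x)}$, and Birkhoff's ergodic theorem applied to the $L^1$ function $x\mapsto\sup_{u\in[0,1]}\log^+\|G_u\|_x$ forces this error to be $o(|m|)$ for $\mu$-a.e.\ $x$. Dividing by $t\sim m$ and combining with the discrete limit of the previous step yields $\lambda_i=\lim_{t\to\pm\infty}\tfrac{1}{t}\log\|G_tv\|$, where the backward limit is handled symmetrically using the second integrability hypothesis. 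Finally, uniqueness of the Oseledets decomposition implies that the splitting obtained at time one is automatically $G_t$-equivariant for every real $t$: the image $G_t(\mathcal{V}_{\lambda_i})_x\subset\mathcal{V}_{g_t(x)}$ realizes growth rate $\lambda_i$ for the discrete cocycle at $g_t(x)$, hence must agree with $(\mathcal{V}_{\lambda_i})_{g_t(x)}$.

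The main technical obstacle is the invocation of the classical two-sided Oseledets theorem for a general measurable real bundle rather than for a matrix cocycle on a trivial product; this requires a measurable local trivialization, which exists because $\mathcal{V}$ is a measurable real bundle over a standard probability space, and the resulting matrix cocycle can be patched on a full-measure set. In the flat-bundle applications of the subsequent sections the bundle comes with a smooth structure, so this measurability issue is entirely harmless and the statement is used essentially as a black box.
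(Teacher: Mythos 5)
First, a calibration: the paper does not prove this statement. It is the classical Oseledets multiplicative ergodic theorem, recorded as a Theorem/Definition only to fix the notion of Lyapunov exponents, and it is used downstream as a black box; so there is no in-paper argument to measure yours against, and the only question is whether your sketch is a correct proof of the classical result. Your overall route --- pass to the time-one cocycle $G_1$, invoke the discrete two-sided theorem, then interpolate back to real time via a Birkhoff estimate on the $L^1$ functions in the hypothesis --- is the standard one, and the interpolation step is correct as you describe it (the upper bound at time $t=m+s$ uses the first displayed condition at $g_m(x)$, the lower bound uses the second; this is exactly where the second condition earns its keep).

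Two claims in your first step are not right as written. First, ``since $g_1$ is ergodic'' is unjustified: the time-one map of an ergodic flow need not be ergodic. For the constant-roof suspension of an ergodic transformation $T$, the flow is ergodic while $g_1(x,s)=(Tx,s)$ preserves every level set $X\times\{s\}$. The standard repair is to apply the non-ergodic version of the discrete theorem, which produces $g_1$-invariant measurable exponent functions $\lambda_i(x)$, and then to note that your own interpolation argument makes these functions invariant under $g_t$ for every real $t$, so that ergodicity of the \emph{flow} forces them to be constant. Second, the assertion that the stated hypotheses ``force $\log^+\|G_1^{-1}\|$ to lie in $L^1$'' does not follow: both displayed conditions control only forward operator norms, and a diagonal cocycle of the form $\mathrm{diag}(e^{f},e^{-g})$ with $f\in L^1$, $g\geq 0$, $g\notin L^1$ satisfies them while its inverse is not log-integrable. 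Some control on the inverse cocycle (or a substitute for it) is genuinely needed both for the finiteness of the bottom exponents and for the limits as $t\to-\infty$; in the applications of this paper it is supplied by the time-reversal symmetry of the geodesic flow discussed in Remark~\ref{rem:symmspec} together with the non-expanding cusp monodromy condition of Theorem~\ref{thm:integrable}, and in your general argument it should either be added as a hypothesis or derived from such additional structure rather than claimed as automatic. Apart from these two repairs the architecture of your proof is sound, and the measurability caveat you raise at the end is handled exactly as you say.
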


The case we are interested in is when the space $M$ is the unit tangent bundle $T^1(C)$ of a hyperbolic curve $C\cong \HH/\Gamma$ equipped with the measure induced by the hyperbolic metric with constant negative curvature $-4$ (we keep the same convention as in \cite{ekmz}). 
We denote by $\Delta:=\overline{C}\setminus C$ the boundary points with respect to the smooth compactification of $C$. 
Let $\rho:\Gamma\to \GL_n(\CC)$ be a representation of the fundamental group.
We denote by $\mathbb{V}_{\rho}$ the local system on $C$ induced by $\rho$ and by $\mathcal{V}_C:=\mathbb{V}_{\rho}\otimes_{\CC} \mathcal{O}_C$ the associated holomorphic bundle equipped with the flat Gauss-Manin connection $\nabla$.
The geodesic flow $g_t:T^1(C)\to T^1(C)$ is  ergodic and we can lift it to the flat vector bundle $(\mathcal{V}_C,\nabla)$ using parallel transport. We will consider Lyapunov exponents with respect to this situation.

\begin{rem}
	\label{rem:symmspec}
	The Lyapunov spectrum is symmetric in this case. This follows since the reverse time geodesic flow, which reverses the signs of the Lyapunov exponents, is conjugate to the positive time flow (because of the $\SL_2(\RR)$-action on $\HH$).  Moreover, if the vector bundle $\mathcal{V}$ is complex, it is possible to show that the Oseledets  subvector bundles are complex subbundles of $\mathcal{V}$. In this case we then consider only half of the real Lyapunov spectrum, forgetting about the duplication phenomen given by the complex structure.
\end{rem}

We still need to define an integrable norm on the flat bundle $(\mathcal{V}_C,\nabla)$. We can consider the \textit{constant norm} given as the pullback to $T^1(C)$ of the parallel transport of any norm over the fiber of  some base point $c\in C$ to a Dirichlet fundamental domain for $\Gamma$ on $\HH$. In particular this norm is not continuous across the boundary of the fundamental domain.
We say that $(\mathcal{V}_C,\nabla)$ has  non-expanding cusp monodromies if the eigenvalues of the holonomy matrices  $\text{hol}_{\nabla}(\gamma)$ have absolute value one, for every simple loop $\gamma$ around a cusp.
We recall  a result which ensures us that  the constant norm is integrable.
\begin{thm}[{\cite[Lem. 2.6, Sec. 2.7]{ekmz}}]
	\label{thm:integrable}
	The constant norm over $(V,\nabla)$ is integrable if and only if the associated local system has non-expanding cusp monodromies.
\end{thm}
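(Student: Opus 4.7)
The plan is to localize the analysis to neighborhoods of the cusps of $C$. Away from the cusps, the thick part of $C$ is relatively compact, so any unit-speed geodesic of length one meets only finitely many translates of a fixed Dirichlet fundamental domain $F\subset\HH$. In the constant trivialization, $G_t$ is then a product of a uniformly bounded number of representation matrices $\rho(\gamma_i)$ of uniformly bounded operator norm, which contributes a bounded function to both integrability conditions. All the action therefore takes place in the cusps.

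Fix a cusp of $C$ and lift it to the horoball model $\{\Im z>1\}/\langle z\mapsto z+1\rangle$, with cusp parabolic $\gamma$ and cusp monodromy $A:=\rho(\gamma)\in\GL_n(\CC)$. For a point $x\in T^1(C)$ lifting to height $y\gg 1$ with tangent vector making small angle $\theta$ with the horizontal, a unit-speed hyperbolic geodesic based at height $y$ covers Euclidean horizontal distance of order $y\cos\theta$ in hyperbolic time one, and therefore wraps around the cusp a number of times $N(x)\asymp y\cos\theta$. Tracking the deck-transformation word picked up in going from $F$ to $\tilde g_1(\tilde x)$ shows that $G_1$ acts in the constant trivialization by $A^{\pm N(x)}$ up to a multiplicative error bounded uniformly in $y$, and the monotonicity of the winding number in $t$ yields the same upper bound for $\sup_{t\in[0,1]}\|G_t\|_x$. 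A direct computation with the Liouville measure $y^{-2}\,dx\,dy\,d\theta$ gives the sharp two-sided tail estimate $\mu(\{x\in T^1(C):N(x)\geq n\})\asymp 1/n$.

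For the implication $\Leftarrow$, assume every cusp monodromy $A$ has all eigenvalues of modulus one. Putting $A$ in Jordan normal form yields $\|A^n\|=O(n^{d-1})$, where $d$ is the maximal size of a Jordan block, hence $\log^+\|A^n\|=O(\log n)$. Integrating against the density $O(1/n^2)\,dn$ coming from the tail estimate gives a convergent contribution to $\int\sup_{t\in[0,1]}\log^+\|G_t\|_x\,d\mu(x)$; the second integrability condition is treated identically using the invariance of the Liouville measure under $g_t$. For the converse, if some cusp monodromy has an eigenvalue $\lambda$ with $|\lambda|>1$, then $\log^+\|A^n\|$ grows linearly in $n$ along the corresponding eigendirection, and the sharp lower bound on the tail forces the integral to diverge at the rate of the harmonic series $\sum 1/n$. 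The main obstacle is the bookkeeping: correctly identifying the deck-transformation word as a precise power of the cusp parabolic (rather than a power up to an error that could a priori grow with $y$), controlling the interaction with crossings of the sides of the Dirichlet domain not associated with this cusp, and making the winding estimate uniform in the angular coordinate so that it integrates cleanly against the Liouville measure.
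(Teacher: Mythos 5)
Your argument is correct and is essentially the standard one: the paper itself does not prove this statement but imports it from \cite{ekmz}, and the proof there proceeds exactly as you do, via the winding number $N(x)\asymp y\cos\theta$ in a cusp neighborhood, the tail estimate $\mu(N\geq n)\asymp 1/n$ for the Liouville measure, and the dichotomy $\log^+\|A^n\|=O(\log n)$ versus $\log^+\|A^n\|\gtrsim n$ according to whether the cusp monodromy is non-expanding. The only point worth making explicit is that when non-expansion fails with an eigenvalue of modulus $<1$, you pass to the inverse simple loop (equivalently, to geodesics winding the other way), whose monodromy then has an eigenvalue of modulus $>1$, so your divergence argument still applies.
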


Note that for any two  integrable norms on $(\mathcal{V}_C,\nabla)$, the Lyapunov filtrations and the  Lyapunov spectra coincide (see \cite[Lemma 2.6]{martinandre}).

\begin{rem}
	The constant norm can be used to numerically compute Lyapunov exponents associated to $(V,\nabla)$ when a coding for the geodesic flow on $\HH/\Gamma$ is available.
	If we denote by $\gamma_n\in \Gamma$ the sequence of elements corresponding to the sequence of sides of the fundamental domain crossed by a generic geodesic, then by definition of constant norm the Lyapunov exponents are given as
	\[\lambda_i=\lim_{n\to \infty}\frac{1}{n}\log(\mu_i(n)),\] 
	where $\mu_i(n)$ are the eigenvalues of the matrix  
	\[ \text{hol}_{\nabla}(\gamma_n)\cdot\dots\cdot \text{hol}_{\nabla}(\gamma_1).\]	
	%One can in principle just look at the representation, and not at the induced vector bundle. Indeed, if we fix a point $x\in M$, by ergodicity the flow will come back near $x$ infinitely many times forming loops $\gamma_{t_n}\in \pi_1(M,x)$. We can then consider the fixed vector space $V_x$ with a fixed norm and look at the holonomy representation on this fiber, since it is given by parallel transport. Then for all $v\in (V_{\lambda_i})_x- \{0\}$, one has 
	%\[\lambda_i=\lim_{n\to\pm\infty} \frac{1}{n}\log\|G_{\gamma{t_n}}(v)\|=\lim_{n\to\pm\infty} \frac{1}{n}\log\|\hol(\gamma_{t_n})(v)\|=\lim_{n\to\pm\infty} \frac{1}{n}\log\|\hol(\gamma_{t_n})\|\]
	%where the last norm  $	\|\hol(\gamma_{t_n})\|$ is any matrix norm.
	%\todo{comment: equality with deroin definition of Lyapunov exponents. TODO:DISCRETIZATION OF GEODESIC FLOW (Eskin-Matheus)}
\end{rem}

\subsection{Parabolic flat bundles}

We now summarize results and notions of \cite{simvhs}, \cite{sim90} and \cite{ekmz}. We will introduce parabolic bundles and metric extensions, which are needed in order to state the main result of \cite{ekmz} in  the case of non compact base curves.

\begin{defn}
	A \textit{parabolic bundle} $\mathcal{V}$ over $\overline{C}$ is a holomorphic vector bundle together with a $[0,1)$-filtration $F^{\cdot}\mathcal{V}_c$ on  the fiber $\mathcal{V}_c$
	\[\mathcal{V}_c=\mathcal{V}_c^{\geq \alpha_1}\supsetneq \mathcal{V}_c^{\geq \alpha_2}\supsetneq\dots \supsetneq \mathcal{V}_c^{\geq \alpha_{n+1}}=\mathcal{V}_c^{\geq 1}=0\]
	for every $c\in \Delta$.
\end{defn}
A morphism between parabolic bundles $\mathcal{V}$ and $\mathcal{E}$ is a morphism of
holomorphic vector bundles such that for each $c \in \Delta$  and each weight $\alpha$ of $\mathcal{V}_c$ the image of $\mathcal{V}^{\geq\alpha}$ lies in $\mathcal{E}^{\geq\beta}$ whenever $\beta\leq \alpha$. A \emph{parabolic subbundle} $i:\mathcal{E}\hookrightarrow\mathcal{V}$ is an injective morphism of parabolic bundle with the additional requirements that for each $c\in \Delta$ the weights of $\mathcal{E}$ are a subset of the weights of $\mathcal{V}$ and if $\beta$ is maximal such that
$i(\mathcal{E}^{\geq\alpha})\subseteq \mathcal{V}^{\geq \beta}$ then $\alpha=\beta$.

If we denote by $0\leq \alpha_1<\alpha_2<\dots <\alpha_n<\alpha_{n+1}=1$ the weights of the filtration of a fiber $\mathcal{V}_c$, the filtered dimension of  $(\mathcal{V}_c,F^{\cdot})$ is defined as
\[\dim_{F^{\cdot}}(\mathcal{V}_c)=\sum_{i=1}^n \alpha_i\dim \gr_{\alpha_i}(\mathcal{V}_c)\]
where $\gr_{\alpha_i}(\mathcal{V}_c)$ is the graded piece at weight $\alpha_i$.

\begin{defn}
	The \textit{parabolic degree} of a parabolic bundle $(\mathcal{V},F^{\cdot})$ is defined to be
	\[\pardeg(\mathcal{V},F^{\cdot})=\deg(\mathcal{V})+\sum_{c\in \Delta} \dim_{F^{\cdot}}\mathcal{V}_c.\]
\end{defn}

Following \cite{ekmz}, we  define  acceptable metrics. This notion is useful in order to compute parabolic degrees of parabolic bundles.

\begin{defn}
	A smooth metric $h$ on a holomorphic vector bundle $\mathcal{V}_C$ over $C$ is called \emph{acceptable} if the curvature $R_h$ of the metric admits locally near every cusp $c\in \Delta$, a bound
	\[|R_h|\leq f+\frac{M}{|q|^2|\log(q)|^2}\]
	with $f\in L^p(C)$ for some $p>1$ and some constant $M$.	
\end{defn}

When we consider a holomorphic vector bundle $\mathcal{V}_C$ over $C$ equipped with a smooth metric $h$, we can talk about a canonical metric extension of $\mathcal{V}_C$ on $\overline{C}$, which in general is just a coherent sheaf. 

Let $j:C\hookrightarrow \overline{C}$ be the inclusion.

\begin{defn}
	The \emph{metric extension} $\Xi_h(\mathcal{V}_C)$  of $\mathcal{V}_C$   to $\overline{C}$ with respect to the metric $h$ is given by the subsheaf  of $j_*(\mathcal{V}_C)$  defined by following growth condition. If  $s(q)$ is a local section of $j_*(\mathcal{V}_C)$ over $U\subset \overline{C}$ around a cusp $c\in U\cap \Delta$,   we  set $s(q)\in \Xi_h(\mathcal{V}_C)(U)$ if for all $\epsilon> 0$ there exists a constant $M({\epsilon})$ such that $|s(q)|_{h}\leq M({\epsilon})|q|^{-\epsilon}$.    
	
	The parabolic structure of $\Xi_h(\mathcal{V}_C)$ over the cusps is given by the following filtration. If  $s(q)$ is a local section of $\Xi_h(\mathcal{V}_C)$ around a cusp $c\in \Delta$,   we  set $s\in (\Xi_h(\mathcal{V}_C))_c^{\geq \alpha}$ if for all $\epsilon> 0$ there exists a constant $M({\epsilon})$ such that \[|s(q)|_{h}\leq M({\epsilon})|q|^{\alpha-\epsilon}.\]
\end{defn}

We want now to generalize the definition of acceptable metric to parabolic bundles over $\overline{C}$.

\begin{defn}
	A smooth metric $h$ on a parabolic vector bundle $(\mathcal{V},F^{\cdot})$ over $\overline{C}$ is called \emph{acceptable} if $h$ is an acceptable metric for the holomorphic bundle $\mathcal{V}_{|C}$ and $(\mathcal{V},F^{\cdot})=\Xi_h(\mathcal{V})$.
\end{defn}

We recall now a result which allows us to compute the parabolic degree of a parabolic vector bundle using any acceptable metric.

\begin{prop}[{\cite[Prop. 2.5]{ekmz}}]
	\label{lem:pardeg}
	If $(\mathcal{V},F^{\cdot})$ is a parabolic vector bundle over $\overline{C}$ of rank $k$, then 
	\[\pardeg (\mathcal{V})=\pardeg(\wedge^k \mathcal{V}).\]
	Moreover if $h$ is an acceptable metric, then
	\[\pardeg(\mathcal{V},F^{\cdot})=\frac{1}{2\pi i}\int_C \partial \overline{\partial}\log(\det(h_{ij}))\]
	where $h_{ij}=h(e_i,e_j)$ is the Gram matrix of the metric.
\end{prop}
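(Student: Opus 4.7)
The plan is to establish the determinant identity first, and then reduce the Chern--Weil-type integral formula to the rank-one case via this same identity.

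For the first equality, I would fix a cusp $c\in\Delta$ and a basis $\{e_1,\dots,e_k\}$ of $\mathcal{V}_c$ adapted to the filtration $F^{\cdot}$, so that each $e_i$ carries a well-defined weight $\beta_i$ with the multiset $\{\beta_1,\dots,\beta_k\}$ coinciding with $\{\alpha_j\}$ counted with multiplicity $\dim\gr_{\alpha_j}(\mathcal{V}_c)$. The one-dimensional fibre $(\wedge^k \mathcal{V})_c$ is generated by $e_1\wedge\cdots\wedge e_k$, whose induced parabolic weight equals $\sum_i \beta_i = \sum_j \alpha_j \dim\gr_{\alpha_j}(\mathcal{V}_c)=\dim_{F^{\cdot}}(\mathcal{V}_c)$. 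Summing over $c\in\Delta$ and combining with the classical identity $\deg(\wedge^k\mathcal{V})=\deg\mathcal{V}$ for the underlying holomorphic bundle yields $\pardeg\mathcal{V}=\pardeg(\wedge^k\mathcal{V})$.

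For the integral formula I would first observe that $\det(h_{ij})$ is exactly the induced metric $\det h$ on $\wedge^k\mathcal{V}_C$ evaluated on the local frame $e_1\wedge\cdots\wedge e_k$. Since the curvature of $\det h$ is the trace of the curvature of $h$, acceptability is preserved under taking determinants, and $(\wedge^k\mathcal{V},F^{\cdot})=\Xi_{\det h}(\wedge^k\mathcal{V}_C)$ follows from the definition of the filtration on the determinant. Together with the first part of the theorem this reduces the statement to the case of a parabolic line bundle $(\mathcal{L},F^{\cdot})$ with weight $\alpha_c$ at each cusp $c$.

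In the line-bundle case, pick a local holomorphic frame $s_c$ of $\mathcal{L}$ around each cusp and write $\log |s_c|_h^2=\alpha_c\log|q_c|^2+\phi_c$, where the residual function $\phi_c$ is sufficiently regular thanks to acceptability and to $\Xi_h(\mathcal{L})=(\mathcal{L},F^{\cdot})$. Setting $D_\epsilon(c)$ to be a small disk of radius $\epsilon$ around $c$, applying Stokes' theorem to the identity $\partial\bar\partial\log h=d(\bar\partial\log h)$ on $C\setminus\bigcup_c D_\epsilon(c)$ gives
\[
\int_{C\setminus\bigcup_c D_\epsilon(c)}\partial\bar\partial\log h=-\sum_{c\in\Delta}\int_{\partial D_\epsilon(c)}\bar\partial\log h.
\]
The direct calculation $\int_{\partial D_\epsilon(c)}\frac{d\bar q_c}{\bar q_c}=-2\pi i$, combined with the vanishing of $\int_{\partial D_\epsilon(c)}\bar\partial\phi_c$ as $\epsilon\to 0$, contributes $2\pi i\sum_c\alpha_c$. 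One then compares $h$ with an arbitrary smooth Hermitian metric $\tilde h$ on the extension of $\mathcal{L}$ to $\overline{C}$: the classical Chern--Weil formula $\frac{1}{2\pi i}\int_{\overline{C}}\partial\bar\partial\log\tilde h=\deg\mathcal{L}$ supplies the remaining term, while $\log(h/\tilde h)$ carries precisely the cusp singularities just accounted for. Summing yields $\frac{1}{2\pi i}\int_C\partial\bar\partial\log h=\deg\mathcal{L}+\sum_c\alpha_c=\pardeg(\mathcal{L},F^{\cdot})$.

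The main technical obstacle is controlling the passage $\epsilon\to 0$ uniformly at all cusps: both the integrability of $\partial\bar\partial\log h$ on $C$ and the vanishing of the $\phi_c$-contributions on $\partial D_\epsilon(c)$ rest on the $L^p$-bound built into the definition of acceptability together with the two-sided growth estimate $|s|_h\le M(\epsilon)|q|^{\alpha-\epsilon}$ coming from $\Xi_h$. Verifying that these two ingredients combine to make the boundary and interior error terms negligible is the crux of the argument.
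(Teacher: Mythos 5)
The paper does not prove this statement: it is quoted verbatim from \cite[Prop.~2.5]{ekmz}, so there is no in-paper proof to compare against. Your argument is the standard one and, as far as I can tell, is the same route taken in the cited reference: the first identity is pure bookkeeping with the induced parabolic weight of the determinant, and the integral formula is reduced to the parabolic line-bundle case and handled by Stokes' theorem on $C$ minus small disks around the cusps, with the residue $2\pi i\,\alpha_c$ extracted from the leading term $\alpha_c\log|q_c|^2$.

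One point deserves more than the passing remark you give it: to invoke the line-bundle case you need that $\Xi_{\det h}(\wedge^k\mathcal{V}_C)$ carries weight \emph{exactly} $\dim_{F^\cdot}\mathcal{V}_c=\sum_i\beta_i$ at each cusp, and likewise that $\phi_c=\log|s_c|_h^2-\alpha_c\log|q_c|^2$ is bounded below well enough for $\int_{\partial D_\epsilon(c)}\bar\partial\phi_c\to 0$. The Hadamard-type upper bound $|e_1\wedge\cdots\wedge e_k|_{\det h}\leq\prod_i|e_i|_h$ gives one inequality for free, but the matching lower bound does not follow formally from the definitions of acceptability and of $\Xi_h$; it is supplied by Simpson's local analysis of acceptable metrics adapted to a parabolic structure (the subpolynomial two-sided estimates on adapted frames). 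If you make that input explicit, the proof is complete.
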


Finally we can define the notion of admissible metric on a holomorphic flat bundle over $C$. Admissible metrics are the ones that can be used to compute Lyapunov exponents.

For any flat holomorphic vector bundle over $C$ there is a canonical extension,  which is called the Deligne extension. 
It is a holomorphic vector bundle $\mathcal{V}$ on $\overline{C}$ with a logarithmic connection $\nabla:\mathcal{V} \to \mathcal{V}_{\rho}\otimes \Omega^1_{\overline{C}}(\log(\Delta))$.  Note  that for holomorphic flat bundles $(\mathcal{V}_C,\nabla)$ over $C$ with non-expanding cusp monodromies the Deligne extension has a canonical parabolic structure (see \cite{ekmz}).

\begin{defn}
	A smooth metric $h$ on the holomorphic flat bundle $\mathcal{V}_C$ over $C$ is called \emph{admissible} if  the following conditions hold:
	\begin{enumerate}
		\item The metric $h$ is acceptable for the Deligne extension $\mathcal{V}$ of $\mathcal{V}_C$ with respect to its canonical parabolic structure.
		\item For every cusp $c\in \Delta$ with coordinate $q$, there is some $n\in \NN$ such that for any $ e\in \mathcal{V}_c^{\geq \alpha}$ and $ e'\in \mathcal{V}_c^{\geq \alpha'}$  it holds
		\[  h(e(q),e'(q))\leq M_1|q|^{\alpha+\alpha'}(\log|q|)^{2n},\quad \text{for some $M_1>0$}. \]
		\item For every cusp $c\in \Delta$ with coordinate $q$, there is some $n\in \NN$  such that a local generating section $e$ of $\det(\mathcal{V})$ has the lower bound
		\[|e(q)|_h\geq M_2|q|^{2\dim_{F^{\cdot}}(\mathcal{V}_c)} (\log|q|)^{-2n} ,\quad \text{for some $M_2>0$}. \]
	\end{enumerate}
\end{defn}

We want to highlight that the first condition of the above definitions simply says that  for an admissible metric the Deligne extension is the same as the metric extension and that the curvature does not grow too fast around the cusps.

We recall the existence lemma for such metrics  based on the result by Simspon \cite[Theorem 4]{sim90}.

\begin{lem}[{\cite[Lem. 2.4]{ekmz}}]
	\label{lem:existadmissible}
	If a holomorphic flat bundle $(\mathcal{V}_C,\nabla)$ has non-expanding cusp monodromies, then it admits an admissible metric.
\end{lem}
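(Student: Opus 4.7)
The plan is to reduce the construction to Simpson's existence theorem for tame harmonic metrics on flat bundles over a punctured curve, and then verify separately each of the three conditions in the definition of admissibility.

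First, I would invoke Theorem~4 of \cite{sim90}, which under the non-expanding cusp monodromy hypothesis (the eigenvalues of each local monodromy operator lie on the unit circle, so in Simpson's terminology the bundle is tame) produces a Hermitian metric $h$ on $\mathcal{V}_C$ that is harmonic for the flat connection and whose asymptotic behavior near each cusp $c$ is governed by the Jordan decomposition of the residue of $\nabla$ at $c$. The local model near a cusp in this theory is a direct sum of rank-one pieces of the form $|q|^{2\alpha}(\log|q|)^{2n}$, so all the desired growth bounds are encoded explicitly in Simpson's local description.

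Next I would verify condition (1). Sections of the Deligne extension are characterized, after choosing a flat frame normalized by the residue, by the growth $|s|_h = O(|q|^{-\eps})$ for every $\eps>0$, which matches exactly the definition of $\Xi_h(\mathcal{V}_C)$; and the jumps of the canonical parabolic filtration correspond to the real parts of the residue eigenvalues, which under the non-expansion hypothesis all fall in $[0,1)$. Hence $\Xi_h(\mathcal{V}) = \mathcal{V}$ as parabolic bundles. Acceptability, i.e.\ the curvature estimate, is a standard consequence of harmonicity together with the local model: $R_h$ is bounded in $L^p$ for $p$ slightly greater than $1$, with a controlled singularity of order $|q|^{-2}|\log|q||^{-2}$.

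Then I would verify (2) and (3). Condition (2) is the upper bound $h(e,e') \leq M_1 |q|^{\alpha+\alpha'}(\log|q|)^{2n}$, which is read off directly from Simpson's local model since pairing two sections in $\mathcal{V}_c^{\geq\alpha}$ and $\mathcal{V}_c^{\geq\alpha'}$ produces weight $\alpha+\alpha'$ with at worst a polynomial-logarithmic correction produced by the nilpotent part of the residue. Condition (3) for $\det(\mathcal{V})$ then follows: by Proposition~\ref{lem:pardeg} the total parabolic weight at $c$ equals $\dim_{F^{\cdot}}(\mathcal{V}_c)$, so a local generator of $\det(\mathcal{V})$ has $h$-norm bounded below by $M_2 |q|^{2\dim_{F^{\cdot}}(\mathcal{V}_c)}(\log|q|)^{-2n}$, which is precisely the required inequality.

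The main obstacle is Step~2, the identification of the metric extension with the Deligne extension and its canonical parabolic filtration. This is the heart of Simpson's tame non-abelian Hodge theory in the open case, and the non-expansion condition is essential: if any monodromy eigenvalue had absolute value different from one, the real part of the corresponding residue eigenvalue could lie outside $[0,1)$, and the metric extension would differ from the Deligne extension by a twist, so all three admissibility conditions could fail. Once this identification is in place, conditions (2) and (3) are bookkeeping with the local normal form.
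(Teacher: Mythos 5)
The paper does not reprove this lemma: it quotes it verbatim from \cite{ekmz} (Lemma 2.4 there), noting only that it is ``based on'' Simpson's Theorem 4 in \cite{sim90}. Your overall architecture --- obtain the local asymptotics near the cusps from Simpson's tame harmonic bundle theory and then check conditions (1)--(3) of admissibility against the local normal form --- is the intended one, and your verifications of (2), (3) and of the identification of the metric extension with the Deligne extension are fine \emph{once} a metric with the right local model is in hand.

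The genuine gap is in your first step. A \emph{harmonic} metric on a flat bundle over a punctured curve exists, in Simpson's theory, only when the associated filtered local system is polystable, equivalently when the monodromy representation is reductive. The lemma, however, is stated --- and is used in Theorem \ref{thm:lyapexp} --- for arbitrary holomorphic flat bundles with non-expanding cusp monodromies, including non-semisimple ones: a non-trivial unipotent extension $0\to \underline{\CC}\to \mathbb{V}\to \underline{\CC}\to 0$ has non-expanding cusp monodromies but admits no harmonic metric, so your step 1 produces nothing for it. ``Tame'' is a property of a harmonic bundle once you have one; non-expanding cusp monodromy does not by itself yield existence. The way to close the gap (and what the cited proof does) is to dispense with harmonicity entirely: near each cusp put the local monodromy in Jordan form, take the Deligne canonical frame $e_i$ and \emph{declare} $|e_i|_h^2=|q|^{2\alpha_i}(\log|q|)^{2k_i}$ as an orthogonal direct sum over Jordan blocks, with exponents dictated by Simpson's local model for tame harmonic bundles; then glue these explicit local metrics with an arbitrary smooth metric on a compact core via a partition of unity. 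Since admissibility only constrains the metric near the cusps, the glued metric inherits (1)--(3) from the local models, and the acceptability (curvature) bound is a direct computation for the explicit model rather than a consequence of harmonicity. Simpson's theorem thus enters only as the source of the local norm estimates, not as a global existence statement.
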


We say that a norm on $\mathcal{V}_C$ is admissible if it is induced by an admissible metric.
Recall that if $\mathcal{V}_C$ has non-expanding cusp monodromies, by Theorem \ref{thm:integrable} the constant norm is integrable in the sense of Oseledets theorem. In this case the same property holds for admissible norms. 

\begin{lem}[{\cite[Lem. 2.7]{ekmz}}]
	\label{lem:integrableadmissible}
	If a holomorphic flat bundle $(\mathcal{V}_C,\nabla)$ has non-expanding cusp monodromies, then any admissible norm is integrable in the sense of Oseledets theorem.
\end{lem}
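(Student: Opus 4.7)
\medskip

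\noindent\emph{Proof proposal.} The plan is to deduce integrability of an admissible norm $\|\cdot\|_h$ from integrability of the constant norm $\|\cdot\|_0$ provided by Theorem \ref{thm:integrable}, by controlling the pointwise ratio of the two norms. Writing the parallel transport cocycle as $G_t:\mathcal{V}_x\to \mathcal{V}_{g_t(x)}$, one has
\[
\log^+\|G_t\|_h \;\leq\; \log^+\|G_t\|_0 \;+\; \bigl|\log \varphi(g_t(x))\bigr| \;+\; \bigl|\log \varphi(x)\bigr|,
\]
where $\varphi(x)$ denotes the operator norm of $\id:(\mathcal{V}_x,\|\cdot\|_0)\to(\mathcal{V}_x,\|\cdot\|_h)$ (and similarly for the inverse). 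Since the geodesic flow preserves the hyperbolic measure, $\int \sup_{t\in[0,1]} |\log \varphi(g_t(x))|\,d\mu$ is controlled by $\int |\log \varphi|\,d\mu$ together with a one--parameter averaging of the measure along flow lines of length one, both of finite magnitude once $\log\varphi\in L^1(T^1C,\mu)$. Thus the whole problem reduces to showing that $\log\varphi$ is $\mu$-integrable on $T^1C$.

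Next I would split $T^1 C$ into a compact core $K\subset T^1C$ and disjoint cuspidal neighbourhoods $U_c$ for $c\in\Delta$. On $K$ the constant norm $\|\cdot\|_0$ (being the parallel transport of a fixed fibre norm to a Dirichlet fundamental domain) and the admissible norm $\|\cdot\|_h$ are both continuous and non-degenerate, so $\log\varphi$ is bounded there and contributes only a finite amount. All the work is concentrated in each $U_c$, where one uses a local coordinate $q$ with $|q|\to 0$, a basis $e_1,\dots,e_n$ of $\mathcal{V}_c$ adapted to the parabolic filtration with weights $\alpha_1,\dots,\alpha_n$, and the fact that in $U_c$ the hyperbolic measure is comparable to $\frac{dq\,d\bar q}{|q|^2(\log|q|)^2}$.

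In $U_c$ the constant norm $\|\cdot\|_0$ is, by definition, bounded between positive constants on each eigen-direction of the parabolic monodromy (here the non-expanding condition is essential). By conditions (2) and (3) in the definition of admissibility, one has two-sided polynomial-logarithmic comparisons of the form
\[
\|e_i(q)\|_h \;\leq\; M_1 |q|^{\alpha_i}(\log|q|)^n
\quad\text{and}\quad
\bigl\|\det(h(e_i,e_j))\bigr\|^{1/2}\;\geq\; M_2 |q|^{\sum_i \alpha_i}(\log|q|)^{-n}.
\]
Combining these with Cramer's rule bounds every matrix entry of the change-of-basis relating $\|\cdot\|_h$ to $\|\cdot\|_0$ by $C|q|^{-\beta}|\log|q||^N$ for some fixed exponents $\beta,N$. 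Taking $\log$ yields $|\log\varphi(q)|\leq C'(|\log|q||+\log|\log|q||)$, which is integrable against $\frac{dq\,d\bar q}{|q|^2(\log|q|)^2}$ near $q=0$. This shows $\log\varphi\in L^1(T^1C,\mu)$ and completes the argument.

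The main technical obstacle I anticipate is making the bookkeeping in the cusp charts precise: the admissible metric is only controlled on the graded pieces of the parabolic filtration, and off-diagonal terms (with respect to a filtration-adapted basis) must be estimated via the Gram determinant lower bound from condition (3) together with the upper bound from condition (2). Once this linear-algebra estimate is carried out carefully on each $U_c$, the integrability against the hyperbolic measure is automatic from the logarithmic growth, and the dominated averaging over the flow-interval $[0,1]$ proceeds as in the proof of Theorem \ref{thm:integrable}.
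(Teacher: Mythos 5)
The paper does not reprove this statement---it is quoted verbatim from \cite[Lem.~2.7]{ekmz}---so I am judging your argument on its own terms. Your overall strategy (compare the admissible norm to the constant norm of Theorem \ref{thm:integrable} and show that the pointwise conversion factor $\log\varphi$ is controlled) is the natural one. But the argument as written has a genuine gap, and in fact ends with a false integrability claim. The function $|\log|q||$ is \emph{not} integrable against $\frac{dq\,d\bar q}{|q|^2(\log|q|)^2}$ near $q=0$: in polar coordinates the measure is comparable to $\frac{dr\,d\theta}{r(\log r)^2}$, and
\[
\int_0^{1/2}\frac{|\log r|}{r(\log r)^2}\,dr=\int_0^{1/2}\frac{dr}{r\,|\log r|}=\bigl[\log|\log r|\bigr]
\]
diverges. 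Only the $\log|\log|q||$ term in your bound is integrable. So if your Cramer's-rule bookkeeping really only yields $|\log\varphi(q)|\leq C'(|\log|q||+\log|\log|q||)$ with a nonzero coefficient on $|\log|q||$, the proof does not close.

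The missing idea is precisely the cancellation encoded in condition (1) of admissibility. Because the metric extension equals the Deligne extension \emph{with its canonical parabolic structure}, the parabolic weight $\alpha_i$ of a local Deligne frame section $e_i$ coincides with the argument (divided by $2\pi$) of the corresponding monodromy eigenvalue; the non-expanding hypothesis is exactly what makes these weights real. Untwisting a flat multivalued section $v$ into the Deligne frame produces a factor $|q|^{-\alpha_i}$ (times powers of $\log q$ from the nilpotent part), which cancels the $|q|^{\alpha_i}$ in the upper bound of condition (2) and in the determinant lower bound of condition (3). Hence the ratio between $\|v\|_h$ and the constant norm $\|v\|_0$ of a flat section is bounded above and below by powers of $|\log|q||$ only, giving $|\log\varphi|=O(\log|\log|q||)$, which \emph{is} integrable against $\frac{du}{u^2}$ after the substitution $u=-\log|q|$. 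Your write-up, by estimating "every matrix entry of the change-of-basis" with an unspecified exponent $\beta$, discards exactly this cancellation. A secondary, fixable point: controlling $\sup_{t\in[0,1]}|\log\varphi(g_t x)|$ does not follow from $\log\varphi\in L^1$ and flow-invariance of $\mu$ alone; one should use that a unit-length geodesic segment changes the cusp height $y=-\log|q|/2\pi$ by at most a factor of $e$, so that $\sup_{t\in[0,1]}\log|\log|q(g_tx)||\leq \log|\log|q(x)||+O(1)$.
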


Admissible norms can then be used to compute Lyapunov exponents.

\subsection{Relation of Lyapunov exponents and parabolic degrees of subbundles}

We recall the main result of the paper by Eskin, Konstevich,Möller and Zorich. It relates the sum of the first $k$-Lyapunov exponents to the degree of a rank $k$ holomorphic subbundle.

\begin{thm}[\cite{ekmz}]
	\label{thm:ekmz}
	Let $(\mathcal{V}_C,\nabla)$ be a holomorphic flat bundle with non-expanding cusp monodromies. If $\mathcal{E}\subset \mathcal{V} $ is a holomorphic parabolic subbundle of rank $k$ of the Deligne extension $\mathcal{V}$ of $\mathcal{V}_C$, then 
	\begin{equation}
	\label{eq:bound}
	\sum_{i=1}^k \lambda_i(\mathcal{V}_C)\geq \frac{2 \pardeg(\mathcal{E})}{\deg(\Omega_{\overline{C}}^1(\log(\Delta))}.
	\end{equation}
	
\end{thm}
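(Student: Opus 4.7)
\textbf{Step 1: Reduction to the rank-one case.} The sum $\sum_{i=1}^k \lambda_i(\mathcal{V}_C)$ is the top Lyapunov exponent of the flat bundle $\wedge^k \mathcal{V}_C$ (this is a standard consequence of Oseledets' theorem applied to the induced cocycle on the exterior power). Moreover, by the first part of Proposition \ref{lem:pardeg}, the parabolic degree is preserved under exterior powers in the sense $\pardeg(\mathcal{E}) = \pardeg(\wedge^k \mathcal{E})$, and $\wedge^k \mathcal{E} \subset \wedge^k \mathcal{V}$ is a parabolic line subbundle. Non-expanding cusp monodromy and integrability pass to $\wedge^k \mathcal{V}_C$. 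So it suffices to prove the inequality when $k = 1$, i.e. for a parabolic line subbundle $\mathcal{L} \subset \mathcal{V}$.

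\textbf{Step 2: Set-up on the universal cover.} Fix an admissible metric $h$ on $\mathcal{V}_C$ (Lemma \ref{lem:existadmissible}); by Lemma \ref{lem:integrableadmissible} it is Oseledets-integrable. Let $\pi:\HH \to C$ be the universal cover and trivialize $\pi^*\mathcal{V}_C \cong \HH \times V$ using the flat connection, so that parallel transport is the identity in these coordinates. The line subbundle $\mathcal{L}$ pulls back to a holomorphic map $\widetilde{s}:\HH \to \PP(V)$; locally lift it to a nonvanishing holomorphic section $s:U\to V$. For $z\in\HH$, the admissible norm $\|s(z)\|_h$ and the (constant, flat) norm $\|s(z)\|_{\mathrm{flat}}$ agree up to the evolution along the flow, and the \emph{conformal factor}
\[
\varphi(z) \;=\; \log \frac{\|s(z)\|_{\mathrm{flat}}^2}{\|s(z)\|_h^2}
\]
is a well-defined function on $C$ (independent of the local trivialization of $\mathcal{L}$), playing the role of a potential for the curvature of $h_{|\mathcal{L}}$.

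\textbf{Step 3: Bounding $\lambda_1$ by the flat-norm growth of holomorphic sections.} Pick a generic point $x\in C$ with lift $\tilde{x}\in\HH$, and consider the parallel transport along the forward geodesic $g_t\tilde{x}$. For a unit vector $v\in \mathcal{L}_{\tilde x}$, $v$ has flat norm $1$, and after time $T$ it has flat norm $1$; its admissible norm equals $e^{-\varphi(g_T\tilde x)/2}$ times a bounded factor. By Cauchy--Schwarz, projecting $P_T(v)$ onto $\mathcal{L}_{g_T\tilde{x}}$ gives
\[
\|P_T(v)\|_h \;\geq\; e^{\frac{1}{2}(\varphi(g_T\tilde x) - \varphi(\tilde x))},
\]
so
\[
\lambda_1(\mathcal{V}_C) \;\geq\; \lim_{T\to\infty} \frac{1}{T} \log \|P_T(v)\|_h \;\geq\; \tfrac{1}{2}\lim_{T\to\infty} \tfrac{1}{T}\varphi(g_T\tilde x),
\]
whenever the limit exists.

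\textbf{Step 4: Averaging the conformal factor.} Rather than controlling $\varphi(g_T\tilde x)$ along a single geodesic (which is delicate near the cusps), average over all directions: the growth of $\varphi$ along the geodesic flow is controlled by averages of $\varphi$ over concentric hyperbolic disks $D_T$ centered at $\tilde x$ in $\HH$. A Green-type / circle-averaging identity on $\HH$ (using constant curvature $-4$) turns radial averages of $\varphi$ into integrals of $i\partial\bar\partial \varphi$. Concretely, $\lim_T T^{-1}\varphi(g_T\tilde x)$ is bounded below by $\lim_T \vol(D_T)^{-1}\int_{D_T} i\partial\bar\partial \varphi$, and $i\partial\bar\partial \varphi = -i\partial\bar\partial \log\det h$ is the curvature form of $h_{|\mathcal{L}}$.

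\textbf{Step 5: Chern--Weil in the parabolic setting.} By the second part of Proposition \ref{lem:pardeg}, the integrated curvature of an acceptable metric computes the parabolic degree:
\[
\frac{1}{2\pi}\int_C i\partial\bar\partial \log h \;=\; \pardeg(\mathcal{L}).
\]
Passing from $\HH$ to a fundamental domain in the averaging, and normalizing by the hyperbolic volume (which is $\pi \deg(\Omega^1_{\overline C}(\log\Delta))$ in our curvature convention, via Gauss--Bonnet), we obtain
\[
\tfrac{1}{2}\lim_{T\to\infty}\tfrac{1}{T}\varphi(g_T\tilde x) \;\geq\; \frac{2\pardeg(\mathcal{L})}{\deg(\Omega^1_{\overline C}(\log \Delta))},
\]
combining with Step 3 to yield the desired lower bound for $\lambda_1$, hence for $\sum_{i=1}^k \lambda_i$ after unwinding Step 1.

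\textbf{Main obstacle.} Steps 1--3 are essentially bookkeeping, and Step 5 is Chern--Weil. The delicate point is Step 4: justifying the interchange of the time limit along a flow line with the spatial Laplacian/curvature average, and controlling both near the cusps. The non-expanding cusp monodromy assumption and the admissibility conditions (bounding $\|s\|_h$ and $\|e\|_h$ by powers of $|q|$ times logarithmic factors) are precisely what is needed to dominate $\varphi$ and $\partial\bar\partial\varphi$ by integrable quantities, so that an $L^1$-ergodic theorem applies and boundary terms in Green's formula contribute only a subleading error. This integrability analysis near $\Delta$ is where the bulk of the technical work lies.
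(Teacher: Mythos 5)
Your overall architecture (reduce to a line subbundle via exterior powers, average over circles of growing radius, apply Green's formula, and finish with Chern--Weil for the parabolic degree via Proposition \ref{lem:pardeg}) matches the strategy used in this paper to prove Theorem \ref{thm:lyapexp}, of which the stated inequality is the special case obtained by discarding the nonnegative error term. But Step 3 contains a genuine gap that the rest of the outline does not repair. In the flat trivialization, $P_T(v)$ is a \emph{constant} vector, and it does not remain inside the (non-flat) subbundle $\mathcal{L}$; the $h$-norm of its orthogonal projection onto $\mathcal{L}_{g_T\tilde x}$ equals $\bigl|\langle v, s(g_T\tilde x)\rangle_h\bigr|/\|s(g_T\tilde x)\|_h$, which carries an ``angle'' factor that can be arbitrarily small, and is zero whenever the constant vector $v$ becomes $h$-orthogonal to the moving line. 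Your claimed bound $\|P_T(v)\|_h\ge e^{(\varphi(g_T\tilde x)-\varphi(\tilde x))/2}$ silently discards this factor and is false; equivalently, at the level of the seminorm \eqref{eq: seminorm} you have replaced the pairing term $\log|u_z(\omega_z)|$ by its \emph{upper} bound $\log\|\omega_z\|_{\mathrm{flat}}+C$, which is an inequality in the wrong direction for producing a lower bound on $\lambda_1$.

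The missing idea is to dualize and keep the pairing term. One works with a flat covector $u\in\mathcal{V}_c^{\vee}$ and a local holomorphic frame $\omega$ of $\mathcal{L}$, so that $z\mapsto u_z(\omega_z)$ is a holomorphic function; then $\log\|u_z\|_{\mathcal{L}}=\log|u_z(\omega_z)|-\log\|\omega_z\|_h\le\log\|u_z\|_h$ by Cauchy--Schwarz, and after the circle average and Green's formula the first summand is handled by subharmonicity: Poincar\'e--Lelong turns $\Delta\log|u_z(\omega_z)|$ into a nonnegative sum of point masses at the zeros of the pairing, which is exactly the error term $\err^{\mathcal{E}}(u)\ge 0$ of Theorem \ref{thm:lyapexp}, while the second summand yields $2\pardeg(\Xi_h(\mathcal{L}))/\deg\bigl(\Omega^1_{\overline C}(\log\Delta)\bigr)$. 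Without this dualization the angle term is the log of an $h$-pairing, which is not the modulus of a holomorphic function, is not subharmonic, and cannot be bounded below by averaging. The cusp estimates you flag in Step 4 are a real technical issue, but they are not the main obstruction; the essential role of the circle average is to convert the potentially very negative pairing contribution along individual geodesics into a nonnegative zero-counting term, and that step is absent from your outline.
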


%%%%%%%%%%%%%%%%%%%%%%%%%%%%%%%%%%%%%%%%%%%%%%%%%%%%%%%
\section{Refinement of \cite{ekmz}-inequality and higher dimensional analogues}
%%%%%%%%%%%%%%%%%%%%%%%%%%%%%%%%%%%%%%%%%%%%%%%%%%%%%%%%
\label{sec:mainresultlyap}

In this section we prove one of the main results, namely a refinement of the main theorem of \cite{ekmz}. 
In order to state the refinement of inequality \eqref{eq:bound}, we need to recall the definition of a seminorm on $\bigwedge^k \mathcal{V}_C^{\vee}$ introduced in \cite{ekmz} in the proof of Theorem \ref{thm:ekmz}.

Let $h$ be an admissible metric for $\mathcal{V}_C$.  By abuse of notation, we will denote also by $h$ the induced admissible metric on  dual exterior powers of $\mathcal{V}_C$. We denote by $|\cdot|_{h}$ the induced norm. Following \cite{ekmz}, we define a seminorm on $\bigwedge^k \mathcal{V}_C^{\vee}$. For every point $c\in C$, consider a small open set $c\in U_c\subset C$ and fix  a local basis $\omega_1,\dots,\omega_k$ of $H^0(U_c,\mathcal{E})$. We define locally the seminorm on $\bigwedge^k\mathcal{V}_C^{\vee}$ as
\begin{equation}
\label{eq: seminorm}
\|u\|_{\mathcal{E}}=\frac{|u(\omega_1(c)\wedge \dots \wedge \omega_k(c))|}{\|\omega_1(c)\wedge \dots \wedge \omega_k(c)\|_h},\quad   u\in\bigwedge^{k} \mathcal{V}^{\vee}_c\cong \left(\bigwedge^{k} \mathcal{V}_c\right)^{\vee}   . 
\end{equation}

The seminorm does not depend on the choice of the local frame $(\omega_i)$ of $\mathcal{E}$ since numerator and denominator are homogenous of the same degree in $\omega_i$'s.

The subvector bundle defined by the zero locus of the seminorm  will appear in the refined inequality that we want to prove. 
In this regard, let us pull-back the vector bundles to the universal covering $\pi:\HH\to C $ of $C$.
Since any local system on a simply connected space is trivial,  the pull-back of the flat bundle  $\mathcal{V}_C$ is trivial. Let us fix $c\in C$ and a lift $\tilde{c}\in \HH$. Via the canonical isomorphism $\mathcal{V}_c\cong \pi^*(\mathcal{V}_C)_{\tilde{c}}$ we fix the trivialization 
\[\pi^*(\mathcal{V}_C)\cong \HH\times \mathcal{V}_c.\]
The pull-back $\pi^*(\mathcal{E}_{|C})\subset \pi^*(\mathcal{V}_C)$ defines a $\rho_{\mathcal{V}_C}$-equivariant subbundle, where $\rho_{\mathcal{V}_C}$ is the representation associated to the flat bundle $\mathcal{V}_C$. This means that
\begin{equation}
\label{eq:equiv}
\pi^*(\mathcal{E}_{|C})_{\gamma(\tilde{c})}=\{\gamma(\tilde{c})\}\times \rho_{\mathcal{V}_C}(\gamma)(\mathcal{E}_c)\subset \HH\times \mathcal{V}_c
\end{equation}
for every $\gamma\in  \pi_1(C,c)$.

\begin{defn}
	\label{def:badlocus}
	We define the 'trouble making set', or bad locus,   
	\[\tbadE:=\ker\left(\bigwedge^k \mathcal{V}_C^{\vee}\to \bigwedge^k \mathcal{E}_{|C}^{\vee}\right)\subseteq \bigwedge^k \mathcal{V}_C^{\vee}\]
	as the kernel of the map induced by the inclusion $\mathcal{E}\subseteq \mathcal{V}$.\\
	Moreover, for any $u\in   \bigwedge^k \mathcal{V}_c^{\vee}=H^0(\HH,\pi^*(\bigwedge^k \mathcal{V}_C^{\vee}))$, the   'trouble making set' associated to $u$, or bad locus of $u$, is defined as
	\[\tbadE(u):=\{z\in \HH\colon \|u_z\|_{\pi^*(\mathcal{E})}=0\}\subseteq \HH.\]
\end{defn}

Notice that $\tbadE\subseteq \bigwedge^k \mathcal{V}_C^{\vee}$ is a holomorphic subbundle of corank $1$ and $\tbadE(u)$ can be identified with the intersection of the pull-back $\pi^*(\tbadE)$ to $\HH$ with the horizontal foliation induced by the flat bundle 
$\bigwedge^k \mathcal{V}_C^{\vee}$.
\begin{rem}
	\label{rem:geombad}
	
	To give an inclusion of a rank $k$ holomorphic sub-vector bundle $ \mathcal{E}_{|C}\subset \mathcal{V}_C $ over $C$ is equivalent to give a  section 
	\[s_{\mathcal{E}}:C\to \mathcal{G}_k(\mathcal{V}_C)\]
	of the Grassmanian bundle $\mathcal{G}_k(\mathcal{V}_C)$ of $k$-planes of $\mathcal{V}_C$.
	Via the above choice of trivialization $\pi^*(\mathcal{V}_C)\cong \HH\times \mathcal{V}_c$,
	 the pull-back section $\pi^*(s_{\mathcal{E}})$ defines a $\rho_{\mathcal{V}_C}$-equivariant holomorphic map
	\[s_{\mathcal{E}}:\HH\to \grass(k,\mathcal{V}_c)\hookrightarrow \PP(\bigwedge^k \mathcal{V}_c),\quad s_{\mathcal{E}}(z)=(\pi^*(\mathcal{E})_z\subset \mathcal{V}_c).\]
	The equivariance property is defined by the equality 
	\[s_{\mathcal{E}}(\gamma\cdot z)=\rho_{\mathcal{V}_C}(\gamma)\cdot s_{\mathcal{E}}(z),\quad \forall \gamma\in \pi_1(C,c).\]
	For any $u\in   \bigwedge^k \mathcal{V}_c^{\vee}\cong \left(\bigwedge^k \mathcal{V}_c\right)^{\vee}$, we can now rewrite its bad locus  as 
	\begin{equation}
	\label{eq:badlocusgeom}
	\tbadE(u)=\{z\in \HH\colon s_{\mathcal{E}}(z)\in \ker(u)\subset \PP(\bigwedge^k \mathcal{V}_c)\}.
	\end{equation}
	From this description it is clear that either $\tbad(u)=\HH$ if $\ker(u)\supseteq \Im(s_{\mathcal{E}})$ or it is a discrete subset given as the zero set of an holomorphic (non $\rho_{\mathcal{V}_C}$-equivariant) function on $\HH$.
\end{rem}

We observe that if the representation $\rho_{\mathcal{V}_C}$ satisfies an irreducibility property, then we can ensure that $\tbad(u)\not =\HH$ for every $u\in  \bigwedge^k \mathcal{V}_c^{\vee}$. We will need to use that $\tbad(u)$ is discrete in $\HH$ for every $u\in  \bigwedge^k \mathcal{V}_c^{\vee}$ for the proof of Theorem \ref{thm:conjcompact}.

\begin{lem}
	\label{lem:badlocusnottrivial}
If $\rho_{\mathcal{V}_C}$ is $k$-irreducible, meaning that $\bigwedge^k \rho_{\mathcal{V}_C}$ is irreducible, then $\tbad(u)\not =\HH$ for every $u\in  \bigwedge^k \mathcal{V}_c^{\vee}$.
\end{lem}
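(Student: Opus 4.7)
The plan is to argue by contradiction, using the geometric description of the bad locus from Remark \ref{rem:geombad} together with the $\rho_{\mathcal{V}_C}$-equivariance of the classifying map. Throughout, we may and will assume $u \neq 0$, since for $u = 0$ the set $\tbadE(u)$ is trivially all of $\HH$ (so the statement is implicitly for nonzero $u$).

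Suppose for contradiction that $\tbadE(u) = \HH$ for some nonzero $u \in \bigwedge^{k}\mathcal{V}_c^{\vee}$. By formula \eqref{eq:badlocusgeom}, this means $s_{\mathcal{E}}(z) \in \ker(u) \subset \PP(\bigwedge^{k}\mathcal{V}_c)$ for every $z \in \HH$, i.e.\ the whole image of $s_{\mathcal{E}}$ lies in the projective hyperplane cut out by $u$. Lifting from $\PP(\bigwedge^{k}\mathcal{V}_c)$ to $\bigwedge^{k}\mathcal{V}_c$, define
\begin{equation*}
W \;:=\; \Span\bigl\{\, v \in \bigwedge^{k}\mathcal{V}_c \,:\, [v] \in \Im(s_{\mathcal{E}}) \,\bigr\} \;\subseteq\; \ker(u).
\end{equation*}
Since $s_{\mathcal{E}}$ has nonempty image, $W \neq 0$, and since $u \neq 0$, $W$ is a proper subspace of $\bigwedge^{k}\mathcal{V}_c$.

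The key step is to observe that $W$ is stable under the representation $\bigwedge^{k}\rho_{\mathcal{V}_C}$. Indeed, by the equivariance relation $s_{\mathcal{E}}(\gamma\cdot z) = \rho_{\mathcal{V}_C}(\gamma)\cdot s_{\mathcal{E}}(z)$ recalled in Remark \ref{rem:geombad}, which via the Plücker embedding corresponds to the action of $\bigwedge^{k}\rho_{\mathcal{V}_C}(\gamma)$ on $\PP(\bigwedge^{k}\mathcal{V}_c)$, the set $\Im(s_{\mathcal{E}})$ is preserved by every $\bigwedge^{k}\rho_{\mathcal{V}_C}(\gamma)$ for $\gamma \in \pi_1(C,c)$. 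Hence so is its linear span $W$.

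Finally, the hypothesis that $\bigwedge^{k}\rho_{\mathcal{V}_C}$ is irreducible forces $W$ to be either $0$ or all of $\bigwedge^{k}\mathcal{V}_c$, contradicting the previous paragraph. I do not expect any real obstacle: the only point requiring care is the compatibility between the $\rho_{\mathcal{V}_C}$-equivariance of $s_{\mathcal{E}}$ viewed into the Grassmannian and the $\bigwedge^{k}\rho_{\mathcal{V}_C}$-equivariance after composing with the Plücker embedding, which is automatic from the naturality of the exterior power.
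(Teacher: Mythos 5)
Your argument is correct and is essentially identical to the paper's own proof: both pass to the geometric description of $\tbadE(u)$ from Remark \ref{rem:geombad}, observe that $\tbadE(u)=\HH$ forces $\Im(s_{\mathcal{E}})\subseteq\ker(u)$, and then use the $\bigwedge^k\rho_{\mathcal{V}_C}$-equivariance of $s_{\mathcal{E}}$ to conclude that the linear span of the image is a nonzero proper invariant subspace, contradicting $k$-irreducibility. Your explicit remark about excluding $u=0$ is a harmless extra precision not spelled out in the paper.
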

\begin{proof}
Using the point of view of Remark \ref*{rem:geombad}, we have that $\tbad(u)=\HH$ if and only if $\Im(s_{\mathcal{E}})\subset \ker(u)$. This implies that the linear span of $\Im(s_{\mathcal{E}})$ is contained in the hyperplane $\ker(u)$. Since $s_{\mathcal{E}}$ is $\bigwedge^k \rho_{\mathcal{V}_C}$-equivariant,  its linear span is a $\bigwedge^k \rho_{\mathcal{V}_C}$-invariant subspace of $\bigwedge^k \mathcal{V}_c$. This gives a contradiction, since by assumption $\bigwedge^k \rho_{\mathcal{V}_C}$ is irreducible.
\end{proof}

Notice that parallel transport on the trivial bundle given by the pull-back of $\mathcal{V}^{\vee}$ to $T^1\HH$ is simply given, after the choice of a trivialization, by the constant transport
\[G_t:T^1\HH\times\mathcal{V}^{\vee}_c\to T^1\HH\times\mathcal{V}^{\vee}_c,\quad G_t(x,u)=(g_t(x),u).\]
From now on we fix a choice of a trivialization and we denote a point in the pull-back bundle by $u_z:=(z,u)\in T^1\HH\times \mathcal{V}^{\vee}_c$  and a lift of $c\in C$ by $\tilde{c}\in \HH$ .

We will state now a refinement of the main Theorem of \cite{ekmz}.
Let  the notation be as above. In particular $\mathcal{V}_C$ is a  holomorphic flat bundle over $C$ defined by a representation $\rho_{\mathcal{V}_C}$. 
\begin{thm}
	\label{thm:lyapexp}
	For any  holomorphic subbundle   $\mathcal{E}\subset \mathcal{V}_C $ of rank $k$ over $C$, then 
	\begin{equation}
	\label{eq:bounderr}
	\sum_{i=1}^k \lambda_i(\mathcal{V}_C)\geq  \frac{2\pardeg(\Xi_h(\mathcal{E}))}{\deg(\Omega_{\overline{C}}^1(\log(\Delta))}  + \pi\lim_{T\to \infty}  \frac{1}{T}\int_{0}^{T}\frac{\sharp \{\tbad(u)\cap D_t(\tilde{c})\}}{\vol(D_t(\tilde{c}))} \dd t
	\end{equation}
	for  almost any $c\in C$ and   Lebesgue almost any $u\in \bigwedge^{k} \mathcal{V}^{\vee}_c $.  Here $D_t(\tilde{c})$ denotes the hyperbolic ball of radius $t$ in $\HH$ with center $\tilde{c}$. 
\end{thm}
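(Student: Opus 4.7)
My plan is to refine the proof of Theorem \ref{thm:ekmz} by tracking the Poincaré--Lelong contribution coming from the zero set of the seminorm $\|\cdot\|_{\mathcal{E}}$, which was simply bounded away by subharmonicity in the original argument; the refinement amounts to making this contribution explicit.

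First, by Oseledets' theorem applied to the dual exterior cocycle on $\bigwedge^k\mathcal{V}_C^{\vee}$ equipped with an admissible norm (integrable by Lemma \ref{lem:integrableadmissible}) and by the symmetry of the Lyapunov spectrum (Remark \ref{rem:symmspec}), for Lebesgue-a.e.\ $u \in \bigwedge^k\mathcal{V}_c^{\vee}$ and a.e.\ direction $\theta \in T^1_c C$, the growth rate $\tfrac{1}{T}\log \|G_T^{\vee}u\|_h$ along the geodesic through $(c,\theta)$ equals $\sum_{i=1}^k \lambda_i(\mathcal{V}_C)$. Using the pointwise comparison $\|\cdot\|_{\mathcal{E}} \leq \|\cdot\|_h$ (with constants controlled near the cusps by admissibility), one obtains
\[\sum_{i=1}^k \lambda_i(\mathcal{V}_C) \geq \limsup_{T\to\infty} \tfrac{1}{T}\log\|u_{\tilde g_T(\tilde c,\theta)}\|_{\pi^*\mathcal{E}},\]
after pulling back to $\HH$ where the cocycle becomes constant under our choice of trivialization. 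By Jensen's inequality, the right-hand side is bounded below by the $\liminf$ of the hyperbolic circle averages $F(t) := |S_t(\tilde c)|^{-1}\int_{S_t(\tilde c)}\log\|u_z\|_{\pi^*\mathcal{E}}\,d\sigma(z)$ suitably normalized.

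The core of the argument combines the hyperbolic Green formula with the Poincaré--Lelong formula. Writing the seminorm locally as $|f(z)|/\|\omega_1\wedge\dots\wedge\omega_k\|_h(z)$ with $f$ a holomorphic function whose zero set on a local chart of $\HH$ is precisely $\tbad(u)$, we have the distributional identity
\[\tfrac{i}{\pi}\partial\bar\partial\log\|u_z\|_{\pi^*\mathcal{E}} \;=\; -\tfrac{1}{\pi}R_h(\det(\pi^*\mathcal{E})) \;+\; [\tbad(u)],\]
with $[\tbad(u)]$ the integration current on the zero set, which is discrete for Lebesgue-a.e.\ $u$ (by Remark \ref{rem:geombad}, the condition $\tbad(u)=\HH$ is the Lebesgue-null condition that $\ker u$ contains the linear span of $\Im s_{\mathcal{E}}$). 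Integrating $F'(t)$ over $[0,T]$ via Green's formula on $\HH$ and passing $T\to\infty$, the smooth curvature term equidistributes on $C$ (using that spherical averages of $\rho$-equivariant pullbacks of sections of $\det\mathcal{E}$ converge to spatial averages on $C$) and, via the curvature--parabolic-degree identity of Proposition \ref{lem:pardeg}, yields exactly $2\pardeg(\Xi_h(\mathcal{E}))/\deg(\Omega^1_{\overline{C}}(\log\Delta))$ after normalizing by $\vol(D_t)$. The singular term contributes $\pi\cdot \sharp\{\tbad(u)\cap D_t(\tilde c)\}/\vol(D_t(\tilde c))$, which integrated in $t$ and Cesàro-averaged produces exactly the error term $\err^{\mathcal{E}}(u)$ of the statement.

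The main obstacle is the rigorous justification of these limit exchanges near the cusps of $C$, where $\log\|u_z\|_{\pi^*\mathcal{E}}$ can grow rapidly along a lift of a cusp neighborhood to $\HH$. Here the admissibility of $h$ is essential: its growth conditions control the seminorm up to polynomial factors in $|\log|q||$ around each cusp (negligible after dividing by $T$), while the acceptability of $h$ guarantees that the curvature integral over $C$ computes the parabolic degree without boundary corrections, as already used in \cite{ekmz}. Combined with Birkhoff's ergodic theorem for the geodesic flow on $T^1 C$ (to pass between radial averages in $\HH$ and space averages on $T^1 C$) and the $\rho$-equivariance \eqref{eq:equiv} of $\pi^*\mathcal{E}$, these ingredients close the argument.
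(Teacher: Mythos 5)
Your proposal is correct and follows essentially the same route as the paper: reduce to the admissible norm via the Cauchy--Schwarz comparison $\|\cdot\|_{\mathcal{E}}\leq\|\cdot\|_h$, average over circles, apply the hyperbolic Green formula, and split the Laplacian of $\log\|u_z\|_{\mathcal{E}}$ into the smooth curvature part (which equidistributes to the parabolic degree via Proposition \ref{lem:pardeg}) and the Poincar\'e--Lelong counting current on $\tbad(u)$ (which yields the error term). The only cosmetic differences are that you package the two contributions as a single distributional identity rather than splitting the logarithm into numerator and denominator, and your invocation of Jensen's inequality is unnecessary where the circle average of $\log\|G_Tr_\theta u\|_h$ already equals the Lyapunov exponent for a.e.\ $u$.
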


% We will follow the proof made in \cite{ekmz} of Theorem \ref{thm:ekmz} but we will find a better bound. This is attained by keeping terms that in the original proof were forgotten since useless for the bound they wanted to prove. We want to highlight that the inequality that we find should be an equality, but at this time we are still unable to prove it. 

\begin{proof}	
	
	First of all  note that it suffices to prove the theorem in the case where $\mathcal{E}$ is a line bundle. Indeed if it is not the case, consider the line bundle $\mathcal{L}:=\bigwedge^k \mathcal{E}\subset \bigwedge^k \mathcal{V}_C$. Then $\pardeg(\mathcal{L})=\pardeg(\mathcal{E})$ by Lemma \ref{lem:pardeg} and the top Lyapunov exponent of $\mathcal{L}$ is just the sum $\sum_{i=1}^k \lambda_i(\mathcal{V}_C)$ of the first $k$ exponents.
	Hence from now on $\mathcal{E}=\mathcal{L}$ is a sub-line bundle of $\mathcal{V}_C$.
	Moreover, since the Lyapunov spectrum is symmetric (see Remark \ref{rem:symmspec}), the Lyapunov spectrum of the dual local system $\mathbb{V}^{\vee}$ is the same as the one of $\mathbb{V}$. We will then focus on computing the top Lyapunov exponent $\lambda_1(\mathcal{V}_C^{\vee})=\lambda_1(\mathcal{V}_C)$.
	
	Note that the  Cauchy-Schwartz inequality implies that the admissible norm $||\cdot||_h$ induced by $h$ is greater or equal than the $\mathcal{L}$-seminorm $||\cdot||_{\mathcal{L}}$ defined in \eqref{eq: seminorm}. Indeed, for any $c\in C$ and any $u\in \mathcal{V}_c^{\vee}$ it holds 
	\begin{equation}
	\label{eq:cauchy-scwhartz}
	\|u\|_{\mathcal{L}}=\frac{|u(\omega_c)|}{\|\omega_c\|_h}\leq \frac{||u||_h\||\omega_c||_h}{\|\omega_c\|_h}=||u||_h
	\end{equation}
	where $\omega$ is a local non-zero section of $\mathcal{L}$ near $c\in C$.\\
	By Lemma \ref{lem:integrableadmissible}, the norm induced by the admissible metric $h$ is integrable, meaning that it computes the Lyapunov exponents. 
	We can then write the top Lyapunov exponent as
	\[\lambda_1(\mathcal{V}_C)=\lim_{t\to \infty} \frac{1}{t}(\log||G_t(u)||_h)\]
	for almost any $(c,v)\in T^1(C)$ and Lebesgue almost any $u\in T^1\mathcal{V}_{(c,v)}^{\vee}$. Here we denoted by $T^1\mathcal{V}^{\vee}$ the pull-back of $\mathcal{V}^{\vee}$ to $T^1(C)$.\\
	We apply the usual chain of equalities as in \cite{ekz} or \cite{ekmz} to rewrite the expression above. We first average over the circle  and then use the above Cauchy–Schwarz inequality \eqref{eq:cauchy-scwhartz}. After that we take the integral of the derivative and then we apply a version of Green's formula (\cite[Lemma 3.6]{ekz}) for the hyperbolic disc $D_t(\tilde{c})$ centered in $\tilde{c}\in \HH$ with hyperbolic radius $t$ (here the term $\log\|G_t r_{\theta} u\|_{\mathcal{L}}$ is considered in the distributional sense). Finally we split the integral using the definition of the $||\cdot||_{\mathcal{L}}$-seminorm and rewrite directly  the second term of the expression in terms of the degree of $\mathcal{L}$ as in \cite{ekmz}, where the main ingredient is given by equidistribution of discs of large radius which allows to replace the limit of integrals over balls with an integral over $C$.
	\begin{align*}
		%\begin{split}
		\lambda_1(\mathcal{V}_C)&=\lim_{T\to \infty} \frac{1}{T}\frac{1}{2\pi}\int_{0}^{2\pi} \log\|G_T r_{\theta} u\|_h\dd\theta\geq \lim_{T\to \infty} \frac{1}{T}\frac{1}{2\pi}\int_{0}^{2\pi} \log\|G_T r_{\theta} u\|_{\mathcal{L}}\dd\theta\\
		&= \lim_{T\to \infty} \frac{1}{T}\frac{1}{2\pi}\int_0^T \frac{\dd}{\dd t}\int_{0}^{2\pi} \log\|G_T r_{\theta} u\|_{\mathcal{L}}\dd\theta\\
		&= \lim_{T\to \infty} \frac{1}{T}\int_{0}^{T} \frac{\tanh (t)}{2\vol(D_t(\tilde{c}))} \int_{D_t(\tilde{c})} \Delta_{\hyp} \log\|u_z\|_{\mathcal{L}}\dd g_{\hyp}(z)\ \dd t\\
		&= \lim_{T\to \infty} \frac{1}{T}\int_{0}^{T} \frac{\tanh (t)}{2\vol(D_t(\tilde{c}))} \int_{D_t(\tilde{c})} \Delta_{\hyp} \log|u_z(\omega_z)|\dd g_{\hyp}(z)\ \dd t\ +\\
		&- \lim_{T\to \infty} \frac{1}{T}\int_{0}^{T} \frac{\tanh (t)}{2\vol(D_t(\tilde{c}))} \int_{D_t(\tilde{c})} \Delta_{\hyp} \log\|\omega_z\|_{h}\dd g_{\hyp}(z)\ \dd t\\
		&=\lim_{T\to \infty} \frac{1}{T}\int_{0}^{T} \frac{\tanh (t)}{2\vol(D_t(\tilde{c}))} \int_{D_t(\tilde{c})} \Delta_{\hyp} \log|u_z(\omega_z)|\dd g_{\hyp}(z)\ \dd t\ +\\
		&+\frac{2\deg_{par}(\Xi_h(\mathcal{L}))}{\deg(\Omega_{\overline{C}}^1(\log(\Delta))}
		%\end{split}
	\end{align*}
	Note that we could split the $\log$ only in the fourth line, since  the Laplacian makes the numerator and the denominator of the $\mathcal{L}$-norm become well-defined functions.\\
	We need to treat the first summand. We write explicitly the hyperbolic Laplacian and the hyperbolic area form and simplify. We then use that the integral over the ball of the distribution $\overline{\partial}\partial \log(|u_z(\omega_z)|)$ gives  the number of zeros of  the holomorphic function $u_z(\omega_z)$ inside the ball times $\pi/ i$ (cf. \cite[Poincaré-Lelong Equation]{gh}). The last equality follows since $\tanh(t)$ is bounded and asymptotic to $1$ for large $t$.
	
	\begin{equation*}
		\begin{split}	
			&\lim_{T\to \infty} \frac{1}{T}\int_{0}^{T} \frac{\tanh (t)}{2\vol(D_t(\tilde{c}))} \int_{D_t(\tilde{c})} \Delta_{\hyp} \log|u_z(\omega_z)|\dd g_{\hyp}(z)\ \dd t\\
			&=\lim_{T\to \infty} \frac{1}{T}\int_{0}^{T} \frac{\tanh (t)}{2\vol(D_t(\tilde{c}))} \left(\int_{D_t(\tilde{c})} 4\frac{\partial^2}{\partial z\partial \overline{z}} \log|u_z(\omega_z)| \frac{i}{2}|\dd z|^2\right) \dd t\\ 	
			&= i\lim_{T\to \infty} \frac{1}{T}\int_{0}^{T} \frac{\tanh (t)}{\vol(D_t(\tilde{c}))} \left(\int_{D_t(\tilde{c})}  \overline{\partial}\partial  \log|u_z(\omega_z)|\right) \dd t\\
			&= \pi  \lim_{T\to \infty} \frac{1}{T}\int_{0}^{T} \frac{\tanh (t)}{\vol(D_t(\tilde{c}))}\sharp\{ z\in D_t(\tilde{c})\colon u_z(\omega_z)=0\}  \dd t\\
			&= \pi  \lim_{T\to \infty} \frac{1}{T}\int_{0}^{T} \frac{\sharp\{ z\in D_t(\tilde{c})\colon u_z(\omega_z)=0\} }{\vol(D_t(\tilde{c}))} \dd t
		\end{split}
	\end{equation*}
	
\end{proof}

\begin{defn}
	We define the second term in formula \ref{eq:bounderr} as the \emph{error term}
	\[\err^{\mathcal{E}}(u):=\pi\lim_{T\to \infty}  \frac{1}{T}\int_{0}^{T}\frac{\sharp \{\tbadE(u)\cap D_t(\tilde{c})\}}{\vol(D_t(\tilde{c}))} \dd t\]
	for $u\in \bigwedge^k \mathcal{V}_c^{\vee}$.
	%	Note that for a fixed $\mathcal{E}$, the error term is $\err^{\mathcal{E}}(u)$ is constant for almost any $u\in \bigwedge^k \mathcal{V}^{\vee} $.
\end{defn}

\begin{rem}
	The error term $\err^{\mathcal{E}}(u)$ is defined as the limit of the mean of a normalized counting function. This limit exists since we showed it is the difference between the Lyapunov exponents and the parabolic degree. Notice that if not only the limit of the mean of the counting function, but also the limit of the counting function itself $\displaystyle \lim_{t\to \infty} \frac{\sharp \{\tbadE(u)\cap D_t(\tilde{c})\}}{\vol(D_t(\tilde{c}))}$ exists, then the error term is equal to this limit. We conjecture that this is the case.  Note moreover that $\err^{\mathcal{E}}(u)=\err^{\mathcal{E}}(\lambda u)$, for any $\lambda\in \CC^*$. Hence the error term defines a function
	\[\err^{\mathcal{E}}:\PP(\bigwedge^k \mathcal{V}_C^{\vee}) )\to \RR^+. \]
\end{rem}

\subsection{Higher dimensional analogues}

Using the same argument as in the proof of Theorem \ref{thm:lyapexp}, one can prove analogous statements in the case where the base manifold is a ball quotient or an affine invariant manifold of a stratum of abelian differentials. In the first case, since ball quotients are locally symmetric spaces of rank 1, the geodesic flow is ergodic and so the Oseledets  multiplicative ergodic theorem can be applied. In the second case there is a natural $\SL_2(\RR)$ ergodic action on affine invariant manifolds. We omit the details of the proofs since the computations are analogous to the the ones of the last theorem.

The next Proposition is a generalization of the main result of \cite{martinandre}.

\begin{prop}
	Let $\mathcal{V}$ be a weight one variation of Hodge structures over a ball quotient $B=\mathbb{B}^n/\Gamma$ of constant curvature $-4$, where $\Gamma$ is a torsionfree lattice in $\PU(1,n)$. Let $\overline{B} $ be a smooth compactification of $B$ with normal crossing boundary divisor $\Delta$. Let $\mathcal{E}\subset \mathcal{V}$ be a holomorphic sub-vector bundle of rank $k$. Then
	\begin{align*}
		\sum_{i=1}^k \lambda_i&\geq \frac{(n+1)c_1(\Xi_h(\mathcal{E}))\cdot c_1(\omega_{\overline{B}})^{n-1}}{c_1(\omega_{\overline{B}})^{n}}+\\
		& \quad +\frac{2i}{2n(n-1)!}\lim_{T\to \infty}\frac{1}{T}\int_0^T \frac{1}{\vol(\mathbb{B}^n_t)} \int_{\mathbb{B}^n_t(\tilde{c})}\partial\overline{\partial}(\log|u(s_z)|)\wedge \omega_{\text{hyp}}^{n-1} \dd g_{\text{hyp}}(z)\dd t
	\end{align*}
	for almost any $c\in B$ and Lebesgue almost any vector  $u\in \bigwedge^k \mathcal{V}^{\vee}_c$. Here   $s$ is a local generator of $\bigwedge^k \mathcal{E}$ and $\mathbb{B}^n_t(\tilde{c})$ is the hyperbolic ball of radius $t$ around the lift $\tilde{c}\in \mathbb{B}^n$ of $c\in B$. Finally $\omega_{\overline{B}}=\bigwedge^n \Omega^1_{\overline{B}}(\log (\Delta))$ is the log-canonical bundle and $\Xi_h(\mathcal{E})$ is the metric extension of $\mathcal{E}$ with respect to the Hodge metric $h$.
\end{prop}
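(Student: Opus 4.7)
The plan is to mimic the argument of Theorem~\ref{thm:lyapexp}, replacing the hyperbolic disc by the complex hyperbolic ball $\mathbb{B}^n$ and the planar Green's formula by its analogue on a rank one symmetric space of complex dimension $n$. First I would reduce to the line bundle case by passing to $\bigwedge^k \mathcal{V}$ and $\bigwedge^k \mathcal{E}$: the sum of the first $k$ exponents of $\mathcal{V}$ equals the top exponent of $\bigwedge^k \mathcal{V}$, and $c_1(\Xi_h(\bigwedge^k \mathcal{E}))=c_1(\Xi_h(\mathcal{E}))$ by the straightforward higher dimensional analogue of Proposition~\ref{lem:pardeg}. Replacing $\mathcal{V}$ by $\mathcal{V}^{\vee}$ (which leaves the top exponent invariant by the symmetry argument of Remark~\ref{rem:symmspec}, still available since the geodesic flow on $T^1B$ is reversible), I would introduce on $\mathcal{V}^{\vee}$ the seminorm $\|u\|_{\mathcal{E}}=|u(\omega_c)|/\|\omega_c\|_h$ with $\omega$ a local non-vanishing holomorphic section of $\bigwedge^k \mathcal{E}$; Cauchy--Schwarz then gives $\|\cdot\|_{\mathcal{E}}\leq\|\cdot\|_h$ exactly as in \eqref{eq:cauchy-scwhartz}. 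The Hodge metric $h$ is admissible across the normal crossing divisor $\Delta$ by Simpson's asymptotic estimates \cite{sim90} together with the nilpotent orbit theorem, hence integrable in the sense of Oseledets for the geodesic flow on $T^1B$ and suitable to compute Lyapunov exponents.

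Next I would pull the whole picture back to the universal cover $\mathbb{B}^n$, where $\bigwedge^k\mathcal{E}$ is classified by a $\bigwedge^k \rho$-equivariant holomorphic map $s_{\mathcal{E}}\colon \mathbb{B}^n\to \PP(\bigwedge^k\mathcal{V}_c)$, and fix a lift $\tilde c\in\mathbb{B}^n$ of $c$. The chain of manipulations in Theorem~\ref{thm:lyapexp} now begins with
\[ \lambda_1(\mathcal{V})\,=\,\lim_{T\to\infty}\frac{1}{T}\int_{S^{2n-1}}\log\|G_T r_v u\|_h\, dv \,\geq\, \lim_{T\to\infty}\frac{1}{T}\int_{S^{2n-1}}\log\|G_T r_v u\|_{\mathcal{E}}\, dv, \]
where the sphere $S^{2n-1}$ parametrizes geodesic rays at $\tilde c$. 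Writing the right hand side as an integral of its $t$-derivative and applying the Green identity on the K\"ahler manifold $(\mathbb{B}^n,\omega_{\hyp})$ (the rank one higher dimensional analogue of \cite[Lem.~3.6]{ekz}) converts the spherical integral into an expression of the shape
\[ \lim_{T\to\infty}\frac{1}{T}\int_0^T \frac{c_n}{\vol(\mathbb{B}^n_t)}\int_{\mathbb{B}^n_t(\tilde c)}\partial\overline{\partial}\log\|u_z\|_{\mathcal{E}}\wedge \omega_{\hyp}^{n-1}\,\dd g_{\hyp}\,\dd t \]
for explicit universal constants $c_n$ depending only on $n$ and on the normalization of $\omega_{\hyp}$.

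I would then split $\log\|u_z\|_{\mathcal{E}}=\log|u(s_z)|-\log\|s_z\|_h$. For the second summand, the Chern--Weil formula identifies $-\partial\overline{\partial}\log\|s_z\|_h$ with the curvature of the Hodge metric on $\bigwedge^k\mathcal{E}$; equidistribution of large complex hyperbolic balls in $B$ converts its normalized integral to a global intersection on $\overline B$, and a careful tracking of the Einstein constant of $\omega_{\hyp}$ in complex dimension $n$ (which is what brings in the combinatorial factor $n+1$) produces the ratio $(n+1)\,c_1(\Xi_h(\mathcal{E}))\cdot c_1(\omega_{\overline B})^{n-1}/c_1(\omega_{\overline B})^n$. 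This is essentially the argument of \cite{martinandre} extended from sub-line to sub-bundles of arbitrary rank. For the first summand, the higher dimensional Poincar\'e--Lelong formula represents $\partial\overline{\partial}\log|u(s_z)|$ as a positive $(1,1)$-current supported on the analytic hypersurface $\{u(s_z)=0\}\subset \mathbb{B}^n$; in contrast to the curve case, this current cannot be rewritten as a discrete counting function and its pairing with $\omega_{\hyp}^{n-1}$ gives precisely the error integral appearing in the statement.

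The main obstacle I expect is the bookkeeping of constants in the higher dimensional Green's identity and in the Chern--Weil step, so that the prefactor $(n+1)$ on the intersection pairing and the prefactor $2i/(2n(n-1)!)$ on the error term come out with the correct values: complex hyperbolic space of dimension $n$ admits several standard normalizations, and these must be threaded consistently through the definition of the Hodge metric, the curvature computation, and the averaging over the sphere $S^{2n-1}$. The analytic subtleties near $\Delta$---justifying integration by parts for $\log\|u_z\|_{\mathcal{E}}$ (a quasi-plurisubharmonic function with mild logarithmic singularities) and equidistribution of balls in the presence of these singularities---are of the same type handled in \cite{ekmz,martinandre} and are controlled by the admissibility of the Hodge metric.
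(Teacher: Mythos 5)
Your proposal is correct and follows essentially the same route as the paper, which in fact omits the proof entirely, stating only that the computations are the same as in Theorem \ref{thm:lyapexp} transported to the rank one symmetric space $\mathbb{B}^n$ (reduction to $\bigwedge^k$, the $\mathcal{E}$-seminorm and Cauchy--Schwarz, Green's identity on balls, the split of the logarithm into a Chern--Weil/equidistribution term and a Poincar\'e--Lelong current). The one small point where you diverge is the justification of integrability of the Hodge norm: the paper deliberately restricts to weight one variations of Hodge structures because it takes this from Kappes--M\"oller \cite{martinandre} (via the geometry of the period domain and Royden's theorem) and explicitly notes that the admissibility-implies-integrability lemmas have \emph{not} been extended to ball quotients, whereas you derive it from Simpson's asymptotics, which is the route the paper only conjectures would work for general flat bundles.
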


In the last proposition we only considered weight one variation of Hodge structures instead of general flat bundles  since the integrability of the Hodge norm was proven in this case in \cite{martinandre} using the geometry of the period domain and Royden's theorem. The statements of Lemma \ref{lem:existadmissible} and Lemma \ref{lem:integrableadmissible} should extend to the case of ball quotients, and in this case the above result can be generalized to any flat vector bundle.

The next proposition is about the case of affine invariant manifolds. It is a generalization of \cite{ekz}.

\begin{prop}
	Let $\mathcal{M}_1$	be an affine invariant manifold in some stratum of abelian differentials. Let $\mathcal{H}$ be the Hodge bundle and $\mathcal{E}\subset \mathcal{H}$ be a holomorphic sub-vector bundle of rank $k$. Then the sum of the top $k$ Lyapunov exponents associated to the ergodic probability measure $\nu_1$ corresponding to $\mathcal{M}_1$ satisfy the bound:
	\[\sum_{i=1}^k \lambda_i\geq \int_{\mathcal{M}_1}\Delta(\log||\omega||_h)\dd \nu_1+\err^{\mathcal{E}}(u)\]
	where the Laplacian is the leafwise Laplacian along Teichmueller disks  and $||\omega||_h$ is the Hodge norm of a local section $\omega$ of $\bigwedge^k \mathcal{E}$.  Finally the error term is considered along  any Teichmueller disk passing through the base point of $u$, for almost any $c\in \mathcal{M}_1$ and Lebesgue almost any $u\in \bigwedge^k \mathcal{H}_c^{\vee}$.
\end{prop}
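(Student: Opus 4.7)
The plan is to run the same chain of inequalities as in the proof of Theorem~\ref{thm:lyapexp}, replacing hyperbolic averages on $\HH$ by hyperbolic averages along Teichm\"uller disks in $\mathcal{M}_1$. First I reduce to the line bundle case by passing to $\mathcal{L} := \bigwedge^k \mathcal{E} \subseteq \bigwedge^k \mathcal{H}$, so that $\sum_{i=1}^k \lambda_i$ equals the top Lyapunov exponent of $\bigwedge^k \mathcal{H}$; by the symmetry of the Kontsevich--Zorich spectrum under the Teichm\"uller flow (Remark~\ref{rem:symmspec} applied to the $\SL_2(\RR)$-action on $\mathcal{M}_1$), this in turn equals $\lambda_1(\bigwedge^k \mathcal{H}^{\vee})$, which is the quantity I actually compute.

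Fixing an admissible Hodge norm $\|\cdot\|_h$ on $\bigwedge^k \mathcal{H}^{\vee}$ and the associated $\mathcal{L}$-seminorm $\|\cdot\|_{\mathcal{L}}$ from \eqref{eq: seminorm}, I average $\log\|G_T u\|_h$ over the circle subgroup of $\SL_2(\RR)$ acting on the Teichm\"uller disk through $c\in \mathcal{M}_1$, apply the Cauchy--Schwarz inequality \eqref{eq:cauchy-scwhartz} to pass to $\|\cdot\|_{\mathcal{L}}$, and rewrite the circle average as the integral from $0$ to $T$ of its $t$-derivative. Green's formula, exactly as in \cite[Lem.~3.6]{ekz}, converts this into a time average of integrals of the leafwise hyperbolic Laplacian $\Delta_{\hyp}\log\|u_z\|_{\mathcal{L}}$ over hyperbolic disks $D_t(\tilde{c})$ in the lifted Teichm\"uller disk. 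Splitting $\log\|u_z\|_{\mathcal{L}} = \log|u_z(\omega_z)| - \log\|\omega_z\|_h$ as in the proof of Theorem~\ref{thm:lyapexp}, the numerator piece contributes $\err^{\mathcal{E}}(u)$ by the Poincar\'e--Lelong formula applied to the holomorphic function $u_z(\omega_z)$ on the Teichm\"uller disk, since its zero set is exactly $\tbadE(u)$ restricted to this disk.

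The denominator piece requires the affine-invariant-manifold input. The integrand $\Delta_{\hyp}\log\|\omega_z\|_h$ is a well-defined global function on $\mathcal{M}_1$, because a change of local frame $\omega \mapsto f\omega$ alters $\log\|\omega\|_h$ by $\log|f|$, which is pluriharmonic on each Teichm\"uller disk and hence killed by the leafwise Laplacian. Ergodicity of $\nu_1$ under the $\SL_2(\RR)$-action, combined with the Nevo-type equidistribution of expanding hyperbolic balls along $\SL_2(\RR)$-orbits used in \cite{ekz}, implies that
\[
\lim_{T\to\infty}\frac{1}{T}\int_0^T \frac{1}{\vol(D_t(\tilde{c}))}\int_{D_t(\tilde{c})}\Delta_{\hyp}\log\|\omega_z\|_h\,\dd g_{\hyp}(z)\,\dd t = \int_{\mathcal{M}_1}\Delta(\log\|\omega\|_h)\,\dd \nu_1
\]
for $\nu_1$-almost every $c$, producing the first summand on the right-hand side. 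The main obstacle is verifying the $\nu_1$-integrability of $\Delta_{\hyp}\log\|\omega\|_h$ and justifying the pointwise hyperbolic equidistribution on $\mathcal{M}_1$; both are by now consequences of the integrability properties of the Hodge norm established in \cite{ekz} together with the Eskin--Mirzakhani--Mohammadi classification of $\SL_2(\RR)$-invariant measures, but they must be cited carefully so that the exchange of $\lim$ and $\int$ is legitimate.
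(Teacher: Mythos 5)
Your proposal is correct and follows exactly the route the paper intends: the paper explicitly omits the details of this proposition, stating that the computations are analogous to those of Theorem \ref{thm:lyapexp}, and your argument (reduction to the line bundle $\bigwedge^k\mathcal{E}$, Cauchy--Schwarz against the $\mathcal{L}$-seminorm, Green's formula along Teichm\"uller disks, Poincar\'e--Lelong for the error term, and equidistribution of large hyperbolic balls to replace the ball averages by the $\nu_1$-integral) is precisely that analogous computation. Your closing remarks on integrability and on Oseledets holding along every Teichm\"uller disk via \cite{chaikaeskin} match the caveats the paper itself records after the statement.
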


Notice that the error term is considered along \textit{any} Teichmueller disk, since the Oseledets theorem holds for every Teichmueller disk by \cite{chaikaeskin}. Notice moreover that the error term in the last proposition  depends \textit{only} on the Teichmueller disk passing through the base point of $u$ and the restriction of $\mathcal{E}$ to this Teichmueller disk. 
%Since the error term $\err^{\mathcal{E}}(u)$ is an almost everywhere constant function in $u\in \bigwedge^k \mathcal{H}^{\vee} $, the proposition implies  that if the error term is zero for an affine invariant manifold $\mathcal{M}_1$, then it has to be zero for all generic affine invariant submanifolds of  $\mathcal{M}_1$. 

\subsection{Condition for rationality of Lyapunov exponents}

We want now to state a sufficient condition for the sum of the top Lyapunov exponents being equal to the first term of inequality \eqref{eq:bounderr}. This gives in particular a sufficient condition for the sum of the top Lyapunov exponents to be rational.

\begin{prop}
	\label{prop:conditionemptylocus}
	Let  $\mathcal{S}\subset\PP (\bigwedge^{k} \mathcal{V}^{\vee}) $  be a $G_t$-invariant closed subset such that there is a vector $u\in \mathcal{S}$  computing the top Lyapunov exponents, namely such that
	$\sum_{i=1}^k\lambda_i=\lim_{t\to \infty} \frac{1}{t}(\log||G_t(u)||_h)$. If there is a rank $k$ holomorphic subbundle $\mathcal{E}\subset \mathcal{V}$ such that \[\tbadE(u)=\emptyset,\quad \text{for all } u\in \mathcal{S}\] then
	\[\sum_{i=0}^k \lambda_i=  \frac{2\pardeg(\Xi_h(\mathcal{E}))}{\deg(\Omega_{\overline{C}}^1(\log(\Delta))}.\]
\end{prop}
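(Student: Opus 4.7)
The argument should revisit the chain of (in)equalities in the proof of Theorem~\ref{thm:lyapexp}, specialized to the concrete vector $u\in\mathcal{S}$ guaranteed by the hypotheses, rather than to an abstract Lebesgue-generic one.

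\emph{Step 1 (lower bound).} First I would apply Theorem~\ref{thm:lyapexp} to this specific $u$: the Lyapunov-regularity required by that proof is supplied, at this particular vector, by the assumption $\sum_{i=1}^k\lambda_i=\lim_{t\to\infty}\tfrac{1}{t}\log\|G_tu\|_h$. Because $\tbadE(u)=\emptyset$, the counting function $\sharp\{\tbadE(u)\cap D_t(\tilde c)\}$ is identically zero, so $\err^{\mathcal{E}}(u)=0$, and one obtains immediately
\[
\sum_{i=1}^k\lambda_i \;\geq\; \frac{2\pardeg(\Xi_h(\mathcal{E}))}{\deg(\Omega^1_{\overline C}(\log\Delta))}.
\]

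\emph{Step 2 (matching upper bound).} For the reverse inequality I would use the $G_t$-invariance of $\mathcal{S}$: every translate $G_tu$ lies in $\mathcal{S}$, so $\tbadE(G_tu)=\emptyset$ for all $t\geq 0$, and the seminorm $\|G_tu\|_{\mathcal{E}}$ is strictly positive along the whole forward orbit. The ratio $\|\cdot\|_h/\|\cdot\|_{\mathcal{E}}$ is a continuous positive function on $\PP(\bigwedge^k\mathcal{V}^{\vee})$ off the seminorm's zero locus, hence continuous and finite on the closed invariant set $\mathcal{S}$. Provided this ratio is uniformly bounded on $\mathcal{S}$—automatic when $\mathcal{S}$ is compact, as occurs when $C$ is compact since $T^1(C)$ is then compact—one obtains
\[
\lim_{T\to\infty}\frac{1}{T}\log\frac{\|G_Tu\|_h}{\|G_Tu\|_{\mathcal{E}}}=0,
\]
so the pointwise Cauchy--Schwarz inequality $\|G_Tu\|_{\mathcal{E}}\leq\|G_Tu\|_h$ used in the proof of Theorem~\ref{thm:lyapexp} becomes asymptotically an equality on this orbit.

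\emph{Step 3 (identifying the seminorm rate).} The Green's formula calculation from the proof of Theorem~\ref{thm:lyapexp} computes
\[
\lim_{T\to\infty}\frac{1}{T}\log\|G_Tu\|_{\mathcal{E}} = \frac{2\pardeg(\Xi_h(\mathcal{E}))}{\deg(\Omega^1_{\overline C}(\log\Delta))} + \err^{\mathcal{E}}(u),
\]
the Poincaré--Lelong contribution vanishing since $z\mapsto u(\omega_z)$ is nowhere zero on $\HH$ under $\tbadE(u)=\emptyset$. Combining with Steps~1 and 2 then yields
\[
\sum_{i=1}^k\lambda_i=\lim_{T\to\infty}\frac{1}{T}\log\|G_Tu\|_h=\lim_{T\to\infty}\frac{1}{T}\log\|G_Tu\|_{\mathcal{E}}=\frac{2\pardeg(\Xi_h(\mathcal{E}))}{\deg(\Omega^1_{\overline C}(\log\Delta))}.
\]

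\emph{Main obstacle.} The delicate point is the uniform boundedness of $\|\cdot\|_h/\|\cdot\|_{\mathcal{E}}$ on $\mathcal{S}$ used in Step~2. In the compact base case it is a consequence of the continuity of this ratio on a compact set where it never vanishes; in the non-compact case one has to leverage closedness and $G_t$-invariance of $\mathcal{S}$ together with the non-expanding cusp monodromy assumption to bound the ratio near the cusps. This is what upgrades the one-sided bound of Theorem~\ref{thm:lyapexp} to an equality.
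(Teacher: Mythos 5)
Your overall strategy — show that the $\mathcal{E}$-seminorm, which is an honest norm on $\mathcal{S}$ when all bad loci are empty, computes the same growth rate as $\|\cdot\|_h$, and then read off the degree from the Green's-formula computation with vanishing Poincaré--Lelong term — is the same as the paper's. But your Step~2 has a genuine gap exactly where you flag it: you need the ratio $\|\cdot\|_h/\|\cdot\|_{\mathcal{E}}$ to be uniformly bounded along the whole forward orbit, and you only obtain this when $\mathcal{S}$ is compact, i.e.\ when $C$ is compact. The proposition, however, is stated (and used, e.g.\ for Theorem~\ref{thm:ekz} on weight-one variations of Hodge structures over cusped curves) for general finite-area hyperbolic $C$, where $T^1(C)$ is not compact and no such global bound is claimed; "leveraging closedness and non-expanding cusp monodromy" is not an argument.

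The paper closes this gap with Poincaré recurrence rather than a global norm comparison. Fix any compact subset $K\subset T^1C$ of positive measure; then $\mathcal{S}_{|K}$ is compact, the seminorm is nonvanishing there, and one gets two-sided bounds $C_1\|u\|_h\leq\|u\|_{\mathcal{E}}\leq C_2\|u\|_h$ on $\mathcal{S}_{|K}$ only. By Poincaré recurrence the geodesic orbit of a.e.\ base point returns to $K$ along a sequence $t_j\to\infty$, and by $G_t$-invariance of $\mathcal{S}$ the translates $G_{t_j}(u)$ stay in $\mathcal{S}_{|K}$; since the limit $\lim_{t\to\infty}\frac{1}{t}\log\|G_t(u)\|_h$ is assumed to exist and equal $\sum_{i=1}^k\lambda_i$, evaluating along the subsequence $t_j$ suffices to conclude that $\frac{1}{t_j}\log\|G_{t_j}(u)\|_{\mathcal{E}}$ has the same limit. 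This subsequential argument is what makes the proof work in the non-compact setting, and it is the step your proposal is missing; your Steps~1 and~3 are consistent with the paper and need no change.
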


\begin{rem}
	
	This last proposition is the analogous of \cite[Prop. 3.15]{deroindaniel} in which they require a strong irreducility property of the flat bundle in order to have the right harmonic measure.  We  do not need any irreducibility property, but the drawback is that we need the existence of a vector computing the sum of the top exponents. In \cite{deroindaniel} they do not need this assumption since for any closed $G_t$-invariant $\mathcal{S}$ there is always a harmonic measure with support in $\mathcal{S}$. 
\end{rem}

Note that if the main inequality  \eqref{eq:bounderr} of Theorem \ref{thm:lyapexp} were an equality, we would not need the existence of the additional subbundle $\mathcal{S}$ but only the existence of a vector $u\in\bigwedge^{k} \mathcal{V}^{\vee}$ computing the top Lyapunov exponents with $\tbadE(u)=\emptyset$. Indeed in this case the error term $\err^{\mathcal{E}}(u)$ would be zero and this would suffice.  Since we will prove that over compact base curve \eqref{eq:bounderr} is an equality (Theorem \ref{thm:conjcompact}), we can apply the previous argument to this situation (Corollary \ref{cor:conditionemptycompact}). 

\begin{rem}
	Consider the Grassmanian bundle $\Gr(n-k,\mathcal{V})$ of $(n-k)$-planes in $\mathcal{V} $ as a subset of $\PP (\bigwedge^{k} \mathcal{V}^{\vee})$ via the Plucker embedding $\Gr(n-k,\mathcal{V})\subset \PP (\bigwedge^{n-k} \mathcal{V}) $ and the  isomorphism $\PP (\bigwedge^{k} \mathcal{V}^{\vee})\cong \PP (\bigwedge^{n-k} \mathcal{V})$. Then the condition that a $(n-k)$-plane $u\in \Gr(n-k,\mathcal{V})$ has emty bad locus, i.e. $ \tbadE(u)=\emptyset$, is equivalent to the condition that the $(n-k)$-plane in $\mathcal{V}$ represented by $u$ intersects trivially the $k$-plane defined by the subbundle $\mathcal{E}$.
	We will indeed use this criterion  to reprove rationality of Lyapunov exponents for weight 1 and K3 variation of Hodge structures in Section \ref{sec:vhsrat}. The $(n-k)$-plane computing the top Lyapunov exponents will be constructed from the Oseledets  subspaces $\mathcal{V}_{\lambda_i}$.
\end{rem}

\begin{proof}[Proof of Proposition \ref{prop:conditionemptylocus}]
	
	We want to prove that if the bad locus is empty for all $u\in \mathcal{S}$, then we can use the $\mathcal{E}$-norm to compute Lyapunov exponents. \\
	The argument is the standard one relying on the equivalence of any two norms on a finite dimensional vector space.
	If the bad locus $\tbadE(u)=\emptyset$ is empty for all $u\in \mathcal{S}$, it means that the $\mathcal{E}$-norm $||\cdot||_{\mathcal{E}}$ is a norm on the  $G_t$-invariant closed subset  $ \mathcal{S}  \subset \PP(\bigwedge^{k} \mathcal{V}^{\vee})$. Let $K\subset T^1C$ be a compact positive measure set. Then  $\mathcal{S}_{|K}\subset\PP(\bigwedge^{k} \mathcal{V}^{\vee}) $ is a compact subset and the quotient of the norms $||\cdot||_{\mathcal{E}}$ and $||\cdot||_h$ defines a bounded function on $\mathcal{S}_{|K}$ with minimum greater than zero. This means that  there exist two positive constants $C_1$ and $C_2$ such that 
	\[C_1||u||_h\leq ||u||_{\mathcal{E}}\leq C_2||u||_h,\quad \forall u\in \mathcal{S}_{K}.\]
	Now by Poincaré recurrence Theorem, the geodesic flow on $T^1C$ comes back infinitely many times to $K$ since it has positive measure. Moreover $\mathcal{S}$ is $G_t$-invariant by assumption. Let $t_j$  be a sequence of times tending to infinity for which $g_{t_j}(c)\in K$. Now let $u\in \mathcal{S}$ be the vector computing  the top Lyapunov exponents, which exists by assumption. We get then
	\begin{align*}
		\sum_{i=1}^k\lambda_i&=\lim_{t\to \infty} \frac{1}{t}(\log||G_t(u)||_h)=\lim_{t_j\to \infty} \frac{1}{t_j}(C_1\log||G_{t_j}(u)||_h)\leq \\
		\leq   &\lim_{t_j\to \infty} \frac{1}{t_j}(\log||G_{t_j}(u)||_{\mathcal{E }})\leq \lim_{t_j\to \infty} \frac{1}{t_j}(C_2\log||G_{t_j}(u)||_h)=\\
		&=\lim_{t\to \infty} \frac{1}{t}(\log||G_t(u)||_h)=\sum_{i=1}^k\lambda_i.
	\end{align*}
	
	By following the proof of Theorem \ref{thm:lyapexp}, we see that if the error term is computed with respect to the vector $ u$ used above the inequality \eqref{eq:bounderr} becomes an equality. The claim then follows directly from it. 
\end{proof}

The condition to have empty bad locus $\tbadE(u)$ for $u\in \bigwedge^{k} \mathcal{V}^{\vee}$ can be rephrased using the equivalent definition of $\tbadE(u)$ given by expression \eqref{eq:badlocusgeom} via the equivariant map $s_{\mathcal{E}}:\HH\to \PP(\bigwedge^{k} \mathcal{V}_c)$ defining $\mathcal{E}$:
\begin{equation}
\label{eq:conditionequality}
\tbadE(u)=\emptyset\quad  \text{ if and only if }\quad \Im(s_{\mathcal{E}})\cap \ker(u)=\emptyset \subset  \PP(\bigwedge^{k} \mathcal{V}_c).
\end{equation}

\begin{rem}
	\label{rem:rank2condition}
	If the vector bundle $\mathcal{V}$ is of rank $2$, then an element $u\in \bigwedge^{1} \mathcal{V}^{\vee}_c$ defines a point in $\PP(\mathcal{V}_c)$. If $\mathcal{E}\subset \mathcal{V}$ is a sub-line bundle, the developing map defining $\mathcal{E}$
	\[s_{\mathcal{E}}:\HH\to \PP^1_{\CC}\]
	is simply a meromorphic function on $\HH$ equivariant with respect to the action of the representation $\rho_{\mathcal{V}}$ defined by $\mathcal{V}$.  Then 
	\[\tbadE(u)=\{z\in \HH\colon s_{\mathcal{E}}(z)= u \in \PP^1_{\CC}\}.\]
	Moreover in this case there is only one line $u\in \PP(\mathcal{V}_c)$ not computing the top Lyapunov exponent, namely the line corresponding to the second Oseledets  space $\mathcal{V}_{\lambda_2}$. 
	%In this case, if the complement of the image of the map  $s_{\mathcal{E}}$ contains a set not contained in a line, we are in the condition of applying Proposition \ref{prop:conditioneq}. We will show that this is the case for weight 1 variations of Hodge structures of rank 2 and for a closed subset of the oper locus $\Op_2(C)$. 
\end{rem} 
The previous remark together with Proposition \ref{prop:conditionemptylocus} imply the following condition for equality in the rank 2 situation .
\begin{cor}
	\label{cor:equalityrank2noncompact}
	Let $\mathcal{V}$ be a rank 2 flat bundle over a hyperbolic Riemann surface $C$ and $\mathcal{E}\subset \mathcal{V}$ a sub-line bundle. If there is a $\rho_{\mathcal{V}}$-invariant subset  $S\subset \PP^1_{\CC}$ containing more than one point and  such that $s_{\mathcal{E}}(\HH)\cap S=\emptyset$, then 
	\[ \lambda_1=  \frac{2\pardeg(\Xi_h(\mathcal{E}))}{\deg(\Omega_{\overline{C}}^1(\log(\Delta))}.\]
\end{cor}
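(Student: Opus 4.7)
The plan is to apply Proposition \ref{prop:conditionemptylocus} to a suitable $G_t$-invariant closed subset $\mathcal{S}\subset \PP(\mathcal{V}^{\vee})$ built from the given set $S$.

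First I would construct $\mathcal{S}$. In rank two, the natural pairing yields an identification $\PP(\mathcal{V}_c^{\vee})\cong \PP(\mathcal{V}_c)=\PP^1_\CC$ sending $u$ to $\ker(u)$, and this identification intertwines the induced action on $\PP(\mathcal{V}_c^{\vee})$ with the natural $\rho_{\mathcal{V}}$-action on $\PP(\mathcal{V}_c)$. Thus $\overline{S}\subset \PP^1_\CC$ (replacing $S$ by its closure, which remains $\rho_{\mathcal{V}}$-invariant) can be viewed as a $\rho_{\mathcal{V}}$-invariant closed subset of $\PP(\mathcal{V}_c^{\vee})$. Pulling back to the universal cover and using the flat trivialization $\PP(\pi^*\mathcal{V}^{\vee})\cong T^1\HH\times \PP(\mathcal{V}_c^{\vee})$, the constant subset $T^1\HH\times \overline{S}$ is $G_t$-invariant because $G_t$ acts trivially on the second factor; by $\rho_{\mathcal{V}}$-invariance it descends to a closed $G_t$-invariant subset $\mathcal{S}\subset \PP(\mathcal{V}^{\vee})$ over $T^1C$.

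Next I would verify the two hypotheses of Proposition \ref{prop:conditionemptylocus}. By Remark \ref{rem:rank2condition}, for $u\in \mathcal{V}_c^{\vee}$ one has $\tbadE(u)=\{z\in\HH : s_{\mathcal{E}}(z)=u\in \PP^1_\CC\}$, and this set is empty for every $u\in \overline{S}$ precisely because $\overline{S}$ is disjoint from $s_{\mathcal{E}}(\HH)$; hence $\tbadE(u)=\emptyset$ for all $u\in \mathcal{S}$. To exhibit a vector in $\mathcal{S}$ computing the top Lyapunov exponent, recall from the same remark that at a generic $(c,v)\in T^1C$ there is exactly one line in $\PP(\mathcal{V}_c^{\vee})$ failing to compute $\lambda_1$, namely the one corresponding to the slow Oseledets space $\mathcal{V}_{\lambda_2}$. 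Since $|S|\geq 2$, in the fiber of $\mathcal{S}$ over some such $(c,v)$ at least one of the points of $S$ must differ from this single bad line, producing a vector $u\in \mathcal{S}$ that realizes $\lambda_1$. Proposition \ref{prop:conditionemptylocus} then yields the claimed equality.

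The main technical point requiring care is the passage from $S$ to $\overline{S}$: to apply Proposition \ref{prop:conditionemptylocus} one needs $\mathcal{S}$ closed (so that $\mathcal{S}_{|K}$ is compact over any compact $K\subset T^1C$), which requires the stronger disjointness $s_{\mathcal{E}}(\HH)\cap \overline{S}=\emptyset$ rather than merely $s_{\mathcal{E}}(\HH)\cap S=\emptyset$. In the intended applications (finite orbits, or limit sets of subgroups of $\rho_{\mathcal{V}}(\pi_1(C))$ acting on $\PP^1_\CC$) the set $S$ is already closed, so this is not an obstacle; more generally one replaces $S$ by its closure and notes that a finite orbit contained in $\overline{S}\setminus S$ lying on $s_{\mathcal{E}}(\HH)$ would contradict the $\rho_{\mathcal{V}}$-equivariance of $s_{\mathcal{E}}$ together with the density of any infinite $\rho_{\mathcal{V}}$-orbit in its closure.
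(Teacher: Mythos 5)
Your proof is correct and follows essentially the same route as the paper: the paper likewise descends $S$ (via the tautological bundle over $S$) to a closed $G_t$-invariant subset $\mathcal{S}\subset\PP(\mathcal{V})$, invokes Remark \ref{rem:rank2condition} to note that only the line $\mathcal{V}_{\lambda_2}$ fails to compute $\lambda_1$ so that $|S|\geq 2$ guarantees a computing vector in $\mathcal{S}$, and concludes by Proposition \ref{prop:conditionemptylocus}. Your added caveat about replacing $S$ by its closure is a reasonable point of care that the paper silently elides (in its applications $S$ is already closed), though your density argument for why $s_{\mathcal{E}}(\HH)\cap\overline{S}=\emptyset$ in general is the one step that would need tightening.
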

\begin{proof}
	The pull-back of the restriction of the tautological bundle $\mathcal{O}_{\PP_{\CC}^1}(-1)_{|S}$ defines a $G_t$-invariant closed subset $\mathcal{S}\subset \PP(\mathcal{V})$. Since $S$ contains more than two points, by the previous remark there is at least one line $u\in S$ computing the top Lyapunov exponent. By Proposition \ref{prop:conditionemptylocus} we then have equality.
\end{proof}
An important example of an invariant closed subset  $S\subset \PP_{\CC}^1 $ containing more than one point is the limit set of a discrete faithful representation. We can then specialize the last corollary in this setting.

\begin{cor}
	\label{cor:kleininalimit}
	Let $\mathcal{V}$ be a rank 2 flat bundle over a hyperbolic Riemann surface $C$ corresponding to a faithful discrete representation $\rho_{\mathcal{V}}$. If there is a $\rho_{\mathcal{V}}$-equivariant holomorphic map $\dev:\HH\to \PP_{\CC}^1$ whose image is disjoint to the limit set of $\rho_{\mathcal{V}}$, then 
	\[ \lambda_1=  \frac{2\pardeg(\Xi_h(\dev^{*}(\mathcal{O}_{\PP_{\CC}^1}(-1))))}{\deg(\Omega_{\overline{C}}^1(\log(\Delta))}.\]
	Here $\dev^{*}(\mathcal{O}_{\PP_{\CC}^1}(-1))$ is an abuse of notation for the line bundle on $C$ defined by the map $\dev$.
\end{cor}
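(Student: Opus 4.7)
The plan is to reduce the statement directly to Corollary \ref{cor:equalityrank2noncompact} by choosing the correct sub-line bundle and the correct invariant set. First I would set $\mathcal{E}:=\dev^*(\mathcal{O}_{\PP^1_{\CC}}(-1))\subset \mathcal{V}$. By the standard correspondence between sub-line bundles of a rank $2$ flat bundle and equivariant maps to $\PP^1_{\CC}$ recorded in Remark \ref{rem:rank2condition}, the equivariant developing map $s_{\mathcal{E}}$ associated with $\mathcal{E}$ coincides with $\dev$. In particular $\mathcal{E}$ is a well-defined sub-line bundle of $\mathcal{V}$ on $C$, and the construction of $\Xi_h(\mathcal{E})$ makes sense.

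Next I would take $S\subset \PP^1_{\CC}$ to be the limit set of $\rho_{\mathcal{V}}$ (viewed via its projectivization as a subgroup of $\PGL_2(\CC)$ acting on $\PP^1_{\CC}$). By definition the limit set is closed and $\rho_{\mathcal{V}}$-invariant. Since $\pi_1(C)$ is non-elementary (it contains a non-abelian free subgroup whenever $C$ is hyperbolic) and $\rho_{\mathcal{V}}$ is faithful and discrete, the image is a non-elementary Kleinian group, whose limit set is well known to be a perfect nonempty set, and in particular contains more than one point. By hypothesis $\dev(\HH)\cap S=\emptyset$, so $s_{\mathcal{E}}(\HH)\cap S=\emptyset$.

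With these data in hand I would simply invoke Corollary \ref{cor:equalityrank2noncompact}: the equality
\[
\lambda_1= \frac{2\pardeg(\Xi_h(\mathcal{E}))}{\deg(\Omega_{\overline{C}}^1(\log(\Delta)))}
\]
is immediate, and $\mathcal{E}=\dev^*(\mathcal{O}_{\PP^1_{\CC}}(-1))$ is exactly the line bundle appearing in the statement.

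The only point requiring care is the verification that the limit set of the projectivized image of $\rho_{\mathcal{V}}$ is non-trivial, i.e.\ contains at least two points; this is the step where the hypotheses ``faithful'' and ``discrete'' are used, together with the non-elementary nature of $\pi_1(C)$. Everything else is a direct rewriting using Remark \ref{rem:rank2condition} and the content of Corollary \ref{cor:equalityrank2noncompact}, so I do not expect any genuine analytic obstacle here.
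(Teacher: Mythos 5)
Your proposal is correct and follows exactly the route the paper takes: the paper presents this corollary as a direct specialization of Corollary \ref{cor:equalityrank2noncompact}, obtained by taking $S$ to be the limit set, which is closed, $\rho_{\mathcal{V}}$-invariant, and contains more than one point precisely because a faithful discrete representation of the (non-elementary) surface group has a non-elementary Kleinian image. Your identification of $s_{\mathcal{E}}$ with $\dev$ and of $\mathcal{E}$ with $\dev^*(\mathcal{O}_{\PP^1_{\CC}}(-1))$ is the same bookkeeping the paper performs implicitly.
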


We will see in Section \ref{sec:projstr} that the data of a representation $\rho_{\mathcal{V}}$ together with an equivariant immersion $\dev:\HH\to \PP_{\CC}^1$ is equivalent to the datum of a projective structure on a surface, so the previous corollary can be applied to the setting of projective structures.

\section{Main equality in the compact case}

\label{sec:equality}

In this section we show that inequality \eqref{eq:bounderr} is an equality if the base curve is compact and $\mathcal{V}$ is $k$-irreducible.  Recall that a flat bundle is called  $k$-irreducible if its $k$-exterior power is irreducible.

\begin{thm}
	\label{thm:conjcompact}
	Let $\mathcal{V}$ be a  $k$-irreducible flat bundle over a compact hyperbolic Riemann surface $C$. For every holomorphic subbundle $\mathcal{E}\subset \mathcal{V}$, it holds
	\[	\sum_{i=0}^k \lambda_i(\mathcal{V}_C)=  \frac{2\deg(\mathcal{E})}{\deg(\mathcal{K}_C)} +\err^{\mathcal{E}}(u) \]
	for almost any $c\in C$ and Lebesgue almost any $\displaystyle u\in \bigwedge^k \mathcal{V}_c^{\vee}$.
\end{thm}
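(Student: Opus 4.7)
The plan is to run the chain of (in)equalities in the proof of Theorem~\ref{thm:lyapexp} in reverse, and to show that in the compact $k$-irreducible setting the single Cauchy--Schwarz step used there loses nothing in the ergodic limit. As a first reduction, I would pass to $\bigwedge^k \mathcal{V}$ and $\bigwedge^k \mathcal{E}$, exactly as in the opening of the proof of Theorem~\ref{thm:lyapexp}, so that $\mathcal{E}=\mathcal{L}$ becomes a line bundle, $k$-irreducibility of $\mathcal{V}$ becomes ordinary irreducibility of $\bigwedge^k\mathcal{V}$, and the sum $\sum_{i=1}^k \lambda_i$ becomes the single top exponent $\lambda_1$ of $\bigwedge^k \mathcal{V}^\vee$. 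Since $C$ is compact we have $\Delta=\emptyset$, so parabolic degrees are ordinary degrees and there are no cusp subtleties.

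Inspecting the proof of Theorem~\ref{thm:lyapexp}, the only step that is not an identity is the Cauchy--Schwarz bound $\|\cdot\|_\mathcal{L}\le\|\cdot\|_h$; every subsequent manipulation (Green's formula, Poincar\'e--Lelong, and equidistribution of large hyperbolic discs) is an equality. Thus one gets unconditionally
\begin{equation*}
\lim_{T\to\infty}\frac{1}{2\pi T}\int_0^{2\pi}\log\|G_T r_\theta u\|_\mathcal{L}\,d\theta \;=\; \frac{2\deg(\mathcal{L})}{\deg(\mathcal{K}_C)}+\err^{\mathcal{L}}(u),
\end{equation*}
while Oseledets' theorem yields $\lim_{T\to\infty}\frac{1}{2\pi T}\int_0^{2\pi}\log\|G_T r_\theta u\|_h\,d\theta=\lambda_1$. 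Setting $\phi:=\log(\|\cdot\|_h/\|\cdot\|_\mathcal{L})\in[0,\infty]$, viewed as a function on the total space of $\mathbb{P}(\mathcal{V}^\vee)\to C$, the theorem reduces to the vanishing
\begin{equation*}
\lim_{T\to\infty}\frac{1}{2\pi T}\int_0^{2\pi}\phi\bigl(G_T r_\theta u\bigr)\,d\theta \;=\; 0.
\end{equation*}

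By Lemma~\ref{lem:badlocusnottrivial}, irreducibility of $\mathcal{V}$ forces the zero locus of $\|\cdot\|_\mathcal{L}$ to be a proper codimension-one subvariety of $\mathbb{P}(\mathcal{V}^\vee)$, so $\phi$ carries only logarithmic singularities along this subvariety on the compact total space $\mathbb{P}(\mathcal{V}^\vee)$; in particular it satisfies exponential tail bounds $\mu\{\phi>M\}=O(e^{-cM})$ for any probability measure $\mu$ with smooth fiberwise density. To exploit this I would invoke the theory of stationary measures applied to the projectivized parallel-transport cocycle over the geodesic flow on $T^1 C$: under irreducibility of $\mathcal{V}$ there is a unique stationary probability measure $\mu$ on $\mathbb{P}(\mathcal{V}^\vee)$, absolutely continuous along the fibers. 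A Borel--Cantelli argument using $\mu$-invariance under $g_T$ together with the exponential tail of $\phi$ shows that $\phi(G_T w)/T\to 0$ for $\mu$-a.e.\ $w$; Fubini in the variables $(u,\theta)$ then produces the desired circle-averaged vanishing for almost every $u$ and completes the equality. As a byproduct one recovers, in the spirit of \cite{deroindaniel}, an interpretation of $\err^{\mathcal{E}}$ as an ergodic-theoretic invariant.

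The main obstacle is the non-concentration estimate of the previous paragraph: typical orbits of the projectivized flow must not approach the bad locus faster than polynomially in $T$. This rests on the uniqueness and regularity of the stationary measure for the projectivized cocycle, which is classical for i.i.d.\ random matrix products but needs a careful adaptation to the continuous-time geodesic-flow setting here. Compactness of $C$ enters essentially at this point, guaranteeing both uniform recurrence and the exponential tail estimate on $\phi$; both fail in the cusped case, which accounts for the hypothesis of the theorem.
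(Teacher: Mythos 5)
Your reduction is correct and matches the paper's: after passing to $\bigwedge^k$ everything hinges on showing that the circle average of $\phi=\log(\|\cdot\|_h/\|\cdot\|_{\mathcal{L}})$ along the geodesic flow is $o(T)$ for Lebesgue-a.e.\ $u$ in a fiber (this is exactly Proposition~\ref{prop:conjcompact}). The gap is in how you propose to prove that vanishing. Your argument rests on a $G_t$-invariant (stationary) probability measure $\mu$ on $\PP(\mathcal{V}^{\vee})$ that is \emph{absolutely continuous along the fibers}, so that the logarithmic singularity of $\phi$ along the bad divisor yields exponential tails and Borel--Cantelli applies. No such measure exists in general: by Oseledets theory, ergodic $G_t$-invariant measures projecting to Liouville measure concentrate fiberwise on projectivized Oseledets subspaces (the paper cites \cite{eskinwilkinson} for exactly this: the natural invariant measure is fiberwise supported on $\PP(\mathcal{V}_{\lambda_1})$, a single point of each fiber when $\lambda_1$ is simple), and the Furstenberg stationary measure for the associated random walk is likewise typically singular (e.g.\ harmonic measure on a limit set). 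You cannot have invariance and fiberwise smoothness simultaneously, so the tail bound and the invariance cannot be used on the same measure, and the concluding ``Fubini in $(u,\theta)$'' cannot convert a $\mu$-a.e.\ statement for an atomic-in-the-fiber $\mu$ into the Lebesgue-a.e.\ statement the theorem asserts.

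This distinction is not cosmetic: the paper proves precisely the weaker, $\nu$-a.e.\ version of the equality by an argument close in spirit to yours (Proposition~\ref{prop:eq}, integrating against the invariant measure and cancelling by $G_t$-invariance), and explicitly flags that the Lebesgue-a.e.\ statement is the harder and genuinely new content. The actual proof of Proposition~\ref{prop:conjcompact} works directly with a fixed Lebesgue-generic $u$: Lemma~\ref{lem:tbad} uses compactness of a fundamental domain and the Weierstrass preparation theorem to get uniform bounds $\phi\le M+N|\log\epsilon|$ off an $\epsilon$-neighborhood of the bad locus and an explicit logarithmic bound near each bad point (handling colliding zeros), then one takes $\epsilon(t)=e^{-t\epsilon'}$, splits the circle integral into near/far parts, and kills the near part using hyperbolic arc-length estimates together with the already-established convergence of the error term from Theorem~\ref{thm:lyapexp}. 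To repair your argument you would need either this kind of pointwise quantitative non-concentration estimate for Lebesgue-generic $u$, or a separate argument promoting the $\nu$-a.e.\ equality to Lebesgue-a.e.; neither is supplied.
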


We will first state some applications of the previous result and then go on with its proof. The proof of Theorem \ref{thm:conjcompact} is quite technical and is based on finer estimates on the bad locus. We finally recall the main result of \cite{deroindaniel} and get as a corollary that the dynamical degree defined in \cite{deroindaniel} is the same as our  error term if the base curve is compact.
%Moreover  the limit $\displaystyle \frac{\sharp \{\tbadE(u)\cap D_r(\tilde{c})\}}{\vol(D_r)} $ exists, for almost any $u\in \bigwedge^k \mathcal{V}^{\vee} $.

\subsection{Applications}

Thanks to the equality proven in Theorem \ref{thm:conjcompact} we get a better condition in the case of compact base curves for checking rationality of Lyapunov exponents than  the one given by Proposition \ref{prop:conditionemptylocus}. 

\begin{cor}
	\label{cor:conditionemptycompact}
	Let $\mathcal{V}$ be a  $k$-irreducible flat bundle over a compact hyperbolic Riemann surface $C$.
	If there is a rank $k$ holomorphic subbundle $\mathcal{E}\subset \mathcal{V}$ such that \[\tbadE(u)=\emptyset \] for a vector $u\in \bigwedge^k \mathcal{V}^{\vee}$ that computes the sum of the top Lyapunov exponents, then
	\[\sum_{i=0}^k \lambda_i=  \frac{2\deg(\mathcal{E})}{\deg(\mathcal{K}_C)}.\]
\end{cor}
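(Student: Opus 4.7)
The strategy is to deduce the corollary from Theorem \ref{thm:conjcompact} by showing that the error term vanishes under the empty bad locus hypothesis. First, I would invoke Theorem \ref{thm:conjcompact} for the $k$-irreducible bundle $\mathcal V$ over the compact curve $C$, with the chosen holomorphic subbundle $\mathcal E$. This gives the identity
\[
\sum_{i=1}^k \lambda_i(\mathcal V_C) \;=\; \frac{2\deg(\mathcal E)}{\deg(\mathcal K_C)} \;+\; \err^{\mathcal E}(u')
\]
for Lebesgue almost every $u' \in \bigwedge^k \mathcal V^{\vee}$. Since the left-hand side and the first term on the right-hand side are constants independent of $u'$, this forces $\err^{\mathcal E}(\cdot)$ to be almost everywhere equal to a single constant $E \geq 0$, and the problem reduces to checking $E=0$.

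Next I would use the explicit expression for the error term that comes out of the proof of Theorem \ref{thm:lyapexp}, namely
\[
\err^{\mathcal E}(u) \;=\; \pi \lim_{T\to\infty} \frac{1}{T} \int_0^T \frac{\sharp\{\tbadE(u)\cap D_t(\tilde c)\}}{\vol(D_t(\tilde c))} \,\dd t .
\]
Under the hypothesis $\tbadE(u)=\emptyset$ the counting function is identically zero, so $\err^{\mathcal E}(u)=0$ for the specific $u$ in the assumption.

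To conclude that $E=0$ (and not merely that $\err^{\mathcal E}$ vanishes at an exceptional point), I would adapt the norm-equivalence argument in the proof of Proposition \ref{prop:conditionemptylocus} to the present setting. Because $T^1 C$ is compact and $\tbadE(u)=\emptyset$ is a condition on the entire universal cover $\HH$, the seminorm $\|\cdot\|_{\mathcal E}$ is actually a norm along the full geodesic orbit of $u$ in $\PP(\bigwedge^k \mathcal V^\vee)$; a standard compactness-continuity argument then produces a uniform positive lower bound for $\|\cdot\|_{\mathcal E}/\|\cdot\|_h$ along this orbit. Combined with the assumption that $u$ computes the sum of the top $k$ Lyapunov exponents, this equivalence allows the Cauchy--Schwarz chain in the proof of Theorem \ref{thm:lyapexp} to collapse to equalities at this specific $u$, which matches the equality of Theorem \ref{thm:conjcompact} at $u$ and yields $\sum_{i=1}^k \lambda_i = \frac{2\deg(\mathcal E)}{\deg(\mathcal K_C)} + \err^{\mathcal E}(u) = \frac{2\deg(\mathcal E)}{\deg(\mathcal K_C)}$.

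The main obstacle is this last step: the equality of Theorem \ref{thm:conjcompact} is only asserted for almost every $u'$, whereas the hypothesis singles out a particular $u$ that may a priori lie in the exceptional null set. The key point is that the empty bad locus condition, together with compactness of $T^1C$, promotes the pointwise statement to a uniform one along the orbit of $u$, which is exactly what is needed to make the norm-equivalence strategy of Proposition \ref{prop:conditionemptylocus} carry through without needing to invoke an invariant subset containing more than one point.
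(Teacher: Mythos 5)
Your first three steps are exactly the paper's (implicit) argument: Theorem \ref{thm:conjcompact} gives the equality, and the hypothesis $\tbadE(u)=\emptyset$ kills the error term at $u$, since the counting function in $\err^{\mathcal E}(u)$ is identically zero. The paper offers no further proof of the corollary, so up to this point you match it.

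The problem is your last step, which you present as the key point. Your worry that $u$ might lie in the exceptional null set is legitimate in principle, but your proposed repair does not work as written: the geodesic orbit of $u$ in $\PP(\bigwedge^k\mathcal V^\vee)$ is not compact, and $\tbadE(u)=\emptyset$ only tells you the seminorm is \emph{pointwise} nonzero along that orbit. The orbit closure may well meet the projectivized bad locus $\tbadE$, so no uniform positive lower bound on $\|\cdot\|_{\mathcal E}/\|\cdot\|_h$ along the orbit follows from a ``compactness-continuity'' argument; this is precisely the failure mode that forces the paper to run the delicate tubular-neighborhood estimates of Lemma \ref{lem:tbad} rather than the naive norm-equivalence argument. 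Fortunately the repair is also unnecessary: the statement of Proposition \ref{prop:conjcompact} (the engine behind Theorem \ref{thm:conjcompact}) holds for almost every base point and for \emph{every} vector $u\in\mathcal V^\vee_x\setminus\bigoplus_{i\geq 2}\mathcal V^\vee_{\lambda_i,x}$, i.e.\ for every $u$ computing the sum of the top exponents, which is exactly the class of vectors singled out by the corollary's hypothesis (after passing to $\bigwedge^k\mathcal V$ and $\mathcal L=\bigwedge^k\mathcal E$). So the exceptional set in $u$ is precisely the set of vectors that do \emph{not} compute the top exponents, the hypothesized $u$ is automatically covered, and the corollary follows from steps one through three alone.
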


Note that the previous corollary can be used for example if one considers the vector  $u=\sum_{i=k}^{n}\mathcal{V}_{\lambda_i}\in \bigwedge^k \mathcal{V}^{\vee}$ given by the sum of the last Oseledets  spaces. Then only the emptiness condition $\tbadE(u)=\emptyset$ has to be checked.

In the rank 2 situation we get a better version of Corollary \ref{cor:equalityrank2noncompact} in the case of compact base curve. In this case we know indeed that there is only one line $\mathcal{V}_{\lambda_2}$ not computing the top Lyapunov exponent.

\begin{cor}
	\label{cor:equalityrank2compact}
	Let $\mathcal{V}$ be an irreducible rank 2 flat bundle over a compact hyperbolic Riemann surface $C$ and $\mathcal{E}\subset \mathcal{V}$ a sub-line bundle. If the complement $\PP_{\CC}^1\setminus s_{\mathcal{E}}(\HH)$ of the image  of the corresponding equivariant map $s_{\mathcal{E}}:\HH\to \PP_{\CC}^1$ contains more than one point, then 
	\[ \lambda_1= \frac{2\deg(\mathcal{E})}{\deg(\mathcal{K}_C)}.\]
\end{cor}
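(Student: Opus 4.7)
The plan is to deduce this directly from Corollary \ref{cor:conditionemptycompact} applied with $k=1$. Since $\mathcal{V}$ has rank $2$, $k$-irreducibility for $k=1$ is nothing but the irreducibility of $\mathcal{V}$ itself, which is part of the hypothesis. Hence I am free to apply the previous corollary provided that I can exhibit a vector $u \in \mathcal{V}_c^{\vee}$ that simultaneously (i) computes the top Lyapunov exponent $\lambda_1$ and (ii) satisfies $\tbadE(u) = \emptyset$.

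For this, I will rely entirely on Remark \ref{rem:rank2condition}. Under the projective duality $\PP(\mathcal{V}_c^{\vee}) \xrightarrow{\sim} \PP(\mathcal{V}_c) \cong \PP^1_{\CC}$ sending $u \mapsto [\ker u]$, the remark identifies
\[
\tbadE(u) \;=\; \{z \in \HH \colon s_{\mathcal{E}}(z) = [u] \in \PP^1_{\CC}\},
\]
so condition (ii) is equivalent to $[u] \notin s_{\mathcal{E}}(\HH)$. The same remark records that in rank two there is at most one point of $\PP^1_{\CC}$, namely the class $[\mathcal{V}_{\lambda_2}]$ corresponding to the second Oseledets line, that fails to compute $\lambda_1$; every other $u$ satisfies condition (i).

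With both conditions translated into subsets of $\PP^1_{\CC}$, the choice of $u$ is immediate. The hypothesis guarantees that $\PP^1_{\CC} \setminus s_{\mathcal{E}}(\HH)$ contains at least two points, while only the single point $[\mathcal{V}_{\lambda_2}]$ is excluded by (i). Pick any $[u]$ lying in $\PP^1_{\CC} \setminus s_{\mathcal{E}}(\HH)$ and distinct from $[\mathcal{V}_{\lambda_2}]$; then the corresponding $u \in \mathcal{V}_c^{\vee}$ simultaneously computes $\lambda_1$ and has empty bad locus. Corollary \ref{cor:conditionemptycompact} then yields the asserted equality.

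There is no real obstacle to overcome here, since the genuinely hard input, the equality version of the main inequality, has already been proved in Theorem \ref{thm:conjcompact}. The only thing to be careful about is to track the projective duality so that the "target" $\PP^1_{\CC}$ of the developing map $s_{\mathcal{E}}$ and the "ambient space" of $u$ are identified consistently; once that is fixed, the argument is a two-line pigeonhole combining Remark \ref{rem:rank2condition} with the hypothesis on the complement of the image.
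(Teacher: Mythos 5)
Your proposal is correct and is exactly the argument the paper intends: the paper states this corollary without a written proof, but the sentence immediately preceding it ("there is only one line $\mathcal{V}_{\lambda_2}$ not computing the top Lyapunov exponent") signals precisely the pigeonhole you carry out, combining Corollary \ref{cor:conditionemptycompact} (with $k=1$, so $k$-irreducibility is just irreducibility) with the identification of $\tbadE(u)$ as $s_{\mathcal{E}}^{-1}([u])$ from Remark \ref{rem:rank2condition}. Your care with the duality $\PP(\mathcal{V}^{\vee})\cong\PP(\mathcal{V})$ and with choosing $[u]$ in the complement of $s_{\mathcal{E}}(\HH)$ but distinct from $[\mathcal{V}_{\lambda_2}]$ matches the intended reasoning.
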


Since a rank $k$ holomorphic subbundle of the flat bundle corresponding to a representation $\rho$  is the same as a $\wedge^k \rho$-equivariant holomorphic map  $f:\HH\to \PP(\bigwedge^k \CC^n)$, we get also the following corollary.

\begin{cor}
	Let $C$ be a compact Riemann surface. For any $k$-irreducible representation of the fundamental group $\rho:\pi_1(C) \to \GL_n(\CC)$  and any $\wedge^k \rho$-equivariant holomorphic map $f:\HH\to \PP(\bigwedge^k \CC^n)$ the error term function 
	\[\PP(\bigwedge^k \CC^n)^{\vee}\to \RR^+,\quad	u\mapsto \err^{f}(u)=\lim_{T\to \infty}  \frac{1}{T}\int_{0}^{T}\frac{\sharp \{f^{-1}(\ker(u))\cap D_t(z)\}}{\vol(D_t(z))} \dd t \]
	is Lebesgue almost everywhere constant, for almost all $z\in \HH$. \\
	%Moreover for almost any $u\in \PP(\bigwedge^k \CC^n)^{\vee}$, if the set $\{f^{-1}(\ker(u))\}$ has an accumulation point then $\overline{\{f^{-1}(\ker(u))\}}=\RR$.
\end{cor}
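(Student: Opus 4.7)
The plan is to deduce the corollary directly from Theorem~\ref{thm:conjcompact} applied to the flat bundle $\bigwedge^k \mathcal{V}$, where $\mathcal{V} = \HH \times_\rho \CC^n$ denotes the flat bundle on $C$ associated to $\rho$. First I would translate the data of $f$ into the setup of that theorem: following the dictionary in Remark~\ref{rem:geombad}, the $\bigwedge^k \rho$-equivariant holomorphic map $f$ is the classifying map of a unique sub-line bundle $\mathcal{L}_f \subset \bigwedge^k \mathcal{V}$, and by Definition~\ref{def:badlocus} together with equation~\eqref{eq:badlocusgeom} one has $T^{\mathcal{L}_f}_{\mathrm{bad}}(u) = f^{-1}(\ker u)$ for every $u \in (\bigwedge^k \CC^n)^\vee$. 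Consequently the corollary's error term $\err^f(u)$ agrees, up to the $\pi$ normalisation used in Theorem~\ref{thm:lyapexp}, with $\err^{\mathcal{L}_f}(u)$.

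Next I would apply Theorem~\ref{thm:conjcompact} with $k=1$ to the flat bundle $\bigwedge^k \mathcal{V}$ and its rank one subbundle $\mathcal{L}_f$. The $k$-irreducibility hypothesis on $\rho$ is precisely the statement that $\bigwedge^k \mathcal{V}$ is $1$-irreducible as a flat bundle, so the theorem applies. It yields, for a.e. $c \in C$ and Lebesgue a.e. $u \in (\bigwedge^k \mathcal{V})_c^\vee$,
\[
\err^{\mathcal{L}_f}(u) \;=\; \lambda_1\!\bigl(\bigwedge^k \mathcal{V}\bigr) \;-\; \frac{2\deg(\mathcal{L}_f)}{\deg(\mathcal{K}_C)},
\]
and the right-hand side is a single real number, independent of both $u$ and of the base point $c$.

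The only small residual point is to promote ``a.e.\ $c \in C$'' to ``a.e.\ $z \in \HH$''. This follows because the projection $\HH \to C$ is a local diffeomorphism and because the defining limit satisfies the equivariance $\err^f(u)|_{\gamma z} = \err^f(\bigwedge^k \rho(\gamma)\,u)|_{z}$ for $\gamma \in \Gamma$, so a.e.\ constancy in $u$ at a single lift propagates to every lift. I do not expect any substantive obstacle: once Theorem~\ref{thm:conjcompact} is granted, the corollary is a formal consequence of it, the only work being the dictionary identifying $f$ with the sub-line bundle $\mathcal{L}_f$ of $\bigwedge^k \mathcal{V}$ and identifying its bad locus with the preimage $f^{-1}(\ker u)$.
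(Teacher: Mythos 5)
Your proof is correct and takes essentially the same route as the paper, which states this corollary as an immediate application of Theorem \ref{thm:conjcompact} after identifying the equivariant map $f$ with a holomorphic subbundle and observing that the error term must then equal the constant $\lambda_1(\bigwedge^k\mathcal{V}) - 2\deg(\mathcal{L}_f)/\deg(\mathcal{K}_C)$. Your version is in fact slightly more careful than the paper's one-line justification, since by working with a sub-line bundle of $\bigwedge^k\mathcal{V}$ rather than a rank $k$ subbundle of $\mathcal{V}$ you also cover maps $f$ whose image does not lie in the Grassmannian, and you address the passage from almost every $c\in C$ to almost every $z\in\HH$ explicitly.
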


%For any fixed subvector bundle $\mathcal{E}\subset \mathcal{V}$, the error term is $\err^{\mathcal{E}}(u)$ is constant for almost any $u\in \bigwedge^k \mathcal{V}^{\vee} $.
%Since holomorphic sub-vector bundles $\mathcal{E}\subset \mathcal{V}$ are in one to one correspondence with $\rho$-equivariant holomorphic maps $s_{\mathcal{E}}:\HH\to \grass(k,n)$, Conjecture \ref{cong} would imply the following statements.\\
\subsection{Proof of Theorem \ref{thm:conjcompact} }
We will prove Theorem \ref{thm:conjcompact} by proving that  in the $k$-irreducible and compact base curve case, the $\mathcal{E}$-seminorm can be used to compute Lyapunov exponents as any other integrable norm. We want to remark that in order to prove  that the $\mathcal{E}$-seminorm is as good as any other integrable norm,  one could try  the naive approach via Poincaré recurrence theorem used for proving that any norm computes the same Lyapunov exponents (see for example \cite[Lemma 2.6]{martinandre}). The key idea of that approach is that any two norms are uniformly bounded with respect to each other on a projective bundle over a compact subset. In this case the bad locus breaks the compactness of the projective bundle since the $\mathcal{E}$-seminorm is a norm on the complement of the bad locus, which is not compact. 

In order to prove that the $\mathcal{E}$-seminorm can be used to compute Lyapunov exponents, it suffices as before to only consider  the case where $\mathcal{E}=\mathcal{L}$ is a line bundle.
Theorem \ref{thm:conjcompact} is a direct consequence of the following proposition, whose proof will take up the rest of this section.

\begin{prop}
	\label{prop:conjcompact}
	Let $C$ be compact. Let   $\mathcal{L}\subset \mathcal{V}$ be a holomorphic  subline bundle of an irreducible flat vector bundle $ \mathcal{V}$ over $C$. We denote by the same letters the pullbacks to the unit tangent bundle $T^1C$.
	For almost any $x\in T^1C$ and any vector  $u\in \mathcal{V}^{\vee}_x-\oplus_{i=2}^n \mathcal{V}_{\lambda_i,x}^{\vee}$ it holds
	\[\lambda_1(\mathcal{V})=\lim_{t\to \infty} \frac{1}{t}\frac{1}{2\pi}\int_{0}^{2\pi} \log\|G_t r_{\theta} u\|_{\mathcal{L}}\dd\theta\]
\end{prop}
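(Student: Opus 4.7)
The strategy is to show that the $\mathcal{L}$-seminorm $\|\cdot\|_\mathcal{L}$, though it degenerates along the bad locus $\PP(\mathcal{L}^{\perp})\subset\PP(\mathcal{V}^{\vee})$, still computes the top Lyapunov exponent once averaged over the circle. Write $\|v\|_\mathcal{L}=\phi(v)\|v\|_h$ with $\phi\in[0,1]$, continuous off the bad locus. Since $\|\cdot\|_h$ is an admissible (integrable) norm and $u$ has non-trivial component in $\mathcal{V}^{\vee}_{\lambda_1,x}$, the Oseledets theorem applied in the dual bundle gives
\[
\frac{1}{T}\cdot\frac{1}{2\pi}\int_0^{2\pi}\log\|G_Tr_\theta u\|_h\,d\theta\xrightarrow{T\to\infty}\lambda_1(\mathcal{V}).
\]
Hence the proposition reduces to proving
\[
I_T:=\frac{1}{T}\cdot\frac{1}{2\pi}\int_0^{2\pi}\log\phi(G_Tr_\theta u)\,d\theta\xrightarrow{T\to\infty}0;
\]
the Cauchy--Schwarz step in the proof of Theorem~\ref{thm:lyapexp} already yields $\limsup I_T\le 0$, so only the matching lower bound remains.

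Let $e_1(y)$ denote a unit spanning vector of the top Oseledets line $\mathcal{V}^{\vee}_{\lambda_1,y}$ (the simple top-exponent case; the general case is handled by replacing $e_1(y)$ with the top Oseledets subspace). By Oseledets, for a.e. $\theta$ the projective direction $[G_Tr_\theta u]\in\PP(\mathcal{V}^{\vee}_{g_Tx})$ converges to $[e_1(g_Tx)]$ at exponential rate $e^{-(\lambda_1-\lambda_2)T}$. The $G_t$-invariant measurable set $N=\{y:\, e_1(y)\in\mathcal{L}^{\perp}_y\}\subset T^1C$ has Liouville measure $0$ or $1$ by ergodicity of the geodesic flow; full measure would confine the measurable top Oseledets direction to the non-flat holomorphic corank-one sub-bundle $\mathcal{L}^{\perp}$, and inspecting the $G_t$-orbit $\bigcap_{s\in\RR}G_{-s}(\mathcal{L}^{\perp}_{g_sy})$ produces a non-trivial $G_t$-invariant flat sub-bundle of $\mathcal{V}^{\vee}$, contradicting the irreducibility of $\mathcal{V}$. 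Hence $\phi(e_1(y))>0$ for a.e. $y$. The technical heart of the proof --- the \emph{finer estimate on the bad locus} --- is the $L^1(T^1C,\mu)$-integrability of $f:=\log\phi\circ e_1$: the singularities of $\phi$ along $\mathcal{L}^{\perp}$ are of holomorphic logarithmic type, while compactness of $C$ together with the H\"older regularity of the Oseledets direction (afforded by uniform hyperbolicity of the flow) gives uniform control of the sublevel sets $\{f<-M\}$. A standard Borel--Cantelli consequence of $f\in L^1$ is then $f(g_Tx)/T\to 0$ for $\mu$-a.e. $x$.

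To conclude, combine these ingredients: the exponential convergence $[G_Tr_\theta u]\to[e_1(g_Tx)]$ outpaces the subexponential decay $\phi(e_1(g_Tx))\ge e^{-\epsilon T}$ (valid eventually, for any $\epsilon>0$, by the previous step), so Lipschitz control of $\log\phi$ away from its zero set yields $\log\phi(G_Tr_\theta u)=f(g_Tx)+o(1)$ pointwise for a.e. $\theta$, and thus $\frac{1}{T}\log\phi(G_Tr_\theta u)\to 0$. To exchange $\lim$ with $\int_0^{2\pi}\cdot\,d\theta$ one uses that $z\mapsto u(\omega_z)$ is a holomorphic function on $\HH$; Jensen's formula on the hyperbolic disk $D_T(\tilde c)$ gives $L^1$-in-$\theta$ control of $\log|u(\omega_z)|$ on $\partial D_T(\tilde c)$ via the zero-counting function (which is precisely what enters in $\err^{\mathcal{L}}(u)$), providing the required domination. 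This gives $I_T\to 0$, completing the proof. The principal obstacle is the $L^1$ integrability of $f=\log\phi\circ e_1$: the Oseledets direction is merely measurable whereas $\mathcal{L}^{\perp}$ is holomorphic, so a quantitative --- not merely qualitative --- transversality estimate between the two must be extracted from cocycle regularity available on a compact base.
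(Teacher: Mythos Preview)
Your approach is genuinely different from the paper's, and it has a real gap that you yourself flag but do not close. The paper never touches the Oseledets direction $e_1$ or its regularity. Instead, it works directly with the function $\phi(z,u)=\log\bigl(\|u_z\|_h/\|u_z\|_{\mathcal L}\bigr)$ on $\HH\times\PP(\mathcal V_c^\vee)$ and proves, via Weierstrass preparation and compactness of a fundamental domain, two uniform estimates (Lemma~\ref{lem:tbad}): outside an $\epsilon$-tubular neighbourhood of the bad locus $\phi\le M+N|\log\epsilon|$, and inside each $\epsilon$-ball around a bad point $\phi$ is bounded by a finite sum of $|\log\dhyp(\cdot,z')|$ over nearby bad points. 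The proof then sets $\epsilon=e^{-t\epsilon'}$, splits the circle integral at radius $t$ into the ``near'' and ``far'' parts, and bounds the near part by explicit hyperbolic-geometry estimates on arc lengths together with the known finiteness of the error-term limit from Theorem~\ref{thm:lyapexp}; the far part contributes $O(\epsilon')$. No ergodic or regularity input about Oseledets subspaces is used at all.

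Your route fails at the step you call the ``principal obstacle'': the $L^1$-integrability of $f=\log\phi\circ e_1$. The justification you give---H\"older regularity of the Oseledets direction ``afforded by uniform hyperbolicity of the flow''---is incorrect. Uniform hyperbolicity of the \emph{base} geodesic flow says nothing about the regularity of the Oseledets splitting of an arbitrary linear cocycle over it; that splitting is in general only measurable, and one cannot extract quantitative transversality to the holomorphic hypersurface $\mathcal L^\perp$ from cocycle regularity that does not exist. Without $f\in L^1$ the Borel--Cantelli step and everything downstream collapses. There is a second, related gap: even granting $\phi(e_1(y))\ge e^{-\epsilon T}$ eventually, the Lipschitz constant of $\log\phi$ near $[e_1(y)]$ is of order $e^{\epsilon T}$, while the Oseledets convergence $[G_Tr_\theta u]\to[e_1(g_Tr_\theta x)]$ has rate $e^{-(\lambda_1-\lambda_2+o(1))T}$ with a non-uniform $o(1)$; the product need not be $o(1)$. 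Finally, your irreducibility argument that $e_1\notin\mathcal L^\perp$ a.e.\ is incomplete: a $G_t$-invariant \emph{measurable} sub-bundle (which is all your intersection produces) does not contradict irreducibility of the holomorphic flat bundle---Oseledets subspaces themselves are such objects.
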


%The main difficulty to prove that the inequality is indeed an equality is to show that  the $\mathcal{L}$-seminorm computes the Lyapunov exponents. Since when the error term is non zero the bad locus accumulates on the boundary of $\HH$, one has to come up with a fine estimate of the speed of the accumulation, which we could not do until now.

From now on we will denote the trouble making sets introduced in Definition \ref{def:badlocus} as 
\[T:=\tbadL\subset \PP(\mathcal{V}^{\vee}),\quad T(u):=\tbadL(u)\subset \HH \text{ for any } u\in \PP(\mathcal{V}^{\vee}).\]

Let $\pi:\HH\to C$ be the universal covering map and $\pi^*(\PP(\mathcal{V})^{\vee})$ be the pullback of the projective bundle associated to $\mathcal{V}^{\vee}$. Since $\mathcal{V}^{\vee}$ is a flat bundle, the pull-back $ \pi^*(\PP(\mathcal{V}^{\vee}))$ is isomorphic to the trivial projective bundle. Let us fix an isomorphism \[\psi:\HH\times \PP(\mathcal{V}^{\vee}_c) \overset{\sim}{\longrightarrow}\pi^*(\PP(\mathcal{V}^{\vee})),\quad (z,u)\mapsto u_z:=\psi (z,u)\] 
for some $c\in C$.

Consider the function
\begin{align*}
	\phi: \HH\times \PP(\mathcal{V}^{\vee}_c)&\longrightarrow \RR_{\geq 0}\cup \{\infty\} \\
	(z,u)&\longmapsto  \log||u_z||_h-\log||u_z||_{\mathcal{L}}= \log\left(\frac{||u_z||_h }{||u_z||_{\mathcal{L}}}\right)=\log\left(\frac{||u_z||_h||\omega_z||_h }{|u_z(\omega_z)|}\right)
\end{align*}
where $\omega$ is a local frame of $\pi^*(\mathcal{L})$.
Notice that this function is positive by Cauchy-Schwartz (see inequality \eqref{eq:cauchy-scwhartz}).

We denote by $\phi_u:\HH\to \RR\cup \{\infty\}$ the map $\phi(-,u)$, for $u\in \PP(\mathcal{V}^{\vee}_c)$.

\begin{rem}
	Recall that the function appearing in the denominator of $\phi$ comes from the norm of the holomorphic function
	\begin{equation*}
		\label{eq:pairing}
		\HH\times \mathcal{V}^{\vee}_c\longrightarrow \CC,\quad (z,u)\mapsto u_z(\omega_z).
	\end{equation*}
	The bad locus $\psi^{-1}(T)\subset \HH\times \PP^{\vee}(\mathcal{V}_c)$ is its zero locus and the bad locus for the vector $u\in\PP(\mathcal{V}_c) $ is given by the slice $T(u)=\psi^{-1}(T)\cap \left(\HH\times \{u\}\right)$. Notice that by Lemma \ref{lem:badlocusnottrivial} and by the irreducibility hypothesis, the set $T(u)$ is discrete in $\HH$ for every $ u\in \PP(\mathcal{V}^{\vee})$.
\end{rem}

Let $\epsilon>0$ be a positive constant. We define a tubular neighborhood  of the bad locus $\psi^{-1}(T)\subset\HH\times \PP(\mathcal{V}^{\vee}_c) $ to be
\[B(T,\epsilon):=\{(z,u)\in \HH\times \PP(\mathcal{V}^{\vee}_c)\colon \dd_{\hyp}(z,T(u))<\epsilon\}\subset \HH\times \PP(\mathcal{V}^{\vee}_c)\]
and the slice 
\[B(T(u),\epsilon):=B(T,\epsilon)\cap \left(\HH\times\{u\}\right).\]
Let $B(T,\epsilon)^\complement\subset \HH\times \PP^{\vee}(\mathcal{V}_c)$ be the complement of the tubular neighborhood $B(T,\epsilon)$.
In the next lemma we obtain a bound on the behavior of the function $\phi$ on $B(T,\epsilon) $ and on $B(T,\epsilon)^\complement$. The main ingredients used in the proof of the next lemma are  the compactness of the curve and the equivariance property of $\phi$.

\begin{lem}
	\label{lem:tbad}	
	There exist constants $M,N>0$ such that the function $\phi$ outside the tubular neighborhood  $B(T,\epsilon)$ satisfies the following bound 
	\[||\phi_{|B(T,\epsilon)^{\complement}}||_{\infty}\leq M+N|\log(\epsilon)|.\]
	Moreover there is a constant $M'>0$ such that for every $u\in \PP(\mathcal{V}^{\vee}_c)$ and any $w\in T(u)$ the function $\phi_u$ restricted to the ball $B_{\hyp}(w,\epsilon)$ around $w$ satifies the following bound:
	\[\phi_u(z)_{|B_{\hyp}(w,\epsilon)}\leq M'+N\sum_{z'\in T(u)\cap B_{\hyp}(w,\epsilon)}|\log\left(\dhyp(z,z')\right)| .\]
\end{lem}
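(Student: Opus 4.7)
The plan is to exploit the fact that $\phi$ is $\pi_1(C)$-invariant and so descends to a function $\Phi$ on the compact total space $\PP(\mathcal{V}^{\vee})\to C$, with only logarithmic singularities along the smooth codimension one subbundle $\PP(\tbadL)$ coming from the surjection $\mathcal{V}^{\vee}\twoheadrightarrow \mathcal{L}^{\vee}$. Indeed, both the numerator $\|u_z\|_h\|\omega_z\|_h$ and the denominator $|u_z(\omega_z)|$ in the formula for $\phi$ transform identically under the $\pi_1(C)$-action and under rescalings of $u$ and $\omega$, so the ratio is a well-defined function on $\PP(\mathcal{V}^{\vee})$, and the denominator is the section whose vanishing cuts out $\PP(\tbadL)$. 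I will first prove the local second bound and then derive the first bound from it by a compactness-plus-case split argument.

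For the second bound I will work in a local holomorphic chart on $\HH$ of hyperbolic radius bounded below by a uniform injectivity type constant around $w\in T(u)$, together with a local holomorphic trivialization $\omega$ of $\pi^*\mathcal{L}$. Then $f_u(z):=u_z(\omega_z)$ is a holomorphic function whose zero set in $B_{\hyp}(w,\epsilon)$ is $T(u)\cap B_{\hyp}(w,\epsilon)$, and I factor
\[
f_u(z)\;=\;\prod_{z'\in T(u)\cap B_{\hyp}(w,\epsilon)}(z-z')^{n_{z'}}\cdot h(z)
\]
with $h$ holomorphic and non-vanishing on a slightly larger concentric ball. Taking $-\log$ and using that hyperbolic and Euclidean distances are comparable on bounded hyperbolic balls, so $|\log|z-z'||\leq C_1+|\log\dhyp(z,z')|$ for a uniform $C_1$, each factor contributes a log term bounded by $|\log\dhyp(z,z')|+\mathrm{const}$. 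The complementary term $\log(\|u_z\|_h\|\omega_z\|_h)-\log|h(z)|$ descends, up to locally bounded changes of frame, to a smooth function on the compact projective bundle and so is uniformly bounded above, yielding the second inequality.

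For the first bound I fix a threshold $\epsilon_0>0$ smaller than the injectivity type radius and distinguish two regimes. If $\dhyp(z,T(u))\ge\epsilon_0$, then $(z,u)$ descends to a compact subset of $\PP(\mathcal{V}^{\vee})$ bounded away from $\PP(\tbadL)$, so the continuous function $\Phi$ is uniformly bounded there by some $M_2\leq M+N|\log\epsilon|$ for any $\epsilon<1$. If $\epsilon\le \dhyp(z,T(u))<\epsilon_0$, let $w\in T(u)$ be a nearest zero; then $z\in B_{\hyp}(w,\epsilon_0)$ and I apply the second bound on this ball. Every $z'\in T(u)\cap B_{\hyp}(w,\epsilon_0)$ satisfies $\dhyp(z,z')\ge\epsilon$ and $\dhyp(z,z')<2\epsilon_0$, hence $|\log\dhyp(z,z')|\le |\log\epsilon|+|\log \epsilon_0|+\log 2$. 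Combined with a uniform upper bound on the cardinality of $T(u)\cap B_{\hyp}(w,\epsilon_0)$, this gives $\phi_u(z)\le M+N|\log\epsilon|$ after adjusting constants.

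The main obstacle is establishing the two uniform estimates on which the argument rests: an upper bound $N$ on the vanishing orders $n_{z'}$ of $f_u$ and a uniform bound on the number of zeros of $f_u$ in any hyperbolic ball of fixed radius, each independent of $u\in\PP(\mathcal{V}^{\vee}_c)$ and the basepoint. Geometrically these are the contact orders and number of intersection points of the developing map $s_{\mathcal{L}}:\HH\to \PP(\mathcal{V}_c)$ with a moving hyperplane $\PP(\ker u)$, and the desired uniformity follows by realizing $f_u$ as (a local model of) a section of a line bundle on the compact $\PP(\mathcal{V}^{\vee})$ whose zero divisor $\PP(\tbadL)$ is smooth and whose fiberwise ramification is controlled by the finitely many $\pi_1(C)$-orbits of critical points of $s_{\mathcal{L}}$.
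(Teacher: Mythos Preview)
Your geometric framing is right and is essentially what the paper does: $\phi$ descends to the compact space $\PP(\mathcal{V}^{\vee})$ (the paper phrases this as equivariance plus restriction to $F\times\PP(\mathcal{V}_c^{\vee})$), and the singularities are logarithmic along the corank-one subbundle $\PP(\tbadL)$. The derivation of the first bound from the second via the $\epsilon_0$-threshold case split is also correct, given the second bound and the cardinality control.

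The genuine gap is in your proof of the second bound. You factor $f_u(z)=\prod_{z'\in T(u)\cap B_{\hyp}(w,\epsilon)}(z-z')^{n_{z'}}\cdot h(z)$ and assert that $h$ is non-vanishing on a slightly larger ball and that $\log(\|u_z\|_h\|\omega_z\|_h)-\log|h(z)|$ descends to a smooth function on the compact bundle. Neither is true. The factorization depends on $u$ through the set $T(u)\cap B_{\hyp}(w,\epsilon)$, which changes discontinuously as zeros cross $\partial B_{\hyp}(w,\epsilon)$; and even away from such crossings, a zero of $f_u$ sitting just \emph{outside} the ball forces $|h|$ to be arbitrarily small near that boundary point, so $-\log|h|$ is not uniformly bounded above in $u$. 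Consequently the ``remainder is smooth on a compact space'' step fails, and with it the asserted uniform constant $M'$.

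The paper repairs exactly this by doing a \emph{parametric} factorization: it applies the Weierstrass preparation theorem to $u_z(\omega_z)$ as a holomorphic function of $(z,u)$ near each point of $T_{|F}$, obtaining $u_z(\omega_z)=h_j(z,u)P_j(z,u)$ with $h_j$ non-vanishing on a full neighborhood in the product and $P_j$ a monic polynomial in $z$ of fixed degree $n_j$ with coefficients holomorphic in $u$. The uniform $N=\sum_j n_j$ comes from these degrees, and the roots $z_i(u)$ of $P_j$ are (after a finite cover of the $u$-chart) continuous functions of $u$. Compactness then legitimately bounds $-\log|h_j|$ and the contributions of roots lying outside $B_{\hyp}(w,\epsilon)$. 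Your sketch at the end, invoking ``finitely many $\pi_1(C)$-orbits of critical points of $s_{\mathcal{L}}$'', does not supply this: critical points of $s_{\mathcal{L}}$ control only where the contact order with a hyperplane jumps, not the uniform lower bound on $|h|$ as nearby zeros approach. If you replace your pointwise factorization by Weierstrass preparation in $(z,u)$, the rest of your argument goes through and becomes essentially the paper's proof.
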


\begin{proof}
	
	Let us choose a compact fundamental domain $F\subset \HH$ for $C$  and consider the restrictions 
	\[T_{|F}:=\psi^{-1}(T)\cap  \left(F\times \PP(\mathcal{V}^{\vee}_c)\right)\subset \HH\times \PP(\mathcal{V}^{\vee}_c)\] and 
	\[B(T,\epsilon)_{|F}:=B(T,\epsilon)\cap  \left(F\times \PP(\mathcal{V}^{\vee}_c)\right).\]
	
	Note that if we prove the two claims of the proposition restricting ourselves to the subset $F\times \PP(\mathcal{V}^{\vee}_c)$, then we can use the equivariance property
	\[\phi(\gamma z,u)=\phi(z,\rho(\gamma^{-1})u),\quad \gamma\in \pi_1(C,c)\]
	to extend the results to all of the upper half plane since the constants involved in the expressions are independent of $u\in \PP(\mathcal{V}^{\vee}_c)$ and since $\pi_1(C,c)$ acts via isometries on $\HH$.
	
	The main idea now is considering the holomorphic function of expression \eqref{eq:pairing} locally as a power series. Then by a compactness argument we can control the coefficients of this power series. The main technical problem is that two zeros of this function, which are the bad points, can collide for some values of $u\in \PP(\mathcal{V}^{\vee})$. This has to be taken into account in order to correctly prove the second statement of the lemma.

	Since $F\times \PP(\mathcal{V}^{\vee}_c)$ is compact,  we can choose finitely many points $(z_j,u_j)\in T_{|F} $ and compact neighborhoods of the points $(z_j,u_j)\in U_j\subset F\times \PP(\mathcal{V}^{\vee}_c)$ such that $T_{|F}\subseteq \bigcup_{j=1}^m U_j$. We can now apply the  Weierstrass preparation theorem (see for example \cite[Ch.0.1]{gh}) to the holomorphic function $u_z(\omega_z)$ near the points $(z_j,u_j)$ and obtain 
	\[\phi(z,u)_{|U_j}=\log\left( \frac{||u_z||_h||\omega_z||_h }{\left|h_j(z,u)P_j(z,u)\right|}\right)\]
	where the holomorphic functions $h_j$ are never zero and the polynomials $P_j$ are given as
	\[P_j(z,u)=\sum_{i=0}^{n_j-1} a_{i,j}(u) (z-z_j)^i+(z-z_j)^{n_j}.\]
	The coefficients $a_{i,j}(u)$ are holomorphic functions  with $a_{i,j}(u_j)=0$ for all $i$ and $j$. Notice that 
	$T_{|F}\cap U_j$ is the zero locus of $P_j$.
	%Without loss of generality we can assume the splitting
	%\[P_j(z,u)=\prod_{i=1}^{n_j}(z-z_{i,j}(u))\]
	%where $z_{i,j}$ are single valued holomorphic functions on $\PP(\mathcal{V}^{\vee}_c)$. This is justified because, even if the roots are multivalued holomorphic functions there is locally a finite degree covering map ramified over the discriminant locus of the polynomials $P_j$ such that the pull-back $\tilde{P}_j$ of the polynomials $P_j$ are  totally split polynomials. Moreover the preimage of the tubular neighborhood $B(T,\epsilon)_{|F}\cap U_j$ around the zero locus of $P_j$ is the same as the tubular neighborhood around the zero locus of $\tilde{P}_j$.
	
	On each $U_j$ we get then the following bound:
	\begin{equation*}
		\begin{split}
			\phi(z,u)_{|U_j}&=\log\left( \frac{||u_z||_h||\omega_z||_h }{|h_j(z,u)|}\right)-\log|P_j(z,u)|\leq M''-\log|P_j(z,u)|
		\end{split}
	\end{equation*}
	where the constant
	\[M'':=\max_{j=1,\dots,m} \left(\max_{(z,u)\in U_j} \log\left( \frac{||u_z||_h||\omega_z||_h }{|h_j(z,u)|}\right)\right)\]
	is well defined since the functions $h_j$ are never zero on the compact subsets $U_j$.
	
	We then rewrite for every $(z,u)\in U_j$ the roots decomposition of the polynomial $P_j(z,u)$ with respect to the variable $z$ to get
	\begin{equation*}
		\begin{split}
			\phi(z,u)_{|U_j} &\leq M''-\log|P_j(z,u)|=M''-\sum_{i=1}^{n_j}\log|z-z_{i,j}(u)|
		\end{split}
	\end{equation*}
	where $z_{i,j}(u)\in \CC$ are possibly equal to each other. 
	Since $F\subset \HH$ is compact, the euclidean and the hyperbolic distances are comparable to each other. In particular there is a constant $L>0$ such that $|x-y|\geq L\cdot\dhyp(x,y)$ for all $x,y\in F$.	
	Hence we can rewrite the last inequality as
	\begin{equation}
	\label{eq:partialbound}
	\begin{split}
	\phi(z,u)_{|U_j} &\leq M''-\sum_{i=1}^{n_j}\log|z-z_{i,j}(u)|\leq M''-\sum_{i=1}^{n_j}\log|L\cdot \dhyp(z,z_{i,j}(u))|.
	\end{split}
	\end{equation}
	
	If $\epsilon$ is chosen small enough, we can assume that $B(T,\epsilon)_{|F}$ is contained in the union $ U:=\bigcup_{j=1}^m U_j$. 
	Since $(z,u)\in B(T,\epsilon)^{\complement}$ implies that $\dhyp(z,z_{i,j}(u))>\epsilon$, we find the  bound
	\begin{align*}
		||\phi_{|B(T,\epsilon)^{\complement}}||_{\infty} \leq M -N\log(\epsilon)= M +N|\log(\epsilon)|
	\end{align*}
	where 
	\[N:=\sum_{j=1,\dots,m} n_j,\quad M:=\max\{\max_{(z,u)\in \mathring{U}^{\complement}} \phi(z,u), M'' \}-N\log(L) .\]
	Hence the first statement of the proposition is proven.

	In order to prove the second statement of the lemma   we consider a bad point $(w,u)\in T_{|F}$. Since $\epsilon$ is small, we can assume that the tubular neighborhood $B(T,\epsilon)_{|F}$ of $T_{|F}$ is contained in $U=\bigcup_{j=1}^m U_j$. Moreover, if $\epsilon$ is small enough we can also assume that $U_j$ is a product $U_j=K_j\times V_j$ for $K_j\subset F$ and $V_j\subset \PP(\mathcal{V}^{\vee}_c)$ compact subsets. From  now on, for the sake of a simpler notation we set $K:=K_j$ and $V:=V_j$. Moreover we set $P:=P_j$, so that the bad locus $T_{|F}\cap U_j=Z(P)$ is the zero set of $P$.  We will prove the second statement of the lemma  restricting to $U_j=K\times V$. This is sufficient since there are only finitely many $U_j$. 
	
	We need to prove that for any $(w,u)\in Z(P)\subset K\times V$ the following bound holds:
	\[\phi_u(z)_{|B_{\hyp}(w,\epsilon)}\leq M'+N\sum_{z'\in Z(P_u)\cap B_{\hyp}(w,\epsilon)}|\log\left(\dhyp(z,z')\right)| \]
	for some constant $M'$. For any $(z,u)\in K\times V$ we decompose the polynomial $P(z,u)$ into roots 
	\[P(z,u)=\prod_{i=1}^n(z-z_i(u)).\]
	Without loss of generality we can assume that the roots $z_i:V\to \CC$ are well-defined holomorphic functions. Indeed, even if in general they are only multi-valued functions, there is a finite covering $\pi:V'\to V$ such that the pullbacks $\pi^*(z_i):V'\to \CC$ are single valued functions. If we then can prove the desired bound for these pull-back roots, namely if for any root $(w,v)\in Z(\pi^*(P))\subset K\times V'$ it holds
	\[\pi^*(\phi)(z,v)_{|B_{\hyp}(w,\epsilon)}\leq M'+N\sum_{z'\in Z(\pi^*(P)_v)\cap B_{\hyp}(w,\epsilon)}|\log\left(\dhyp(z,z')\right)| \]
	then it is clear that we get the same bound for the original function. Indeed it is enough to choose $v\in \pi^{-1}(u)$ for any $(w,u)\in Z(P)\subset K\times V$ and use the definition of pull-back $\pi^*(\phi)(z,v)=\phi(z,\pi(v))$ and $\pi^*(P)_v=P_{\pi(v)}$ to get the original bound.

	Since we reduced to the case where $P(z,u)=\prod_{i=1}^n(z-z_i(u))$ and   $z_i:V\to \CC$ are holomorphic functions, we can consider the  irreducible components of the zero locus $Z(P)$ which are now given as graphs
	\[\Gamma_i:=\{(z_i(u),u)\in K\times V\colon u\in V\}.\]
	Let us define the tubular neighborhood around $\Gamma_i$ as
	\[B(\Gamma_i):=\{(z,u)\in K\times V\colon \dhyp(z,z_i(u))\leq \epsilon\}.\]
	Notice that by inequality \eqref{eq:partialbound} we have
	\[\phi(z,u)_{|K\times V}\leq M''-\sum_{i=1}^{n}\log|L\cdot \dhyp(z,z_{i}(u))|.\]
	In order to define a global constant $M'$ independent of $u\in V$ in the bound that we are trying to prove, we need to define the maximal compact subset of $B(\Gamma_i)$ where the function $\log|L\cdot \dhyp(z,z_k(u))|_{|B_{\hyp}(z_i(u),\epsilon)}$ is well define and then take the maximum over this set for all $k\not =i$. 
	This set can be defined as  the $k$-th complement
	\begin{equation*}
		\begin{split}
			B(\Gamma_i)(k):&=\{(z,u)\in B(\Gamma_i)\colon  \dhyp(z_i(u),z_k(u))\geq\epsilon\}\\
			&=\{(z,u)\in B(\Gamma_i)\colon  (z_k(u),u)\not \in B(\Gamma_i) \}.
		\end{split}
	\end{equation*}
	The number
	\[M'_{i}:=\max_{k\not =i}\left(\max_{(z,u)\in B(\Gamma_i)(k)} -\log\left(L\cdot\dhyp(z,z_{k}(u))\right)\right)\] 
	is well-defined since the sets we are taking the maximum on are compact and the functions $\log\left(L\cdot\dhyp(z,z_{k}(u))\right)$ are well-defined in these sets.
	We finally define 
	\[M':=N\cdot\max_{i} M'_{i}+M''-N\log(L).\]
	Then by rewriting once again inequality \eqref{eq:partialbound} and using the definition of $M'$ we find that for every  $i\in\{1,\dots,n\}$ it holds 
	\begin{align*}
		\phi(z,u)_{|B(\gamma_i)}&\leq   M'-\sum_{z_{k}(u)\in B_{\hyp}(z_{i}(u),\epsilon)}\log\left(\dhyp(z,z_{k}(u))\right)\\
		&\leq M'+N\sum_{z_{k}(u)\in B_{\hyp}(z_{i}(u),\epsilon)}\left|\log\left(\dhyp(z,z_{k}(u))\right)\right|
	\end{align*} 
	where we added the constant $N>0$ in the second inequality because in the second claim of the proposition we are summing over the bad points not taking multiplicities into account.
	The second claim of the proposition is then proven.
	
\end{proof}

\begin{rem}
	\label{lem:finitebadpoints}
	The positive constant $N>0$ of the previous lemma gives a uniform bound of the number of bad points in any compact fundamental domain for a compact curve  $C$. In the case of a non compact curve, it is unclear if such a uniform bound exists. Notice that  there is an an alternative way  of proving that the number of bad points is uniformly bounded, without using the Weierstrass preparation theorem. Let $\chi_F(z)$ be the characteristic function of a compact fundamental domain $F\subset \HH$. Using a partition of unity argument we can create a continuous function $\tilde{\chi}_F(z)$ that agrees with $\chi_F(z)$ on $F$. Consider then   the continuous map
	\[\PP(\mathcal{V}^{\vee})\to \RR,\quad u\mapsto \int_{\HH}\tilde{\chi}_F(z)\chi_{T(u)}(z)  \]
	where   $\chi_{T(u)}(z)$ is the characteristic function of the set $T(u)$. Since this map is continuous,   it is bounded. The bound is independent of the choice of fundamental domain by the equivariance property
	\[\chi_{T(u)}(\gamma(z))=\chi_{T(\rho(\gamma^{-1})u)}(z),\quad \text{for any } \gamma\in \pi_1(C,c).\]
\end{rem}

Using the bounds of the last lemma, we now prove Proposition \ref{prop:conjcompact} which, as we already noticed,  implies Theorem \ref{thm:conjcompact}. The main strategy is to separate the study of the integral of the seminorm near and far from the bad points.

\begin{proof}[Proof of Proposition \ref{prop:conjcompact}]
	Let us choose $u\in \mathcal{V}^{\vee}$ that computes the Lyapunov exponent, meaning that 
	\[\lambda_1(\mathcal{V})=\lim_{t\to \infty} \frac{1}{t}\frac{1}{2\pi}\int_{0}^{2\pi}\log||u_{g_tr_{\theta}(x_0)}||_{h} \dd \theta\]
	where $x_0\in T^1\HH$ is the base point of $u$.  We used and will use from now on a slight abuse of notation by identifying $g_tr_{\theta}(x_0)\in T^1\HH$ with its base point in $\HH$. This is not a problem since the norms involved are pullbaks of norms for the bundle over $\HH$.
	
	Let us fix
	$\epsilon'>0 \text{ and }  t>>0$ and set 
	\[\epsilon=\epsilon(t):=e^{-t\epsilon'}.\]
	%By abuse of notation, we will not distinguish a point in $T^1\HH$ from its base point in $\HH$. 
	We define now
	\[S(t)_{\text{near}}:=\{\theta\in [0,2\pi]\colon g_tr_{\theta}(x_0)\in   B(T(u),\epsilon)\}\]
	and $S(t)_{\text{far}}:=[0,2\pi]-S(t)_{\text{near}}$.

	We want to prove that  the difference between the norm and the seminorm
	\begin{align*}
		\frac{1}{t}\int_{0}^{2\pi}\log\left(\frac{||u_{g_tr_{\theta}(x_0)}||_{h}}{ ||u_{g_tr_{\theta}(x_0)}||_{\mathcal{L}}}\right)\dd \theta&=\frac{1}{t}\int_{0}^{2\pi}\phi(g_tr_{\theta}(x_0),u)\dd \theta\\
		&=\frac{1}{t}\left(\int_{S(t)_{\text{near}}}\phi(g_tr_{\theta}(x_0),u)\dd \theta+\int_{S(t)_{\text{far}}}\phi(g_tr_{\theta}(x_0),u)\dd \theta\right)
	\end{align*}
	tends to zero.

	We treat first the integral near the bad locus.
	Let $z_0\in \HH$ be the base point of $x_0$ and define the hyperbolic annulus
	\[A(t,\epsilon):=\{z\in \HH\colon t-\epsilon\leq\dhyp(z,z_0)\leq t+\epsilon\}\] and for any $w\in \HH$ define the 
	arc portion
	\[C_t(w,\epsilon):=\{\theta\in [0,2\pi]\colon g_tr_{\theta}(x_0)\in B_{\hyp}(w,\epsilon)\}.\]
	
	It follows from the above definitions  and from the second statement of  Lemma \ref{lem:tbad} that
	\begin{align*}
		&\int_{S(t)_{\text{near}}}\phi(g_tr_{\theta}(x_0),u)\dd \theta\leq \sum_{w\in T(u)\cap A(t,\epsilon)} \int_{C_t(w,\epsilon)}\phi(g_tr_{\theta}(x_0),u)\dd \theta\\
		&\leq \sum_{w\in T(u)\cap A(t,\epsilon)} M'\int_{C_t(w,\epsilon)} \dd \theta+\\
		&\sum_{w\in T(u)\cap A(t,\epsilon)} N\sum_{z'\in T(u)\cap B_{\hyp}(w,\epsilon)}\int_{C_t(w,\epsilon)} \left|\log\left(\dhyp(g_tr_{\theta}(x_0),z')\right)\right| \dd \theta\\
		&\leq \sum_{w\in T(u)\cap A(t,\epsilon)} M'\int_{C_t(w,\epsilon)} \dd \theta+\\
		&\sum_{w\in T(u)\cap A(t,\epsilon)} N^2\max_{z'\in B_{\hyp}(w,\epsilon)}\left(\int_{C_t(w,\epsilon)} \left|\log(\dhyp(g_tr_{\theta}(x_0),z'))\right| \dd \theta\right).
	\end{align*}
	where the last inequality follows since we can assume that $B_{\hyp}(w,\epsilon)$ is small enough to be contained in a fundamental domain and $N$ is the the uniform bound for number of bad points in a fundamental domain. 
	
	\begin{center}
		\begin{tikzpicture}
		\def\r{6pt} % radius for angle
		\def\factor{3} % factor for distance of angle names
		
		\def\drawangle(#1)(#2)(#3)#4{%
			\pgfmathanglebetweenlines%
			{\pgfpointanchor{#2}{center}}{\pgfpointanchor{#1}{center}}%
			{\pgfpointanchor{#2}{center}}{\pgfpointanchor{#3}{center}}
			\pgfmathround{\pgfmathresult}
			\pgfmathtruncatemacro{\angle}{\pgfmathresult}
			\typeout{Angle #4 = \angle}
			% get angles for arc:
			\pgfmathanglebetweenpoints%
			{\pgfpointanchor{#2}{center}}{\pgfpointanchor{#3}{center}}
			\pgfmathtruncatemacro{\endarc}{\pgfmathresult}
			\pgfmathanglebetweenpoints%
			{\pgfpointanchor{#2}{center}}{\pgfpointanchor{#1}{center}}%
			\pgfmathtruncatemacro{\startarc}% always counter-clockwise!
			{\pgfmathresult > \endarc ? \pgfmathresult - 360 : \pgfmathresult}
			\draw (#2) ++ (\startarc:\r) arc (\startarc:\endarc:\r);
			\typeout{Start angle: \startarc\space End angle: \endarc}
			% place nodes:
			\pgfmathsetmacro{\anglenode}{(\endarc+\startarc)/2}
			\pgfmathparse{\factor*\r}
			\path (#2) ++(\anglenode:\pgfmathresult pt) node {$#4$};%{$\angle^\circ$};%{$#4$};
		}
		
		\draw (1,1) circle (2.5cm);
		\node [left] at (1,1) {$z_0$};
		\filldraw (1,1) circle[radius=1pt];
		\draw  (1,1)--(1,3.5);
		\node [left] at (1,2.3) {$t$};
		\node [above] at (-1.8,3) {$A(t,\epsilon)$};
		\draw[dashed] (1,1) circle (1.9cm);
		\draw[dashed] (1,1) circle (3.1cm);
		\filldraw (3.2,1) circle[radius=1pt];
		\node [above] at (3.2,1) {$w$};
		\draw (3.2,1) circle (0.6cm);
		\draw  (1,1)--(3.43,1.55);
		\draw  (1,1)--(3.44,0.45);
		
		\coordinate (O) at (1,1); 
		\coordinate (A) at (3.43,1.55); 
		\coordinate (B) at (3.44,0.45); 
		\drawangle(B)(O)(A){}
		\node [right] at (1.3,0.97) {\scriptsize   $C_t(w,\epsilon)$};
		
		\node [left] at (3.45,0.7) {\small $z'$};
		\filldraw (3.3,0.6) circle[radius=0.5pt];
		\end{tikzpicture}
	\end{center}

	Define 
	\[ \theta(\epsilon,t):=\frac{\sinh(\epsilon)}{\sinh(t)}\]
	and notice that
	\[\int_{C_t(w,\epsilon)} \dd \theta\leq\theta(\epsilon,t)\]
	since  the measure of the angle $C_t(w,\epsilon)$ is the same as the quotient of the hyperbolic length of the arc that the angle defines and the hyperbolic length of the circumference $ S_t(z_0)$. It is moreover clear that the hyperbolic length of the arc defined by  $C_t(w,\epsilon)$ is less than the length of the circumference $\partial B_{\hyp}(w,\epsilon)$.
	
	We hence get the following bound:
	\begin{align*}
		&\int_{S(t)_{\text{near}}}\phi(g_tr_{\theta}(x_0),u)\dd \theta \leq M'\cdot \sharp\{ T(u)\cap A(t,\epsilon)\} \cdot\theta(\epsilon,t)+ \\
		&+N^2\cdot \sharp\{ T(u)\cap A(t,\epsilon)\}\cdot \max_{w\in  A(t,\epsilon)}\left( \max_{z'\in B_{\hyp}(w,\epsilon)}\int_{C_t(w,\epsilon)} \left|\log(\dhyp(g_tr_{\theta}(x_0),z'))\right| \dd \theta\right).
	\end{align*}
	
	In order to bound the second summand, we notice that this term is invariant under isometries. Hence we are free to choose a convenient coordinate system and  work for example in the Poincaré disk $\mathcal{D}$ with  starting point  $x_0=(0,(1,0))\in T^1 \mathcal{D}$, namely the center together with the horizontal direction. Moreover, since the term
	\[\max_{z'\in B_{\hyp}(w,\epsilon)}\left(\int_{C_t(w,\epsilon)} \left|\log(\dhyp(g_tr_{\theta}(x_0),z'))\right| \dd \theta\right)\]
	is invariant under rotation, we can assume that $w$ is on the horizontal ray 
	\[w\in R:=A(t,\epsilon)\cap [0,1]=[\tanh(t-\epsilon),\tanh(t+\epsilon)].\]
	We define the tubular neighborhood of the ray as
	\[U:=\bigcup_{r\in [t-\epsilon,t+\epsilon]}B_{\hyp}(\tanh(r/2),\epsilon).\]
	We then get  the following bound:
	\begin{align*}
		&\max_{w\in R}\left(\max_{z'\in  B_{\hyp}(w,\epsilon)}\left(\int_{C_t(w,\epsilon)} \left|\log(\dhyp(g_tr_{\theta}(x_0),z'))\right| \dd \theta\right)\right) \\
		&\leq \max_{z'\in U}\left(\int_{-\frac{\theta(\epsilon,t)}{2}}^{\frac{\theta(\epsilon,t)}{2}} \left|\log\dhyp(\tanh(t/2)e^{i\theta},z')\right| \dd \theta\right)\\
		&\leq \max_{z'\in R}\left(\int_{-\theta(\epsilon,t)}^{\theta(\epsilon,t)} \left|\log\dhyp(\tanh(t/2)e^{i\theta},z')\right| \dd \theta\right)
	\end{align*}
	where the last inequality follows again by rotational invariance (up to enlarging the angle of integration we can assume that $z'$ is  on the ray that cuts the angle in two equal parts and then we can rotate to have $z'\in R$).
	Notice now that 
	
	\begin{align*}
		& \max_{z'\in R}\left(\int_{-\theta(\epsilon,t)}^{\theta(\epsilon,t)} \left|\log\dhyp(\tanh(t/2)e^{i\theta},z')\right| \dd \theta\right)\\
		&\leq \int_{-\theta(\epsilon,t)}^{\theta(\epsilon,t)} \left|\log\dhyp(\tanh(t/2)e^{i\theta},R)\right| \dd \theta
	\end{align*}
	where by definition
	\[\dhyp(\tanh(t/2)e^{i\theta},R):=\inf_{z'\in R}\dhyp(\tanh(t/2)e^{i\theta},z').\]
	
	Using  the hyperbolic sine rule we get
	\[\dhyp(\tanh(t/2)e^{i\theta},R)=\sinh^{-1}(\sinh(\tanh(t/2))\cdot \sin(\theta))=\theta +o(\theta)\]
	where the little-$o$ notation is with respect to $t$ going to infinity (hence $\theta=\theta(t)$ going to zero).
	We finally can then rewrite 
	\begin{align*}
		& \int_{-\theta(\epsilon,t)}^{\theta(\epsilon,t)} \left|\log\dhyp(\tanh(t/2)e^{i\theta},R)\right| \dd \theta= 2\int_{0}^{\theta(\epsilon,t)} \left|\log\theta\right|\dd \theta + o(\theta(\epsilon,t))\\
		&=\left|2\theta(\epsilon,t)\log\left(\theta(\epsilon,t)\right)-2\theta(\epsilon,t)\right|+o(\theta(\epsilon,t))\sim \log(\theta(\epsilon,t))\theta(\epsilon,t)\sim\frac{\epsilon(t)\log(\epsilon(t)/e^t)}{e^{t}}.
	\end{align*}
	The last asymptotic follows from the original definition of $\epsilon=\epsilon(t)=e^{-t\epsilon'}$, for a fixed  small $\epsilon'>0$, and the definition of $\theta(\epsilon,t)$, which together with the fact that for large $t$  the function $\sinh(t)$ is asymptotic to $e^t$ and for $\epsilon$ small $\sinh(\epsilon)$ is asymptotic $\epsilon$, gives 
	\[\theta(\epsilon,t)=\frac{\sinh(\epsilon)}{\sinh(t)}\sim \frac{\epsilon(t)}{e^t} .\]

	Putting together all the inequalities we showed that 
	\begin{align*}
		&\lim_{t\to \infty} \frac{1}{t} \int_{S(t)_{\text{near}}}\phi(g_tr_{\theta}(x_0),u)\dd \theta \leq \liminf_{t\to \infty} \frac{1}{t} \sharp\{ T(u)\cap A(t,\epsilon)\} \cdot \frac{\epsilon(t)\log(\epsilon(t)/e^t)}{e^{t}}\\
		&\leq \liminf_{t\to \infty}  \sharp\{ T(u)\cap D_{t+\epsilon}(z_0)\} \cdot \frac{\epsilon(t)\log(\epsilon(t)/e^t)}{te^{t}}.
	\end{align*}
	By Theorem \ref{thm:lyapexp}, we know that the limit defining the error term converge:
	\[\lim_{T\to \infty}  \frac{1}{T}\int_{0}^{T}\frac{\sharp \{T(u)\cap D_{t+\epsilon}(z_0)\}}{\vol(D_{t+\epsilon}(z_0))} \dd t=\lim_{T\to \infty}  \frac{1}{T}\int_{0}^{T}\frac{\sharp \{T(u)\cap D_{t+\epsilon}(z_0)\}}{4\pi \sinh^2((t+\epsilon)/2)} \dd t<\infty.\]
	This implies that
	\[ \liminf_{t\to \infty}  \sharp\{ T(u)\cap D_{t+\epsilon}(z_0)\} \cdot \frac{\epsilon(t)\log(\epsilon(t)/e^t)}{te^{t}}=0.\]
	Indeed if this is not the case then there is a constant $c>0$ and $t'$ such that for all $t>t'$ it holds $\displaystyle \sharp\{ T(u)\cap D_{t+\epsilon}(z_0)\}>c\cdot(te^t/(\epsilon(t)\log(\epsilon(t)/e^t) )$ which implies that
	\[\lim_{T\to \infty}  \frac{1}{T}\int_{t'}^{T}\frac{\sharp \{T(u)\cap D_{t+\epsilon}(z_0)\}}{4\pi \sinh^2((t+\epsilon)/2)} \dd t>\lim_{T\to \infty}  \frac{1}{T}\int_{t'}^{T}\frac{c\cdot(te^t/(\epsilon(t)\log(\epsilon(t)/e^t) )}{4\pi \sinh^2((t+\epsilon)/2)} \dd t.\]
	This yields to a contradiction, since the right-hand side of the previous inequality is not finite. Indeed the integrand is asymptotic to
	\[	\displaystyle \frac{te^t}{\epsilon(t)\log(\epsilon(t)/e^t)e^{t+\epsilon(t)}}\sim \frac{te^{t\epsilon'}}{\log(e^{-t(1+\epsilon')})}\overset{t\to \infty}{\longrightarrow} \infty.\]
	%\begin{equation*}
	%\begin{split}
	%&\lim_{t\to \infty} \frac{1}{t}\int_{S(t)_{\text{near}}}\log\frac{||u_{\theta(z)}||_{h}}{ ||u_{\theta(z)}||_{\mathcal{L}}}\dd \theta\\
	%&\leq\lim_{t\to \infty} \frac{1}{t}\left(e^{2t}\left(e^{2\epsilon(t)}-e^{-2\epsilon(t)}\right)\right)\left(\frac{2\epsilon(t)(\log(2\epsilon(t))-1)}{e^{2t}}\right)=0.  
	%\end{split}
	%\end{equation*}
	
	Using the first statement of  Lemma \ref{lem:tbad} we can compute a bound of the integral over the points which are not near $T(u)$:
	\[\lim_{t\to \infty}\frac{1}{t}\int_{S(t)_{\text{far}}}\phi(g_tr_{\theta}(x_0),u)\dd \theta\leq \lim_{t\to \infty}\frac{2\pi}{t}(M-N(\log(\epsilon)) = 2\pi N\epsilon'. \]
	
	By letting $\epsilon'$ tend to zero, we finally get 
	\[\lambda_1(\mathcal{V})=\lim_{T\to \infty} \frac{1}{T}\frac{1}{2\pi}\int_{0}^{2\pi} \log\|G_T r_{\theta} u\|_{\mathcal{L}}\dd\theta\] 
	and Proposition \ref{prop:conjcompact} is proven.
	
\end{proof}

\subsection{Harmonic measures and Brownian motion}

In this section we compare Theorem \ref{thm:conjcompact} to the main results of \cite{deroin} and \cite{deroindaniel}. We will describe how in the case of compact base curve Theorem \ref{thm:conjcompact}  can be used to identify the dynamical degree defined in \cite{deroindaniel} with our error term, which in turn can be viewed as a generalization of the asymptotic covering degree of developing maps defined in \cite{deroin}.

In   \cite{dd1} and \cite{deroin}, Deroin and Dujardin defined the Lyapunov exponents associated to holonomies of parabolic projective structure on hyperbolic surfaces in the context of Brownian motion. The definition of Lyapunov exponents in this context is essentially the same as our definition provided by Oseledets  multiplicative ergodic theorem, but the cocycle is defined over the Brownian motion on the Riemann surface instead of on the geodesic flow. 
The two definitions of Lyapunov exponents provide the same numbers  since the Brownian motion tracks the geodesic flow sublinearly on hyperbolic Riemann surfaces (see \cite{ancona}).
In \cite{dd1} it was proven the main equality of Theorem \ref{thm:conjcompact} in the specific case of rank 2 representations given as holonomies of projective structures inducing the same holomorphic structure of the base curve.  In Proposition \ref{prop:operproj} we identified the locus of such representations with the oper locus if the base curve is compact. The error term was identified with the asymptotic covering degree of the developing map of the projective structure (the different constants appearing are due to a different normalization of the hyperbolic metric). Our error term is a generalization of this asymptotic degree (see Proposition \ref{prop:rank2shatz} for a specific comparison in rank two).  Notice however  that the equality proven in \cite{dd1} is more general than ours since it works for parabolic representations over non compact curves and since in the error term they do not need the integral defining the mean of the counting function since they can prove that the counting function converges.

In \cite{deroindaniel}, Daniel and Deroin generalized the definition of Lyapunov exponents in the context of Brownian motion on Kähler manifolds. The result they provide is analogous to the main equality of Theorem \ref{thm:conjcompact}, where the error term is called dynamical degree.  Recall that  a measure $\nu$ on the projective bundle associated to a flat bundle $\mathcal{V}$ is called  harmonic  if it is invariant under the heat semigroup action. The dynamical degree associated to a sub-vector bundle $\mathcal{E}\subset \mathcal{V}$ is defined as the intersection number
\[\delta_{\mathcal{E}}:=T_{\nu} \cap [\PP(\mathcal{E})] \]
where $T_{\nu}$ is the harmonic current associated to $\nu$.  

\begin{cor}
	\label{cor:dynamdegree}
	Over a compact Riemann surface, the error term and the dynamical degree coincide
	\[\delta_{\mathcal{E}}= \err^{\mathcal{E}}(u)\]
	for Lebesgue almost all $u\in \bigwedge^{k} \mathcal{V}^{\vee}$.
\end{cor}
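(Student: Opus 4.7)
The plan is to deduce the corollary by comparing two different formulas, both of which express the sum of the top $k$ Lyapunov exponents of $\mathcal{V}$ as a normalized degree plus a correction term. Theorem \ref{thm:conjcompact} provides
\[
\sum_{i=1}^k \lambda_i(\mathcal{V}) \;=\; \frac{2\deg(\mathcal{E})}{\deg(\mathcal{K}_C)} \,+\, \err^{\mathcal{E}}(u)
\]
for Lebesgue a.a.\ $u\in \bigwedge^k \mathcal{V}^{\vee}$, where the Lyapunov exponents are defined via parallel transport along the geodesic flow on $T^1 C$. The main theorem of \cite{deroindaniel}, applied to the flat bundle $\mathcal{V}$ over the compact hyperbolic surface $C$ with the subbundle $\mathcal{E}\subset \mathcal{V}$, expresses the analogous sum of Lyapunov exponents defined via Brownian motion as
\[
\sum_{i=1}^k \lambda_i^{\mathrm{Brow}}(\mathcal{V}) \;=\; \frac{2\deg(\mathcal{E})}{\deg(\mathcal{K}_C)} \,+\, \delta_{\mathcal{E}}.
\]

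The bridge between these two identities is the fact that the Brownian-motion Lyapunov exponents agree with the geodesic-flow Lyapunov exponents in this setting. This is precisely Ancona's sublinear tracking theorem for Brownian paths on negatively curved surfaces (cited in the discussion preceding the corollary), which ensures that the almost-sure asymptotic growth rate of $\log\|G_t u\|_h$ is the same whether $t$ parametrizes the geodesic flow or a generic Brownian trajectory. Hence $\lambda_i^{\mathrm{Brow}}(\mathcal{V}) = \lambda_i(\mathcal{V})$ for all $i$, and subtracting the two formulas cancels both the Lyapunov sum on the left and the normalized degree on the right, yielding $\err^{\mathcal{E}}(u) = \delta_{\mathcal{E}}$ for Lebesgue almost all $u$.

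The only delicate point is ensuring that normalizations and hypotheses are compatible: both \cite{deroindaniel} and the present paper fix the hyperbolic metric with constant curvature $-4$, so the normalization factor $2/\deg(\mathcal{K}_C)$ appears identically in both identities; one must also observe that the $k$-irreducibility hypothesis required for Theorem \ref{thm:conjcompact} is compatible with, or implied by, the irreducibility assumption used in \cite{deroindaniel} to guarantee a unique harmonic current $T_\nu$ on $\PP(\bigwedge^k \mathcal{V})$. Once these comparisons are made, the corollary is immediate.
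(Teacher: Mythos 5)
Your proposal is correct and follows essentially the same route as the paper: the paper's proof likewise invokes the coincidence of the Brownian-motion and geodesic-flow Lyapunov exponents (via sublinear tracking, as recalled in the preceding discussion) and then compares the equality of Theorem \ref{thm:conjcompact} with the formula of \cite[Theorem 5]{deroindaniel} to cancel the Lyapunov sums and the normalized degrees. Your additional remarks on matching the curvature normalization and the irreducibility hypotheses are sensible checks but do not alter the argument.
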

\begin{proof}
	As recalled above, the Lyapunov exponents defined in the context of Brownian motion and the one defined for the geodesic flow coincide on a hyperbolic curve.
	The result then follows by comparing the equality of Theorem \ref{thm:conjcompact} and the formula in \cite[Theorem 5]{deroindaniel}.
\end{proof}

Notice that the error term, contrary to the dynamical degree, can in principle be approximated with computer experiments.

Recall that $G_t:\mathcal{V}\to \mathcal{V}$ is the lift via parallel transport of the  geodesic flow over $T^1C$.
The main theorem of \cite{eskinwilkinson} in our setting implies  that if the flat bundle $\mathcal{V}_C$ over $T^1C$ is irreducible, then there exists a measure $\nu$ on the projective bundle $\PP(\mathcal{V}_C)$  that is $G_t$-invariant, projects to the hyperbolic measure on the base and it is fiberwise supported on the projectivization $\PP(\mathcal{V}_{\lambda_1})$ of the first Oseledets  subspace.
Using an abuse of notation we call $\nu$ the corresponding measure on the wedge products $\PP\left(\bigwedge^k\mathcal{V}_C\right)$ if they are irreducible.  \\
The property of our  error term to be  Lebesgue almost everywhere constant differentiate our result to the one in \cite{deroindaniel}. If we allow ourselves to consider an error term which is almost everywhere constant with respect to a $G_t$-invariant measure $\nu$, we are able to more easily show a weaker version of Theorem \ref{thm:conjcompact}  equivalent to the result of \cite{deroindaniel}.
%By \todo{Bakhtin, Martinez}, this measure $\nu$ is the same as the harmonic measure of \cite{deroindaniel}. 
\begin{prop}
	\label{prop:eq}
	Let $\mathcal{V}_C$ be a $k$-irreducible flat bundle.
	For any  holomorphic subbundle   $\mathcal{E}\subset \mathcal{V}_C $ of rank $k$, if \[\int_{\PP(\mathcal{V}^{\vee})} \log\left(\frac{||u||_{\mathcal{L}} }{ ||u||_h}\right)\dd \nu(u)<\infty\] then 
	\begin{equation*}
		\sum_{i=0}^k \lambda_i= \frac{2\pardeg(\Xi_h(\mathcal{E}))}{\deg(\Omega_{\overline{C}}^1(\log(\Delta))}  + \err^{\mathcal{E}}(u)
	\end{equation*}
	for $\nu$-almost any $u\in \PP\left(\bigwedge^k \mathcal{V}^{\vee}\right)$.\\
	If the base curve is compact, the integrability assumption  holds.
\end{prop}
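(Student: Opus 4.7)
The plan is to adapt the proof of Theorem \ref{thm:lyapexp}, replacing the circle-averaging approach by a $\nu$-almost-sure argument that exploits the $G_t$-invariance of $\nu$ and its support on the top Oseledets subspace. After reducing to the case where $\mathcal{E}=\mathcal{L}$ is a line bundle (as in Theorem \ref{thm:lyapexp}), I would introduce the two additive $G_t$-cocycles on $\PP(\mathcal{V}^{\dual})$, namely $A_h(T,u)=\log(\|G_T u\|_h/\|u\|_h)$ and $A_{\mathcal{L}}(T,u)=\log(\|G_T u\|_{\mathcal{L}}/\|u\|_{\mathcal{L}})$, which differ by the coboundary $\psi(G_T u)-\psi(u)$ with $\psi=\log(\|\cdot\|_{\mathcal{L}}/\|\cdot\|_h)$. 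The hypothesis $\psi\in L^1(\nu)$, together with the admissibility of $h$ which gives $A_h(1,\cdot)\in L^1(\nu)$, places $A_{\mathcal{L}}(1,\cdot)$ in $L^1(\nu)$ as well.

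Next I would apply Birkhoff's ergodic theorem to the time-one map $G_1$ acting on $(\PP(\mathcal{V}^{\dual}),\nu)$. Since $\nu$ is fiberwise supported on the projectivized top Oseledets subspace $\PP(\mathcal{V}_{\lambda_1})$, the limit $\lim_T A_h(T,u)/T=\lambda_1$ holds $\nu$-a.e., hence $\int A_h(1,v)\,d\nu(v)=\lambda_1$. The $G_1$-invariance of $\nu$ makes the coboundary integrate to zero, so $\int A_{\mathcal{L}}(1,v)\,d\nu(v)=\int A_h(1,v)\,d\nu(v)=\lambda_1$. Applying Birkhoff to $A_{\mathcal{L}}(1,\cdot)$ will then yield the pointwise statement
\[\lim_{T\to\infty}\tfrac{1}{T}\log\|G_T u\|_{\mathcal{L}}=\lambda_1\quad\text{for $\nu$-a.e. }u.\]

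From here I would invoke the Green-formula and Poincaré–Lelong chain from the proof of Theorem \ref{thm:lyapexp}, which starting from the circle-averaged expression produces
\[\lim_{T\to\infty}\tfrac{1}{T}\cdot\tfrac{1}{2\pi}\int_0^{2\pi}\log\|G_T r_\theta u\|_{\mathcal{L}}\,d\theta=\tfrac{2\pardeg(\Xi_h(\mathcal{L}))}{\deg\Omega^1_{\overline{C}}(\log\Delta)}+\err^{\mathcal{L}}(u).\]
The key point is then to show that for $\nu$-a.e.\ $u$ the circle-averaged and pointwise limits coincide, which follows from the $G_1$-ergodicity of $\nu$ together with the $L^1(\nu)$-domination provided by the integrability hypothesis (allowing a Fubini-type interchange of limit and circle integral). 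Both limits thus equal $\lambda_1$, delivering the stated equality.

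For the compact case, I would verify $\psi\in L^1(\nu)$ using Lemma \ref{lem:tbad}: the function $\psi$ is uniformly bounded outside any fixed tubular neighborhood of the bad locus $\tbadL$ and has at worst logarithmic singularities of the form $|\log\dhyp(z,z')|$ near it; on a compact base these singularities are locally integrable, and since $\nu$ does not charge the proper analytic subvariety $\tbadL\cap\PP(\mathcal{V}_{\lambda_1})$, the integral $\int|\psi|\,d\nu$ is finite. The main obstacle will be the interchange step, where the pointwise Birkhoff limit must be matched with the circle-average limit produced by the Green-formula chain; this requires ergodicity of $\nu$ under $G_1$ together with enough uniform integrability of the cocycles to justify passing the limit inside the $d\theta$-integral.
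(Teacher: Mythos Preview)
Your proposal is correct and takes essentially the same approach as the paper: reduce to a line bundle, recognize that $A_h$ and $A_{\mathcal L}$ differ by the coboundary of $\psi=\log(\|\cdot\|_{\mathcal L}/\|\cdot\|_h)$, use the integrability hypothesis together with $G_t$-invariance of $\nu$ to kill the coboundary contribution, and apply Birkhoff for the $\nu$-almost-sure pointwise statement; integrability in the compact case is likewise handled via Lemma~\ref{lem:tbad}. The paper orders the steps slightly differently---it first establishes the equality at the $\nu$-integrated level (where the circle average is absorbed into the $\nu$-integration, since $\nu$ projects to the hyperbolic measure) and only afterwards invokes Birkhoff---which makes your ``interchange'' obstacle less visible in its write-up, though the underlying justification is the same.
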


%The problem with this version is that the harmonic measure is defined only abstractly and hard to control. We can give a different characterization of this measure in the geodesic flow context that we will use in order to prove the previous proposition. 

\begin{proof}[Proof of Proposition \ref{prop:eq}]
	As in the proof of Theorem \ref{thm:lyapexp}, it is enough to prove the result for the top Lyapunov exponent and consider the case where $\mathcal{E}=\mathcal{L}$ is a line bundle.
	Since the measure $\nu$ is fiberwise supported on the first Oseledets  space, the top Lyapunov exponent of $\mathcal{V}$ is given ny 
	\[\lambda_1=\lim_{t\to \infty} \frac{1}{t} \int_{\PP(\mathcal{V}^{\vee})} \log\left(\frac{||G_t u||_h}{||u||_h}\right)\dd \nu(u)\]
	where $h$ is an integrable norm.
	Integrating inequality \eqref{eq:bounderr} over $\PP(\mathcal{V}^{\vee})$ with respect to the measure $\nu$ and rewriting backwards the equalities of the proof of Theorem \ref{thm:lyapexp}, we find 
	\[\lambda_1\geq \frac{2\pardeg(\Xi_h(\mathcal{E}))}{\deg(\Omega_{\overline{C}}^1(\log(\Delta))}  + \int_{\PP(\mathcal{V}^{\vee})}\err^{\mathcal{E}}(u) \dd\nu(u)=\lim_{t\to \infty}\frac{1}{t}\int_{\PP(\mathcal{V}^{\vee})}\log\left(\frac{||G_t u||_{\mathcal{L}}}{||u||_{\mathcal{L}}}\right)\dd \nu(u)\]
	where $||\cdot||_{\mathcal{L}}$ is the $\mathcal{L}$-seminorm defined in \eqref{eq: seminorm}. 
	We finally compute
	\begin{align*}
		&\lambda_1-\left(\frac{2\pardeg(\Xi_h(\mathcal{E}))}{\deg(\Omega_{\overline{C}}^1(\log(\Delta))}  + \int_{\PP(\mathcal{V}^{\vee})}\err^{\mathcal{E}}(u) \dd\nu(u)\right)\\
		&=\lim_{t\to \infty} \frac{1}{t} \int_{\PP(\mathcal{V}^{\vee})} \log\left(\frac{||G_t u||_h}{||u||_h}\right)\dd \nu(u)-\lim_{t\to \infty}\frac{1}{t}\int_{\PP(\mathcal{V}^{\vee})}\log\left(\frac{||G_t u||_{\mathcal{L}}}{||u||_{\mathcal{L}}}\right)\dd \nu(u)\\
		&=\lim_{t\to \infty} \frac{1}{t} \int_{\PP(\mathcal{V}^{\vee})} \log\left(\frac{||G_t u||_h \cdot||u||_{\mathcal{L}} }{||G_t u||_{\mathcal{L}}\cdot ||u||_h}\right)\dd \nu(u).
	\end{align*}
	Since by assumption 
	$\int_{\PP(\mathcal{V}^{\vee})} \log\left(\frac{||u||_{\mathcal{L}} }{ ||u||_h}\right)\dd \nu(u)<\infty$,
	we can split the integral
	\begin{align*}
		&\int_{\PP(\mathcal{V}^{\vee})} \log\left(\frac{||G_t u||_h \cdot||u||_{\mathcal{L}} }{||G_t u||_{\mathcal{L}}\cdot ||u||_h}\right)\dd \nu(u)=\\
		&=\int_{\PP(\mathcal{V}^{\vee})} \log\left(\frac{||G_t u||_h }{||G_t u||_{\mathcal{L}}}\right)\dd \nu(u)-\int_{\PP(\mathcal{V}^{\vee})} \log\left(\frac{||u||_h }{ ||u||_{\mathcal{L}}}\right)\dd \nu(u)=0
	\end{align*}
	
	where the last equality follows from the $G_t$-invariance of the measure $\nu$. We have then proved that
	\begin{align*}
		\lambda_1&=\left(\frac{2\pardeg(\Xi_h(\mathcal{E}))}{\deg(\Omega_{\overline{C}}^1(\log(\Delta))}  + \int_{\PP(\mathcal{V}^{\vee})}\err^{\mathcal{E}}(u) \dd\nu(u)\right)=\\
		&=\lim_{t\to \infty}\frac{1}{t}\int_{\PP(\mathcal{V}^{\vee})}\log\left(\frac{||G_t u||_{\mathcal{L}}}{||u||_{\mathcal{L}}}\right)\dd \nu(u).
	\end{align*}
	
	In order to prove that the function $\err^{\mathcal{E}}(u)$ is $\nu$-almost everywhere constant notice that the function
	\[\PP(\mathcal{V}^{\vee})\longrightarrow \RR,\quad u\mapsto \log\left(\frac{||G_1 u||_{\mathcal{L}}}{||u||_{\mathcal{L}}}\right)\]
	is $\nu$-integrable since 
	\[\lambda_1=\lim_{t\to \infty}\frac{1}{t}\int_{\PP(\mathcal{V}^{\vee})}\log\left(\frac{||G_t u||_{\mathcal{L}}}{||u||_{\mathcal{L}}}\right)\dd \nu(u)=\int_{\PP(\mathcal{V}^{\vee})}\log\left(\frac{||G_1 u||_{\mathcal{L}}}{||u||_{\mathcal{L}}}\right)\dd \nu(u)\]
	where  the second equality comes from the $G_t$-invariance of $\nu$.
	Then applying the Birkhoff ergodic theorem to this function and the measure $\nu$, it follows that for $\nu$-almost any $u\in \PP(\mathcal{V}^{\vee})$:
	\[\lim_{t\to \infty} \frac{1}{t}\log\left(\frac{||G_t u||_{\mathcal{L}}}{||u||_{\mathcal{L}}}\right)=\int_{\PP(\mathcal{V}^{\vee})}\log\left(\frac{||G_1 u||_{\mathcal{L}}}{||u||_{\mathcal{L}}}\right)\dd \nu(u)=\lambda_1.\] 
	Hence we finally get for $\nu$-almost any $u\in \PP(\mathcal{V}^{\vee})$:
	\begin{align*}
		\lambda_1=\lim_{t\to \infty} \frac{1}{t}\log\left(\frac{||G_t u||_{\mathcal{L}}}{||u||_{\mathcal{L}}}\right)=\frac{2\pardeg(\Xi_h(\mathcal{E}))}{\deg(\Omega_{\overline{C}}^1(\log(\Delta))}  + \err^{\mathcal{E}}(u).
	\end{align*}
	
	The second claim of the proposition about the integrability condition in the compact case can be showed using Lemma \ref{lem:tbad} and arguing in an analogous way as in the proof of Proposition \ref{prop:conjcompact}.
\end{proof}

%\begin{conje}
%	\label{cong2}
%	The following are equivalent:
%	\begin{enumerate}
%	\item The error term $\err^{\mathcal{E}}(u)$ is zero for any $u\in \bigwedge^{k} V ^{\vee}_c$ that computes Lyapunov exponents.
%	\item The bad locus $\tbadE(u)$ is a finite set for every vector $u\in \bigwedge^{k} V^{\vee}_c$ that computes Lyapunov exponents.
%	\item There exists a vector $u\in \bigwedge^{k} V^{\vee}_c$ that computes Lyapunov exponents for which $\tbadE(u)$ is a finite set.
%	\item There exists a vector $u\in \bigwedge^{k} V^{\vee}_c$ that computes Lyapunov exponents for which $\tbadE(u)=\emptyset$.
%	\end{enumerate}
%\end{conje}

\section{Lyapunov exponents for variations of Hodge structures}
\label{sec:vhsrat}

In this section we use the previous results in order to describe the properties of Lyapunov exponents for special flat bundles, namely variations of Hodge structures.

First of all we  we use the geometric version of Oseledets theorem described in \cite{simionzero} to give a bound on the number of zero exponents of a general  variation of Hodge structures. We then use Simpson correspondence to classify unitary representations as variations of Hodge structures  with trivial Lyapunov spectrum and deduce from the proof of this last statement a simplicity result for non-unitary variations of Hodge structures.  

We finally use  the condition given by Proposition \ref{prop:conditionemptylocus} to prove  results about rationality of Lyapunov exponents for variations of Hodge structures in low weight and describe what is known in these situations. More specifically, we will prove a slightly generalized version of the results of \cite{ekz} and \cite{simionk3} about equality of Lyapunov exponents and degrees of Hodge bundles in the weight 1 and in the real weight 2 variation of Hodge structures case.
The core of the arguments of the proofs that we present are similar as the one in the original proofs of the papers cited above or the ones of the most recent paper \cite{deroindaniel}. The main  idea is to relate the Lyapunov exponents to properties of the period maps. In particular, we will use that the image of the period map cannot contain the point corresponding to some Oseledets  spaces in the two cases of weight one and real K3 type variation of Hodge structures.

We recall the definition of variation of Hodge structures. They are special holomorphic flat bundles arising for example from the variation of  the cohomology  of families of algebraic varieties.

\begin{defn}
	\label{def:vhs}
	A complex variation of Hodge structures of weight $k$ over $C$ is a holomorphic flat vector bundle $(\mathcal{V},\nabla)$ over $C$ together with a holomorphic filtration
	\[F^{k+1}=0\subset \dots \subset F^0=\mathcal{V}\]
	which satisfies the Griffiths transversality condition 
	\[\nabla:F^p\to F^{p-1}\otimes \Omega^1_C\]
	and such that furthermore there exists a $\nabla$-flat hermitian complex form $H$ on $\mathcal{V}$,  which is positive definite on $F^i/F^{i+1}$ for $i$ even and negative definite for $i$ odd.
\end{defn}

Via Simpson correspondence, variations of Hodge structures  correspond to stable systems of Hodge bundles (\cite{simvhs}).

\begin{defn}
	A system of Hodge bundles is a Higgs bundle $(\mathcal{V},\Phi)$ together with a decomposition $\mathcal{V}=\oplus \mathcal{V}^{p,q}$, such that $\Phi: \mathcal{V}^{p,q}\to  \mathcal{V}^{p-1,q+1}\otimes \mathcal{K}_C$.
\end{defn}

Via Simpson correspondence a variation of Hodge structures is associated to the semistable system of Hodge bundles given by the graded object associated to the Hodge filtration equipped with the Higgs field defined by the graded pieces of the flat connection. In this case the harmonic metric is the Hodge metric coming from the hermitian form $H$. 

Finally we want to remark special properties of weight one and real weight two variations of Hodge structures what will be interesting to relate to Lyapunov exponents (see  Section \ref{sec:vhsrat}). A real weight two  variation of Hodge structures   is given by a real vector bundle $\mathcal{V}_{\RR}$ over $C$ such that its base change to $\CC$ defines a complex weight 2 variation of Hodge structures $\mathcal{V}$.

\begin{prop}
	\label{prop:w1k3HN}
	\begin{enumerate}
		\item Let $\mathcal{V}$ be a weight one variation of Hodge structures.  Then the first piece of the Hodge  filtration $F^1=\mathcal{V}^{1,0}$ is the maximal degree subbundles among all subbundles of $\mathcal{V}$.
		\item Let  $\mathcal{V}$ be a real variation of Hodge structures of weight two and let $\mathcal{V}^{2,0}$ be the first piece of the Hodge filtration. Then for every subbundle  $\mathcal{E}\subseteq \mathcal{V}$  it holds $\deg(\mathcal{E})\leq 2\deg(\mathcal{V}^{2,0})$.
	\end{enumerate}	
\end{prop}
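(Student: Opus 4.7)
The plan is to work via Simpson's correspondence and the polystability of the associated system of Hodge bundles. Recall that the weight-$k$ VHS $\mathcal{V}$ corresponds to a polystable system of Hodge bundles $(\bigoplus_{p+q=k} \mathcal{V}^{p,q}, \Phi)$ of slope zero, where $\Phi: \mathcal{V}^{p,q} \to \mathcal{V}^{p-1,q+1} \otimes \mathcal{K}_C$. Polystability says that every coherent subsheaf $\mathcal{F} \subset \bigoplus \mathcal{V}^{p,q}$ which is $\Phi$-invariant (i.e.\ $\Phi(\mathcal{F}) \subset \mathcal{F} \otimes \mathcal{K}_C$) satisfies $\deg(\mathcal{F}) \leq 0$. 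The key elementary observation is that any ``truncation from the bottom'' $\bigoplus_{p \leq p_0} \mathcal{V}^{p,k-p}$ is automatically $\Phi$-invariant, because the Higgs field strictly lowers the $p$-index.

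For part (1), this observation applied to $\mathcal{V}^{0,1}$ immediately gives $\deg(\mathcal{V}^{0,1}) \leq 0$ and therefore $\deg(\mathcal{V}^{1,0}) \geq 0$. For an arbitrary holomorphic subbundle $\mathcal{E} \subset \mathcal{V}$, I would consider the induced filtration $\mathcal{E} \cap F^1 \subset \mathcal{E}$; its associated graded gives a subsheaf of $\mathcal{V}^{1,0} \oplus \mathcal{V}^{0,1}$ of the same total degree as $\mathcal{E}$. Forming the Higgs saturation of this graded subsheaf---which in weight one amounts to adjoining $\mathcal{K}_C^{-1} \otimes \Phi(\mathcal{E} \cap F^1) \subset \mathcal{V}^{0,1}$ to the $(0,1)$-piece (and noting $\Phi^2 = 0$ here)---and applying polystability to the result will produce, after a careful bookkeeping of the degree contributions, the sharp bound $\deg(\mathcal{E}) \leq \deg(\mathcal{V}^{1,0})$.

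For part (2), the real structure of the weight-two VHS gives the Hodge-symmetry identifications $\mathcal{V}^{0,2} \cong (\mathcal{V}^{2,0})^{\vee}$ and $\mathcal{V}^{1,1} \cong (\mathcal{V}^{1,1})^{\vee}$ via the polarization, hence $\deg(\mathcal{V}^{0,2}) = -\deg(\mathcal{V}^{2,0})$ and $\deg(\mathcal{V}^{1,1}) = 0$. These identities yield the numerical identity $\deg(F^2) + \deg(F^1) = 2\deg(\mathcal{V}^{2,0})$, which is precisely the target bound. The strategy is then to repeat the scheme of part (1) with the longer filtration $0 \subset F^2 \subset F^1 \subset F^0 = \mathcal{V}$: form $\mathcal{E} \cap F^p$, pass to the associated graded inside $\bigoplus_p \mathcal{V}^{p,2-p}$, and take the Higgs saturation (which may now involve two applications of $\Phi$, since in weight two $\Phi^3 = 0$). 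Polystability combined with the real-structure identifications above should yield $\deg(\mathcal{E}) \leq 2\deg(\mathcal{V}^{2,0})$.

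The main obstacle will be the precise degree bookkeeping in the Higgs-saturation step: one needs to quantify exactly how much degree is gained when $\Phi$-images (twisted by $\mathcal{K}_C^{-1}$) are adjoined, and to check that after accounting for the intersections with the $(p,q)$-parts of the associated graded of $\mathcal{E}$ the polystability bound specializes to the claimed inequalities. In weight two the iteration of the saturation, together with the use of the polarization to relate the degrees of the $(2,0)$, $(1,1)$ and $(0,2)$ pieces so as to obtain exactly the factor $2$, is the most delicate part of the argument.
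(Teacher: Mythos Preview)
Your framework is exactly right---everything reduces to semistability of the associated system of Hodge bundles---but the ``Higgs saturation'' step is where your argument breaks down, and the paper's proof avoids it entirely. When you adjoin the $\Phi$-image of $\mathcal{E}\cap F^1$ (untwisted by $\mathcal{K}_C^{-1}$) to the $(0,1)$-piece, you enlarge the sheaf to something $\Phi$-invariant, but the degree of what you add is not controlled: the semistability inequality $\deg(A)+\deg(B')\le 0$ on the saturation tells you nothing useful about $\deg(A)+\deg(B)=\deg(\mathcal{E})$ unless you can bound $\deg(B')-\deg(B)$, and there is no reason for that difference to be related to $\deg(\mathcal{V}^{1,0})$. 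The bookkeeping you anticipate is not merely delicate; as stated it does not close.

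The paper's trick is to adjoin \emph{entire} lower Hodge summands rather than $\Phi$-images. In weight one: the quotient $\mathcal{E}/(\mathcal{E}\cap\mathcal{V}^{1,0})\subset\mathcal{V}^{0,1}$ is already $\Phi$-invariant, so its degree is $\le 0$; and $(\mathcal{E}\cap\mathcal{V}^{1,0})\oplus\mathcal{V}^{0,1}$ is $\Phi$-invariant (since $\Phi$ lands in $\mathcal{V}^{0,1}\otimes\mathcal{K}_C$ regardless), so $\deg(\mathcal{E}\cap\mathcal{V}^{1,0})+\deg(\mathcal{V}^{0,1})\le 0$. Adding these two inequalities gives $\deg(\mathcal{E})\le -\deg(\mathcal{V}^{0,1})=\deg(\mathcal{V}^{1,0})$ with no further bookkeeping. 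In weight two the same idea is iterated with three $\Phi$-invariant test sheaves: $\mathcal{E}/(\mathcal{E}\cap F^1)\subset\mathcal{V}^{0,2}$, then $\bigl((\mathcal{E}\cap F^1)/(\mathcal{E}\cap\mathcal{V}^{2,0})\bigr)\oplus\mathcal{V}^{0,2}$, then $(\mathcal{E}\cap\mathcal{V}^{2,0})\oplus\mathcal{V}^{1,1}\oplus\mathcal{V}^{0,2}$; the three resulting inequalities, combined with your correct observations $\deg(\mathcal{V}^{1,1})=0$ and $\deg(\mathcal{V}^{0,2})=-\deg(\mathcal{V}^{2,0})$, telescope to $\deg(\mathcal{E})\le 2\deg(\mathcal{V}^{2,0})$. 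The point is that adjoining the full summand makes the extra degree contribution \emph{explicitly} equal to a Hodge-bundle degree you already know, rather than the degree of an unknown $\Phi$-image.
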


\begin{proof}
	First of all we remark that in the proof we will work with subsheaves of holomorphic vector bundles, and not only with subbundles. All the semistability arguments still work in this more general context since we can always pass to the saturation of the subsheaves.
	
	If $\mathcal{V}$ is a weight one variation of Hodge structures, the associated semistable system of Hodge bundles is  given by $\mathcal{V}^{1,0}\oplus \mathcal{V}^{0,1}$, where $\mathcal{V}^{0,1}=\mathcal{V}/\mathcal{V}^{1,0}$. First of all note that  if $\mathcal{W}\subseteq \mathcal{V}^{1,0}$, then $\deg(\mathcal{W})\leq \deg(\mathcal{V}^{1,0})$. Indeed $\mathcal{W}\oplus \mathcal{V}^{0,1}$ is sub-system of Hodge bundles. Hence by semistability $\deg(\mathcal{W})+\deg(\mathcal{V}^{0,1})\leq0$.
	If $\mathcal{E}\subset \mathcal{V}$ is any subbundle, then consider the short exact sequence
	\[0\to \mathcal{E}\cap  \mathcal{V}^{1,0}\to\mathcal{E}\to \frac{\mathcal{E}}{\mathcal{E}\cap  \mathcal{V}^{1,0}}\to 0. \]
	Since the quotient $\frac{\mathcal{E}}{\mathcal{E}\cap  \mathcal{V}^{1,0}}$ is a subsheaf of $\mathcal{V}^{0,1}$, it defines a sub-system of Hodge bundles and so it has negative degree. By additivity of the degree we finally get 
	\[\deg(\mathcal{E})=\deg(\mathcal{E}\cap  \mathcal{V}^{1,0})+\deg(\frac{\mathcal{E}}{\mathcal{E}\cap  \mathcal{V}^{1,0}})\leq \deg(\mathcal{V}^{1,0}).   \]
	
	If $\mathcal{V}$ is a real variation of Hodge structures of weight two, the associated semistable system of Hodge bundles is given by $ \mathcal{V}^{2,0}\oplus \mathcal{V}^{1,1}\oplus \mathcal{V}^{0,2}$ where by definition $\mathcal{V}^{1,1}=F^1/\mathcal{V}^{2,0}$ and $ \mathcal{V}^{0,2}= \mathcal{V}/F^1$. Let now $\mathcal{E}\subseteq \mathcal{V}$ be a subbundle. First of all consider the bundle $\frac{\mathcal{E}}{\mathcal{E}\cap F^1}$. Since it injects as a subsheaf of $ \mathcal{V}^{0,2}$, it defines a sub-Higgs sheaf of the associated system of Hodge bundles. By semistability it has then to have negative degree and so we get 
	\[\deg(\mathcal{E})\leq \deg(\mathcal{E}\cap F^1). \]
	Moreover, since  $(\mathcal{E}\cap \mathcal{V}^{2,0})\oplus  \mathcal{V}^{1,1}\oplus \mathcal{V}^{0,2}$ also defines a sub-Higgs sheaf and since $\deg(\mathcal{V}^{0,2})=-\deg(\mathcal{V}^{2,0})$ and $\deg(\mathcal{V}^{1,1})=0$ (this is because $ \mathcal{V}$ is a real VHS), again by semistability  we obtain 
	\[\deg(\mathcal{E}\cap \mathcal{V}^{2,0})\leq \deg(\mathcal{V}^{2,0}).\]
	To conclude consider the sub-Higgs sheaf $\frac{\mathcal{E}\cap F^1}{\mathcal{E}\cap \mathcal{V}^{0,2}}\oplus \mathcal{V}^{0,2}$, which by semistability has negative degree. It follows that 
	\[ \deg(\mathcal{E}\cap F^1)\leq \deg(\mathcal{E}\cap \mathcal{V}^{0,2})+\deg(\mathcal{V}^{2,0}).\]
	Using the three inequalities that we obtained we get
	\[\deg(\mathcal{E})\leq  \deg(\mathcal{E}\cap F^1)\leq\deg(\mathcal{E}\cap \mathcal{V}^{0,2})+\deg(\mathcal{V}^{2,0})\leq 2\deg(\mathcal{V}^{2,0}).  \]
\end{proof}

The last proposition will be related  to the characterization of Lyapunov exponents from the main equality of Theorem \ref{thm:ekmz} and Theorem \ref{thm:simionk3}.

\subsection{Oseledec decomposition for variations of Hodge structures}

%System of Hodge bundles can be also characterized (\cite{simpsonhiggs}) as fixed point of the $\CC^*$-action on $\higgs$ given by $t\cdot(\mathcal{V},\Phi)=(\mathcal{V},t\cdot   \Phi)$ for $t\in \CC^*$.

%Recall also that the Hodge norm coming from the Hodge metric $h$ defined by the hermitian form $H$ on $\mathcal{V}_C$ is admissible (see \cite[Prop. 3.1]{ekmz}), so it can be used o compute Lyapunov exponents.

 The monodromy invariance  of the indefinite Hermitian form $H$  allows to derive the well-known orthogonality of Oseledets subspaces for variations of Hodge structures. This is an important ingredient that distinguishes variations of Hodge structures to generic flat bundles.

\begin{prop}
	\label{prop:Oseledets istrop}
	The Oseledets  subspace $\mathcal{V}_{\lambda_i}$ are totally isotropic with respect to the $\nabla$-flat indefinite hermitian form $H$ unless $\lambda_i=0$ and they are pairwise orthogonal unless $\lambda_i=-\lambda_j$.
\end{prop}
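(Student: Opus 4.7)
The plan is to exploit the $\nabla$-flatness of $H$ along the geodesic flow together with a Cauchy--Schwarz comparison between $H$ and the positive definite Hodge metric that computes the Lyapunov exponents. First I would fix the Hodge metric $h$ on $\mathcal{V}$ obtained from $H$ by flipping signs on those graded pieces $F^i/F^{i+1}$ on which $H$ is negative definite, so that $h$ is positive definite and the $H$-orthogonal splitting $\mathcal{V} = \mathcal{V}_+ \oplus \mathcal{V}_-$ into the $H$-positive and $H$-negative pieces is also $h$-orthogonal. By Simpson's existence result together with Lemma \ref{lem:existadmissible}, $h$ is admissible, and by Lemma \ref{lem:integrableadmissible} it computes the Lyapunov exponents. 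Writing $v = v_+ + v_-$, $w = w_+ + w_-$ and applying Cauchy--Schwarz on each definite piece and then in $\RR^2$ yields the fiberwise bound $|H(v,w)| \leq \|v\|_h \cdot \|w\|_h$.

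The second step combines this bound with the $\nabla$-flatness of $H$. For $v \in \mathcal{V}_{\lambda_i,x}$ and $w \in \mathcal{V}_{\lambda_j,x}$ at a generic $x \in T^1C$, flatness gives $H(v,w) = H(G_t v, G_t w)$ for every $t \in \RR$, while Oseledets theorem yields $\|G_t v\|_h = e^{(\lambda_i + o(1))t}$ and $\|G_t w\|_h = e^{(\lambda_j + o(1))t}$ as $t \to \pm\infty$. Applying the pointwise bound in the fibre over $g_t x$ gives
\[ |H(v,w)| \;=\; |H(G_t v, G_t w)| \;\leq\; \|G_t v\|_h \cdot \|G_t w\|_h \;=\; e^{(\lambda_i + \lambda_j + o(1))t}. \]
If $\lambda_i + \lambda_j < 0$ I would let $t \to +\infty$ to force $H(v,w) = 0$; if $\lambda_i + \lambda_j > 0$ I would let $t \to -\infty$, which is legitimate since the Oseledets decomposition for the reverse geodesic flow agrees with the forward one with the exponents reversed in sign (Remark \ref{rem:symmspec}). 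This yields the orthogonality statement, and specialising to $i = j$ with $\lambda_i \neq 0$ gives $H(v,v) = 0$ on $\mathcal{V}_{\lambda_i,x}$, which by Hermitian polarisation is equivalent to total isotropy of $\mathcal{V}_{\lambda_i,x}$.

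The main conceptual point, rather than a serious obstacle, is that the Cauchy--Schwarz comparison is applied at the \emph{same} fibre $g_t x$ on both sides, so the exponential growth or decay of the Hodge norms transfers directly into an estimate on the $t$-independent quantity $H(v,w)$. Everything else is a routine extraction from Oseledets theorem at a generic point, and the resulting almost-everywhere identities automatically extend to an identity of the measurable Oseledets subbundles appearing in the statement.
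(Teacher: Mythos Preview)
Your proof is correct and follows essentially the same route as the paper: combine $\nabla$-flatness of $H$ with a Cauchy--Schwarz comparison against the Hodge norm and the Oseledets growth rates, then send $t\to\pm\infty$ according to the sign of $\lambda_i+\lambda_j$. The only difference is cosmetic: the paper invokes Poincar\'e recurrence to a compact set $K$ and a constant $c(K)$ in the Cauchy--Schwarz step, whereas your explicit pointwise bound $|H(v,w)|\le \|v\|_h\|w\|_h$ (obtained by splitting into the definite pieces) makes that recurrence argument unnecessary.
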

We recall the proof of this statement, which can be found for example in \cite{martinandre}.
\begin{proof}
	Let $K$ be a positive measure compact subset of $T^1C$, let ${t_k}$ be a sequence going to infinity such that $g_{t_k}(c)\in K$ for $t_k\to \pm \infty$ and for almost all $c\in T^1C$. The existence of the sequence is justified by Poincaré recurrence theorem. Let $u_i\in \mathcal{V}_{\lambda_i}$. Then by $G_t$-invariance of $H$ and by the Cauchy-Schwartz inequality we get
	\[H(u_i,u_j)=H(g_{t_k}u_i,g_{t_k}u_j)\leq c(K)||g_{t_k}u_i||_H||g_{t_k}u_j ||_H\sim e^{(\lambda_i+\lambda_j)\cdot t_k}\]
	where $c(K)>0$ is a positive constant depending only on $K$. Since  $e^{(\lambda_i+\lambda_j)\cdot t_k}\to 0$ for  $\lambda_i\not =\lambda_j$ and for $t_k\to \infty$ or $t_k\to -\infty$, we get the result.
\end{proof}

\subsection{Zero exponents and simplicity results for variation of Hodge structures.}

First of all we use a well-known version of the geometric Oseledets theorem (\cite{simionzero}) to give a bound on the number of zeros of variation of Hodge structures.

\begin{prop}
	\label{prop:zeroexp}
	Let $(\mathcal{V},\nabla)$ be a rank $n$ irreducible complex variation of Hodge structures of   weight $k$ with Hodge filtration $(F^i)$. Let $p=\sum_{i\equiv 0(2)} \rk\left( F^i/F^{i+1} \right)$ be the rank of the even part of $\mathcal{V}$.  Then there are at least $|n-2p|$ zero Lyapunov exponents.
\end{prop}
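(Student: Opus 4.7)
The plan is to combine the isotropy statement of Proposition~\ref{prop:Oseledets istrop} with the signature of the polarizing form $H$. By the definition of a complex variation of Hodge structures recalled in Definition~\ref{def:vhs}, the Hodge form $H$ is positive definite on $F^i/F^{i+1}$ for $i$ even and negative definite for $i$ odd, hence $H$ is a non-degenerate hermitian form on $\mathcal{V}$ of signature $(p, n-p)$, where $p=\sum_{i\equiv 0(2)} \rk(F^i/F^{i+1})$ is precisely the hypothesis of the statement.

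Next, I would set $V^{+}=\bigoplus_{\lambda_i>0} \mathcal{V}_{\lambda_i}$ and $V^{-}=\bigoplus_{\lambda_i<0} \mathcal{V}_{\lambda_i}$. By Proposition~\ref{prop:Oseledets istrop}, each summand with $\lambda_i\neq 0$ is totally isotropic for $H$, and two summands with exponents $\lambda_i,\lambda_j$ are $H$-orthogonal unless $\lambda_i=-\lambda_j$. In particular $V^{+}$ is a totally isotropic subspace of $(\mathcal{V},H)$, and similarly for $V^{-}$. Moreover, by the symmetry of the Lyapunov spectrum recalled in Remark~\ref{rem:symmspec}, one has $\dim V^{+}=\dim V^{-}$, so the number of non-zero Lyapunov exponents equals $2\dim V^{+}$.

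The conclusion then follows from the standard linear-algebra fact that a totally isotropic subspace of a non-degenerate hermitian form of signature $(p,n-p)$ has dimension at most $\min(p,n-p)$. Combining this with the previous step gives
\[
\#\{\text{non-zero exponents}\}=2\dim V^{+}\leq 2\min(p,n-p),
\]
so the number of zero exponents is at least $n-2\min(p,n-p)=|n-2p|$, as required.

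I do not expect any serious obstacle: once the signature $(p,n-p)$ is correctly identified from the sign rule in Definition~\ref{def:vhs}, the argument is essentially Proposition~\ref{prop:Oseledets istrop} plus the maximal-isotropic bound. The role of the geometric Oseledets theorem of \cite{simionzero} is only to guarantee that the Oseledets decomposition is compatible with the holomorphic/algebraic structure (so that the orthogonality argument can be carried out cleanly over $\mathbb{C}$ and the irreducibility hypothesis is used to avoid pathological degenerations); the core signature count is what produces the bound $|n-2p|$.
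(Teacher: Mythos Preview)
Your argument is correct and is actually more elementary than the paper's. The paper simply invokes, as a black box, the fact from \cite[Ex.~4.5]{simionzero} that any flat bundle with structure group in $\SU(p,q)$ has at least $|p-q|$ zero exponents, and then observes that the Hodge form gives such a reduction. You instead prove this $\SU(p,q)$ bound directly, using Proposition~\ref{prop:Oseledets istrop}: the space $V^{+}$ is totally $H$-isotropic, hence $\dim V^{+}\le \min(p,n-p)$, and the symmetry of the spectrum finishes the count. This is a genuine simplification: you never need to appeal to the geometric Oseledets machinery of \cite{simionzero}, only to the elementary orthogonality statement already established in the paper.

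Your final paragraph, however, misattributes what is doing the work. The geometric Oseledets theorem plays no role in \emph{your} argument --- you use only Proposition~\ref{prop:Oseledets istrop}, whose proof is the Poincar\'e recurrence and Cauchy--Schwarz computation recalled there, not anything from \cite{simionzero}. Likewise, the irreducibility hypothesis is not used anywhere in your signature count (nor, for that matter, in the paper's proof); it is harmless but superfluous for this particular statement. You should drop that closing paragraph.
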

\begin{proof}
	By the geometric Oseledec theorem,   any vector bundle coming from a $SU(p,q)$-flat bundle has at least $|p-q|$ zero exponents (\cite[Ex.4.5]{simionzero}). We can apply this statement to complex variations of Hodge structures, since by definition the image of the monodromy representation is in $SU(p,q)$ where $p=\sum_{i\equiv 0(2)} h^{i,j}$ is the dimension of the even Hodge bundles and $q=\sum_{i\equiv 0(2)} h^{i,j}$ is the dimension of the odd ones.
\end{proof}

We now prove that for a variation of Hodge structure over a compact curve,  triviality of the Lyapunov spectrum is equivalent to having weight zero, which means corresponding to a unitary representation.

\begin{prop}
	\label{prop:weight0}
	Let $(\mathcal{V},\nabla)$ be a rank $n$ irreducible varation of Hodge structure. Then the following are equivalent:
	\begin{enumerate}
		\item $\mathcal{V}$ is a variation of Hodge structures of  weight  zero.
		\item the corresponding monodromy representation is unitary.
		\item $\mathcal{V}$ is stable.
	\end{enumerate} 
	Moreover the conditions of above are equivalent to have trivial Lyapunov spectrum.
\end{prop}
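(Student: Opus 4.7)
The plan is to establish the cycle (1)$\Leftrightarrow$(2)$\Rightarrow$(3)$\Rightarrow$(1) and then separately match these to triviality of the Lyapunov spectrum. The main tools will be Simpson's nonabelian Hodge correspondence (sending an irreducible VHS to a stable system of Hodge bundles), the Narasimhan-Seshadri theorem on the compact base $C$, and the lower bound of Theorem \ref{thm:ekmz}.

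For (1)$\Leftrightarrow$(2) I would argue directly from Definition \ref{def:vhs}: a weight zero VHS has $\mathcal{V}=F^0=\mathcal{V}^{0,0}$, so the $\nabla$-flat Hermitian form $H$ is positive definite on all of $\mathcal{V}$, forcing the monodromy to preserve a flat positive definite Hermitian form, i.e.\ to be unitary. Conversely, a unitary representation produces a flat positive definite Hermitian form, which I would read as the polarization of a weight zero Hodge structure. Next, (2)$\Rightarrow$(3) follows from Narasimhan-Seshadri: an irreducible unitary representation of $\pi_1(C)$ corresponds to a stable degree zero holomorphic vector bundle.

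The substantive step is (3)$\Rightarrow$(1). I would apply Simpson's correspondence to the irreducible VHS to obtain the associated stable system of Hodge bundles $(E,\Phi)$ with $E=\bigoplus_{p+q=k}\mathcal{V}^{p,q}$ and $\deg(E)=0$. Assume the weight satisfies $k\geq 1$. Since $\Phi$ lowers $p$, the subsheaf $\bigoplus_{p\leq k-1}\mathcal{V}^{p,q}\subset E$ is $\Phi$-invariant, hence a sub-Higgs bundle. Provided $\mathcal{V}^{k,0}$ is proper and nonzero in $E$, stability of $(E,\Phi)$ yields $\deg\bigl(\bigoplus_{p\leq k-1}\mathcal{V}^{p,q}\bigr)<0$, equivalently $\deg\mathcal{V}^{k,0}>0$. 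Now $F^k\subset\mathcal{V}$ (viewed as a holomorphic subbundle of the flat bundle) has the same underlying $C^\infty$ bundle as $\mathcal{V}^{k,0}\subset E$, so by topological invariance of the degree on a compact curve $\deg F^k=\deg\mathcal{V}^{k,0}>0$. Since $\deg\mathcal{V}=0$, the subbundle $F^k$ destabilizes $\mathcal{V}$, contradicting (3). The degenerate case $\mathcal{V}=\mathcal{V}^{k,0}$ forces $\Phi=0$, so Simpson-Narasimhan-Seshadri identifies the representation as unitary and the VHS reduces to weight zero after the appropriate Tate shift.

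For the Lyapunov equivalence, one direction is immediate: if the representation is unitary then the Hodge norm is $G_t$-invariant and every exponent vanishes. For the converse, if $\mathcal{V}$ were a VHS of weight $k\geq 1$, applying Theorem \ref{thm:ekmz} to the holomorphic subbundle $F^k\subset\mathcal{V}$ with $\deg F^k>0$ (from the previous paragraph) gives
\[
\sum_{i=1}^{\rk F^k}\lambda_i\;\geq\;\frac{2\deg(F^k)}{\deg(K_C)}\;>\;0,
\]
contradicting trivial spectrum. The main obstacle I anticipate is the careful bookkeeping of the two holomorphic structures on the same smooth bundle---the flat holomorphic structure on $\mathcal{V}$ used in (3) versus the Dolbeault holomorphic structure on $E$ used in Simpson's correspondence---and the degenerate pure-type case $\mathcal{V}=\mathcal{V}^{k,0}$, which must be reinterpreted as weight zero to close the cycle.
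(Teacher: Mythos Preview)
Your proof is correct and follows essentially the same strategy as the paper: use the system of Hodge bundles associated to the irreducible VHS together with Higgs (semi)stability to produce a positive-degree holomorphic subbundle coming from the Hodge filtration, then contradict either stability of $\mathcal{V}$ or, via Theorem \ref{thm:ekmz}, triviality of the spectrum. The only notable variation is that you work with the top step $F^k=\mathcal{V}^{k,0}$ and its complementary sub-Higgs bundle $\bigoplus_{p\le k-1}\mathcal{V}^{p,q}$, invoking \emph{strict} stability of the Higgs bundle (available since the VHS is irreducible) to get $\deg F^k>0$ directly; the paper instead uses the bottom piece $\mathcal{V}^{0,k}$ and semistability to get $\deg F^1\ge 0$, and then must separately rule out $\deg F^1=0$ via irreducibility in the Lyapunov step. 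Your choice makes the Lyapunov converse marginally cleaner. One cosmetic remark: your direct argument for (2)$\Rightarrow$(1) only produces \emph{some} weight-zero polarization, not that the given filtration is trivial, but this is harmless since the cycle (1)$\Rightarrow$(2)$\Rightarrow$(3)$\Rightarrow$(1) already closes without it.
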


\begin{proof}
	Recall that by the result of Narasimhan-Seshadri (Theorem \ref{thm:n-s}), the locus of unitary representations corresponds to  the locus of stable vector bundles $\mathcal{V}$ equipped with the harmonic metric connection or, by Simpson correspondence, to the locus of  Higgs bundles with zero Higgs field. 
	If a variation of Hodge structures is of weight zero, then the Griffiths filtration is trivial and so the associated system of Hodge bundles has zero Higgs field. So the associated representation is unitary. Moreover, again by Narasimhan-Seshadri's result, if the associated representation is unitary, then $\mathcal{V}$ is stable.\\
	We will prove now that stability implies weight zero. 
	Assume by contradiction that the weight is bigger than zero, so that we have a non-trivial filtration 
	\[F^{k+1}=0\subset \dots \subset F^0=\mathcal{V}\]
	corresponding to the system of Hodge bundles
	\[(\Gr_F(\mathcal{V})=\bigoplus_p \mathcal{V}^{p,n-p}, \nabla^{gr}),\ \mathcal{V}^{p,n-p}:=F^p/F^{p+1}.\]
	By Griffiths transversality condition, $(\mathcal{V}^{0,n},\nabla^{gr}_{|\mathcal{V}^{0,n}}=0)\subset (\Gr_F(\mathcal{V}),\nabla^{gr}) $ is a sub-Higgs bundle, and so by semistability of Higgs bundles in $\higgs$, we have that $\mu(\mathcal{V}^{0,n})\leq \mu(\Gr_F(\mathcal{V}))=0$.
	Since $ \deg(\Gr_F(\mathcal{V}))=\sum_p \deg(\mathcal{V}^{p,n-p})$, it follows that
	\[\deg(\mathcal{V}^{0,n})=-\deg(\bigoplus_{p\not =0}  \mathcal{V}^{p,n-p})=-\deg(F^1)\leq 0.\]
	So we have found that  $\deg(F^1)\geq 0=\mu(\mathcal{V})$ which is impossible by the stability of $\mathcal{V}$.\\
	We prove now the last statement of the proposition. We know that a unitary representation has trivial Lyapunov spectrum since in this case the norm used to compute the Lyapunov exponent is invariant under parallel transport. Conversely, if the Lyapunov spectrum is zero then, since $F^1\subset \mathcal{V}$ is a holomorphic subbundle, we have that  $0=\sum_{i=1}^{\rk(F_1)}\lambda_i\geq \deg(F^1)$ by Theorem \ref{thm:ekmz}.  But as we already saw, if the weight is positive then it holds also $0\leq \deg(F^1)$. If $F^1$ has zero degree then it defines a flat subbundle, contradicting the hypothesis of irreducibility of $\mathcal{V}$.
\end{proof}

From the last Theorem we get a direct corollary about the sum of the first  $\rk(F_1)$ Lyapunov exponents of a positive weight variation of Hodge structures.

\begin{cor}
	\label{cor:boundvhs}
	Let $(\mathcal{V},\nabla)$ be a rank $n$ irreducible variation of Hodge structures of  positive weight. Then the the sum of the first  $\rk(F^1)$ Lyapunov exponents is positive and the following non-trivial bound holds:
	\[\sum_{i=1}^{\rk(F_1)}\lambda_i(\mathcal{V})\geq \deg(F^1)=\deg(\mathcal{V}^{n,0})>0.\]
\end{cor}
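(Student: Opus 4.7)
\medskip

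\textbf{Plan.} The strategy is a three-step chain: apply the EKMZ inequality to the subbundle $F^1$; then prove $\deg(F^1)\geq 0$ by a semistability argument already present in the proof of Proposition \ref{prop:weight0}; and finally upgrade this to strict positivity using the stability of the associated system of Hodge bundles (which is equivalent to the irreducibility hypothesis). The identification $\deg(F^1)=\deg(\mathcal{V}^{n,0})$ is then a standard consequence of the polarization.

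\medskip

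First, since $F^1\subset \mathcal{V}$ is a holomorphic subbundle of the flat bundle, I would invoke Theorem \ref{thm:ekmz} to obtain
\[
\sum_{i=1}^{\rk(F^1)}\lambda_i(\mathcal{V})\;\geq\;\frac{2\,\pardeg(F^1)}{\deg(\Omega^1_{\overline{C}}(\log \Delta))}.
\]
Once the right-hand side is shown to be strictly positive, both statements of the corollary will follow (with $\deg(F^1)$ understood in the appropriately normalized sense).

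\medskip

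Next, to prove $\deg(F^1)\geq 0$ I would copy the argument from the proof of Proposition \ref{prop:weight0}: passing to the graded system of Hodge bundles $(\Gr_F(\mathcal{V}),\nabla^{gr})$, Griffiths transversality makes $(\mathcal{V}^{0,n},0)$ a sub-Higgs bundle (the Higgs field has nowhere to send it). Since this system of Hodge bundles is semistable and has slope zero (because $\deg(\mathcal{V})=0$), one gets $\deg(\mathcal{V}^{0,n})\leq 0$, and additivity of the degree along the graded decomposition yields $\deg(F^1)=-\deg(\mathcal{V}^{0,n})\geq 0$.

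\medskip

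The crux is promoting this to strict inequality, which is where the irreducibility hypothesis enters. Under Simpson's correspondence, irreducibility of the underlying flat bundle $\mathcal{V}$ corresponds to \emph{stability} (not merely polystability) of $(\Gr_F(\mathcal{V}),\nabla^{gr})$ as a Higgs bundle. If one had $\deg(F^1)=0$, then $\mathcal{V}^{0,n}$ would be a sub-Higgs bundle of slope equal to $\mu(\Gr_F(\mathcal{V}))=0$; and because the weight is strictly positive, $\mathcal{V}^{0,n}\subsetneq \mathcal{V}$ is \emph{proper}, contradicting stability. Hence $\deg(F^1)>0$. To finish, the claimed equality $\deg(F^1)=\deg(\mathcal{V}^{n,0})$ follows from the polarization: the flat Hermitian form $H$ gives a perfect pairing between $\mathcal{V}^{p,n-p}$ and $\mathcal{V}^{n-p,p}$, so $\deg(\mathcal{V}^{p,n-p})=-\deg(\mathcal{V}^{n-p,p})$, and in particular $\deg(F^1)=\sum_{p\geq 1}\deg(\mathcal{V}^{p,n-p})=-\deg(\mathcal{V}^{0,n})=\deg(\mathcal{V}^{n,0})$.

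\medskip

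The only delicate point is the stability-versus-polystability distinction in step three; everything else is either a direct invocation of a stated theorem or a degree book-keeping via the polarization. No analytic or dynamical input beyond what Theorem \ref{thm:ekmz} already supplies is needed.
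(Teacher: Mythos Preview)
Your proof is correct and follows essentially the same route as the paper: apply the EKMZ bound (Theorem~\ref{thm:ekmz}) to $F^1$, then use the argument from the proof of Proposition~\ref{prop:weight0} (semistability of the graded Higgs bundle applied to the sub-Higgs bundle $\mathcal{V}^{0,n}$) to get $\deg(F^1)\geq 0$, and finally invoke irreducibility to rule out $\deg(F^1)=0$. Your step three is actually cleaner than the paper's: the paper says ``if $F^1$ has zero degree then it defines a flat subbundle,'' which is a bit elliptical, whereas you directly contradict \emph{stability} of the graded Higgs bundle via the proper sub-Higgs bundle $\mathcal{V}^{0,n}$ of equal slope; and you also supply the justification of $\deg(F^1)=\deg(\mathcal{V}^{n,0})$ via the polarization, which the paper leaves implicit.
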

\begin{proof}
	From the proof of the last Theorem we see that if the weight is positive and the variation of Hodge structures is irreducible then the degree of $F^1$ is strictly positive. Hence the  bound \eqref{eq:bound} gives the result.
\end{proof}

%For weight 1 VHS we already proved, but in rank 2 it's even more claer sincet the map defined in Remark \ref{rem:geombad} given by the inclusion $\mathcal{H}^{1,0}\subset \mathcal{H}$ is the period map \[s_{\mathcal{H}^{1,0}}:\HH\to \grass(k,\mathcal{H}_c)\hookrightarrow \PP(\bigwedge^k \mathcal{V}_c),\quad z\mapsto (\mathcal{H}^{1,0}_z\subset \mathcal{H}_c).\]
%If  the rank of $\mathcal{H}$ is $2$, the image of the period map in $\PP^1_{\CC}$ is the period domain given by $\HH$. Hence there is a set  $U\subset \PP^1_{\CC}$ not contained in any line that is disjoint to the image of the period map and the claim follows from Remark \ref{rem:rank2condition}.

\subsection{Weight 1 variation of Hodge structures}

Recall that a weight 1 complex variation of Hodge structure of rank $n$ over $C$  is given by a flat vector bundle $\mathcal{H}$ of rank $n$ together with a holomorphic subbundle $\mathcal{H}^{1,0}\subset \mathcal{H}$ of rank $k$ and a $\nabla$-flat hermitian complex form $H$  on $\mathcal{H}$ that is positive definite on  $\mathcal{H}^{1,0}$ and negative definite on $\mathcal{H}/\mathcal{H}^{1,0}$.

We can reprove the result of  \cite{ekz} in the case of complex weight $1$ variation of Hodge structures over hyperbolic curves.

\begin{thm}
	\label{thm:ekz}
	If $\mathcal{H}$ is a weight $1$ complex variation of Hodge structures of rank $n$ with $\rk(\mathcal{H}^{1,0})=k$ over $C$, then
	\[\sum_{i=1}^k \lambda_i=\frac{2\pardeg(\Xi_h(\mathcal{H}^{1,0}))}{\deg(\Omega_{\overline{C}}^1(\log(\Delta))} \]
\end{thm}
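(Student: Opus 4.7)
The plan is to invoke Proposition \ref{prop:conditionemptylocus}, which reduces the desired equality to finding a $G_t$-invariant closed subset $\mathcal{S} \subset \PP(\bigwedge^k \mathcal{H}^\vee)$ together with a vector $u \in \mathcal{S}$ that attains the top exponent $\sum_{i=1}^k \lambda_i$, subject to $\tbad^{\mathcal{H}^{1,0}}(u') = \emptyset$ for every $u' \in \mathcal{S}$. The main geometric input is the $\nabla$-flat hermitian form $H$ from Definition \ref{def:vhs}: in weight one it restricts to a definite form of one sign on $\mathcal{H}^{1,0}$, so $H$ has global signature $(n-k, k)$ on $\mathcal{H}$.

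Working in a fixed fiber $\mathcal{H}_c$ and using the Plücker embedding of Remark \ref{rem:geombad}, I would set
\[\mathcal{S} := \{[U] \in \Gr(n-k, \mathcal{H}_c) : H|_U \text{ is semi-definite of the sign opposite to } H|_{\mathcal{H}^{1,0}}\} \subset \PP(\textstyle\bigwedge^k \mathcal{H}_c^\vee).\]
This $\mathcal{S}$ is closed, because semi-definiteness is a closed condition on the Grassmannian, and $G_t$-invariant, because parallel transport preserves $H$ and hence the signature of $H|_U$. For the empty bad locus condition, Remark \ref{rem:geombad} identifies $\tbad^{\mathcal{H}^{1,0}}(u')$ with $\{z \in \HH : s_{\mathcal{H}^{1,0}}(z) \cap U \neq 0\}$, where $s_{\mathcal{H}^{1,0}}(z)$ is the parallel transport of $\mathcal{H}^{1,0}_{\pi(z)}$ to $\mathcal{H}_c$. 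Since $H$ is flat, every $s_{\mathcal{H}^{1,0}}(z)$ remains definite of the original sign; a common non-zero vector with the opposite-sign semi-definite $U$ would then be both strictly positive and non-positive (or vice versa) under $H$, which is impossible.

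It remains to exhibit $u \in \mathcal{S}$ computing the top Lyapunov exponent. By the symmetric spectrum (Remark \ref{rem:symmspec}) the top exponent of $\bigwedge^k \mathcal{H}^\vee$ equals $\sum_{i=1}^k \lambda_i(\mathcal{H})$, and the Plücker vector $u_U$ attached to a plane $U$ attains it precisely when $U$ is linearly complementary in $\mathcal{H}_c$ to the span $W_0$ of the bottom $k$ Oseledets subspaces at $c$. This complementarity condition is Zariski open and non-empty in $\Gr(n-k, \mathcal{H}_c)$, whereas the locus of $(n-k)$-planes where $H$ is definite of the required sign is non-empty Euclidean open by the signature of $H$, hence Zariski dense in the irreducible Grassmannian. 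Their intersection is therefore non-empty, providing the desired $U$, and Proposition \ref{prop:conditionemptylocus} then yields the equality.

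The step I expect to require the most care is the last one: reconciling the \emph{flat} signature constraint (which makes $\mathcal{S}$ both $G_t$-invariant and bad-locus-free) with the \emph{Oseledets} genericity constraint (which makes $u$ actually compute the top exponent). The argument closes only because Hodge-theoretic positivity produces a full Euclidean-open subset of $\Gr(n-k, \mathcal{H}_c)$, which is automatically dense enough in the irreducible Grassmannian to meet any non-empty Zariski open subset; this compatibility is a feature particular to weight one and is not available in higher weight.
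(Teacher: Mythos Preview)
Your argument is correct, and it proceeds along a genuinely different line from the paper's. The paper also invokes Proposition~\ref{prop:conditionemptylocus}, but it takes $\mathcal{S}$ to be the locus of \emph{totally isotropic} $(n-k)$-planes for $H$ and then appeals to Proposition~\ref{prop:Oseledets istrop} (Oseledets subspaces are isotropic and pairwise $H$-orthogonal) to produce an isotropic plane built out of Oseledets pieces that computes the top exponent. Because a totally isotropic subspace has dimension at most $\min(k,n-k)$, the paper must split into the cases $k\geq n-k$ and $k<n-k$, handling the latter by passing to the complex conjugate variation of Hodge structures. Your choice of $\mathcal{S}$ as \emph{semi-definite} $(n-k)$-planes of the opposite sign avoids this dichotomy entirely: such planes exist for every value of $k$, and the transversality with the Oseledets data is obtained by a soft Zariski-density argument rather than by the structural orthogonality of Proposition~\ref{prop:Oseledets istrop}. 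One small point: your ``precisely when $U$ is complementary to $W_0$'' is only literally true when $\lambda_{n-k}>\lambda_{n-k+1}$; in the degenerate case the equivalence fails, but all you actually use is that the set of $(n-k)$-planes whose Pl\"ucker vector attains the top exponent is non-empty Zariski open in the Grassmannian, which holds regardless. What you gain is a uniform, case-free proof; what the paper's route buys is an explicit link to the isotropy of Oseledets subspaces, which is of independent interest and reappears in the weight-two argument (Theorem~\ref{thm:simionk3}).
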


\begin{proof}
	
	We want to use Proposition \ref{prop:conditionemptylocus} and prove that for any vector $u$ of the $G_t$-invariant closed subspace $\mathcal{S}\subset \PP(\bigwedge^k\mathcal{H}^{\vee})$ given by totally isotropic $(n-k)$-planes  the bad locus $\displaystyle \tbad^{\mathcal{H}^{1,0}}(u)$ is empty. This locus is $G_t$-invariant because the indefinite metric $H$ is $G_t$-invariant. Moreover, we need to prove that there is a totally isotropic $(n-k)$-plane computing the top $k$-Lyapunov exponents.\\
	Assume that $k\geq n-k$. In order to prove that  the bad locus $\tbad^{\mathcal{H}^{1,0}}(u)$ is empty, notice that the image of the period map 
	\[s_{\mathcal{H}^{1,0}}:\HH\to \\P\left(\bigwedge^k \mathcal{H}_c\right)\]
	is contained in the space of positive definite $k$-planes since $H$ is positive definite on $\mathcal{H}^{1,0}$. Hence every $k$-plane $s_{\mathcal{H}^{1,0}}(z)$ has to intersect trivially any totally isotropic $(n-k)$-plane, which means by condition \eqref{eq:conditionequality}  that the bad locus is empty for any $u\in \mathcal{S}$.
	We now only need to find a  totally isotropic $(n-k)$-plane  computing the top Lyapunov exponents. Since $k\geq n-k$ and the Hermitian form $H$ has signature $(k,n-k)$, using Proposition \ref{prop:Oseledets istrop} it is easy to see that there is a totally isotropic $(n-k)$-plane contained in the positive Oseledets  space $\mathcal{V}_{\geq 0}$. By definition of the Osedelec space this plane computes the sum of the top $k$ Lyapunov exponents.
	
	If $k\leq  n-k$ we consider the complex conjugate variation of Hodge structure. It has the same Lyapunov exponent and now the complex conjugate of the bundle $\mathcal{H}^{0,1}:=\mathcal{H}/\mathcal{H}^{1,0}$ is a holomorphic subbundle with degree equal to $\deg(\mathcal{H}^{0,1})=-\deg(\mathcal{H}^{1,0})$. Now we can use the first part of the proof
	\[\sum_{i=1}^{k} \lambda_i=-\sum_{i=1}^{n-k} \lambda_i=-\frac{2\pardeg(\Xi_h(\mathcal{H}^{0,1}))}{\deg(\Omega_{\overline{C}}^1(\log(\Delta))}=\frac{2\pardeg(\Xi_h(\mathcal{H}^{1,0}))}{\deg(\Omega_{\overline{C}}^1(\log(\Delta))} .\] 
\end{proof}

Notice that as a corollary of the last theorem we get back the first part of the statement of Proposition \ref{prop:w1k3HN}, namely that $\mathcal{H}^{1,0}$ is the maximal degree subbundle among all the subbundles of $\mathcal{H}$. 

%We want to conclude by recalling that in the case of a weight one variation of Hodge structures corresponding to a maximal representation, we can say something more on the Lyapunov spectrum. Indeed as explained in Remark \ref{rem:maximalvhs}, in this case the Milnor-Wood inequality is the same as the Arakelov inequality. In \cite{zuoshimura}, weight one  VHS attaining the Arakelov inequality were classified. If the base curve is not compact, the VHS is given as the direct sum  of a unitary part and the direct sum of the uniformizing representations. The Lyapunov spectrum is then degenerate, namely given only by  $\rk(\mathcal{H}^{1,0})$-ones and by zeroes.  Moreover in this case the base curve is a Shimura curve in the moduli space of abelian varieties. It would be really interesting to understand if the other implication holds, namely to investigate if the degenereteness of the Lyapunov spectrum in the case of non-compact base curve implies that the curve is a Shimura curve.
%If the base curve is not compact, the characterization of \cite{zuoshimura} does not help to understand better the Lyapunov spectrum. However, by maximality we at least know that the sum of the first 
%$\rk(\mathcal{H}^{1,0})$-exponents is equal to $\rk(\mathcal{H}^{1,0})$.

\subsection{Real variations of Hodge structures of K3 type}

A weight two real variation of Hodge structures  over $C$   is given by a real vector bundle $\mathcal{H}_{\RR}$ over $C$ such that its base change to $\CC$ defines a complex variation of Hodge structures $\mathcal{H}$ of weight two. Let  $F^2=\mathcal{H}^{2,0}\subset F^1  \subset \mathcal{H}$  be the Hodge filtration and  note that $\overline{F^2}=\mathcal{H}^{0,2}:=\mathcal{H}/F^1$. Recall that by definition there is a $\nabla$-flat hermitian complex form $H$  on $\mathcal{H}$ which is positive definite on  $\mathcal{H}^{2,0}$ and $\mathcal{H}^{0,2}$, and negative definite on $\mathcal{H}^{1,1}:=F^1/\mathcal{H}^{2,0}$. Let $n:=\rk(\mathcal{H})$ and $k:=\rk(\mathcal{H}^{2,0})$.

Recall first of all that in the case of real variations of Hodge structures of weight 2, by the geometric Oseledets theorem, and more specifically by Proposition \ref{prop:zeroexp}, there are at least $n-4k$ zero Lyapunov exponents. Hence the Lyapunov spectrum takes the form
\[\lambda_1,\dots,\lambda_{2k},0,\cdots,0,-\lambda_{2k},\dots,-\lambda_1.\]

In \cite{simionk3}, it is proven that for a real variation of Hodge structures of weight 2 of K3 type, i.e. for $k=1$, the bound of Theorem \ref{thm:ekmz} is an equality for the top Lyapunov exponent when we use  the holomorphic sub-line bundle $\mathcal{E}=\mathcal{H}^{2,0}$.
We can reprove this result using Proposition \ref{prop:conditionemptylocus}. 

\begin{thm}
	\label{thm:simionk3}
	Let $\mathcal{H}$ be a flat vector bundle of rank $n$ corresponding to a  real variation of Hodge structures of  weight two of K3 type, namely $\rk(\mathcal{H}^{2,0})=1$.
	Then 
	\[\lambda_1=\frac{2\pardeg(\Xi_h(\mathcal{H}^{2,0}))}{\deg(\Omega_{\overline{C}}^1(\log(\Delta))}. \]
\end{thm}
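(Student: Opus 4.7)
The plan is to apply Proposition \ref{prop:conditionemptylocus} with $\mathcal{E}=\mathcal{H}^{2,0}$, which is a line bundle since we are in the K3 type case, so the sum of the top $k$ exponents reduces to $\lambda_1$. The task is then to construct a $G_t$-invariant closed subset $\mathcal{S}\subset\PP(\mathcal{H}^\vee)$ such that the bad locus $\tbad^{\mathcal{H}^{2,0}}(u)$ is empty for every $u\in\mathcal{S}$ and at least one $u\in\mathcal{S}$ computes $\lambda_1$. The key extra piece of structure available here is the flat real polarization $Q$ on $\mathcal{H}_{\RR}$, a non-degenerate symmetric bilinear form whose complexification has signature $(n-2,2)$: by the Hodge--Riemann relations $Q$ is negative definite on the real span of $\mathcal{H}^{2,0}\oplus\mathcal{H}^{0,2}$ and positive definite on $\mathcal{H}^{1,1}_{\RR}$, while $\mathcal{H}^{2,0}$ is $Q$-isotropic with $F^1_z=(\mathcal{H}^{2,0}_z)^{\perp_Q}$.

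I would then use $Q$ to identify $\mathcal{H}\cong\mathcal{H}^\vee$ as $\nabla$-flat bundles via $w\mapsto u_w:=Q(w,\cdot)$, and set
\[
\mathcal{S}:=\{[u_w]\in\PP(\mathcal{H}^\vee)\colon w\in\mathcal{H}_{\RR}\setminus\{0\},\ Q(w,w)\leq 0\}.
\]
Since $Q$ and the real structure are both $\nabla$-flat, $\mathcal{S}$ is $G_t$-invariant; it is closed, being the image of a real-closed cone under the closed embedding $\RR\PP^{n-1}\hookrightarrow\CC\PP^{n-1}$. The emptiness of the bad locus follows from criterion \eqref{eq:conditionequality}: $\tbad^{\mathcal{H}^{2,0}}(u_w)=\emptyset$ is equivalent to $Q(w,\omega_z)\neq 0$ for every $z\in\HH$, that is to $w\notin F^1_z$ for every $z$. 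Suppose for contradiction that $w\in F^1_z$ for some $z$. Since $w$ is real, we also have $w\in\overline{F^1_z}$, whence $w\in F^1_z\cap\overline{F^1_z}=\mathcal{H}^{1,1}_z$ and so $w\in\mathcal{H}^{1,1}_z\cap\mathcal{H}_{\RR}$; but $Q$ is positive definite on this real subspace, contradicting $w\neq 0$ and $Q(w,w)\leq 0$.

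Finally, one has to produce $u\in\mathcal{S}$ computing $\lambda_1$. The open negative cone $\{w\in\mathcal{H}_{\RR}\colon Q(w,w)<0\}$ is non-empty because the signature has a nontrivial negative part, and it has positive Lebesgue measure in $\mathcal{H}_{\RR}$, while the set of vectors that fail to compute the top Lyapunov exponent of $\mathcal{H}^\vee$ is a Lebesgue null subset of $\mathcal{H}_{\RR}$ (sitting in a union of proper subspaces coming from lower dual Oseledets subbundles). Almost every $w$ in the negative cone therefore yields a $u_w\in\mathcal{S}$ that computes $\lambda_1$, and Proposition \ref{prop:conditionemptylocus} then delivers the desired equality. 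The main delicate step in carrying this plan out is correctly pinning down the sign of $Q$ on each Hodge piece so as to single out the right $G_t$-invariant cone; once this sign is fixed the argument is entirely parallel to the weight one proof of Theorem \ref{thm:ekz}, with the Hermitian positivity of $\mathcal{H}^{1,0}$ replaced by the real bilinear positivity of $\mathcal{H}^{1,1}_{\RR}$.
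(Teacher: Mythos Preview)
Your proof is correct and follows the same overall strategy as the paper: apply Proposition~\ref{prop:conditionemptylocus} with $\mathcal{E}=\mathcal{H}^{2,0}$, choosing a $G_t$-invariant closed set $\mathcal{S}\subset\PP(\mathcal{H}^\vee)$ built from the flat polarization and the real structure, and verify both hypotheses. The differences are minor. The paper works with the Hermitian form $H$ and takes $\mathcal{S}$ to be the set of hyperplanes whose $H$-orthogonal complement is a totally isotropic \emph{real} line; your $\mathcal{S}$ is the (larger) closed non-positive real $Q$-cone, which contains the paper's set as its boundary. Your emptiness-of-bad-locus argument and the paper's are the same computation phrased in terms of $Q$ versus $H$.

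The one genuinely different step is how you find a vector in $\mathcal{S}$ computing $\lambda_1$. The paper exhibits it explicitly: by Proposition~\ref{prop:Oseledets istrop} the last Oseledets space $\mathcal{V}_{\lambda_n}$ is a real isotropic line, so the hyperplane $\bigoplus_{i>1}\mathcal{V}_{\lambda_i}$ lies in $\mathcal{S}$ and tautologically computes $\lambda_1$. You instead use a Lebesgue-measure argument on $\mathcal{H}_\RR$: the open negative cone has positive measure while the vectors failing to compute $\lambda_1$ lie in a proper real subspace (the real points of a proper complex Oseledets subspace). This avoids invoking the orthogonality of Oseledets spaces and does not require the top exponent to be simple, so it is arguably a cleaner way to check that hypothesis.
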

\begin{proof}
	As in the proof of weight one case we want to use Proposition \ref{prop:conditionemptylocus}. Hence we will prove that for any vector $u$ in the $G_t$-invariant subset $\mathcal{S}\subset \PP(\mathcal{H}^{\vee})$ given by  hyperplanes whose orthogonal complement is a totally isotropic real line,  the bad locus $\displaystyle \tbad^{\mathcal{H}^{2,0}}(u)$ is empty. Moreover we will prove that there is one hyperplane in $\mathcal{S}$ computing the top Lyapunov exponent.\\
	First of all we  check that all lines in the image of the period map
	\[s_{\mathcal{H}^{2,0}}:\HH\to \PP( \mathcal{H}_c)\]
	do not intersect non-trivially any hyperplane in $ \mathcal{S}$ .
	  	The image of the period map is contained in the period domain defined by positive lines $v\in \PP( \mathcal{H}_c)$ for which $H(v,\overline{v})=0$. Then if there is a hyperplane $u\in \mathcal{S}$ intersecting non trivially a line $v\in \Im(s_{\mathcal{H}^{2,0}})$, it means that the isotropic real line $u^{\perp}$ is contained in the negative definite orthogonal complement $<v,\overline{v}>^{\perp}$, which is impossible.  
	Finally, by Proposition \ref{prop:Oseledets istrop}, the orthogonal complement of the  hyperplane $u=\sum_{i>1}\mathcal{V}_{\lambda_i}$ is the Oseledets  space $\mathcal{V}_{\lambda_n}$ which is a totally isotropic real line. By definition $u$ computes the top Lyapunov exponent.
\end{proof}

Notice that as a corollary of the last theorem we get a stronger  statement than  the one of Proposition \ref{prop:w1k3HN} in the K3 case.
\begin{cor}
	Let $\mathcal{H}$ be a flat vector bundle corresponding to a  real variation of Hodge structures of  K3 type. Then for any sub-line bundle $\mathcal{L}\subset \mathcal{H}$ it holds
	\[\deg(\mathcal{L})\leq \deg(\mathcal{H}^{2,0}).\]
\end{cor}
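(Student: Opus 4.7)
The plan is to read the corollary as a direct consequence of Theorem~\ref{thm:simionk3} combined with the main inequality of Theorem~\ref{thm:ekmz} (or equivalently the refinement of Theorem~\ref{thm:lyapexp}). The idea is that once the inequality between the top Lyapunov exponent and $\pardeg$ of a sub-line bundle becomes an equality for the specific choice $\mathcal{L}=\mathcal{H}^{2,0}$, the same inequality applied to any competitor sub-line bundle is forced to fit under that equality and hence yields a degree comparison.

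Concretely, I would proceed as follows. First, for an arbitrary sub-line bundle $\mathcal{L}\subset \mathcal{H}$, apply Theorem~\ref{thm:ekmz} to the rank one case $k=1$, obtaining
\[
\lambda_1 \;\geq\; \frac{2\pardeg(\Xi_h(\mathcal{L}))}{\deg(\Omega_{\overline{C}}^1(\log(\Delta)))}.
\]
Second, invoke Theorem~\ref{thm:simionk3}, which in the K3 type case identifies the same top exponent with the normalized parabolic degree of $\mathcal{H}^{2,0}$:
\[
\lambda_1 \;=\; \frac{2\pardeg(\Xi_h(\mathcal{H}^{2,0}))}{\deg(\Omega_{\overline{C}}^1(\log(\Delta)))}.
\]
Chaining these two relations and cancelling the common positive denominator gives $\pardeg(\Xi_h(\mathcal{L}))\leq \pardeg(\Xi_h(\mathcal{H}^{2,0}))$, which in the compact setting (no cusps, trivial parabolic structure) is precisely $\deg(\mathcal{L})\leq \deg(\mathcal{H}^{2,0})$.

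There is essentially no obstacle: the work has already been done in Theorem~\ref{thm:simionk3}, whose proof relies on the geometry of the period domain to rule out bad-locus intersections for the hyperplane $u=\sum_{i>1}\mathcal{V}_{\lambda_i}$. The only point worth mentioning is that the improvement over Proposition~\ref{prop:w1k3HN}(2) — which gave the weaker bound $\deg(\mathcal{E})\leq 2\deg(\mathcal{H}^{2,0})$ for arbitrary subbundles $\mathcal{E}$ — comes from the fact that the Lyapunov argument is sharp for $\mathcal{H}^{2,0}$ in the K3 case, whereas the Higgs semistability chain in the proof of Proposition~\ref{prop:w1k3HN} picked up two contributions (an intersection with $F^1$ and then with $\mathcal{V}^{2,0}$) and thus unavoidably doubled the bound. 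So the strategy is simply to replace the semistability chain with the Lyapunov equality, valid precisely because $\rk(\mathcal{H}^{2,0})=1$.
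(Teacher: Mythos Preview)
Your proposal is correct and is exactly the argument the paper has in mind: the corollary is stated immediately after Theorem~\ref{thm:simionk3} with no separate proof, being the obvious combination of that equality with the general inequality of Theorem~\ref{thm:ekmz} applied to an arbitrary sub-line bundle. Your remark contrasting this with the semistability chain of Proposition~\ref{prop:w1k3HN}(2) is also on point.
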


We believe for the general  two variation of Hodge structures case the situation is analogous to the one described in \cite{ekmz} for Calabi-Yau threefolds. Namely, if the monodromy is arithmetic a strict inequality between the sum of the first $k$ exponents and the normalized degree of $\mathcal{H}^{2,0}$ should always hold.
%Recall finally that in the special case of a real K3 VHS corresponding to a maximal representation, the top Lyapunov exponent is $2$, since the Milnor-Wood inequality is the same as the Arakelov inequality (see Remark \ref{rem:maximalvhs}). Moreover, by the results of \cite{zuok3} we know that this is the case if and only if the variation of Hodge structures comes from the Kummer construction applied to the product of two isogenous modular elliptic curve. This in particular means that the base curve is a Shimura curve in the moduli space of K3 surfaces, which is the same as the period domain of variation of Hodge structures of real K3 type.

\section{Lyapunov exponents on De Rham moduli spaces and Shatz stratification}
\label{subsec:R-H}

The set of flat vector bundles can be made into an algebraic variety called de Rham moduli space. We showed in the last section that Lyapunov exponents satisfy special properties on the subset of this moduli space given by variations of Hodge structures. In this section we want to investigate Lyapunov exponents as invariants on the full  moduli space and in particular understand the behavior of these functions on Shatz strata given by Harder-Narasimhan type. Thanks to inequality \eqref{eq:bound} we can give a bound of the Lyapunov exponents functions on these strata. This result is a generalization of the main result of \cite{deroin}, since we will see in the next sections that the maximal stratum in rank two can be identified with the locus considered in \cite{deroin} given by projective structures underlying the same complex structures. We finally use a recent result of \cite{dujfavre} to prove that the Lyapunov exponents functions are unbounded on the maximal stratum, which means in particular that they are unbounded on the moduli space.

More specifically the De Rham moduli space $\drham$  is the moduli space of semisimple flat vector bundles $(\mathcal{V},\nabla)$ of rank $n$ over $C$ with trivial determinant bundle modulo the action of the complex gauge group.
 When we speak about moduli spaces, we are always assuming that the base hyperbolic Riemann surface $C$ is compact, since we do not want to deal here with representation varieties with fixed parabolic weights at the cusps.
We saw that Lyapunov exponents are defined for every flat bundle with non-expanding cusp monodromies. Hence if $C$ is compact, they are defined for every point of $\drham$.  
We can then define  functions, which we will call the Lyapunov exponent functions, from the de Rham moduli space
\[\lambda_i: \drham\to \RR,\quad (\mathcal{V},\nabla)\mapsto \lambda_i(\mathcal{V},\nabla),\quad i=1,\dots,n\]
that send a flat holomorphic bundle over $C$ to its $i$th Lyapunov exponent.
Recall that  we have to take care only of half of the Lyapunov spectrum since it is symmetric (see Remark \ref{rem:symmspec}). These are the invariants that we will consider in the rest of the paper.

\begin{rem}
	Recall that the the De Rham moduli space is biholomorphic via Riemann-Hilbert correspondence to the complex character variety, also known as Betti moduli space, 
	\[\betti:=\Hom(\pi_1(C),\SL_n(\CC))//\SL_n(\CC),\]
	which is the moduli space of reductive representations of the fundamental group of $C$ into $\SL_n(\CC)$. The De Rham moduli space and the character variety are also homeomorphic via Simpson correspondence to the Hitchin moduli space, which is the moduli space  of rank $n$ polystable Higgs bundle   over $C$ with trivial determinat bundle  and with vanishing Chern classes.\\ 
	The Lyapunov exponents functions can be then also considered on these moduli spaces, but they are more naturally defined on $\drham$. 
\end{rem}

\subsection{ Shatz stratification}
\label{sec:drshatz}

Since Lyapunov exponents are related to degrees of holomorphic subbundles of flat bundles, we recall here the existence of a stratification of the de Rham moduli space given by Harder-Narasimhan type, called Shatz stratification. We then describe more in detail the minimal stratum containing the unitary representations and the maximal stratum, which we identify with the oper locus.  %Finally we recall the construction of another stratification defined by  the $\CC^*$-action on the Hitchin bundle moduli space. The minimal and maximal strata of these two stratifications coincide.

We  recall some basic definitions about Harder-Narasimhan filtrations and maximal degree subbundles following \cite{H-N}.
	Let $\mathcal{V}$ be a holomorphic vector bundle over $C$ of rank $n$. Recall that the \emph{degree $\deg(\mathcal{V})$ of $\mathcal{V}$} is the first Chern class of $\mathcal{V}$, or equivalently the degree of the determinant bundle $\det(\mathcal{V})$.
	The  \emph{slope} $\mu(\mathcal{V}) $ of $\mathcal{V}$ is defined as  the quotient of the degree and the rank \[\mu(\mathcal{V})=\deg(\mathcal{V})/\rk(\mathcal{V}).\]

Notice that both degree and rank are additive functors, while the slope is not.

\begin{defn}
	A vector bundle $\mathcal{V}$ is \emph{(semi)stable} if, for every  holomorphic subbundle $\mathcal{E}\subset \mathcal{V}$, it holds $\mu(\mathcal{E})<(\leq) \mu(\mathcal{V})$, or equivalently $\mu(\mathcal{V})<(\leq) \mu (\mathcal{V}/\mathcal{E})$.
\end{defn}

%\begin{rem}The two following easy statements are useful for computations.
%	\begin{enumerate}
%		\item Let $f:\mathcal{V}_1\to \mathcal{V}_2$ be a non-zero holomorphic map between two holomorphic vector bundles of the same rank. Then $\mu(\mathcal{V}_2)\geq \mu(\mathcal{V}_1)$. Indeed $f$ induces a nonzero holomorphic section of the line bundle $\det(\mathcal{V}_2)\otimes \det(\mathcal{V}_1)^{\vee}$, which hence has positive degree since we are working over a curve.
%		\item  Let $f:\mathcal{V}_1\to \mathcal{V}_2$ be a non-zero holomorphic map between two holomorphic semistable vector bundles. Then $\mu(\mathcal{V}_2)\geq \mu(\mathcal{V}_1)$. Indeed by semistability of $\mathcal{V}_1$ first and of $\mathcal{V}_2$ after we have $\mu(\mathcal{V}_1)\leq \mu(\Im(f))\leq \mu(\mathcal{V}_2)$.
%	\end{enumerate}
%\end{rem}
We want now to define the Harder-Narasimhan filtration of a vector bundle.
If $\mathcal{V}$ is not semistable, we say that $\mathcal{E}\subset \mathcal{V}$ is maximal if  $\mathcal{E}$ is semistable and for every $\mathcal{E'}$ such that $\mathcal{E}\subsetneq \mathcal{E'}\subset \mathcal{V}$, it holds $\mu(\mathcal{E})>\mu(\mathcal{E'})$. In other words, $\mathcal{E}$ is maximal if it is the semistable subbundle with maximal slope. One can show that it exists and it is unique.
One can moreover show that $\mathcal{E}$ is maximal if and only if  $\mathcal{E}$ is semistable and for every $\mathcal{Q}\subset \mathcal{V}/\mathcal{E}$, it holds $\mu(\mathcal{Q})<\mu(\mathcal{E})$.

\begin{defn}
	The \emph{Harder-Narashiman filtration} of a holomorphic vector bundle $\mathcal{V}$ of rank $n$  is a filtration by holomorphic  subbundles
	\[0=\mathcal{V}_0\subset \mathcal{V}_1\subset \dots \subset \mathcal{V}_l=\mathcal{V}\]
	such that $\mathcal{V}_i/\mathcal{V}_{i-1}$ is semistable and $\mu(\mathcal{V}_i/\mathcal{V}_{i-1})>\mu(\mathcal{V}_{i+1}/\mathcal{V}_{i})$.
	This filtration always exists and it is unique.
	
	We call the collection
	\[(\mu_1,\dots,\mu_n),\quad \mu_i=\mu(\mathcal{V}_i/\mathcal{V}_{i-1})\] 
	of slopes (possibily repeated depending on the rank of $\mathcal{V}_i/\mathcal{V}_{i-1}$) the \emph{Harder-Narashiman type} of $\mathcal{V}$.  	
\end{defn}

For example, if $\mathcal{V}$ is semistable, the Harder-Narashiman type of $\mathcal{V}$ is simply given by $(\mu_1=\mu(\mathcal{V}),\dots, \mu_n=\mu(\mathcal{V}))$.

\begin{rem}
	The bundles $\mathcal{V}_i$ appearing in the Harder-Narasimhan filtration satisfy $\mu(\mathcal{V}_i)>\mu(\mathcal{V}_{i+1})$.
	Moreover, each one of the following conditions can be substituted to the second condition in the definition of Harder-Narasimhan filtration:
	\begin{enumerate}
		\item $\mu(\mathcal{V}_i/\mathcal{V}_{i-1})>\mu(\mathcal{V}_{i+1}/\mathcal{V}_{i})$.
		\item $\mathcal{V}_i/\mathcal{V}_{i-1}$ is maximal in $\mathcal{V}/\mathcal{V}_{i-1}$.
		\item $\mu_i$ is the minimal slope among the slopes of quotients of $\mathcal{V}_i$.
	\end{enumerate}
	In particular  $\mathcal{V}_1$ is the sub-line bundle with the maximal slope among all subbundles of $\mathcal{V}$. For the proof of the previous statements see \cite{H-N}.
\end{rem}

%\begin{prop}
%	\label{prop:hnmax}
%The bundles appearing in the HN filtration are the ones with maximal degree among all subbundles of the same rank.
%\end{prop}
%\begin{proof}
%\todo{TODO}.
%\end{proof}

%\subsection{Shatz stratification of $\drham$}

We want to state a central theorem  about the upper-semicontinuity of the Harder-Narashiman type.
There is a partial ordering on vectors given by Harder-Narashiman types, namely
\[(\mu_1,\dots,\mu_n)\leq (\nu_1,\dots,\nu_n) \Longleftrightarrow \sum_{i=1}^k \mu_i\leq \sum_{i=1}^k \nu_i \text{ for all } k=1,\dots, n.\]
One can also visualize this partial ordering by drawing a semi-polygon in the plane that has vertices with coordinates $(\rk(\mathcal{V}_i),\deg(\mathcal{V}_i))$. The partial ordering is then given by checking if a semi-poligon is above an other one.

Now we state the main theorem by Atiyah and Bott, which was proven  in the general case of higher dimensional base space by Shatz.

\begin{thm}[\cite{atiyah},\cite{shatz}]
	The Harder-Narasimhan type defines an  upper semicontinuous function, meaning that if $\mathcal{C}_{\vec{\mu}}$ is the space of vector bundles with Harder-Narasimhan type $\vec{\mu}$, then
	\[\overline{\mathcal{C}_{\vec{\mu}}}\subseteq \bigcup_{\vec{\nu}\geq \vec{\mu}} \mathcal{C}_{\vec{\nu}}.\] In particular there is a stratification of $\mathcal{M}^{(n)}_{DR}$ given by the Harder-Narashiman type called Shatz stratification.
\end{thm}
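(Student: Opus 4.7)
The plan is to reduce the statement to upper semicontinuity of the Harder-Narasimhan polygon in any holomorphic family of vector bundles on $C$. Once this is established, each stratum $\mathcal{C}_{\vec{\mu}}$ is the set-theoretic difference of two closed sets (those with polygon above or equal to $\vec{\mu}$ and those with polygon strictly above $\vec{\mu}$), hence locally closed, giving the claimed stratification. Since $\drham$ admits, at least analytically locally, a holomorphic universal family of flat bundles (via Kuranishi-type slices transverse to the gauge orbit), and since the Harder-Narasimhan type is an invariant of the underlying holomorphic bundle alone, upper semicontinuity in families transfers directly to the moduli space.

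The crucial technical input is properness of Grothendieck's relative Quot scheme. Given a holomorphic family $\mathcal{V}\to C\times T$ and integers $k$, $d$, the relative Quot scheme $\mathrm{Quot}_{n-k,-d}(\mathcal{V}/C\times T/T)$ parametrising rank $n-k$, degree $-d$ coherent quotients of the fibres is proper over $T$. The image of its projection to $T$ is therefore closed, and by construction this image equals
\[
T_{k,d}:=\{t\in T:\mathcal{V}_t\text{ admits a coherent subsheaf of rank }k\text{ and degree}\geq d\}.
\]
Since the saturation of a subsheaf of rank $k$ has the same rank and degree $\geq$ that of the subsheaf, $T_{k,d}$ coincides with the same locus restricted to subbundles. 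Consequently the function
\[
d_k(t):=\max\{\deg(\mathcal{E}):\mathcal{E}\subset\mathcal{V}_t\text{ a subbundle of rank }k\}
\]
is upper semicontinuous on $T$; the maximum is finite because the degrees of destabilising subbundles in a bounded family are uniformly bounded above (for instance via Grauert's semicontinuity applied to $\Hom(\mathcal{E},\mathcal{V}_t)$ after twisting).

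Next I would assemble these one-rank statements into the polygonal version. The Harder-Narasimhan polygon $P_{\mathrm{HN}}(\mathcal{V}_t)$ is the upper boundary of the convex hull of the points $(k,d_k(t))$ for $k=0,\dots,n$; its vertices are the pairs $(\rk\mathcal{V}_i,\deg\mathcal{V}_i)$ arising from the HN filtration. The partial ordering $\vec{\nu}\geq\vec{\mu}$ defined via partial sums is precisely the relation ``polygon of $\vec{\nu}$ lies weakly above polygon of $\vec{\mu}$.'' Upper semicontinuity of each $d_k$ together with the convex-hull description yields upper semicontinuity of the whole polygon in this order, which is exactly the inclusion $\overline{\mathcal{C}_{\vec{\mu}}}\subseteq\bigcup_{\vec{\nu}\geq\vec{\mu}}\mathcal{C}_{\vec{\nu}}$.

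The main obstacle I anticipate is not the polygon upper semicontinuity itself, which is classical after Atiyah-Bott and Shatz, but rather the passage from families of holomorphic bundles to the de Rham moduli space. One must verify that around any $(\mathcal{V}_0,\nabla_0)\in\drham$ there is an analytic neighbourhood parametrising flat bundles whose underlying holomorphic structures vary holomorphically. This follows from the analytic slice theorem for the gauge action on the space of flat connections, but care is needed since $\drham$ is in general only a GIT quotient with non-Hausdorff stacky points. A secondary technical point is ensuring that subsheaves and subbundles give the same maximal degrees; this is handled by passing to saturations, which on a curve preserves rank and weakly increases degree.
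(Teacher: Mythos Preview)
The paper does not prove this theorem; it is stated with attribution to \cite{atiyah} and \cite{shatz} and used as a black box to set up the Shatz stratification, so there is no ``paper's own proof'' to compare against. Your sketch is a reasonable outline of the classical argument (properness of the relative Quot scheme forces closedness of the loci $T_{k,d}$, hence upper semicontinuity of the maximal-degree functions, hence of the HN polygon), and the concerns you flag about passing from families to the moduli space via analytic slices are the right ones. One small imprecision: the HN polygon at an intermediate rank $k$ need not equal $d_k(t)$ on the nose (degrees are integers while the polygon is piecewise linear), but since the vertices of the polygon are attained by the HN pieces $\mathcal{V}_i$, the convex hull of the $(k,d_k(t))$ does recover the polygon, which is all you need.
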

%\todo{TODO:add idea proof}.
We will now discuss more in detail the minimal and the maximal Shatz strata.

\subsection{Minimal Shatz stratum}
\label{subsec:minimalstrat}

The minimal stratum is easy to describe, since it is clearly given by the locus of semistable bundles. Note that  the degree of every vector bundle in $\drham$ is zero, since this is the moduli space of flat vector bundles.  Hence  the Harder-Narashiman type of a semistable bundle is  $(\mu_1=0,\dots, \mu_n=0)$. The minimal stratum is an open dense set. The subset of semistable but not stable bundles is the closed  subset of the minimal stratum corresponding to the subset of reducible representations in $\betti$.

%\begin{rem}
%  The locus of stable bundles  is isomorphic to the tangent bundle of the space of stable holomorphic vector bundles. Indeed the space of holomorphic connections on a fixed simple vector bundle $\mathcal{V}$ is given by
%  \[H^0(\End(\mathcal{V})\otimes \mathcal{K}_C)\cong H^1(End(\mathcal{V})^{\vee})\cong H^1(End(\mathcal{V}))\]
%  which is the tangent space to the space of stable holomorphic vector bundles (see \cite{narashiman-seshandri}). 
%\end{rem}
%
%Using the last remark, it is easy to see that the open subset of the minimal stratum given by stable bundles is contained into the smooth locus of $\drham$.
%Indeed if $\mathcal{V}$ is stable, it is easy to see that it is also simple, i.e. $H^0(\End(\mathcal{V}))=0$. 
%Note that here $\End(\mathcal{V})$ means the endomorphism of $\mathcal{V}$ that are the identity on the determinant bundle, since we are only dealing with the category of vector bundles with trivial determinant. 
%%That is also why the definition of simple vector bundle is $H^0(\End(\mathcal{V}))=0$ and not  $H^0(\End(\mathcal{V}))\cong \CC$. 
% Using the Riemann-Roch theorem, we can then compute the dimension of the tangent space of $\drham$  at simple flat bundles
%\[2h^1(End(\mathcal{V}))=2h^0(End(\mathcal{V}))+2\rk(\End(\mathcal{V}))(g-1)=2(n^2-1)(g-1)\]
%which is equal to the dimension of $\drham$.

Thanks to the  theorem by Narasimhan and Seshadri  we know what is the  closed subset of the stable locus in $\drham$ corresponding to the unitary locus of $\betti$.

\begin{thm}[\cite{narashiman-seshandri}]
	\label{thm:n-s}
	The locus of irreducible unitary representations in $\betti$ corresponds to the locus of flat vector bundles $(\mathcal{V},\nabla)\in \drham$ where $\mathcal{V}$ is stable and $\nabla$ is the harmonic metric connection. 
	This subset  is a $(n^2-1)(g-1)$-dimensional closed subvariety of the stable locus of $\drham$. 
\end{thm}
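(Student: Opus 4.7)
The plan is to establish the two directions of the Narasimhan--Seshadri correspondence separately, and then compute the dimension by deformation theory. For direction $(\Rightarrow)$, I want to show that if $\rho:\pi_1(C)\to \SU(n)$ is irreducible, the associated flat bundle equipped with the flat unitary connection $\nabla$ is stable in the sense of holomorphic vector bundles. For direction $(\Leftarrow)$, I want to show that every stable holomorphic vector bundle of trivial determinant admits a (unique up to positive scalar) Hermitian metric whose Chern connection is flat, recovering a unitary representation. The main obstacle will be the existence step in the second direction.

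For the easy direction, let $(\mathcal{V},\nabla,h)$ come from an irreducible unitary representation, and let $\mathcal{E}\subset \mathcal{V}$ be any holomorphic subbundle. The Chern connection of $h|_{\mathcal{E}}$ differs from the restriction of $\nabla$ by the second fundamental form $\beta\in \Omega^{1,0}(C,\mathrm{Hom}(\mathcal{E},\mathcal{V}/\mathcal{E}))$, and a standard Chern--Weil computation yields
\[
\deg(\mathcal{E}) \;=\; -\frac{1}{2\pi}\int_C |\beta|^2_h\, \omega_C \;\leq\; 0,
\]
with equality iff $\beta=0$, i.e.\ iff $\mathcal{E}$ is $\nabla$-invariant. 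Irreducibility of $\rho$ rules out proper $\nabla$-invariant subbundles, so equality cannot occur for any proper $\mathcal{E}$, proving stability. Uniqueness of the unitary structure up to scalar then follows from Schur's lemma applied to $\rho$.

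The hard direction is the existence of the Hermitian--Einstein (here, flat unitary) metric on a given stable $\mathcal{V}$ of degree zero. I would follow Donaldson's variational proof: define the Donaldson functional $\Psi$ on the space of Hermitian metrics on $\mathcal{V}$ as an integrated secondary Bott--Chern class, whose Euler--Lagrange equation is the Hermitian--Einstein equation $F_h=0$. The Donaldson functional is convex along geodesics in the space of metrics, and the key analytic step is to show that stability of $\mathcal{V}$ implies properness/coercivity of $\Psi$, so that minimizing sequences converge (modulo gauge) to a smooth minimizer. This properness is the heart of the proof: one must rule out destabilizing subsheaves arising as blow-up loci of maximizers of the Maruyama/Harder--Narasimhan slope among weak limits. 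Once $h$ is produced, its Chern connection is the flat unitary $\nabla$; its holonomy gives an irreducible unitary representation by the stability of $\mathcal{V}$ (a reducible unitary factorization would give a flat, hence holomorphic, $h$-orthogonal splitting contradicting stability).

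Finally, to compute the dimension, I would use deformation theory: the Zariski tangent space to $\betti$ at an irreducible class $[\rho]$ is the group cohomology $H^1(\pi_1(C),\mathrm{Ad}\,\rho)$, and since $\pi_1(C)$ is a genus $g$ surface group acting on $\mathfrak{su}(n)$, Poincaré duality and the Euler characteristic formula give $H^0=H^2=0$ (by irreducibility) and $\dim_{\mathbb{R}} H^1 = -\chi(C)\cdot \dim_{\mathbb{R}}\mathfrak{su}(n) = 2(g-1)(n^2-1)$, which identifies the real dimension of the unitary locus as a totally real submanifold of half the real dimension of $\betti$ (matching the dimension stated up to a factor of two convention). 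Closedness follows because $\SU(n)\subset \SL_n(\mathbb{C})$ is closed and compact, so the image of $\Hom(\pi_1(C),\SU(n))$ in $\betti$ is closed.
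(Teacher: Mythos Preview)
The paper does not prove this theorem: it is stated with the citation \cite{narashiman-seshandri} and no proof is given, as it is a classical result being quoted for later use. There is therefore nothing in the paper to compare your proposal against.

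That said, your outline is a reasonable sketch of Donaldson's analytic proof of the Narasimhan--Seshadri theorem. A few remarks. The easy direction via the second fundamental form is correct and standard. For the hard direction you have named the right ingredients (Donaldson functional, convexity along geodesics, properness from stability), but what you have written is a summary of the strategy rather than a proof; the actual analysis---Uhlenbeck compactness, the convergence of minimizing sequences, and the identification of the blow-up locus with a destabilizing subsheaf---is substantial and would need to be filled in or cited precisely. Note also that the original paper of Narasimhan and Seshadri used a completely different method (induction on rank via extensions and GIT, not gauge theory); Donaldson's proof came later. Your dimension count is fine: $\dim_{\mathbb{R}} H^1(\pi_1(C),\mathrm{Ad}\,\rho)=(2g-2)(n^2-1)$ is the real dimension of the unitary locus, and the paper's $(n^2-1)(g-1)$ is the complex dimension of the moduli space of stable bundles to which it is diffeomorphic.
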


In the Hitchin moduli space the unitary locus  corresponds to the locus of Higgs bundles with zero Higgs field.

\subsection{Maximal Shatz stratum: oper locus}

We want to describe the maximal locus of the Shatz stratification, namely the locus where the flat vector bundles have maximal Harder-Narashiman type. 
In order to do this, we have to introduce the notion of opers and state their main properties. We will follow the survey \cite{wentworthsurvey}.

\begin{defn}
	A \textit{$\SL_n$-oper} is a rank $n$ holomorphic vector bundle $\mathcal{V}$ with trivial determinant bundle, equipped with a flat holomorphic connection $\nabla$ and a  filtration by holomorphic subbundles
	\[0=\mathcal{V}_0\subset \mathcal{V}_1\subset \dots \subset \mathcal{V}_n=\mathcal{V}\]
	such that
	\begin{enumerate}
		\item $\nabla(\mathcal{V}_i)\subseteq \mathcal{V}_{i+1}\otimes \mathcal{K}_C$;
		\item $\nabla:\mathcal{V}_i/\mathcal{V}_{i-1}\to \mathcal{V}_{i+1}/\mathcal{V}_i\otimes \mathcal{K}_C$ is an isomorphism.
	\end{enumerate}
\end{defn}

Let us now define the oper locus $\Op_n(C)\subset \drham$ as the subset of the de Rham moduli space of flat vector bundles admitting an oper structure. This definition makes sense since the oper structure, i.e. the oper filtration, is unique for a fixed oper $(\mathcal{V},\nabla)$. 
The uniqueness of the oper structure is a consequence of  the following central proposition.
\begin{prop}[\cite{wentworthsurvey}]
	\label{prop:oper}
	Let $(\mathcal{V},\nabla)$ be a $\SL_n$-oper. The oper structure on $\mathcal{V}$ is uniquely determined by the line bundle 
	\[\mathcal{V}/\mathcal{V}_{n-1}\cong \mathcal{V}_1\otimes \mathcal{K}_C^{-(n-1)}.\]
	Moreover 
	\[(\mathcal{V}/\mathcal{V}_{n-1})^n\cong \mathcal{K}_C^{-n(n-1)/2} \quad\text{ and }\quad \det(\mathcal{V}_j)\cong \mathcal{V}/\mathcal{V}_{n-1}\otimes \mathcal{K}_C^{nj-(j(j+1)/2)}.\]
\end{prop}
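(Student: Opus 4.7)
The plan is to read off every successive quotient $\mathcal{V}_i/\mathcal{V}_{i-1}$ from the line bundle $\mathcal{V}_1$ by iterating the isomorphism in condition (2), and then extract the determinant formulas from the triviality of $\det\mathcal{V}$.

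First I would iterate condition (2): the stated isomorphism $\nabla: \mathcal{V}_i/\mathcal{V}_{i-1} \xrightarrow{\sim} \mathcal{V}_{i+1}/\mathcal{V}_i \otimes \mathcal{K}_C$ gives $\mathcal{V}_{i+1}/\mathcal{V}_i \cong \mathcal{V}_i/\mathcal{V}_{i-1} \otimes \mathcal{K}_C^{-1}$. Starting from $\mathcal{V}_1/\mathcal{V}_0 = \mathcal{V}_1$ and iterating produces
\[
\mathcal{V}_{i+1}/\mathcal{V}_i \cong \mathcal{V}_1 \otimes \mathcal{K}_C^{-i}, \quad i = 0,1,\dots,n-1.
\]
Specializing to $i = n-1$ yields the first asserted isomorphism $\mathcal{V}/\mathcal{V}_{n-1} \cong \mathcal{V}_1 \otimes \mathcal{K}_C^{-(n-1)}$.

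Next I would take determinants of the filtration, using the standard identity $\det\mathcal{V} = \bigotimes_{i=1}^n \mathcal{V}_i/\mathcal{V}_{i-1}$. Substituting the quotients found in the previous step yields $\det\mathcal{V} \cong \mathcal{V}_1^{\otimes n} \otimes \mathcal{K}_C^{-n(n-1)/2}$, and because $\mathcal{V}$ has trivial determinant this forces $\mathcal{V}_1^{\otimes n} \cong \mathcal{K}_C^{n(n-1)/2}$. Combining with the first formula, $(\mathcal{V}/\mathcal{V}_{n-1})^{\otimes n} \cong \mathcal{V}_1^{\otimes n} \otimes \mathcal{K}_C^{-n(n-1)} \cong \mathcal{K}_C^{-n(n-1)/2}$. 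The same determinant computation restricted to $\mathcal{V}_j$ gives $\det\mathcal{V}_j \cong \mathcal{V}_1^{\otimes j} \otimes \mathcal{K}_C^{-j(j-1)/2}$, and after substituting $\mathcal{V}_1 \cong \mathcal{V}/\mathcal{V}_{n-1} \otimes \mathcal{K}_C^{n-1}$ a short arithmetic check on the exponent $j(n-1) - j(j-1)/2 = nj - j(j+1)/2$ delivers the last claimed isomorphism.

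For the uniqueness of the oper structure, I would argue that the entire filtration can be recovered from $\mathcal{V}_1$ and $\nabla$ by inductively defining $\mathcal{V}_{i+1}$ to be the saturation in $\mathcal{V}$ of the smallest coherent subsheaf containing $\mathcal{V}_i$ with $\nabla(\mathcal{V}_i) \subseteq \mathcal{V}_{i+1} \otimes \mathcal{K}_C$. The isomorphism condition in (2) guarantees that at each stage the connecting map $\mathcal{V}_i/\mathcal{V}_{i-1} \to \mathcal{V}_{i+1}/\mathcal{V}_i \otimes \mathcal{K}_C$ is a line bundle isomorphism, so the successive ranks jump by exactly one and the resulting subsheaves are forced to be subbundles of the prescribed rank. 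Since $\mathcal{V}/\mathcal{V}_{n-1}$ determines $\mathcal{V}_1$ by the first isomorphism, the oper structure is determined by either of these line bundles.

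The main obstacle I expect is the uniqueness step, specifically verifying that the inductively generated $\mathcal{V}_{i+1}$ is genuinely a subbundle of the correct rank (and not merely a subsheaf), and that the connecting map stays an isomorphism at every stage. Once the iteration of the successive quotients is in hand, the three displayed formulas reduce to straightforward bookkeeping with line bundle identities.
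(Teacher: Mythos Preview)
The paper does not supply its own proof of this proposition; it is quoted from \cite{wentworthsurvey} and used as a black box. Your argument is the standard one and is correct for the line-bundle identities: iterating the isomorphism in condition (2) to get $\mathcal{V}_{i+1}/\mathcal{V}_i\cong\mathcal{V}_1\otimes\mathcal{K}_C^{-i}$, then reading off determinants, is exactly how this is done. One small remark: your computation actually gives $\det\mathcal{V}_j\cong(\mathcal{V}/\mathcal{V}_{n-1})^{\otimes j}\otimes\mathcal{K}_C^{nj-j(j+1)/2}$, so the stated formula in the paper is missing a power $j$ on the first factor; your derivation is right and the discrepancy is a typo in the statement.

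On uniqueness, your inductive reconstruction of $\mathcal{V}_{i+1}$ from $\mathcal{V}_i$ and $\nabla$ is the right mechanism, but note that knowing the abstract isomorphism class of $\mathcal{V}/\mathcal{V}_{n-1}$ (equivalently of $\mathcal{V}_1$) does not by itself pin down the sub-line-bundle $\mathcal{V}_1\subset\mathcal{V}$; what one actually needs is that any oper filtration on a fixed $(\mathcal{V},\nabla)$ is forced. The paper handles this later by identifying the oper filtration with the Harder--Narasimhan filtration (Theorem~\ref{thm:opermax}), which is canonical. Your saturation argument can also be made to work, but the cleanest way to close the gap you flag is to observe that the successive quotients you computed have strictly decreasing degrees, so the filtration you build is automatically the Harder--Narasimhan filtration and hence unique.
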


In particular the isomorphism class of $\mathcal{V}$ is fixed on every connected component of $\Op_n(C)$ and  each component  parametrizes  the space of holomorphic connections on a fixed holomorphic vector bundle. These components are classified by the choice of the line bundle $\mathcal{V}/\mathcal{V}_{n-1}$ which is defined by the property $(\mathcal{V}/\mathcal{V}_{n-1})^n\cong K_X^{-n(n-1)/2}$. Hence $\Op_n(C)$ has $n^{2g}$ connected components, which also corresponds to the number of ways of lifting a monodromy representation in $\PSL_n(\CC)$ to $\SL_n(\CC)$ (see \cite[Remark 4.2]{wentworthsurvey}). 
Using Proposition \ref{prop:oper} one can also prove that if a holomorphic bundle has the structure of an oper then it must be  an irreducible flat vector bundle, or equivalently the representation that it defines is simple (\cite[Prop. 4.8]{wentworthsurvey}).

%We will see in the next section that there is only one variation of Hodge structures on each connected component of the oper locus. It is given by the $(n-1)$-th symmetric power of the maximal Higgs one in rank $2$ (see Proposition \ref{prop:opervhs}). The maximal Higgs variation of Hodge structures in rank $2$ corresponds to the uniformizing representation of $C$.

We want now to parametrize the oper locus in such a way that it will be easy to see  why in rank $2$  the oper locus corresponds to  the set of holonomies of projective structures inducing the same complex structure  on $C$ (see Proposition \ref{prop:operproj}).

Consider the $n$-th order  differential operator on $\HH$ locally of the form
\begin{equation}
Dy=y^{(n)}+Q_2 y^{(n-2)}+\dots Q_n y
\end{equation}
where $Q_j$ are pull-backs of local sections of $\mathcal{K}_C^j$.
This differential operator induces a short exact sequence of $\underline{\CC}$-modules
\begin{equation}
\label{eq:exactseqdiffeq}
0\to \mathbb{V}\overset{\varphi}{\longrightarrow} \mathcal{K}_C^{1-(n+1/2)}\overset{D}{\longrightarrow} \mathcal{K}_C^{n+1/2}\to 0
\end{equation}
and we say that the local system $\mathbb{V}$ is realized in $\mathcal{K}_C^{1-(n+1/2)}$.
Clearly the space of all such local systems is parametrized by the affine space modeled on $\bigoplus_{j=2}^n H^0(C,\mathcal{K}_C^j)$.

We can now state another characterization of opers.

\begin{prop}[\cite{wentworthsurvey}]
	\label{prop:operdiffeq}
	Let $(\mathcal{V},\nabla)$ be a flat vector bundle. Then $(\mathcal{V},\nabla)$ is an oper if and only if its associated local system is realized in $\mathcal{K}_C^{1-(n+1/2)}$.\\
	This characterization defines an isomorphism between each connected component of $\Op_n(C)$ and the affine space modeled on the Hitchin base $\bigoplus_{j=2}^n H^0(C,\mathcal{K}_C^j)$. It follows also that the dimension of $\Op_n(C)$ is $(n^2-1)(g-1)$.
\end{prop}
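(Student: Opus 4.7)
The plan is to establish an explicit bijection between each connected component of $\Op_n(C)$ and the set of scalar differential operators of the given normal form, and then read off both the affine structure and the dimension. I would treat the two directions separately and verify they are mutually inverse.

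\textbf{Opers to differential operators.} Given an $\SL_n$-oper $(\mathcal{V},\nabla,\mathcal{V}_\bullet)$, Proposition \ref{prop:oper} already provides $L := \mathcal{V}/\mathcal{V}_{n-1} \cong \mathcal{K}_C^{1-(n+1)/2}$ together with the chain of oper isomorphisms $\mathcal{V}_i/\mathcal{V}_{i-1} \cong \mathcal{V}_1 \otimes \mathcal{K}_C^{-(i-1)}$. Locally I would choose a holomorphic frame $(e_1,\ldots,e_n)$ adapted to the filtration and normalized by the oper isomorphisms so that $\nabla e_i \equiv e_{i+1} \otimes dz \pmod{\mathcal{V}_i \otimes \mathcal{K}_C}$, which is possible precisely because of the two oper conditions. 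A local flat section $s = \sum_i s_i e_i$ then satisfies an upper-triangular system whose decoupling is an $n$-th order ODE for $y := s_n$, viewed as a local section of $L$; writing this as $Dy = 0$, the principal symbol is the identity, and the remaining coefficients can be checked to transform under a change of adapted frame as holomorphic sections $Q_j \in \mathcal{K}_C^j$. The $\SL_n$-condition $\det(\mathcal{V}) \cong \mathcal{O}_C$ forces the coefficient of $y^{(n-1)}$ to vanish: the trace of the connection matrix in the adapted frame is the connection one-form induced on $\det\mathcal{V}$, which is flat and trivializable.

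\textbf{Differential operators to opers.} Given $D$ of the stated form, set $\mathbb{V} := \ker(D) \subset \mathcal{K}_C^{1-(n+1)/2}$, a rank $n$ local system by standard ODE theory, and let $\mathcal{V} := \mathbb{V} \otimes_\CC \mathcal{O}_C$ with its Gauss--Manin connection. The oper filtration is produced from the canonical jet filtration on $L$: concretely, the companion-matrix form of $D$ puts $\nabla$ in a standard shape, and $\mathcal{V}_i$ is defined as the subbundle whose local sections correspond to solutions vanishing to order at least $n-i$ at a chosen point. Griffiths transversality $\nabla(\mathcal{V}_i) \subseteq \mathcal{V}_{i+1} \otimes \mathcal{K}_C$ and the isomorphism condition on graded pieces then follow directly from the principal symbol of $D$ being the identity, while the absence of the $y^{(n-1)}$ coefficient forces $\det(\mathcal{V}) \cong \mathcal{O}_C$. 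A direct check in an adapted local frame shows that the two constructions are mutually inverse, so the correspondence is bijective on each connected component.

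\textbf{Affine structure, dimension, and main obstacle.} Fixing a connected component of $\Op_n(C)$ amounts to fixing the line bundle $L \cong \mathcal{K}_C^{1-(n+1)/2}$, equivalently an $n$-th root of $\mathcal{K}_C^{-n(n-1)/2}$; there are $n^{2g}$ such choices. Once $L$ is fixed, the bijection above identifies the component with the affine space modelled on $\bigoplus_{j=2}^n H^0(C,\mathcal{K}_C^j)$, since two normalized operators with the same principal symbol differ by exactly such a tuple $(Q_2,\ldots,Q_n)$. Riemann--Roch gives $h^0(C,\mathcal{K}_C^j) = (2j-1)(g-1)$ for $j \geq 2$, and $\sum_{j=2}^n (2j-1) = n^2 - 1$, yielding $\dim \Op_n(C) = (n^2-1)(g-1)$. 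The main technical obstacle is verifying that the locally defined coefficients $Q_j$ glue intrinsically into global sections of $\mathcal{K}_C^j$: one must analyze how $Q_j$ transforms under a simultaneous change of holomorphic coordinate on $C$ and of the oper-adapted frame, and confirm in particular that the cancellation of the $y^{(n-1)}$ term is global rather than merely local. For $n=2$ this reduces to the classical transformation law for projective connections via the Schwarzian derivative, and the general case is a conceptually similar but more involved cocycle computation.
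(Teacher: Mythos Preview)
The paper does not give its own proof of this proposition; it is quoted from \cite{wentworthsurvey} and stated without argument. What the paper does add, immediately after the statement, is the explicit formula \eqref{eq:operfromdiffeq} for the oper filtration in the direction from differential operators to opers,
\[
\mathcal{V}_{n-k}=\Bigl\{\textstyle\sum_i f_i\otimes v_i \ :\ \sum_i f_i^{(j)}\varphi(v_i)=0,\ j=0,\dots,k-1\Bigr\},
\]
which, after differentiating the lower-order relations, is equivalent to your jet description ``solutions vanishing to order at least $n-i$''. So your reverse construction and the paper's explicit map agree.

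Your sketch is the standard argument and is essentially correct. Two small remarks. First, the phrase ``at a chosen point'' is misleading: the filtration is defined fiberwise at \emph{every} point $p$, with $(\mathcal{V}_i)_p$ the space of local solutions vanishing to order $\ge n-i$ at $p$; you should say this explicitly. Second, you are right to flag as the main obstacle that the individual $Q_j$ do not glue to global sections of $\mathcal{K}_C^j$; only the \emph{difference} of two normalized operators does, which is precisely why each component of $\Op_n(C)$ is an affine torsor for $\bigoplus_{j\ge 2}H^0(C,\mathcal{K}_C^j)$ rather than canonically identified with it. Your discussion already handles this point correctly.
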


We give now the explicit map between local systems realized in  $\mathcal{K}_C^{1-(n+1/2)}$ and opers realizing the correspondence of the proposition above.
Assume  that we are given the exact sequence \eqref{eq:exactseqdiffeq} and let $\mathcal{V}:=\mathcal{O}_C \otimes_{\underline{\CC}} \mathbb{V} $ be the associated flat vector bundle.  Define the oper filtration by
\begin{equation}
\label{eq:operfromdiffeq}
\mathcal{V}_{n-k}:=\{\sum_{i=1}^n f_i\otimes v_i\colon \sum_{i=1}^n f_i^{(j)}\varphi(v_i)=0,\ j=0,\dots,k-1\}\quad \text{ for } k=1,\dots,n-1,
\end{equation}
where $f_i^{(j)}$ is the $j$-th derivative of the local holomorphic function $f_i$.
We will see that the construction \eqref{eq:operfromdiffeq} of the oper filtration is useful to relate opers and projective structures in rank 2.

Let us  finally recall the theorem   stating that the oper locus is the maximal Shatz stratum.

\begin{thm}[\cite{wentworthsurvey}]
	\label{thm:opermax}
	The maximal stratum of $\mathcal{M}_{DR}^{(n)}$ is the oper locus $\Op_n(C)$. The Harder-Narashiman filtration of an oper is the oper filtration itself and the Harder-Narashiman type is given by
	\[\mu_i=\mu(\mathcal{K}_C^{(n+1)/2-i})=(n+1-2i)(g-1).\]
	By upper-semicontinuity, the oper locus is a closed embedded subset of $\mathcal{M}_{DR}^{(n)}$.
\end{thm}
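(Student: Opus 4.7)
The plan is to prove the three assertions in order: first compute the Harder--Narasimhan type of the oper filtration directly, then show it is maximal over the whole moduli space, then invoke upper-semicontinuity.

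First I would analyze the filtration of a given oper $(\mathcal{V},\nabla)$ using Proposition \ref{prop:oper}. The oper axiom gives the isomorphism $\mathcal{V}_{i+1}/\mathcal{V}_i \isom (\mathcal{V}_i/\mathcal{V}_{i-1}) \otimes \mathcal{K}_C^{-1}$ from condition (2), so if $L := \mathcal{V}_1$ then $\mathcal{V}_i/\mathcal{V}_{i-1} \isom L \otimes \mathcal{K}_C^{-(i-1)}$. The relation $(\mathcal{V}/\mathcal{V}_{n-1})^n \isom \mathcal{K}_C^{-n(n-1)/2}$ in Proposition \ref{prop:oper} pins down $\deg(L) = (n-1)(g-1)$, whence $\mu_i := \mu(\mathcal{V}_i/\mathcal{V}_{i-1}) = (n+1-2i)(g-1)$. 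Since each quotient is a line bundle (hence semistable) and the slopes are strictly decreasing (as $g \geq 2$), the oper filtration satisfies the defining properties of the Harder--Narasimhan filtration, so by uniqueness it is the HN filtration.

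Next, to show this type is maximal on $\drham$, consider any irreducible $(\mathcal{V},\nabla)\in\drham$ with HN filtration $0 = \mathcal{W}_0 \subset \dots \subset \mathcal{W}_l = \mathcal{V}$ of slopes $\nu_1 > \dots > \nu_l$ and ranks $d_i$. The composition $\mathcal{W}_i \hookrightarrow \mathcal{V} \xrightarrow{\nabla} \mathcal{V}\otimes\mathcal{K}_C \twoheadrightarrow (\mathcal{V}/\mathcal{W}_i)\otimes \mathcal{K}_C$ is $\mathcal{O}_C$-linear (the second fundamental form), and it is nonzero because otherwise $\mathcal{W}_i$ would be flat and $\nabla$-invariant, contradicting irreducibility. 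Its image is simultaneously a nonzero quotient of $\mathcal{W}_i$ (hence has a subquotient of slope $\geq \nu_i$, the smallest HN slope of $\mathcal{W}_i$) and a subsheaf of $(\mathcal{V}/\mathcal{W}_i)\otimes\mathcal{K}_C$ (hence contains a subsheaf of slope $\leq \nu_{i+1} + (2g-2)$, the largest HN slope of this target). Comparing gives the universal inequality $\nu_i - \nu_{i+1} \leq 2g-2$ for every $i$. Combined with the constraint $\sum_i d_i \nu_i = \deg(\mathcal{V}) = 0$ coming from trivial determinant, an elementary optimization shows that the HN polygon lies below the polygon with all $d_i = 1$ and all gaps equal to $2g-2$, which is precisely the oper polygon computed above. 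For reducible $\mathcal{V} = \mathcal{V}_1 \oplus \mathcal{V}_2$ the HN polygon is obtained by merging the HN polygons of the summands, and an induction on rank using that the oper maximum slope $(m-1)(g-1)$ is strictly increasing in the rank $m$ shows that such $\mathcal{V}$ cannot attain the oper polygon.

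The final step is to check that equality of HN type with the oper type forces an oper structure. If equality holds, each $d_i = 1$, each gap is exactly $2g-2$, and the image of the second fundamental form for $\mathcal{W}_i$ lies in the unique maximal-slope subsheaf $\mathcal{W}_{i+1}/\mathcal{W}_i \otimes \mathcal{K}_C$ of $(\mathcal{V}/\mathcal{W}_i)\otimes \mathcal{K}_C$; the induced maps between line bundles of equal degree are isomorphisms, showing $\nabla(\mathcal{W}_i) \subseteq \mathcal{W}_{i+1}\otimes\mathcal{K}_C$ with the graded maps being isomorphisms, i.e. the oper axioms. Hence the oper locus equals the locus of maximal HN type, and the upper-semicontinuity result of Atiyah--Bott--Shatz recalled above forces the maximal stratum to be closed, giving the closed embedding $\Op_n(C) \hookrightarrow \drham$. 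The main obstacle I expect is making the equality case of the optimization genuinely produce the oper structure (as opposed to merely the correct Harder--Narasimhan type); handling the reducible case cleanly and tracking the slope comparisons for the image of the second fundamental form is where the argument needs the most care.
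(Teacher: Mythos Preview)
The paper does not actually give its own proof of this statement; it is quoted verbatim from \cite{wentworthsurvey} with no argument supplied. So there is no ``paper's proof'' to compare against. That said, your outline is essentially the standard argument and is correct in its main steps: the direct computation of the oper slopes from Proposition~\ref{prop:oper}, the second-fundamental-form bound $\nu_i - \nu_{i+1} \le 2g-2$ for irreducible flat bundles, the linear optimization showing the oper polygon is the unique maximizer, and the equality analysis recovering the oper axioms.

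Two points deserve sharpening. First, in the slope comparison for the image of the second fundamental form you should simply invoke the standard facts $\mu(\text{quotient of }\mathcal{W}_i) \ge \nu_i$ and $\mu(\text{subsheaf of }(\mathcal{V}/\mathcal{W}_i)\otimes\mathcal{K}_C) \le \nu_{i+1} + 2g-2$ rather than the more convoluted ``subquotient'' phrasing. Second, your treatment of the reducible case only shows that a reducible bundle cannot \emph{attain} the oper type; it does not yet show its HN polygon lies \emph{below} the oper polygon at every $k$, which is what ``maximal stratum'' requires. The clean fix is to observe that the gap bound $\mu_i - \mu_{i+1} \le 2g-2$ survives the merge of the HN slope lists of the flat summands: each summand has degree zero, so its top slope is $\ge 0$ and its bottom slope is $\le 0$, and a short case analysis on whether a given position in the merged list is the last slope of its summand shows every merged gap is still $\le 2g-2$. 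Then the same optimization applies uniformly, and the reducible case needs no separate induction.
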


Using a result of  \cite{simpsonlimits} it is easy to show that there is  only one variation of Hodge structures on an each connected component of the oper locus.

\begin{prop}
	\label{prop:opervhs}
	The only variation of Hodge structures on an each connected component of the oper locus is given by the $(n-1)$-th symmetric power of the maximal Higgs one in rank $2$. The maximal Higgs variation of Hodge structures in rank $2$ corresponds to the uniformizing representation of $C$.
\end{prop}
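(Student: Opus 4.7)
The plan is to handle existence and uniqueness separately, using the cited result of Simpson \cite{simpsonlimits} as the key ingredient for the latter.

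\textbf{Existence.} The maximal Higgs VHS in rank $2$ is the flat bundle $\mathcal{K}_C^{1/2}\oplus \mathcal{K}_C^{-1/2}$ obtained by lifting the Fuchsian uniformizing representation to $\SL_2(\CC)$. Its Hodge filtration $\mathcal{K}_C^{1/2}\subset \mathcal{K}_C^{1/2}\oplus\mathcal{K}_C^{-1/2}$ is simultaneously an $\SL_2$-oper filtration, since the map $\mathcal{K}_C^{1/2}\to \mathcal{K}_C^{-1/2}\otimes \mathcal{K}_C$ induced on graded pieces by the flat connection is the tautological isomorphism. Taking the $(n-1)$-th symmetric power preserves both the VHS property (symmetric powers of VHS are VHS) and the oper property, producing a VHS-oper with graded line bundles $\mathcal{K}_C^{(n-1)/2-j}$ for $j=0,\dots,n-1$. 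By Proposition \ref{prop:oper} this identifies a specific component of $\Op_n(C)$; the remaining $n^{2g}-1$ components are reached by tensoring with the nontrivial $n$-torsion line bundles on $C$, which also preserves the VHS property.

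\textbf{Uniqueness.} By Simpson's result in \cite{simpsonlimits}, the VHS in $\drham$ correspond under the nonabelian Hodge correspondence to the $\CC^*$-fixed points of the Hitchin moduli space, namely to systems of Hodge bundles $(\bigoplus_{p,q}\mathcal{E}^{p,q},\Phi)$ with $\Phi:\mathcal{E}^{p,q}\to \mathcal{E}^{p-1,q+1}\otimes \mathcal{K}_C$. Suppose $(\mathcal{V},\nabla)\in \Op_n(C)$ is a VHS lying on a fixed component. By Theorem \ref{thm:opermax}, both the underlying holomorphic bundle $\mathcal{V}$ and its Harder--Narasimhan filtration (equal to the oper filtration) are determined by the component. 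A descending slope-type induction then shows that the Hodge filtration must coincide with the oper filtration: the top Hodge piece is a holomorphic sub-line bundle of maximal degree $(n-1)(g-1)$ and hence equals the uniquely-defined maximal-slope subbundle $\mathcal{V}_1=\mathcal{K}_C^{(n-1)/2}$ by Harder--Narasimhan uniqueness; applying the same argument to the successive quotients identifies each Hodge step with the corresponding oper step. Once the two filtrations agree, both the graded pieces $\mathcal{V}^{p,q}$ and the Higgs field $\Phi$ are forced (the latter being the isomorphism between consecutive graded pieces encoded in the oper structure), so the system of Hodge bundles, and hence the VHS, is uniquely determined and matches the existence construction above.

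The main obstacle is the slope-matching argument identifying the Hodge filtration with the oper/Harder--Narasimhan filtration. Carrying it out rigorously requires combining the polystability of the associated system of Hodge bundles from Simpson's nonabelian Hodge theory with the characterization of the oper filtration in Theorem \ref{thm:opermax} as the unique maximal destabilizing filtration; once the filtrations are matched at each inductive step and the underlying holomorphic bundle is fixed by the component, no ambiguity in the VHS structure remains.
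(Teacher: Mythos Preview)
Your overall strategy matches the paper's: reduce to showing that the Hodge filtration of a VHS--oper must agree with the oper filtration, and then read off the system of Hodge bundles. The paper does exactly this, but the key identification of the two filtrations is obtained by a direct appeal to \cite[Prop.~4.3]{simpsonlimits}: both the oper filtration and the Hodge filtration are Griffiths--transverse filtrations of $(\mathcal{V},\nabla)$ whose associated graded Higgs bundle is semistable (indeed stable, since an oper is irreducible), and Simpson's result says such a filtration is unique. That is the entire uniqueness argument in the paper.

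Your proposed replacement for this step, the ``descending slope-type induction'', has a genuine gap. You assert that the top Hodge piece $F^k$ is a \emph{line} bundle of degree $(n-1)(g-1)$, and therefore must coincide with the maximal destabilizing subbundle $\mathcal{V}_1$. Neither the rank nor the degree of $F^k$ is known a priori: from stability of the system of Hodge bundles one only gets $\deg F^j>0$ for each $j$ (since $\bigoplus_{p<j}\mathcal{V}^{p,q}$ is a sub-Higgs sheaf), which is far from pinning $F^k$ down as the unique maximal-slope sub-line bundle. Carrying this induction out rigorously amounts to reproving Simpson's uniqueness statement, which you acknowledge as ``the main obstacle'' but do not resolve. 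So as written the uniqueness argument is incomplete, and the cleanest fix is precisely the citation the paper uses.

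A second omission: you assert, rather than prove, that the rank-$2$ maximal Higgs VHS is the uniformizing representation. The paper supplies an argument for this. The Higgs field is the derivative of the period map $p:\HH\to\HH$; since it is an isomorphism, $p$ is a local diffeomorphism, and since $C$ is compact $p$ is proper, hence a covering map, hence an isomorphism because $\HH$ is simply connected. This forces $\rho$ to uniformize $C$. You should include this, since it is the content of the second sentence of the proposition.
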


\begin{proof}
	Let $(\mathcal{V},\nabla,\{\mathcal{V}_{i}\}_{i=0,\dots,n})$  be an oper.
	By Proposition \ref{prop:oper} the holomorphic vector bundle $\mathcal{V}$ is fixed on each connected component of the oper locus. Moreover the flat bundle $(\mathcal{V},\nabla)$ is irreducible. 
	Let now 
	\[F^{k+1}=0\subset \dots \subset F^0=\mathcal{V}\]
	be the filtration associated to a VHS structure on $\mathcal{V}$. Since $(\mathcal{V},\nabla)$ is irreducible, the variation of Hodge structure $(\mathcal{V},\nabla,\{F^{i}\}_{i=0,\dots,k+1})$ is irreducible as VHS. Hence the corresponding system of Hodge bundles is stable.
	Since both the oper and the VHS filtrations are semistable, by \cite[Prop. 4.3]{simpsonlimits} they are the same and the associated Higgs bundle has to be the maximal Higgs system of Hodge bundles. This system of Hodge bundles is clearly given by the symmetric power of the maximal Higgs in rank 2. 
	
	In order to check that the rank 2 maximal Higgs variation of Hodge structure defines the uniformizing representation of $C$, notice that the Higgs field can be identified with the derivative of the period map $p:\HH\to \HH$. Since the Riemann surface $C$ is compact, the period map is proper. Hence if the Higgs field is an isomorphism, the period map is a covering map, since it is a proper local isomorphism. It then has to be an isomorphism since $\HH$ is simply connected. Hence the period map induces an isomorphism between $C\cong \HH/\pi_1(C)$ and $\HH/\rho(\pi_1(C))$, where $\rho$ is the corresponding representation. It follows that $\rho$ is the uniformizing representation of $C$.
\end{proof}

\subsection{Lyapunov exponents on Shatz strata}
\label{sec:shatzbound}

The Harder-Narasimhan type provide then a natural bound for the sum of Lyapunov exponents thanks to Theorem \ref{thm:ekmz}. It is natural then to consider the Lyapunov exponents functions with respect to the Shatz strata. We also show that Lyapunov exponents functions are unbounded on the maximal stratum.

The bound on the Lyapunov exponent functions on the minimal stratum defined by semistable bundles is trivial, since there the Harder-Narasimhan filtration is trivial. Notice that all  Lyapunov exponent functions restricted to the closed subset of this stratum given by  the unitary locus are zeros.

On the maximal stratum, which by Theorem \ref{thm:opermax} is the oper locus $\Op_n(C)$, we have the maximal possible bound of the Lyapunov exponent functions. 
We can actually compute the bound, since we know the Harder-Narashiman type of an oper.

\begin{prop}
	\label{prop:lyapoper}
	If $\mathcal{V}\in \Op_n(C)$ is in the oper locus then 
	\[\sum_{i=1}^k \lambda_i(\mathcal{V})\geq k(n-k),\quad k=1,\dots,n.\]
	The above inequalities are sharp, since they are achieved in the special point corresponding to the only flat vector bundle of the maximal Shatz stratum  underlying a variation of Hodge structures. This special point  corresponds to the  $(n-1)$-th symmetric power of the uniformizing representation of $C$.
\end{prop}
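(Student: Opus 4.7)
The plan is to apply Theorem \ref{thm:ekmz} directly to the subbundles $\mathcal{V}_k$ of the oper filtration. By Theorem \ref{thm:opermax} this filtration is the Harder--Narasimhan filtration, with successive quotients $\mathcal{V}_i/\mathcal{V}_{i-1}$ line bundles of degree $\mu_i=(n+1-2i)(g-1)$. Summing these slopes and using the elementary identity $\sum_{i=1}^k(n+1-2i)=k(n-k)$ I would obtain
\[
\deg(\mathcal{V}_k) \;=\; \sum_{i=1}^k (n+1-2i)(g-1) \;=\; k(n-k)(g-1).
\]
Since $C$ is compact, $\Delta=\emptyset$, parabolic degree equals usual degree, and $\deg(\Omega^1_{\overline{C}}(\log\Delta))=\deg(\mathcal{K}_C)=2(g-1)$. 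Substituting $\mathcal{E}=\mathcal{V}_k$ into the inequality \eqref{eq:bound} of Theorem \ref{thm:ekmz} immediately yields
\[
\sum_{i=1}^k \lambda_i(\mathcal{V}) \;\geq\; \frac{2\deg(\mathcal{V}_k)}{2(g-1)} \;=\; k(n-k),
\]
which is the asserted lower bound for every $k=1,\dots,n$.

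For the sharpness claim I would first invoke Proposition \ref{prop:opervhs}, which identifies the unique variation of Hodge structures in each component of the oper locus with the $(n-1)$-st symmetric power of the uniformizing representation $\rho_{\mathrm{unif}}\colon \pi_1(C)\to \mathrm{SL}_2(\RR)$. The top Lyapunov exponent of $\rho_{\mathrm{unif}}$ can then be read off from Theorem \ref{thm:ekz} applied to its Hodge decomposition $\mathcal{K}_C^{1/2}\oplus \mathcal{K}_C^{-1/2}$, giving $\lambda_1(\rho_{\mathrm{unif}})=2\deg(\mathcal{K}_C^{1/2})/\deg(\mathcal{K}_C)=1$; by the symmetry of the Lyapunov spectrum recorded in Remark \ref{rem:symmspec}, the full rank-two spectrum is $\{1,-1\}$. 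I would then appeal to the multiplicativity of Lyapunov exponents under symmetric powers: the parallel transport on $\mathrm{Sym}^{n-1}\mathcal{V}$ is the symmetric power of the rank-two transport, so if the latter has Oseledets eigenvalues $e^{\pm t}$ then the former has eigenvalues $e^{(n+1-2j)t}$ for $j=1,\dots,n$. This produces the spectrum $\{n-1,n-3,\dots,-(n-3),-(n-1)\}$, and a direct summation gives $\sum_{i=1}^k\lambda_i=\sum_{i=1}^k(n+1-2i)=k(n-k)$, meeting the lower bound.

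The proof is essentially mechanical once one accepts these two ingredients; the only step that requires more than bookkeeping is the multiplicativity of the Oseledets decomposition under the symmetric power functor. This should follow from the observation that the Oseledets splitting of $\mathrm{Sym}^{n-1}\mathcal{V}$ is the symmetric power of the Oseledets splitting of $\mathcal{V}$, since on a full-measure set the cocycle is diagonalizable in a basis adapted to the Oseledets filtration and symmetric powers of diagonal matrices remain diagonal with the expected weights. I do not anticipate a genuine obstacle beyond this functoriality check.
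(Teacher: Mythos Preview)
Your proposal is correct and follows essentially the same route as the paper's own proof: apply Theorem \ref{thm:ekmz} to the subbundles $\mathcal{V}_k$ of the oper filtration (which is the Harder--Narasimhan filtration by Theorem \ref{thm:opermax}), compute $\deg(\mathcal{V}_k)=k(n-k)(g-1)$ from the slopes, and for sharpness invoke Proposition \ref{prop:opervhs} together with Theorem \ref{thm:ekz} in rank two and the behaviour of Lyapunov exponents under symmetric powers. Your write-up is in fact more explicit than the paper's on the symmetric-power step, and the functoriality of the Oseledets decomposition you flag is indeed routine.
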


\begin{proof}
	Using  Theorem \ref{thm:ekmz} we see that the Lyapunov spectrum dominates the Harder-Narasimhan type. 
	By Theorem \ref{thm:opermax} we know that the Harder-Narasimhan filtration of an oper is the oper filtration itself. Using  Proposition \ref{prop:oper} and arguing inductively we can compute the slopes of the subbundles $\mathcal{V}_k$ defining the  oper filtration. We see then that 
	\[\deg(\mathcal{V}_k)=k(n-k).\]
	To prove equality in the only variation of Hodge structures point (see Proposition \ref{prop:opervhs}) notice that  in the rank $2$ case the uniformizing variation of Hodge strucure is of weight one. Hence by Theorem \ref{thm:ekz} we can compute the top Lyapunov exponent, which is $1$. Now it is enough to compute how Lyapunov exponents change under symmetric power and check that we get indeed the equality.
\end{proof}

Now that we have proved a lower bound for the sum of Lyapunov exponents of Shatz strata, it is natural to ask if the Lyapunov exponent functions are unbounded on these strata. Using the recent result of Dujardin and Favre \cite[Th. A]{dujfavre} about the growth of Lyapunov exponents for meromorphic families, we are able to show the following result for the maximal stratum.

\begin{thm}
	\label{thm:shatzunbounded}
	The top Lyapunov exponent function is unbounded on the maximal Shatz stratum, the oper locus, with logarithmic growth near the boundary of the character variety.
\end{thm}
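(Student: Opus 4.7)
The plan is to reduce to the rank-two case via symmetric powers and then apply Dujardin--Favre's logarithmic growth theorem for meromorphic $\SL_2(\CC)$-families.

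In rank two, I would fix a nonzero quadratic differential $Q\in H^0(C,\mathcal{K}_C^2)$ and consider the one-parameter family of rank-two opers corresponding, via the parametrization of Proposition \ref{prop:operdiffeq}, to the second-order ODE $y''+tQy=0$ for $t\in\CC$. This produces a holomorphic family of representations $\rho_t\colon\pi_1(C)\to\SL_2(\CC)$. A standard WKB rescaling transforms the ODE into a meromorphic system at $t=\infty\in\PP^1$, so that $\rho_t$ becomes a meromorphic family of $\SL_2(\CC)$-valued cocycles over $\pi_1(C)$ in the sense of \cite[Th.~A]{dujfavre}. That theorem then yields $\lambda_1(\rho_t)\sim c\log|t|$ as $t\to\infty$, with $c>0$ provided the family is non-elementary at infinity; this is automatic for generic $Q$, since the trace of $\rho_t(\gamma)$ along a suitable simple loop $\gamma$ is a non-constant holomorphic function of $t$ with unbounded image.

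To pass from rank two to rank $n$, I would use that the $(n-1)$-th symmetric power $\Sym^{n-1}(\rho_t)$ remains an oper: if $\rho_t$ corresponds to $y''+tQy=0$, then $\Sym^{n-1}(\rho_t)$ corresponds to the $n$-th order ODE obtained by taking the $(n-1)$-th symmetric power of the solutions, which is again an oper in the sense of Proposition \ref{prop:operdiffeq}. Under symmetric powers the top Lyapunov exponent scales linearly, $\lambda_1(\Sym^{n-1}(\rho_t))=(n-1)\lambda_1(\rho_t)$, so the previous logarithmic growth is preserved. Since $\Sym^{n-1}(\rho_t)$ leaves every compact subset of the character variety as $t\to\infty$, this exhibits an unbounded sequence in $\Op_n(C)$ along which $\lambda_1$ grows logarithmically, which is exactly the claim of the theorem.

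The main obstacle is the verification of the meromorphic extension at $t=\infty$ and the non-elementarity hypothesis that guarantees $c>0$ in Dujardin--Favre's theorem. The rescaling of the second-order ODE is classical, but identifying the limiting cocycle at $t=\infty$ with a non-elementary object requires tracking the Stokes phenomenon of the WKB expansion carefully, in particular ruling out that the rescaled monodromy becomes abelian in the limit. Once these points are settled, the uniform lower bound $\lambda_1(\Sym^{n-1}(\rho_t))\geq n-1$ coming from Proposition \ref{prop:lyapoper} is consistent with, and reinforces, the logarithmic divergence.
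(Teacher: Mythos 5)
Your overall skeleton --- work in rank two on the oper component, invoke Dujardin--Favre, then pass to rank $n$ via $\Sym^{n-1}$ using that symmetric powers of rank-two opers are opers and that $\lambda_1$ scales by $(n-1)$ --- is exactly the paper's, and the symmetric-power step is fine. The gap is in the rank-two step. Dujardin--Favre's Theorem A applies to families $(\rho_t)$ whose matrix entries, equivalently whose trace functions $t\mapsto \Tr\rho_t(\gamma)$, are \emph{meromorphic} at the degenerating parameter value, and this is precisely what fails for $y''+tQy=0$ at $t=\infty$: the traces are entire in $t$ of order $1/2$ (WKB gives growth like $\exp\bigl(\sqrt{t}\oint_\gamma\sqrt{Q}\bigr)$), hence have essential singularities at infinity. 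Meromorphy of the traces is a conjugation-invariant property of the monodromy family, so no ``WKB rescaling'' can repair this: rescaling makes the \emph{coefficients} of the ODE meromorphic in $s=1/t$, but the monodromy of a family whose coefficients acquire a pole in the parameter is exactly the irregular $\hbar\to 0$ situation with Stokes phenomena, and it is not meromorphic in $s$. Consistently, your asserted asymptotic $\lambda_1(\rho_t)\sim c\log|t|$ is false: along a ray $tQ$ in the Hitchin base one has $\lambda_1(\rho_t)\asymp |t|^{1/2}$ (an upper bound of this order follows from $\log\|\rho_t(\gamma)\|\lesssim \sqrt{|t|}$ for the generators, and a lower bound from the covering-degree formula of Proposition \ref{prop:rank2shatz}, since the density of zeros of solutions of $y''+tQy=0$ grows like $\sqrt{|t|}$ by Sturm comparison). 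The ``logarithmic growth near the boundary of the character variety'' in the statement is logarithmic with respect to the character-variety (trace) coordinates, not with respect to the linear parameter on $H^0(C,\mathcal{K}_C^2)$, so your intermediate claim cannot be what Dujardin--Favre delivers.

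For comparison, the paper applies Dujardin--Favre to families $\mathbb{D}\to\bettitwo$ that are holomorphic on $\mathbb{D}\setminus\{0\}$, do not extend holomorphically over $0$, and take values in the oper component; non-elementarity is then automatic, because the oper locus coincides with $\hol(\Proj(C))$ and misses the elementary representations, so no genericity of $Q$ and no Stokes analysis are needed. In other words, the degeneration must be taken meromorphically in the character variety itself rather than along a ray of the transcendental parametrization by quadratic differentials (making this choice of family precise is where the paper is brief, but at least its hypotheses are formulated in the coordinates Dujardin--Favre actually use). With your parametrization the hypotheses of their theorem fail identically rather than being a technical point left to verify, so the rank-two step of your argument cannot be completed as written, and the subsequent symmetric-power reduction, though correct, has nothing to amplify.
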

\begin{proof}
	First of all note that the character variety and the de Rham moduli space are biholomorphic, hence by Theorem \ref{thm:opermax} the oper locus, being the maximal Shatz stratum, is a closed embedded subset of the character variety.
	Now recall that by Proposition \ref{prop:operdiffeq}  each connected component of the oper locus in rank $n$ is biholomorphic to the Hitchin base $\bigoplus_{j=2}^n \Hcoh^0(C;\mathcal{K}_C^j)$. \\
	By the result of \cite{dujfavre}, a meromorphic family from the unit disk to the space of non-elementary representations in the character variety in rank $2$ which is holomorphic outside of zero and cannot be holomorphically extended in 0 yields a logarithmic growth of the Lyapunov exponent near zero. A connected component of the oper locus in rank 2 is given by the embedded vector space $\Hcoh^0(C;\mathcal{K}_C^2)$ in the character variety and does not intersect the space of elementary representations. Since on a complex vector space there are a lot of meromorphic maps from the disk $\mathbb{D}\to \Hcoh^0(C;\mathcal{K}_C^2)$ holomorphic in $\mathbb{D}\setminus\{0\}$ and not holomorphic in zero, we have shown that the Lyapunov exponent function is unbounded in rank 2. To generalize the result to any rank it is enough to recall that, by Proposition \ref{prop:oper},  each component of the oper locus parametrizes  the space of holomorphic connections on a fixed holomorphic vector bundle. Since the rank $n$ oper locus $\Op_n(C)$ contains the $(n-1)$-symmetric power of the uniformizing representation, it is clear that the $(n-1)$-symmetric power of any representation in $\Op_2(C)$ is contained in $\Op_n(C)$. Indeed the symmetric power of a representation in  $\Op_2(C)$ defines a holomorphic connection on the  holomorphic vector bundle given as the symmetric power of the vector bundle underlying the uniformizing representation. Since the top Lyapunov exponent of the $n$-symmetric power of a representation $\rho$ is $n\times \lambda_1(\rho)$, the result  in rank 2 implies the genral result for any rank. 
\end{proof}

%Note that if we would be able to prove a holomorphic version of the valuative criterion for properness, namely that if a complex variety is not compact then it has to exists a meromorphic map from the disk to the variety with a pole in zero, then we would be able to show unboudedness for other loci.
	
\section{Special loci of Betti and de Rham moduli spaces in rank two}
\label{sec:rank2}

In this section we specialize to the study of the moduli space of rank two flat bundles. In this special case we can focus between the relation of special loci in the Betti moduli space and special loci in the De Rham moduli space. In the next section we will study properties of the Lyapunov exponent function on these loci. In particular we  identify the oper locus in $\drhamtwo$ with the space of projective structures underlying the same Riemann surface structure $C$ . This will lead in the next section to the generalization of the main result of \cite{deroin}. The material of this section is well-known but scattered throughout the literature. We present a summarizing picture collecting many different results.

\subsection{Special loci of the Betti moduli space}

Recall that the Betti moduli space
	\[\bettitwo:=\Hom(\pi_1(C),\SL_2(\CC))//\SL_2(\CC)\]
 is the moduli space of reductive representations of the fundamental group of $C$ into $\SL_2(\CC)$. 
 A discrete subgroup $\Gamma\subset \PSL_2(\CC)$ is called \emph{Kleinian}. A \emph{quasi-Fuchsian group} is a Kleinian group $\Gamma$ such that the accumulation points of the $\Gamma$-action on $\partial \HH^3$ is a quasi-circle. They are quasi-conformal deformations of Fuchsian groups, namely  discrete subgroups of $ \PSL_2(\RR)$.
These special subgroups of $\SL_2(\CC)$ define special subsets of $\mathcal{M}^{(2)}_B$. We recall now their properties.
Let $\qfuchs(S)$ be the subspace of quasi-Fuchsian representations and $ \mathcal{D}(S)$ be the subspace of discrete representations. 
The subset of discrete representations $ \mathcal{D}(S)\subset \bettitwo$ is closed in the analytic topology. Its interior is the locus $\qfuchs(S)$ of quasi-Fuchsian representations. 

%\begin{rem}[\cite{dumassurvey}]
%	The space $\qfuchs(S)$ can be identified, using the Bers simultaneous uniformization theorem,  with the space $\Teich(S)\times \Teich(\overline{S})$ where $\overline{S}$ represents the surface $S$ with opposite orientation. This identification gives a holomorphic embedding 
%	\[\qfuchs(S)\cong\Teich(S)\times \Teich(\overline{S})\hookrightarrow \bettitwo.\]
%	The diagonal $\{(C,\overline{C})\colon C\in \Teich(S)\}$ corresponds to the space of Fuchsian representation $\mathcal{M}_B^{\RR}(S)_{2g-2}\cong \Teich(S)$ but the induced embedding is not holomorphic. The image is a totally real submanifold.
%\end{rem}

Recall also that a representation is called \emph{elementary} if its action on $\HH^3$ fixes a point or an ideal point, or if it preserves an unoriented geodesic. Otherwise it is called  \emph{non-elementary}. Equivalently a representation is elementary if and only if it is unitary or reducible or if the image is conjugated to a subgroup of the group generated by 
$\langle \left(\begin{smallmatrix} \lambda & 0  \\0 & \lambda^{-1}  \end{smallmatrix} \right),\left(\begin{smallmatrix}  0 &-1 \\1 & 0  \end{smallmatrix} \right)\rangle$.
We denote by ${\bettitwo}'$  the subspace of non-elementary representations. It is a Zariski-dense subset contained in the smooth locus of $\bettitwo$. 

\subsubsection{Real Betti moduli space}
We can also consider the subset of representations into $\SL_2(\RR)$ as a subset of the Bett moduli space.
et the real Betti moduli space defined as 
\[{\mathcal{M}_{B,\RR}^{(2)}}:=Hom(\pi_1(S),\SL_2(\RR))//\SL_2(\RR).\]

\begin{thm}[\cite{goldmantopcomp}]
	The real representation variety $\mathcal{M}_{B,\RR}^{(2)}$ has one connected component for each even integer $e$ with $0\leq |e| \leq 2g-2$. The integer $e$ correponds to the Toledo invariant or the Euler number associated to a representation.\\
	In the case of the maximal integer $e=2g-2$, the connected component ${\mathcal{M}_{B,\RR,2g-2}^{(2)}}$ is the same as the space of Fuchsian representations. \\
	In the case of the minimal integer $e=0$, the connected component ${\mathcal{M}_{B,\RR,0}^{(2)}}$ is contained in the space of elementary representations.
\end{thm}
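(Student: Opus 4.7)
The plan is to construct an integer-valued locally constant function $e:\mathcal{M}_{B,\RR}^{(2)}\to \ZZ$, namely the Euler number, show it takes exactly the values claimed, and then show each of its level sets is connected. The three parts of the theorem correspond to these three tasks.

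First I would define the Euler number. Given $\rho:\pi_1(S)\to \SL_2(\RR)$, one forms the associated flat $\RR^2$-bundle over $S$ and takes its Euler class in $\Hcoh^2(S,\ZZ)\cong \ZZ$; equivalently one pulls back the bounded Euler class of $\SL_2(\RR)$ (or, since $\SL_2(\RR)$ is homotopy equivalent to $S^1$, uses the $\PSL_2(\RR)$-class and keeps track of the double cover). Since this number is defined cohomologically, it is a continuous, hence locally constant, function on $\mathcal{M}_{B,\RR}^{(2)}$. The parity is forced by the lift to $\SL_2(\RR)$: reps that lift through $\SL_2(\RR)\to \PSL_2(\RR)$ are exactly those with vanishing second Stiefel--Whitney obstruction, equivalently those whose $\PSL_2(\RR)$-Euler number is even. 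The bound $|e|\leq 2g-2$ is Milnor--Wood; the classical proof uses the bounded cohomology representative and the fact that the Euler cocycle takes values in $[-1/2,1/2]$, integrated against the fundamental class.

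Next I would show that each allowed value is attained and that the corresponding level set is connected. Surjectivity onto the allowed integers is constructive: glue hyperbolic structures on subsurfaces with appropriate boundary twisting to realize any prescribed even Euler number; a clean model is to start from a Fuchsian representation realizing $e=2g-2$ and bend along separating curves to decrease $|e|$ by multiples of $2$. Connectedness of a fixed level set is the hard part and is the core of Goldman's argument. The strategy I would adopt is the one in Goldman's thesis: exhibit each level set as the total space of a fibration (or at least a surjection onto a connected base with connected fibers) obtained by pants decompositions, where the coordinates are trace parameters on curves in the decomposition plus twist parameters, and show that deformations between any two representations of the same Euler number can be performed without crossing a level set. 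This step is the main obstacle because one has to rule out extra components by genuinely global arguments, not just dimension counting.

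For the identification of the maximal component with Fuchsian representations, the inclusion of Fuchsian representations into the $e=\pm(2g-2)$ locus follows from Gauss--Bonnet: the Euler number of the uniformization representation equals $\chi(S)$ in absolute value. The converse, that equality in Milnor--Wood forces a representation to be Fuchsian, I would prove in the standard way by analyzing when the bounded Euler cocycle attains its maximum pointwise almost everywhere; this rigidity statement (due to Goldman, also Matsumoto) forces $\rho$ to be conjugate to the holonomy of a hyperbolic structure.

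Finally, for the component with $e=0$, the plan is to use that the trivial representation lies in this component and that the Euler number is a coboundary invariant which vanishes on representations factoring through an abelian or otherwise elementary subgroup. One then shows the converse inside this component: any non-elementary $\rho$ into $\SL_2(\RR)$ either has discrete image (hence is essentially Fuchsian on a finite-index subgroup, contributing to nonzero $e$) or has dense image, in which case the continuous deformation to the trivial representation must cross a locus of reducibles, and the behavior of $e$ under such deformations shows that the starting point already had $e\neq 0$. Packaging these observations yields the containment ${\mathcal{M}_{B,\RR,0}^{(2)}}\subseteq\{\text{elementary}\}$, and I expect the delicate point here to be handling dense (non-discrete) image representations carefully.
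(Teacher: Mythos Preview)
The paper does not give its own proof of this theorem: it is stated with a citation to Goldman and used as background in the rank-two discussion, with no proof environment following it. So there is nothing in the paper to compare your argument against line by line.

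That said, your outline for the first two assertions is the standard one and is essentially what Goldman does: define the Euler number as a locally constant invariant, invoke Milnor--Wood for the range, note the parity constraint coming from the lift through $\SL_2(\RR)\to\PSL_2(\RR)$, and then prove connectedness of each level set (which, as you correctly flag, is the genuinely hard step). The identification of the extremal component with Fuchsian representations via rigidity in Milnor--Wood is also the right mechanism.

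Your argument for the $e=0$ part, however, has a real gap. The dichotomy you propose for a non-elementary $\rho$ into $\SL_2(\RR)$, namely ``discrete image or dense image'', is false: non-elementary subgroups of $\SL_2(\RR)$ can be indiscrete without being dense (their closure could be a proper Lie subgroup, or the image could simply be an indiscrete non-dense subgroup). Even granting the dichotomy, the step ``the continuous deformation to the trivial representation must cross a locus of reducibles, and the behavior of $e$ under such deformations shows that the starting point already had $e\neq 0$'' is circular: $e$ is locally constant, so it does not change along any path in the representation variety, and crossing the reducible locus tells you nothing about the value of $e$. If you want to establish this last containment you need a different argument, for instance a direct analysis showing that any $\SL_2(\RR)$-representation with vanishing Euler class admits an invariant geometric object on $\partial\HH^2$ or in $\HH^2$; your current sketch does not do this.
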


\subsubsection{Projective structures and holonomy map}
\label{sec:projstr}
Finally we want to recall the definition of projective structures and the properties of the holonomy map from the moduli space of projective structures to the Betti moduli space.
A \emph{complex projective strucuture} on $S$ is a maximal atlas of charts mapping open sets of $S$ into $\PP_{\CC}^1$ such that the transition functions are restricition of M\"obius tranformations. 
Equivalently, a projective structure on $S$ is given by a pair $(\dev,\rho)$ where  $\rho:\pi_1(C)\to \PSL_2(\CC)$ is a representation called \emph{holonomy representation} and $\dev:\HH\to \PP^1_{\CC}$ is a $\rho$-equivariant immersion called \emph{developing map}.
The pair has to be considered modulo the natural equivalence relation given by precomposition of developing maps with orientation-preserving diffeomorphisms of $S$ homotopic to the identity on one side and by conjugation of $\PSL_2(\CC)$ on the other.

Let $\Proj(S)$ be space of projective structures on $S$ and $\Teich(S)$ be the Teichm\"uller space.
Note that since M\"obius transformations are holomorphic, a projective structure also determins a complex structure. We can then consider the forgetful map
\[p: \Proj(S)\to \Teichm(S).\]
Denote by $\Proj(C):=p^{-1}(C)$ the fiber over a Riemann surface $C$, which is the space of projective structures inducing the same holomorphic structure.
\begin{rem}
	\label{rem:projstr}
	The space $\Proj(C)$ can be identified with the space of quadratic differentials $H^0(C,\mathcal{K}_C^2)$, using the Schwarzian derivative.
	The identification is given by associating to $(\dev,\rho)\in \Proj(C)$ the push-forward to $C$ of the quadratic differential $S(\dev)$ on $\HH$, where $S(\dev)$ is the Schwarzian derivative. In the other direction, one associates to $\phi\in H^0(C,\mathcal{K}_C^2)$ the projective structure given by $(u_1(z)/u_2(z), \rho)$ where $u_1,u_2$ is a basis of solution of the differential equation
	\begin{equation}
	\label{eq:diffeqranktwo}
	u''(z)+\frac{1}{2}\tilde{\phi}(z)u(z)=0.
	\end{equation}
	Here $\tilde{\phi}(z)\dd z^2$ is the pull-back of $\phi$ to the universal cover $\HH$ and $\rho$ is the monodromy associated to the differential equation.
\end{rem}

We can relate the set of projective structures and the Betti moduli space via the holonomy map, which sends a projective structure to its associated holonomy representation. 
Recall that, by a theorem of Gallo, Kapovich and Marden (\cite{gallokapovichmarden}) the holonomy map 
	\[\hol : \Proj(S)\to \bettitwo\]
	has image in ${\bettitwo}'$, it is surjective on this set and it is a local biholomorphism.
	
	We will now restrict the holonomy map to the fibers $\Proj(C)$ of the forgetful map $p:\Proj(S)\to \Teichm(S)$ and recall the following result.
	
	\begin{thm}[\cite{dumassurvey}]
		For every $C\in \Teich(S)$, the restriction $\hol_{|\Proj(C)}$ is a proper holomorphic embedding. Consequently the image $\hol(\Proj(C))$ is a complex-analytic subvariety of
		${\mathcal{M}^{(2)}_B}'$.
	\end{thm}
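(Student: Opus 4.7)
The plan is to verify the four conditions defining a proper holomorphic embedding---holomorphicity, injectivity of the differential, injectivity of the map, and properness---and then invoke Remmert's proper mapping theorem to conclude the analyticity of the image. The first two conditions come essentially for free from the Gallo--Kapovich--Marden theorem recalled above: since $\hol:\Proj(S) \to {\bettitwo}'$ is a local biholomorphism, its restriction to the closed submanifold $\Proj(C)\subset\Proj(S)$ is automatically holomorphic and has injective differential at every point.

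For injectivity (classically known as Poincar\'e's theorem in this context), I would suppose two projective structures on $C$ share the same holonomy representation $\rho$. Using the identification of Remark \ref{rem:projstr}, they are represented by $\rho$-equivariant developing maps $f_1,f_2:\HH\to\PP^1_{\CC}$. Locally one can form $g = f_1 \circ f_2^{-1}$, a M\"obius transformation commuting with every element of $\rho(\pi_1(C))$. Because $\hol|_{\Proj(C)}$ takes values in ${\bettitwo}'$, the representation $\rho$ is non-elementary, so its centralizer in $\PSL_2(\CC)$ is trivial; this forces $g=\mathrm{id}$ and hence $f_1=f_2$, so the two projective structures coincide and the corresponding quadratic differentials in $\Hcoh^0(C,\mathcal{K}_C^2)$ are equal.

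For properness, I would use the affine identification $\Proj(C)\cong \Hcoh^0(C,\mathcal{K}_C^2)$ and argue by contradiction: given $\phi_n$ with $\|\phi_n\|\to\infty$, we must show that $\hol(\phi_n)$ escapes every compact subset of $\bettitwo$. The essential input is a Nehari-type estimate bounding the norm of the Schwarzian of a $\rho$-equivariant developing map in terms of data of $\rho$, which translates into the statement that the preimage under $\hol$ of a compact family of holonomies is bounded in $\Hcoh^0(C,\mathcal{K}_C^2)$. Equivalently, one shows directly that translation lengths of hyperbolic elements in $\hol(\phi_n)$ must blow up as $\|\phi_n\|\to\infty$.

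Once properness is in hand, $\hol|_{\Proj(C)}$ is an injective proper holomorphic immersion between complex manifolds, hence a closed holomorphic embedding, and Remmert's proper mapping theorem guarantees that its image is a closed analytic subvariety of ${\bettitwo}'$. The main obstacle is the properness step: the other three properties are essentially formal consequences of Gallo--Kapovich--Marden and the non-elementary hypothesis, but controlling the holonomy quantitatively in terms of the norm of the Schwarzian is a genuinely geometric statement requiring hyperbolic-geometric input on $\HH$ that cannot be extracted from the local biholomorphism property alone.
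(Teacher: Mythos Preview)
The paper does not supply its own proof of this statement; it is simply quoted from \cite{dumassurvey}. So there is nothing in the paper to compare against. Evaluated on its own, however, your outline has a genuine gap in the injectivity step.

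You assert that $g=f_1\circ f_2^{-1}$ is \emph{a M\"obius transformation}. This is not justified. Where defined, $f_1\circ f_2^{-1}$ is a local biholomorphism between open subsets of $\PP^1_{\CC}$, but nothing forces it to be the restriction of an element of $\PSL_2(\CC)$. The slogan ``two developing maps differ by a M\"obius transformation'' applies to two developing maps of the \emph{same} projective structure---that is built into the equivalence relation on $(\dev,\rho)$---whereas here $f_1$ and $f_2$ develop a priori \emph{different} projective structures that merely share holonomy. In fact, $f_1\circ f_2^{-1}$ being M\"obius is equivalent to $S(f_1)=S(f_2)$, which is exactly the injectivity you are trying to prove; as written the argument is circular.

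A correct proof of injectivity goes differently. One route, entirely in the spirit of this paper, is to pass through the oper description of Proposition~\ref{prop:operproj}: a projective structure on $C$ with holonomy $\rho$ is the same datum as an oper line $\mathcal{L}\subset\mathcal{V}_\rho$, and the oper filtration on a given flat bundle is unique (as recalled from \cite{wentworthsurvey}). Concretely, any oper line satisfies $\mathcal{L}\cong\mathcal{K}_C^{1/2}$; if $\mathcal{L}_1\neq\mathcal{L}_2$ were two such lines, the composite $\mathcal{L}_1\hookrightarrow\mathcal{V}_\rho\twoheadrightarrow\mathcal{V}_\rho/\mathcal{L}_2\cong\mathcal{K}_C^{-1/2}$ would be a nonzero section of $\mathcal{K}_C^{-1}$, which does not exist for $g\ge 2$. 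Your treatment of the immersion property (via the local biholomorphism of $\hol$) and your sketch of properness (via Nehari--Kraus type control of the Schwarzian in terms of the holonomy) are along the right lines.
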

	Notice that, since $\Proj(C)$ is an affine space modeled on $H^0(C,\mathcal{K}_C^2)\cong \CC^{3g-3}$, then also the image  $\hol(\Proj(C))$ is.
	We will later show  how $\hol(\Proj(C))$ is the same as the oper locus $\Op_2(C)\subset \drhamtwo$.
	We now describe more in detail the intersection of the the quasi-Fuchsian, the discrete and the Fuchsian loci and $\hol(\Proj(C))$.

\begin{thm}[\cite{faltings},\cite{dumassurvey}]
	For each $C\in \Teich(S)$, the intersection of $\hol(\Proj(C))$ and ${\mathcal{M}_{\text{B},\RR}^{(2)}}$ is transversal. Moreover, 
	the intersection of $\hol(\Proj(C))$ and the Fuchsian locus ${\mathcal{M}_{\text{B},\RR,2g-2}^{(2)}}$ is countable. Each one of these Fuchsian points is contained in the open set of quasi-Fuchsian holonomy representations  in $\hol(\Proj(C))$. These open sets are  connected, contractible and biholomorphic to $\Teich(S)$. The closure of these open sets gives the space of discrete representations in $\hol(\Proj(C))$.
\end{thm}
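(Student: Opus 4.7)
The plan is to treat the four assertions separately, in increasing order of subtlety, exploiting the fact that $\hol_{|\Proj(C)}$ is a proper holomorphic embedding with image a complex submanifold of complex dimension $3g-3$, and that $\mathcal{M}_{B,\RR}^{(2)} \subset \bettitwo$ is the fixed locus of an anti-holomorphic involution (complex conjugation on $\SL_2(\CC)$), hence a totally real submanifold of real dimension $6g-6$.

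First, for transversality, I would compute dimensions: the ambient smooth locus of $\bettitwo$ has real dimension $12g-12$, while $\hol(\Proj(C))$ has real dimension $6g-6$ and $\mathcal{M}_{B,\RR}^{(2)}$ also has real dimension $6g-6$. Since $\hol(\Proj(C))$ is a complex submanifold and $\mathcal{M}_{B,\RR}^{(2)}$ is totally real, at any intersection point $\rho_0$ the tangent spaces satisfy $T_{\rho_0}\hol(\Proj(C)) \cap i\cdot T_{\rho_0}\mathcal{M}_{B,\RR}^{(2)} = \{0\}$ by a general principle (a complex subspace intersects a totally real subspace of complementary real dimension transversally precisely when the latter contains no complex line tangent to the former). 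Using the identification of $\Proj(C)$ with $H^0(C,\mathcal{K}_C^2)$ via the Schwarzian (Remark \ref{rem:projstr}) one can identify the tangent space of $\hol(\Proj(C))$ at $\rho_0$ with the space of holomorphic quadratic differentials, and the real structure coming from $\mathcal{M}_{B,\RR}^{(2)}$ then forces transversality.

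Second, for countability of Fuchsian intersection points, I would invoke the dimension count: by the transversality just established, the real dimension of the intersection is $(6g-6) + (6g-6) - (12g-12) = 0$, so the intersection is zero-dimensional, i.e.\ discrete. Since $\hol_{|\Proj(C)}$ is proper and the Fuchsian locus is closed in the discrete locus, the preimage is a closed discrete subset of the affine space $H^0(C,\mathcal{K}_C^2)$; any such subset is countable. (One can alternatively identify these points explicitly with the $2\pi$-grafting parameters of the uniformizing Fuchsian representation.)

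Third, for each Fuchsian point $\rho_0 \in \hol(\Proj(C))$, I would use the Bers simultaneous uniformization theorem: it produces a holomorphic embedding $B_{\rho_0}\colon \Teich(S) \to \Proj(C)$, given by recording for each $Y \in \Teich(S)$ the projective structure on $C$ whose holonomy is the quasi-Fuchsian group simultaneously uniformizing $(C,Y)$ and normalized so that $Y \mapsto \bar{C}$ recovers $\rho_0$. The image consists precisely of quasi-Fuchsian holonomies in $\Proj(C)$, and $B_{\rho_0}$ is a biholomorphism onto its image. Since $\Teich(S)$ is connected and contractible, so is each such open component. Finally, for the closure statement, one uses the classical fact that the closure of the quasi-Fuchsian locus in the character variety (the Bers slice) consists exactly of the discrete faithful representations with the prescribed conformal structure on one side; combined with the openness of $\qfuchs(S) \subset \mathcal{D}(S)$ from Section \ref{sec:projstr}, this identifies $\overline{B_{\rho_0}(\Teich(S))}$ with the corresponding component of discrete holonomies in $\hol(\Proj(C))$.

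The main obstacle, and the step I expect to require the most care, is the transversality statement: one must verify that the real structure on $\bettitwo$ coming from the anti-holomorphic involution is compatible with the Schwarzian parametrization of $\Proj(C)$ in such a way that no nonzero holomorphic quadratic differential tangent vector lies in $i\cdot T_{\rho_0}\mathcal{M}_{B,\RR}^{(2)}$. Once this is in place, the remaining pieces follow by invoking Bers's theorem and standard facts about the boundary of Teichmüller space in the character variety.
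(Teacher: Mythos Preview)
The paper does not prove this theorem: it is stated with attribution to \cite{faltings} and \cite{dumassurvey} and no proof is given in the text. There is therefore nothing in the paper to compare your proposal against.

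On the substance of your sketch: the transversality assertion is precisely the nontrivial content of Faltings's theorem, and your argument does not establish it. The general principle you invoke (``a complex subspace intersects a totally real subspace of complementary real dimension transversally precisely when the latter contains no complex line tangent to the former'') is true but vacuous here: what must be shown is exactly that $T_{\rho_0}\hol(\Proj(C))$ and $T_{\rho_0}\mathcal{M}_{B,\RR}^{(2)}$ together span, and this is not a formal consequence of one being complex and the other totally real. Faltings's proof goes through a careful analysis of the period map and an Ahlfors--Weill type construction; your final paragraph correctly identifies this as the crux but does not supply it.

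Your treatment of the third assertion also needs revision. The Bers embedding produces only the component containing the uniformizing representation of $C$. The other Fuchsian points in $\hol(\Proj(C))$ arise from $2\pi$-graftings along multicurves, and the corresponding quasi-Fuchsian components are not Bers slices in the way you describe (there is no ``normalization so that $Y\mapsto\bar C$ recovers $\rho_0$'' for a non-uniformizing Fuchsian $\rho_0$). That each such component is nonetheless biholomorphic to $\Teich(S)$ is a separate result, due essentially to the grafting parametrization (Goldman, Scannell--Wolf); you would need to invoke that rather than Bers uniformization alone.
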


\begin{rem}
	\label{rem:bers}
	There is a special Fuchsian representation point in $\hol(\Proj(C))$, namely the point corresponding the uniformizing representation of $C$. The open connected set $B(C)$ of quasi-Fuchsian representations containing this point is given by the image under the holonomy map  of the Bers embedding \[B(C)=\hol(Im(\Teich(S)\hookrightarrow \Proj(C)))\subset \bettitwo.\]
	The Bers embedding is defined by 
	\[\Teich(S)\hookrightarrow \Proj(C), \quad Y\mapsto \Sigma_Y(C)\]
	where $\Sigma_Y(C)$ is the projective structure on $C$ induced by the quasi-Fuchsian group $Q(C,Y)$ given by the simultaneous uniformization theorem. 
\end{rem}

%One can  then see that $\Proj(X)$ is an affine space modeled on $H^0(X,K_X^2)$ and there is a section of the forgetful map $p$ that allows to identify biholomorphically $\Proj(S)$ with the the complex vector bundle $\mathcal{Q}(S)\to \Teich(S)$ whose total space consists of $(X,\phi)$, where $X\in \Teich(S)$ and $\phi\in H^0(X,K^2_X)$.

%\begin{thm}[\cite{gallokapovichmarden},\cite{Hejhal},\cite{earle},\cite{Hubbard}]
%	\label{thm:gallo}
%	The holonomy map 
%	\[\hol : \Proj(S)\to \bettitwo\]
%	has image in ${\bettitwo}'$, it is surjective on this set and it is a local biholomorphism.
%\end{thm}
%
%The holonomy map is not injective, in fact all the fibers are infinite, and it is not a covering map.

%\section{Special loci of Betti and De Rham moduli spaces in rank 2}
%\label{sec:summarrank2}
%We want to give a more detailed description of some special loci of the Betti and the de Rham moduli spaces in rank 2. In particular, we want to use the geometric tool given by projective structures to understand better some special loci. For example, we will notice that the  oper locus of $\drhamtwo$ corresponds to the locus $\Proj(C)$ of projective structures on $C$. We will conclude this section with a summary schematic picture of these loci in the Betti and the de Rham moduli spaces.

\subsection{Special loci of de Rham moduli space}
We now specialize to subsets of the de Rham moduli space in rank two. First we describe the variations of Hodge structures loci and then investigate the Shatz stratification in rank two. In  particular we show that the maximal stratum is the same as the set $\hol_{|\Proj(C)}$  of holonomies of projective structures underlying the same Riemann surface structure treated above.

\subsubsection{Variation of Hodge structures locus in rank 2}
\label{subsec:vhsrank2}

Thanks to Hitchin in \cite{hitchin}, we know how the connected components $P_e\subset \drhamtwo$ of the variation of Hodge structures locus look like. More in detail, we know the corresponing system of Hodge bundles. These are indexed by an integer $0\leq e\leq g-1$. For $e=0$, we already recalled that $P_0$ is the space of variations of Hodge structures of weight 0, which corresponds to unitary representations.
For $e>0$, the space $P_e$ parametrizes Higgs bundles of the form 
\[\mathcal{V}=\mathcal{V}^0\oplus \mathcal{V}^1,\quad \Phi:\mathcal{V}^1\to \mathcal{V}^0\otimes \mathcal{K}_C,\]
where $\mathcal{V}^0$ and $\mathcal{V}^1$ are line bundles of degrees  $-e$ and $e$ respectively. These systems of Hodge bundles correspond to weight one complex variations of Hodge structures. By the trivial determinant condition $\det(\mathcal{V}^0\oplus \mathcal{V}^1)=\mathcal{V}^0\otimes \mathcal{V}^1\cong \mathcal{O}_C$, and so $\mathcal{V}^0\cong (\mathcal{V}^1)^*$.
Note that the Higgs field $\Phi$ is a section of the line bundle $(\mathcal{V}^1)^*\otimes \mathcal{V}^0\otimes \mathcal{K}_C$ of degree $2g-2-2e$, hence $e\leq g-1$ and in case of equality, $\Phi$ is an isomorphism.
Let $D\in \Symm^{2g-2-2e}(C)$ be the divisor of $\Phi$.
Then we have only finitely many possibilities for $\mathcal{V}^1$,  determined by the isomorphism  $(\mathcal{V}^1)^2\cong \mathcal{K}_C\otimes \mathcal{O}_C(-D)$.
We have then the parametrization
\[P_e\cong   \Symm^{2g-2-2e}(C)\times \text{finite} \# \]
which gives that the dimension of $P_e$ is $2g-2-2e$.
For $e=g-1$, the Higgs field is an isomorphism and we get the only variation of Hodge structure on the oper locus, which corresponds to the uniformizing representation of $C$ (see Proposition \ref{prop:opervhs}).

Note finally that the locus $P_e$ belongs to the $e$-th Shatz stratum, which is defined as the locus of flat bundles such that the degree of the maximal destibilizing subline bundle is $e$, for $0\leq e\leq g-1$. Indeed, by Proposition \ref{prop:w1k3HN} the first piece of the Hodge filtration $\mathcal{V}^1$ is the maximal destabilizing subsheaf of $\mathcal{V}$.

\subsubsection{Shatz stratification}

In rank two, the Shatz stratification consists of  $g$ Shatz strata. The degree of the maximal destabilizing subbundle varies in the set $\{0,1,\dots, g-1\}$. Indeed flat bundles in the oper locus, which is the maximal stratum, have maximal destabilizing subbundle of degree $g-1$ (see Theorem \ref{thm:opermax}). 

The minimal Shatz stratum  is given by semistable flat bundles. This is an open dense subset containing the space of unitary representations as a closed subspace. The space of unitary representations is the same as the locus $P_0$ of variations  of Hodge structures of  weight zero. 
The subspace of the semistable locus given by semistable but not stable bundle is a closed subset and corresponds to non simple representations. This subspace is contained in the space of elementary representations.

The maximal Shatz stratum can be identified with the oper locus  $\Op_2(C)\subset \drhamtwo$.  It is a $(3g-3)$-dimensional closed subvariety given by the locus of flat bundles  having a sub-line bundle with maximal possible degree, namely $g-1$. Indeed in this case the oper filtration is given by $0\subset \mathcal{V}_1\subset \mathcal{V}$ with $\mathcal{V}_1\cong \mathcal{K}_C^{1/2}$ (see Proposition \ref{prop:oper}).
We  describe the correspondence   between the oper locus and set of holonomies of the projective structures  inducing the same complex structure $C$.

\begin{prop}
	\label{prop:operproj}
	The subset $\hol(\Proj(C))\subset \betti$ corresponds via the Riemann-Hilbert correspondence to the oper locus $\Op_2(C)\subset \drhamtwo$. Moreover, if $\mathcal{V}$ is an oper, the meromorphic  map 
	\[s_{\mathcal{V}_1}:\HH\to \PP^1\]
	defined by the inclusion of the sub-line bundle $\mathcal{V}_1\subset \mathcal{V}$ given by the oper filtration is  the developing map associated to the projective structure corresponding to the oper.
\end{prop}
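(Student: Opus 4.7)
The plan is to exploit the fact that both $\hol(\Proj(C))$ and $\Op_2(C)$ admit a canonical parametrization by the space of quadratic differentials $H^0(C,\mathcal{K}_C^2)$: the oper locus via Proposition \ref{prop:operdiffeq} (taking $n=2$, so the local systems are realized in $\mathcal{K}_C^{-1/2}$), and the projective structures via the Schwarzian derivative as recalled in Remark \ref{rem:projstr}. I will show that these two parametrizations agree, and that under this identification the developing map is literally given by the classifying map $s_{\mathcal{V}_1}$ of the oper subline bundle.

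Starting from an oper $(\mathcal{V},\nabla)\in \Op_2(C)$, Proposition \ref{prop:operdiffeq} gives an exact sequence
\begin{equation*}
0 \longrightarrow \mathbb{V} \overset{\varphi}{\longrightarrow} \mathcal{K}_C^{-1/2} \overset{D}{\longrightarrow} \mathcal{K}_C^{3/2} \longrightarrow 0
\end{equation*}
with $D = \partial^2 + Q_2$ for some $Q_2\in H^0(C,\mathcal{K}_C^2)$. Pulling back to $\HH$ and picking a basis $u_1,u_2$ of local solutions of $\tilde D\tilde u=0$, the ratio $u_1/u_2\colon \HH\to \PP^1_{\CC}$ is an immersion equivariant with respect to the projective monodromy of the ODE, which is precisely the image of the representation $\rho_{\mathcal{V}}$ in $\PSL_2(\CC)$. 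By Remark \ref{rem:projstr}, this is the developing map of the projective structure on $C$ corresponding to the quadratic differential $2Q_2$ (the factor $2$ absorbs the $\tfrac{1}{2}$ appearing in equation \eqref{eq:diffeqranktwo}). The inverse construction is symmetric, identifying $\hol(\Proj(C))$ and $\Op_2(C)$ inside ${\mathcal{M}^{(2)}_B}'$.

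For the assertion on the developing map, I invoke the explicit formula \eqref{eq:operfromdiffeq} for the oper filtration which, for $n=2$ and $k=1$, reads
\begin{equation*}
\mathcal{V}_1 = \Bigl\{\,\sum_i f_i \otimes v_i \;:\; \sum_i f_i\,\varphi(v_i) = 0\,\Bigr\}.
\end{equation*}
If $v_1,v_2$ is a horizontal local basis of $\mathbb{V}$ with $\varphi(v_j)=u_j$, then $\mathcal{V}_1$ is generated by $u_2 v_1 - u_1 v_2$. In the trivialization $\pi^*\mathcal{V}\cong \HH\times \mathcal{V}_c$ of Remark \ref{rem:geombad}, this means $s_{\mathcal{V}_1}(z) = [-u_1(z):u_2(z)]\in \PP(\mathcal{V}_c)\cong \PP^1_{\CC}$, which is $u_1/u_2$ up to a fixed M\"obius transformation and hence defines the same projective structure as the developing map.

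The main obstacle is the bookkeeping of normalizations and the $\SL_2$-vs-$\PSL_2$ discrepancy: opers are $\SL_2(\CC)$-objects whose description via $D$ requires a choice of theta characteristic $\mathcal{K}_C^{1/2}$, which accounts for the $2^{2g}$ connected components of $\Op_2(C)$ noted after Proposition \ref{prop:oper}, all projecting to the single space $\hol(\Proj(C))$ of $\PSL_2(\CC)$-holonomies. Matching the ODE $y''+Q_2 y=0$ with $u'' + \tfrac{1}{2}\tilde\phi\, u=0$ and checking that $\rho_{\mathcal{V}}$-equivariance of $s_{\mathcal{V}_1}$ translates into holonomy-equivariance of $\dev$ are routine once these conventions are pinned down.
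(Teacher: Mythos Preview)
Your proposal is correct and follows essentially the same approach as the paper: both use Proposition~\ref{prop:operdiffeq} and Remark~\ref{rem:projstr} to identify the two loci via the quadratic-differential parametrization, and both compute $s_{\mathcal{V}_1}$ from the explicit description \eqref{eq:operfromdiffeq} of the oper filtration in terms of a basis of solutions. Your version is in fact more careful than the paper's about the normalization of the ODE and the $\SL_2$-vs-$\PSL_2$ bookkeeping.
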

\begin{proof}
	For the proof of the first statement, it is enough to combine the description of opers given by Proposition \ref{prop:operdiffeq} and  Remark \ref{rem:projstr} about the relation of projective structures and differential equations.
	
	In order to prove the second statement we need to use  the construction of the oper filtration \eqref{eq:operfromdiffeq} starting from the local system defined by the differential equation. Using the notation of as in \eqref{eq:operfromdiffeq}, the inclusion $\mathcal{V}_1\cong \mathcal{K}_C^{1/2}\subset \mathcal{V}=\mathcal{O}_C\otimes \mathbb{V}$ is given by
	\[	\mathcal{V}_{1} =\{ f_1\otimes v_1+f_2\otimes v_2\colon  f_1\varphi(v_1)+f_2\varphi(v_2)=0\}\subset \mathcal{V} \]
	where $v_1,v_2$ are a local basis of the solution of the local system.
	Recall that the devoloping map in Remark \ref{rem:projstr} was exactly  defined as the meromorphic function $ \dfrac{\varphi(v_1)}{\varphi(v_2)}$. 
	It is immediate to see that the map $s_{\mathcal{V}_1}$ defined by the inclusion of the pull-back of $\mathcal{V}_1$ into the trivial vector bundle on $\HH$ is given by the same meromorphic map.
\end{proof}

%The equality of the developing map and the map defined by the oper filtration will explain why the main Theorem \ref{thm:lyapexp} is the same as the result of \cite{deroin} if applied to the special case of the oper locus.

\subsection{Summarizing picture}

We present a summarizing picture representing the special loci that we described. In each one of  the $g$ Shatz strata we find the complex variation of Hodge structures Loci $P_e$. The locus $P_0$ of weight zero variation of Hodge structures corresponds to the locus of unitary representations and it is contained in the minimal open Shatz stratum of semistable flat bundles. The locus $P_{g-1}$ of maximal Higgs variation of Hodge structures, corresponding to the $s^{2g}$ lifts to $SL_2(\CC)$ of the uniformizing representation of $C$, is contained in the oper locus, the maximal Shatz stratum. Using the interpretation available in rank $2$ of representations as holonomies of projective structures, we can understand better the oper locus, since it is the same as the set $\Proj(C)$ of projective structures on $C$ inducing the same original complex structure. Finally the blue locus is the maximal real representation locus of Fuchsian representations.

\begin{figure}[h]
	\centering
	\includegraphics[scale=0.55]{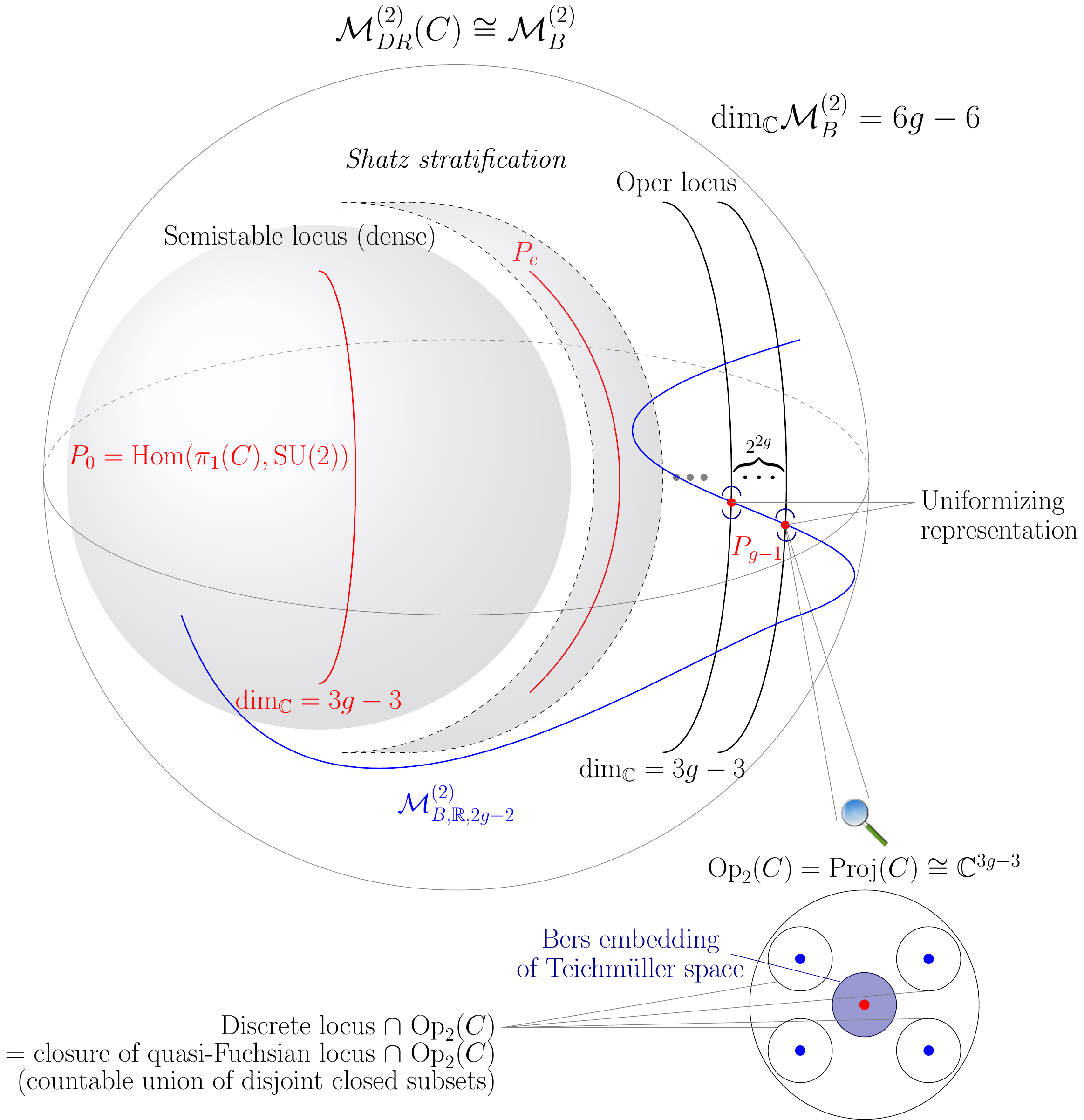}
	\caption{Betti and de Rham moduli spaces in rank 2 and special loci.}
	\label{fig:picture}
\end{figure}

\section{Lyapunov exponent function in rank two}
\label{sec:lyapexpfctrank2}

We want to describe the Lyapunov exponents invariants on the rank two de Rham moduli space, in particular describe their behavior of the special loci described in Figure \ref{fig:picture}. By restricting to the oper locus, we retrieve the main result of \cite{deroin} and we generalize it to other Shatz strata using the bound of  Proposition \ref{prop:lyapoper}, which is an equality in the compact case thanks to Theorem \ref{thm:conjcompact}. This generalization is equivalent to the generalization given in \cite[Theorem F]{deroin} in the context of branched projective structures. 
Notice that in rank two we care only about the top Lyapunov exponent because of the symmetry of the Lyapunov spectrum.   Using the relation of the geodesic flow and the random product on the fundamental group, we reprove known results about continuity of the top Lyapunov exponent function and the characterization of the locus of zero exponent (see \cite{deroin} for the proof in the Brownian motion setup). %Moreover, we recall that by Proposition \ref{prop:unbounded} the top Lyapunov exponent function is unbounded, since it is unbounded on the Hitchin component, which in rank 2 is the same as the space of Fuchsian representation, also known as Teichm\"muller space.

\subsection{Geodesic flow and random products}

Before describing some properties of the top Lyapunov exponent function, we need a lemma that relates the cocycle $G_t:\mathcal{V}_{\rho}\to \mathcal{V}_{\rho}$ defined by parallel transport over the geodesic flow  to the random product of matrices in the monodromy group. We will argue in a similar way as in \cite[Proof of Th. 1]{eskinmatheus}.  By a result of H. Furstenberg \cite{furstenberg}, there is a probability measure $\nu$ on the uniformizing group $\Gamma<\SL_2(\RR)$ of $C$ with support equal to $\Gamma$ such that the Poisson boundary of $(\Gamma,\nu)$ is $(\SO(2,\RR), \text{Leb})$. We will denote by $\lambda_{\rho(\nu)}(u)$ the Lyapunov exponents of $u\in \mathcal{V}_c$ with respect to the random walk of law $\rho(\nu)$ on the monodromy group $\rho(\pi_1(C,c))=\rho(\Gamma)$. By definition, for $\nu^{\NN}$-almost any $(\gamma_1,\dots,\gamma_n,\dots)\in \Gamma^{\NN}$ and any norm $||\cdot||$ on the vector space $\mathcal{V}_c$ it holds
\[\lambda_{\rho(\nu)}(u)=\lim_{n\to \infty} \frac{1}{n}||\rho(\gamma_n)\cdots\rho(\gamma_1)u||.\]

\begin{lem}
	\label{lem:randomwalk}
The Lyapunov exponents $\lambda(\mathcal{V}_{\rho})$ defined by the cocycle given by parallel transport over the geodesic flow of $T^1C$ and the  the Lyapunov exponents $\lambda_{\rho(\nu)}(u)$ given by the random walk on the monodromy group $\rho(\pi_1(C,c))$ coincide.
\end{lem}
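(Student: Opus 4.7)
The plan is to discretize parallel transport along the geodesic flow into a random product of elements of $\rho(\Gamma)$, and then to match the law of this product with the $\nu$-random walk.

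First, by Theorem \ref{thm:integrable} the constant norm on $\mathcal{V}_\rho$ is integrable and hence computes the Lyapunov exponents of the geodesic flow cocycle. Recall that this norm is obtained by parallel-transporting a chosen norm on $\mathcal{V}_c$ to a Dirichlet fundamental domain $F \subset \HH$ for $\Gamma$ based at a lift $\tilde c$ of $c$. By construction, whenever the lifted geodesic $g_t\tilde c$ passes at time $t_n$ from the translate $\gamma_{n-1}\cdots\gamma_1 \cdot F$ into $\gamma_n\cdots\gamma_1 \cdot F$, the time-$t_n$ parallel transport acts on $\mathcal{V}_c \cong \CC^k$ as left multiplication by $\rho(\gamma_n)\cdots\rho(\gamma_1)$. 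Ergodicity of the first-return map to a Poincaré section transverse to $\partial F$, together with Birkhoff's ergodic theorem, yields a constant mean return time $\tau > 0$ with $t_n/n \to \tau$ almost surely. Consequently, for a.e.\ initial direction and a.e.\ $u \in \mathcal{V}_c$,
\[
\lambda_i(\mathcal{V}_\rho) \;=\; \tau^{-1}\,\lim_{n\to\infty}\tfrac{1}{n}\log\mu_i\bigl(\rho(\gamma_n)\cdots\rho(\gamma_1)\bigr),
\]
where $\mu_i$ denotes the $i$-th singular value.

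Second, the main step is to match the law of the coded sequence $(\gamma_n)$ with that of an i.i.d.\ $\nu$-distributed sequence, after normalizing $\nu$ so that $\tau = 1$ (a choice permitted by Furstenberg's construction, since one can rescale the waiting times). The forward endpoint $\xi = \lim_t g_t\tilde c \in \partial\HH \cong \SO(2,\RR)$ of the geodesic is Lebesgue-distributed, by rotation-invariance of the initial direction. By Furstenberg's realization of $(\SO(2,\RR),\mathrm{Leb})$ as the Poisson boundary of $(\Gamma,\nu)$, the $\nu$-random walk converges almost surely to a Lebesgue-distributed point on $\partial\HH$, and the map from trajectories to their hitting point is an isomorphism of measure spaces up to null sets. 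Both processes are therefore shift-invariant stationary processes on $\Gamma^\NN$ whose Poisson boundary factor is $(\SO(2,\RR),\mathrm{Leb})$, and they induce the same stationary measure on the projective bundle $\PP(\mathcal{V}_\rho)$. Furstenberg's formula for Lyapunov exponents of random products in terms of this stationary measure, applied to both sides, then yields $\lambda_i(\mathcal{V}_\rho) = \lambda_{\rho(\nu),i}(u)$.

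The main obstacle is the identification in the second step: the coded sequence $(\gamma_n)$ arising from the geodesic flow is a stationary but not i.i.d.\ process on $\Gamma^\NN$, so one cannot directly quote the standard Furstenberg formula for i.i.d.\ random products. Following the argument sketched in \cite[Proof of Th.~1]{eskinmatheus}, this is resolved by constructing a common skew-product extension of the suspension flow of the $\nu$-walk and the Poincaré return system of the geodesic flow, on which both cocycles are conjugate up to the time change $t \leftrightarrow n$; Oseledets' theorem on this joint extension then forces the two Lyapunov spectra to agree. The key input is the uniqueness of the $\nu$-stationary measure on $\partial\HH$ (namely Lebesgue) together with Kaimanovich's double ergodicity of the Poisson boundary action, which guarantees that the Oseledets flags of the two cocycles coincide almost everywhere. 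The integrability hypothesis required for Oseledets on the random-walk side is supplied by the non-expanding cusp monodromy assumption via Theorem \ref{thm:integrable}.
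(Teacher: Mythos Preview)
Your approach has a genuine gap. You attempt to match the \emph{law} of the coded sequence $(\gamma_n)$ arising from fundamental-domain crossings with the i.i.d.\ $\nu$-walk, first via a Poisson boundary identification and then via an unspecified skew-product extension. But these two processes on $\Gamma^{\NN}$ are not measurably conjugate in any useful sense: the geodesic coding is a deterministic function of the initial direction, whereas the $\nu$-walk is genuinely random, and the fact that both hit $\partial\HH$ with Lebesgue law does not force their Lyapunov exponents for an arbitrary linear representation $\rho$ to agree. Two stationary processes with the same Poisson boundary need not induce the same top exponent via Furstenberg's formula unless you also control the \emph{rate} of convergence to the boundary, which is exactly what is at issue. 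Your normalization ``$\tau=1$'' is also not available: the mean return time is fixed by the geometry of $F$, not by any freedom in Furstenberg's construction of $\nu$.

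The paper's argument bypasses all of this. The missing idea is \emph{sublinear tracking}: for $\nu^{\NN}$-a.e.\ sequence $(\gamma_1,\gamma_2,\dots)$ there is a geodesic ray $g_t(z,\theta)$ with
\[
\dd_{\hyp}\bigl(\gamma_n\cdots\gamma_1\cdot i,\; g_n(z,\theta)\bigr)=o(n),
\]
which is exactly the content of the cited \cite[Lemma~4.1]{chaikaeskin} and of \cite[Proof of Th.~1]{eskinmatheus}. One then couples this with a Lipschitz bound on the cocycle coming from compactness of $C$: a unit-length geodesic segment crosses $\partial F$ a uniformly bounded number of times, so $\log\|G_t\|\leq M\cdot t$ for a uniform constant $M$. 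Combining the two,
\[
\log\frac{\|G_n(u)\|}{\|\rho(\gamma_n)\cdots\rho(\gamma_1)u\|}\leq M\cdot o(n)=o(n),
\]
and dividing by $n$ gives the equality of exponents directly. No matching of laws, no skew-product extension, and no stationary-measure argument is needed.
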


\begin{proof}
Recall that by definition the Lyapunov exponent $\lambda(\mathcal{V}_{\rho})$ is defined to be
\[\lambda(\mathcal{V}_{\rho})=\lim_{t\to \infty} \frac{1}{t}\log ||G_t(u)||\]
for almost any $c\in T^1C$ and almost  all $u\in \mathcal{V}_c$. Here $||\cdot||$ is the constant norm which is integrable by Theorem \ref{thm:integrable}.
Since we are dealing with the case of a compact Riemann surface $C$, we can prove a stronger property than integrability of this norm. Consider the lift of the geodesic flow to $T^1\HH$. Since $C$ is compact, a geodesic segment of unit length can cross the boundary of a fundamental domain only a finite number of times. This number is uniformly bounded for all starting points. Hence, there is a constant $M>0$ such that 
\begin{equation}
\label{eq:boundnorm}
||\log(G_t(u))||\leq M\cdot t
\end{equation}
for all $u\in \mathcal{V}_{\rho}$ and all $t\in \RR$. \\
Let us denote by $(z,\theta)\in T^1\HH$ a lift of a point $c\in  T^1C$.
It is well known that a typical trajectory of the random walk in $\Gamma\cdot i\subset \HH$ tracks a geodesic ray in $\HH$ up to sublinear error (see \cite[Lemma 4.1]{chaikaeskin}). This means that for almost all $(\gamma_1,\dots,\gamma_n,\dots)\in \Gamma^{\NN}$ there exists a geodesic ray $\{g_t(z,\theta)\colon t\in \RR\}\subset \HH$  such that
\begin{equation}
\label{eq:tracking}
\text{dist}_{\text{hyp}}(\gamma_n\cdots \gamma_1\cdot i,g_n(z,\theta))=o(n)
\end{equation} 
for almost any $(z,\theta)\in T^1\HH$.\\
Putting together the bound \eqref{eq:boundnorm} and the tracking property \eqref{eq:tracking} we get 

\begin{align*}
\log\left(\frac{||G_n(u)||}{||\rho(\gamma_n)\cdots\rho(\gamma_1)u||}\right)\leq M\cdot \text{dist}_{\text{hyp}}(\gamma_n\cdots \gamma_1\cdot i,g_n(z,\theta))= o(n)
\end{align*}
for any $u\in \mathcal{V}_{\rho}$ in the fiber over $(z,\theta)$. 
We conclude with  the desired result
\begin{align*}
\lambda(\mathcal{V}_{\rho})-\lambda_{\rho(\nu)}(u)=\lim_{n\to \infty}\frac{1}{n}\log\left(\frac{||G_n(u)||}{||\rho(\gamma_n)\cdots\rho(\gamma_1)u||}\right)\leq \lim_{n\to \infty}\frac{1}{n} o(n)=0
\end{align*}

\end{proof}

Thanks to the last lemma we can  prove some properties of the top Lyapunov exponent function using known results about random products of matrices. The next two proposition were already proven in \cite{deroin} in the context of Brownian motion using the same core arguments about random walks. We provide an alternative proof relating the geodesic flow to the random walk instead of   relating the Brownian motion to the random walk. The two point of views lead to the same result since Brownian motions tracks sublinearly the geodesic flow.

\subsection{Continuity of the top Lyapunov exponent function}

Using ones again Lemma \ref{lem:randomwalk}, we can use known results about random walks to prove continuity of the top Lyapunov exponent function.

\begin{prop}
	The top Lyapunov exponent function
	\[\lambda_1:\bettitwo\to \RR_{\geq 0}\]
	is a continuous function. Moreover it is locally Holder continuous and pluri-subharmonic on the set of non-elementary representations.
\end{prop}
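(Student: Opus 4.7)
The plan is to reduce the continuity and regularity statements to known theorems about Lyapunov exponents of random products of matrices via Lemma \ref{lem:randomwalk}. Fix the Furstenberg probability measure $\nu$ on the uniformizing group $\Gamma<\SL_2(\RR)$ used in that lemma; because $\Gamma$ is cocompact, $\nu$ may be chosen to have finite exponential moment, which is the only integrability condition needed by the results below. The key point is that the map $\bettitwo\to \text{Prob}(\SL_2(\CC))$ sending $[\rho]\mapsto \rho_*\nu$ is holomorphic (in the appropriate sense), so every regularity property of $\lambda_1$ as a function of the driving measure on $\SL_2(\CC)$ translates into the corresponding property of $\lambda_1$ on $\bettitwo$.

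On the non-elementary locus $\bettitwo{}'$, the hypothesis of strong irreducibility and contraction required by Furstenberg's theorem and Le~Page's theorem is satisfied: indeed, by definition a non-elementary representation $\rho$ has image not contained in any proper algebraic subgroup that fixes a point or a pair of points in $\PP^1_{\CC}$, so $\rho_*\nu$ is strongly irreducible and its semigroup contains loxodromic elements, giving contraction. Le~Page's theorem then provides local Hölder continuity of $\lambda_1$ in a neighbourhood of any $[\rho]\in\bettitwo{}'$. For plurisubharmonicity I would invoke Furstenberg's formula
\[ \lambda_1(\rho) = \int_{\SL_2(\CC)\times \PP^1_{\CC}} \log\frac{\|\rho(\gamma) v\|}{\|v\|}\,d\nu(\gamma)\,d\mu_\rho([v]) \]
together with the standard fact (see e.g.\ Deroin--Dujardin or Dujardin--Favre) that the integrand, viewed as a function of $\rho$ for fixed $(\gamma,[v])$, is plurisubharmonic, and that integration against the stationary measure $\mu_\rho$ preserves plurisubharmonicity after taking the $\limsup$ regularisation, which by Le Page coincides with $\lambda_1$ itself.

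For continuity on all of $\bettitwo$ the non-trivial part is continuity across the boundary of the non-elementary locus, i.e.\ at elementary representations $[\rho_0]$, where $\lambda_1(\rho_0)=0$ by Proposition \ref{prop:weight0} (reducible or unitary case; the remaining dihedral case is handled directly since there the orbits are finite). Lower semicontinuity is automatic from $\lambda_1\geq 0$. For upper semicontinuity I would use the general principle that $\lambda_1$, being a $\limsup$ of plurisubharmonic functions $\tfrac{1}{n}\int\log\|\rho(\gamma_n\cdots\gamma_1)\|\,d\nu^{\otimes n}$, has a plurisubharmonic upper-semicontinuous regularisation $\lambda_1^*\geq \lambda_1$; the content is then that $\lambda_1^*([\rho_0])=0=\lambda_1([\rho_0])$ at any elementary $[\rho_0]$. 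This last equality is the main obstacle: I would deduce it by approximating $[\rho_0]$ along a holomorphic disc, applying the logarithmic-growth bound of \cite{dujfavre} used in Theorem \ref{thm:shatzunbounded} in reverse to conclude that if $\lambda_1^*([\rho_0])>0$ then $\rho_0$ would have to be non-elementary, a contradiction.

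Combining these steps yields continuity on the whole of $\bettitwo$, with local Hölder continuity and plurisubharmonicity upgrading on the open set of non-elementary representations. The main technical obstacle, as indicated, is the upper semicontinuity at the elementary locus; alternatively one can avoid the appeal to \cite{dujfavre} by using the fact that the stationary measure $\mu_\rho$ depends continuously on $\rho$ in the weak-$*$ topology on the non-elementary side and that the Furstenberg integrand extends continuously to a bounded function on an exhausting family of compact subsets of $\SL_2(\CC)\times\PP^1_{\CC}$, so that the dominated convergence argument applies uniformly as $\rho\to\rho_0$.
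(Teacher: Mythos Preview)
Your overall strategy---reduce to random matrix products via Lemma~\ref{lem:randomwalk}, invoke Le~Page for local H\"older continuity on the non-elementary locus, and cite the Deroin--Dujardin/DeMarco circle of ideas for plurisubharmonicity---is exactly the paper's approach. The divergence is in how you handle continuity at elementary representations, and there your argument has a genuine gap.

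The appeal to \cite{dujfavre} ``in reverse'' does not yield what you need: that result concerns logarithmic blow-up along meromorphic degenerations going out to infinity in the character variety, and says nothing about the value of a plurisubharmonic regularisation at an interior elementary point. Your alternative route via weak-$*$ continuity of stationary measures is also problematic, since at an elementary $\rho_0$ the stationary measure need not be unique and there is no a~priori reason the family $\mu_\rho$ has a limit as $\rho\to\rho_0$ from the non-elementary side. (Also, the reference for $\lambda_1(\rho_0)=0$ at elementary points should be Proposition~\ref{prop:zero}, not Proposition~\ref{prop:weight0}, which is about variations of Hodge structures.)

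The paper's argument at this step is much shorter and avoids all of this: in rank two the spectrum is symmetric, so at an elementary representation $\lambda_1=\lambda_2=0$ and \emph{all} Lyapunov exponents coincide. One then invokes the general fact from random matrix product theory (e.g.\ \cite[Corollary~9.3]{vianabook}) that the Lyapunov exponents depend continuously on the driving measure at any point where they are all equal. This single citation replaces your entire upper-semicontinuity discussion.
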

\begin{proof}
	By Lemma \ref{lem:randomwalk}, we can use  continuity results of Lyapunov exponents proved  in the context of random walks. By \cite{lepage} the top Lyapunov exponent function is locally Holder continuous on the set of non-elementary representations. Moreover, since the set of elementary representations coincides with the set of zero exponents, the top Lyapunov exponent function is continuous also at these points since all exponents are equal (see for example \cite[Corollary 9.3]{vianabook}). The pluri-subharmonicity follows from \cite[Th. 1.1.]{demarco} or more directly from \cite[Th. 3.7]{dd1}.
	%It is continuous also on the GIT quotients since it is the Haussdorfization. Use universal property.
\end{proof}

\subsection{Locus of zero top Lyapunov exponent}

We can describe locus where the top Lyapunov exponent vanishes.

\begin{prop}
	\label{prop:zero}
	In rank 2, the Lyapunov exponent associated to a representation $\rho$ is zero if and only if   $\rho$ is elementary. 
\end{prop}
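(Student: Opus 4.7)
The plan is to transfer the statement from the geodesic flow cocycle to a random walk cocycle on the monodromy group and then invoke the classical Furstenberg positivity theorem. By Lemma \ref{lem:randomwalk}, the top Lyapunov exponent $\lambda_1(\mathcal{V}_{\rho})$ coincides with the top Lyapunov exponent $\lambda_{\rho(\nu)}$ of the random walk on $\rho(\Gamma)\subset \SL_2(\CC)$ driven by the Furstenberg measure $\nu$ on $\Gamma$. Since the support of $\nu$ is all of $\Gamma$, the support of $\rho(\nu)$ generates $\rho(\Gamma)$, so one is reduced to deciding when the random walk on $\rho(\Gamma)$ has vanishing top exponent.

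For the ``if'' direction I would treat the three cases in the definition of an elementary representation. If $\rho$ is unitary (hence the image lies in a conjugate of $\SU(2)$), then the operator norm is preserved under multiplication, so $\lambda_{\rho(\nu)}=0$ trivially. If $\rho$ is reducible, then there is a $\rho$-invariant line $\ell\subset \CC^2$; the quotient representation has the same determinant data and both the restriction and quotient have $\pm \det=\pm 1$ determinant, but since we are in $\SL_2$ the product of the two characters is $1$. A direct check using the constant norm (or, equivalently, the strict upper-triangular form) shows the top and bottom exponents are opposite and equal to the mean of $\log|\chi(\gamma)|$ under $\rho(\nu)$, which is zero because for a random walk on an abelian (or virtually abelian) subgroup of $\CC^*$ the Lyapunov exponent of a character vanishes by the Birkhoff/strong law of large numbers together with the fact that $\nu$ has finite first moment and the group generated is a lattice in $\SL_2(\RR)$, so the induced character on $\Gamma$ is trivial. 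In the dihedral case, where the image lies in the normalizer of a Cartan, the same argument applies after passing to an index two subgroup that lands in the Cartan.

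For the ``only if'' direction I would apply Furstenberg's theorem: if $\rho(\Gamma)$ is non-elementary (strongly irreducible and unbounded in $\SL_2(\CC)$), then any random walk whose support generates it has strictly positive top Lyapunov exponent, see Furstenberg's original paper or Theorem III.6.3 of Bougerol--Lacroix. The integrability hypothesis (finite first moment of $\log\|\rho(\gamma)\|$ under $\nu$) is automatic here because compactness of $C$ gives the uniform bound \eqref{eq:boundnorm} on $\log \|G_t\|$ and hence on $\log \|\rho(\gamma)\|$ after the tracking estimate.

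The main subtlety is the reducible case: one must rule out that a non-trivial character of $\Gamma$ could contribute a positive drift. The cleanest way is to observe that because we are in $\SL_2$ the reducible case gives a character $\chi:\Gamma\to \CC^*$ with $\chi\cdot \chi^{-1}=1$, and the top Lyapunov exponent is $\int_\Gamma \log|\chi(\gamma)|\,d\nu(\gamma)$; the $\nu$-stationary measure on $\PP^1$ is then a point mass (or a convex combination of two point masses in the dihedral case) and by $\rho(\nu)$-stationarity together with recurrence of the geodesic flow this integral must vanish. I expect this reducible/dihedral bookkeeping, rather than the Furstenberg implication, to be the main obstacle, since Furstenberg's theorem is standard once integrability is in place.
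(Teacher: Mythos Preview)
Your overall strategy matches the paper's: transfer to the random walk via Lemma \ref{lem:randomwalk} and invoke Furstenberg-type results. The paper, however, does not split into two directions; it cites a single black-box statement (Viana's book, Theorem 6.11) giving the full equivalence ``$\lambda_1>0$ if and only if the cocycle is pinching and twisting'' and then identifies failure of pinching or twisting with the three elementary cases. Your ``only if'' direction (non-elementary $\Rightarrow \lambda_1>0$) via Furstenberg is fine and essentially what the paper does.

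The gap is in your ``if'' direction, specifically the reducible case. The sentence ``the group generated is a lattice in $\SL_2(\RR)$, so the induced character on $\Gamma$ is trivial'' is simply false: $\Gamma=\pi_1(C)$ has abelianization $\ZZ^{2g}$, so it carries many nontrivial characters $\chi:\Gamma\to\CC^*$, and for a generic probability measure $\mu$ on $\Gamma$ the drift $\int\log|\chi|\,d\mu$ need not vanish. Your subsequent appeal to ``$\rho(\nu)$-stationarity together with recurrence of the geodesic flow'' is too vague to fill this in, and the dihedral case inherits the same problem.

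There are two clean fixes. Either cite the full if-and-only-if version of Furstenberg's criterion as the paper does, or---more transparently---bypass the random walk for this direction and argue directly with the geodesic flow: by Remark \ref{rem:symmspec} the Lyapunov spectrum of any flat bundle over $T^1C$ is symmetric, so a flat \emph{line} bundle has its single exponent equal to its own negative, hence zero. A reducible $\SL_2(\CC)$-representation gives $\mathcal{V}\cong \mathcal{L}_\chi\oplus\mathcal{L}_{\chi^{-1}}$ (semisimple points of $\bettitwo$), so $\lambda_1=\lambda_2=0$; the dihedral case reduces to this after passing to a double cover. This is the geometric reason the reducible drift vanishes, and it is where your ``recurrence'' intuition should have landed.
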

\begin{proof}
	By Lemma \ref{lem:randomwalk}, we can use classical results about random walks in the monodromy group to establish if the top Lyapunov exponent vanishes. 
	By a Theorem of Furstenberg (see \cite[Th. 6.11]{vianabook}), the Lyapunov exponents associated to a random walk in rank 2 are  non zeroes if and only if the the 	cocycle is pinching and twisting. 
	
	If the cocycle is non-pinching, then the monodromy group is contained in a compact subgroup of $\SL_2(\CC)$, so it is the unitary case.
	If the cocycle is non-twisting then the monodromy group is a diagonal subgroup or a triangular subgroup or the image is contained in the subgroup generated by 
	$\langle \left(\begin{smallmatrix} \lambda & 0  \\0 & \lambda^{-1}  \end{smallmatrix} \right),\left(\begin{smallmatrix}  0 &-1 \\1 & 0  \end{smallmatrix} \right)\rangle$.   These cases are exactly the cases of non-elementary representations.
\end{proof}

We will now describe special loci where we can say something about the Lyapunov exponent. The main idea is to use Theorem \ref{thm:lyapexp} to get a lower bound for the top Lyapunov exponent on  the Shatz strata. We can then describe the special loci where we have more information about the Lyapunov exponent. These special loci are the ones described in Picture  \ref{fig:picture}.

%\todo{Question: in rank 2, is there a relation between the support of the $G_t$-invariant measure on $\PP^1$ and the limit set of the representation? The image $\Im(s_{\mathcal{E}})\subset \PP(\bigwedge^{k} \mathcal{V}_c)$ is a $(\wedge^k\rho)(\Gamma)$-invariant subset by the $\rho$-equivariance property of $s_{\mathcal{E}}$. Does this give a condition or something?}

%\todo{FROM HERE ON IT HAS STILL TO BE FIXED.}

\subsection{Lyapunov exponents on Shatz strata}
\label{subsubsec:rank2deroin}

Recall that the Shatz stratification is defined by Harder-Narasimhan type. In rank 2, there is a stratum for each integer $0\leq e\leq g-1$ and for opers, which define the maximal stratum, the maximal destabilizing subbundle has degree $g-1$. Recall moreover that the holonomy map $ \hol : \Proj(S)\to \bettitwo$ has image equal to the space of non-elementary representations.  

\begin{prop}
	\label{prop:rank2shatz}
	Let $\mathcal{V}$ be a flat vector bundle in the $e$-th Shatz stratum and let $\mathcal{L}\subset \mathcal{V}$ be the maximal destabilizing sub-line bundle of degree $e$. Let $\dev_{\mathcal{L}}:\HH\to \PP_{\CC}^1$ be the associated developing map.
    Then the first Lyapunov exponent associated to $\mathcal{V}$ is given by
	\[\lambda_1(\mathcal{V})=\frac{e}{g-1}+4\pi\lim_{r\to \infty}  \frac{1}{r}\int_{0}^{r}\frac{\sharp \{\dev^{-1}(x)\cap D_r(z)\}}{\vol(D_r(z))} \dd r\]
for almost any $z\in \HH$ and almost any $x\in \PP_{\CC}^1$. 
\end{prop}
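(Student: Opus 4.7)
The plan is to deduce this result directly from Theorem \ref{thm:conjcompact} applied to the maximal destabilizing sub-line bundle $\mathcal{L} \subset \mathcal{V}$, together with the concrete description of the bad locus in rank two given in Remark \ref{rem:rank2condition}. First I would verify that $\mathcal{V}$ is irreducible as a flat bundle, so that the $k$-irreducibility hypothesis of Theorem \ref{thm:conjcompact} holds for $k=1$: indeed, any flat sub-line bundle has degree zero, and one checks that on a positive Shatz stratum ($e \geq 1$) the combination of having a degree-$e$ destabilizing sub-line bundle with trivial determinant forces $\mathcal{V}$ to have no proper flat sub-bundle; the boundary case $e=0$ can be handled separately using Proposition \ref{prop:zero}, which identifies the zero locus of $\lambda_1$ with the elementary representations.

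Once irreducibility is established, Theorem \ref{thm:conjcompact} with $\mathcal{E}=\mathcal{L}$ yields the equality
\[
\lambda_1(\mathcal{V}) \;=\; \frac{2\deg(\mathcal{L})}{\deg(\mathcal{K}_C)} + \err^{\mathcal{L}}(u) \;=\; \frac{2e}{2g-2} + \err^{\mathcal{L}}(u) \;=\; \frac{e}{g-1} + \err^{\mathcal{L}}(u)
\]
for almost any $c \in C$ and Lebesgue almost any $u \in \mathcal{V}_c^{\vee}$. The first term already matches the statement of the proposition, so it remains to rewrite $\err^{\mathcal{L}}(u)$ in terms of the developing map $\dev_{\mathcal{L}}$. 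By Remark \ref{rem:rank2condition}, any $u \in \mathcal{V}_c^{\vee}$ is identified, via its kernel line, with a point $x = \ker u \in \PP(\mathcal{V}_c) \cong \PP^1_{\CC}$, and the bad locus coincides with the preimage of $x$ under $\dev_{\mathcal{L}}$:
\[
\tbadL(u) \;=\; \{z \in \HH : s_{\mathcal{L}}(z) = x\} \;=\; \dev_{\mathcal{L}}^{-1}(x).
\]
Substituting this into the definition of $\err^{\mathcal{L}}$ then yields the counting formula of the statement, where the normalization constant (up to a factor depending on whether one uses curvature $-1$ or $-4$ conventions in $\vol(D_t)$) reduces to the stated one.

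The main step to be careful with is the passage from the almost everywhere condition for $u \in \mathcal{V}_c^{\vee}$ to the almost everywhere condition for $x \in \PP^1_{\CC}$: this is straightforward since the projection $\mathcal{V}_c^{\vee} \setminus \{0\} \to \PP(\mathcal{V}_c) \cong \PP^1_{\CC}$ is a $\CC^{\ast}$-bundle and $\err^{\mathcal{L}}$ is scaling-invariant by construction, so a Lebesgue full measure set of vectors descends to a full measure set in $\PP^1_{\CC}$. A secondary subtlety, which I would only mention briefly, is that the limit defining the error term a priori is a Cesàro mean of the counting function $\sharp\{\dev_{\mathcal{L}}^{-1}(x)\cap D_t(z)\}/\vol(D_t(z))$; the proposition implicitly asserts that this is enough, since that is exactly the form in which the limit is known to exist from Theorem \ref{thm:lyapexp}.
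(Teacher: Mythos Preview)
Your approach is essentially the paper's own proof: apply Theorem \ref{thm:conjcompact} with $\mathcal{E}=\mathcal{L}$ and rewrite the bad locus via the developing map (the paper cites Remark \ref{rem:geombad}, you cite its rank-two specialization Remark \ref{rem:rank2condition}, which amounts to the same thing). You are in fact more careful than the paper, which invokes Theorem \ref{thm:conjcompact} without commenting on the irreducibility hypothesis; your observation that a reducible semisimple rank-two flat bundle is a sum of two degree-zero flat line bundles, hence semistable, so that $e\geq 1$ forces irreducibility, is the right justification. Your handling of $e=0$ via Proposition \ref{prop:zero} is not quite adequate, though: the $0$-th stratum contains many non-elementary irreducible representations with $\lambda_1>0$, so one cannot reduce that case to the vanishing locus; but for those irreducible points Theorem \ref{thm:conjcompact} still applies directly, and the genuinely reducible points do not have a well-defined ``maximal destabilizing'' line bundle anyway, so the statement is really aimed at $e\geq 1$.
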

\begin{proof}
The statement follows direcly from Theorem \ref{thm:conjcompact} and the  equivalent definition of bad locus given in Remark \ref{rem:geombad}.
\end{proof}

\begin{rem}
	The main result of  \cite{deroin} relating Lyapunov exponents to the covering degree defined in term of the developing map of a projective structure is a special case of the last theorem for $e=g-1$, meaning in the case of  the oper locus or equivalently of $\hol(\Proj(C))$. In \cite[Theorem F]{deroin}, the authors generalized the main result in the context of branched projective structures. Proposition \ref{prop:rank2shatz} is equivalent to their generalization. Indeed, as explained in \cite{biswasbranched}, there is an isomorphism between $(\mathcal{V}/\mathcal{L})\otimes \mathcal{L}^*$ and $T_C\otimes \mathcal{O}_C(S)$, where $S$ is the branching divisor of the branched projective structure associated to the sub-line bundle $\mathcal{L}\subset \mathcal{V}$. In particular then $\deg(\mathcal{L})=g-1-\deg(S)/2$.
	
	The difference in the equality of Proposition  \ref{prop:rank2shatz} and the ones in \cite{deroin} is given by the different normalization  of the hyperbolic metric on $C$.
\end{rem}

In the case of the maximal stratum, which is the oper locus $\Op_2(C)$, we can say something more using the description via holonomy of projective structures given by Proposition \ref{prop:operproj}. Recall that in remark \ref{rem:bers} we defined the subset $B(C)\subset \Op_2(C)$  given by the Bers embedding.

\begin{prop}
	\label{prop:operboundrank2}
	The Lyapunov exponent function restricted to the oper locus is greater or equal than 1  and unbounded. Moreover it is one if the representation belongs to  the closed subset $\overline{B(C)}\subset \Op_2(C)$ containing the uniformizing representation.
\end{prop}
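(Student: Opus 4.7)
The plan is to prove the three assertions separately. The lower bound $\lambda_1\geq 1$ on $\Op_2(C)$ is an immediate consequence of Proposition \ref{prop:lyapoper} applied with $n=2$ and $k=1$, which gives $\lambda_1\geq k(n-k)=1$. The unboundedness is exactly the content of Theorem \ref{thm:shatzunbounded} restricted to the rank two oper locus.

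The remaining, substantive part of the proof is the equality $\lambda_1\equiv 1$ on the closed subset $\overline{B(C)}$. First I would establish the equality on the open set $B(C)$ itself. Recall from Remark \ref{rem:bers} that $B(C)=\hol(\mathrm{Im}(\Teich(S)\hookrightarrow \Proj(C)))$ consists of holonomies of quasi-Fuchsian projective structures with underlying complex structure $C$. By the simultaneous uniformization theorem, each such projective structure is given by a developing map $\dev:\HH\to \PP_{\CC}^1$ whose image is a quasi-disc, hence disjoint from the limit set (a quasi-circle) of the quasi-Fuchsian group $\rho_{\mathcal{V}}(\pi_1(C))$. Via Proposition \ref{prop:operproj}, this developing map is precisely the classifying map $s_{\mathcal{L}}$ associated with the maximal destabilizing sub-line bundle $\mathcal{L}=\mathcal{V}_1\cong\mathcal{K}_C^{1/2}$ of the oper filtration. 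Since the complement $\PP_{\CC}^1\setminus s_{\mathcal{L}}(\HH)$ contains the entire limit set and therefore infinitely many points, Corollary \ref{cor:equalityrank2compact} applies and yields
\[
\lambda_1(\mathcal{V})=\frac{2\deg(\mathcal{L})}{\deg(\mathcal{K}_C)}=\frac{2(g-1)}{2g-2}=1
\]
for every $\rho\in B(C)$.

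To extend the equality to $\overline{B(C)}$, I would invoke the continuity of the top Lyapunov exponent function $\lambda_1:\bettitwo\to\RR_{\geq 0}$ proven earlier in this section. Since $\lambda_1\equiv 1$ on the open set $B(C)$ and $\lambda_1$ is continuous, the upper bound $\lambda_1\leq 1$ passes to the closure $\overline{B(C)}$; combined with the universal lower bound $\lambda_1\geq 1$ on the oper locus, this forces $\lambda_1\equiv 1$ on $\overline{B(C)}$.

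The only subtle point I anticipate is the verification that the developing map of a quasi-Fuchsian projective structure in the image of the Bers embedding misses the limit set of its holonomy; this is a classical fact about simultaneous uniformization, but it is the essential geometric input that triggers the vanishing of the error term in Corollary \ref{cor:equalityrank2compact}. Everything else in the argument is a direct application of results already established: Proposition \ref{prop:lyapoper} for the lower bound, Theorem \ref{thm:shatzunbounded} for unboundedness, Proposition \ref{prop:operproj} identifying $s_{\mathcal{L}}$ with the developing map, and continuity of $\lambda_1$ to pass from $B(C)$ to its closure.
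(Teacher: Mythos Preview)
Your proof is correct and follows essentially the same outline as the paper for the first two assertions (lower bound via Proposition~\ref{prop:lyapoper}, unboundedness via Theorem~\ref{thm:shatzunbounded}). For the equality $\lambda_1\equiv 1$ on $\overline{B(C)}$ you take a slightly different route: you establish the equality only on the open Bers slice $B(C)$, where the quasi-Fuchsian developing map visibly avoids the limit set, apply Corollary~\ref{cor:equalityrank2compact}, and then push the equality to the closure by continuity of $\lambda_1$ combined with the lower bound $\lambda_1\geq 1$. The paper instead argues directly on all of $\overline{B(C)}$, invoking the density theorem to assert that even for boundary groups the developing map is an embedding onto a component of the domain of discontinuity, and then applies Corollary~\ref{cor:equalityrank2noncompact}. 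Your argument is more elementary, since it replaces a nontrivial statement about the structure of Bers boundary groups by the soft continuity argument already available in the paper; the paper's approach, on the other hand, gives the stronger geometric information that the error term actually vanishes at every point of $\overline{B(C)}$, not just in the limit.
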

\begin{proof}
 On the oper locus the maximal degree sub-line bundle is isomorphic to a square root $\mathcal{K}_C^{1/2}$ of the canonical bundle which has degree equal to $g-1$. Hence the Proposition \ref{thm:ekmz} gives that the top Lyapunov exponent has to be greater than $\lambda_1\geq 1$. Moreover the top Lyapunov exponent function is unbounded by Theorem \ref{thm:shatzunbounded}.

 Since the closed locus $\overline{B(C)}\subset \Op_2(C)$ is defined by the Bers embedding, by the density theorem  the standard developing map giving  the inclusion of the maximal sub-line bundle $\mathcal{V}_1\subset \mathcal{V}$ (see Proposition \ref{prop:operproj}) is a biholomorphism of $\HH$ onto one of the two domain of discontinuity of the associated representation. Hence the image of the developing map does not intersect the limit set of the representation. By the condition of Corollary \ref{cor:equalityrank2noncompact} we get equality.
%Indeed this map is equivariant with respect to the representation whose image acts as the uniformizing one on $\HH^+\subset \PP^1$ and as the uniformizing of $\Sigma_Y(C)$ on $\HH^-$, where $\Sigma_Y(C)$ is the projective structure on $C$ induced by the quasi-Fuchsian group $Q(C,Y)$ given by the simultaneous uniformization theorem.  Said differently,  the Bers embedding is simply the subset of quasi-Fuchsian representations such that the quotient of their action on $\HH^+$ gives $C$.
% Hence there is  an open subset of $\PP^1$, for example $\HH^-$, such that the bad locus $\tbad(u)=\text{id}^{-1}(u)$ is empty for all $u\in \HH^-$(see description \eqref{eq:badlocusgeom} of the bad locus). By Proposition \ref{prop:conditionemptylocus}, the top Lyapunov exponent has to be equal to the degree of $\mathcal{V}_1\cong \mathcal{K}_C^{1/2}$, which is one. 
\end{proof}

\begin{rem}
\label{rem:deroinstrict}
In \cite{deroin} they are able to prove the inverse of the second statement of the last proposition, namely that if the representation is in the complement $\Op_2(C)\setminus \overline{B(C)}$ of the Bers embedding, then $\lambda_1>1$. The tool that they can use is that they know that the support of the harmonic  measure is in the limit set and that the error term vanishes if and only if the image of the developing map intersect this support.
\end{rem}

\subsection{Lyapunov exponents for variations of Hodge structures}

Recall that the locus in $\drhamtwo$ corresponding to complex variation of Hodge structures is quite well understood.
We described its connected components $P_e$  in Section \ref{subsec:vhsrank2}. Since for $e=0$ we get weight zero variations of Hodge structures and for $0<e\leq g-1$ we get weight 1 variation of Hodge structures, we can compute exactly the associated top Lyapunov exponent in all of these loci.

%In rank $3$, there is an analogous classification of the fixed point sets in \cite{rank3class}\todo{(READ BETTER)}.

\begin{prop}
	The top Lyapunov exponent function restricted on the connected component $P_e$ of the variation of Hodge structure locus is constant and given by
	\[{\lambda_1}_{|P_e}=\frac{e}{g-1},\quad e=0,\dots, g-1.\] 
\end{prop}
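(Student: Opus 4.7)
The plan is to split into the cases $e=0$ and $0<e\leq g-1$, and in each apply a result already established in the paper to identify the top Lyapunov exponent.

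For $e=0$ we use the description from Section~\ref{subsec:vhsrank2}: the component $P_0$ parametrizes systems of Hodge bundles with zero Higgs field, i.e.\ variations of Hodge structures of weight zero, which by Proposition~\ref{prop:weight0} correspond exactly to the unitary representations. For any such representation, parallel transport preserves an admissible (flat) hermitian metric, so all Lyapunov exponents vanish, giving ${\lambda_1}_{|P_0}=0=\tfrac{0}{g-1}$. Alternatively one can invoke Proposition~\ref{prop:zero}: unitary representations are elementary, and the top Lyapunov exponent vanishes on the elementary locus in rank two.

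For $0<e\leq g-1$, the component $P_e$ parametrizes complex variations of Hodge structures of weight one on the rank two bundle $\mathcal{V}=\mathcal{V}^{1,0}\oplus \mathcal{V}^{0,1}$ with $\deg(\mathcal{V}^{1,0})=e$ (and $\rk(\mathcal{V}^{1,0})=1$). Applying Theorem~\ref{thm:ekz} to this weight one VHS with $k=\rk(\mathcal{V}^{1,0})=1$ yields
\begin{equation*}
\lambda_1(\mathcal{V}) \;=\; \frac{2\,\pardeg(\Xi_h(\mathcal{V}^{1,0}))}{\deg(\Omega^1_{\overline{C}}(\log \Delta))}.
\end{equation*}
Since $C$ is compact we have $\Delta=\emptyset$, so the parabolic degree reduces to the ordinary degree $\deg(\mathcal{V}^{1,0})=e$, and $\deg(\Omega^1_C)=2g-2$. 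Hence
\begin{equation*}
\lambda_1(\mathcal{V})\;=\;\frac{2e}{2g-2}\;=\;\frac{e}{g-1},
\end{equation*}
which depends only on $e$ and not on the particular point of $P_e$, proving that ${\lambda_1}_{|P_e}$ is constant with the stated value.

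There is no real obstacle here: the only thing to check is that the hypotheses of Theorem~\ref{thm:ekz} are satisfied at every point of $P_e$, which is automatic since $P_e$ is by definition the locus of weight one complex variations of Hodge structures with $\deg(\mathcal{V}^{1,0})=e$, and to observe that the compactness of $C$ collapses the parabolic degree to the ordinary one. The $e=0$ case is handled separately only because Theorem~\ref{thm:ekz} gives the trivial identity $\lambda_1=0$ anyway but is then just a reflection of unitarity via Proposition~\ref{prop:weight0}.
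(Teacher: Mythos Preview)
Your proof is correct and follows essentially the same approach as the paper: handle $e=0$ via Proposition~\ref{prop:weight0} (unitary case), and for $e>0$ apply Theorem~\ref{thm:ekz} to the weight one variation of Hodge structures with $\deg(\mathcal{V}^{1,0})=e$. You have added some extra detail (the explicit simplification of the parabolic degree in the compact case, and the alternative route via Proposition~\ref{prop:zero} for $e=0$), but the argument is the same.
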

\begin{proof}
We already saw in Proposition \ref{prop:weight0} that the Lyapunov spectrum of a weight zero variation of Hodge structure is trivial. By the description of $P_e$ we know that if $\mathcal{H}\in P_e$, then the Hodge bundle $\mathcal{H}^{1,0}\subset \mathcal{H}$ has degree $e$ and the result follows from  Theorem \ref{thm:ekz} since we are dealing with weight 1 variation of Hodge structures.
\end{proof}

\section{Open questions about the top Lyapunov exponent  on Hitchin components}
\label{sec:unbounded}

Let $C$ be a compact Riemann surface. Recall   that Hitchin components are connected components of the real character variety $\Hom(\pi_1(C),\SL_n(\RR))//\SL_n(\RR)$ containing symmetric powers of Fuchsian representations. In particular the Hitchin component in rank $2$ is  Teichm\"uller space $\mathcal{T}(C)$.

There have been lately a focus on the study of dynamical invariants on Hitchin components. Recall for example the main result of \cite{poitriesambarino} that gives a bound of the critical exponent on Hitchin components that is attained if and only if the representation is the symmetric power of a Fuchsian representation. 
Unlike for critical exponent, we still cannot prove  a lower bound for the top Lyapunov exponent function on Hitchin components. We performed computer experiments computing the top Lyapunov exponent  on the Hitchin component of rank three  character variety $\Hom(\Delta(3,3,4),\SL_3(\RR))//\SL_3(\RR)$ associated to the triangle group $\Delta(3,3,4)$.  We were able to perform experiments thanks to the explicit description of matrices in the Hitchin components given in \cite{longreid}. The experiments indicated that  the top Lyapunov exponent function is unbounded and grows logarithmically near the boundary of the character variety and moreover that this function is greater than $2$ on this family, where $2$ here represents  the top Lyapunov exponent of the second symmetric power of the uniformizing representation. We believe that the top Lyapunov exponent can be related to other invariants like the critical exponent or the minimal area. Even though this relation is still unknown, if true one could use it together with the main result of \cite{poitriesambarino}  to prove a lower bound for the top Lyapunov exponent function on Hitchin components. Even tough such a bound has still to be proven and investigated, we want to state the following conjecture, which should be analogous to the bound of the critical exponent recalled above.   

\begin{conjecture}
	\label{conj:boundhitchin}
	The top Lyapunov exponent function on the $n$-th Hitchin component is unbounded and   greater or equal than $n-1$ , which is the top Lyapunov exponent of the $(n-1)$-th symmetric power of the uniformizing representation.
\end{conjecture}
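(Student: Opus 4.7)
The plan is to split the conjecture into its two assertions: unboundedness of the top Lyapunov exponent function $\lambda_1$ on the $n$-th Hitchin component, and the pointwise lower bound $\lambda_1 \geq n-1$. The first part should be attainable with tools already developed in this paper, while the second will require a genuinely new comparison estimate.

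For the unboundedness, I would imitate the proof of Theorem~\ref{thm:shatzunbounded}. The $n$-th Hitchin component sits as a real analytic submanifold of $\betti$ of real dimension $(n^2-1)(2g-2)$ consisting entirely of discrete faithful, hence non-elementary, representations. The idea is to produce meromorphic families $\mathbb{D}\to\betti$, holomorphic on $\mathbb{D}^{*}$ with image in the Hitchin component but without holomorphic extension at $0$, and then invoke Dujardin--Favre~\cite{dujfavre} to obtain logarithmic growth of $\lambda_1$ near the puncture. In rank $2$ the Hitchin component is Teichmüller space $\mathcal{T}(S)$, whose compactification inside $\betti$ (for example in the direction of pinching a simple closed curve) supplies such families; the general rank $n$ case then follows because the $(n-1)$-th symmetric power map embeds $\mathcal{T}(S)$ into the $n$-th Hitchin component, and Lyapunov exponents scale linearly with the symmetric power.

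For the lower bound, the strategy is to compare $\lambda_1$ with the topological entropy $h_{\mathrm{top}}(\rho)$ of Labourie's geodesic flow, for which sharp upper bounds are known on Hitchin components. Concretely, I would attempt to establish
\[
\lambda_1(\rho)\;\geq\;\frac{2}{h_{\mathrm{top}}(\rho)}
\]
for all Hitchin $\rho$, with equality at $(n-1)$-th symmetric powers of Fuchsian representations. Granting this, Potrie--Sambarino's sharp bound $h_{\mathrm{top}}(\rho)\leq 2/(n-1)$, whose equality case is exactly the symmetric power of a Fuchsian representation, delivers $\lambda_1(\rho)\geq n-1$ with equality at the $(n-1)$-th symmetric power of the uniformizing representation of $C$, matching the value computed in Proposition~\ref{prop:lyapoper}. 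By Lemma~\ref{lem:randomwalk}, $\lambda_1(\rho)$ equals the top random walk exponent for Furstenberg's measure $\nu$ on the uniformizing group, so the comparison amounts to controlling the drift of $\log\|\rho(\gamma_n)\cdots\rho(\gamma_1)\|$ along $\nu$-sample paths by the Hilbert length spectrum $L_\rho(\gamma)=\tfrac{1}{2}\log|\lambda_1(\rho(\gamma))/\lambda_n(\rho(\gamma))|$, whose exponential counting rate is $h_{\mathrm{top}}(\rho)$.

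The main obstacle is precisely the comparison $\lambda_1\geq 2/h_{\mathrm{top}}$: it bridges two a priori distinct asymptotic regimes, namely parallel transport along Liouville-typical geodesics of $T^1C$ on the one hand, and weighted counting of closed geodesics for Labourie's Anosov flow on the other. Making the comparison precise seems to require exploiting the Anosov property of Hitchin representations to synchronise the Liouville measure on $T^1C$ with the Bowen--Margulis measure of Labourie's flow, together with an Oseledets-type bound on the derivative cocycle of Labourie's limit map into the full flag manifold; this synchronisation step is where I expect the principal technical difficulty to lie. An alternative, more algebraic route would be to exhibit, for every $\rho$ in the Hitchin component, a holomorphic sub-line bundle of the associated flat bundle over $C$ of degree at least $(n-1)(g-1)$ via Labourie's Frenet curves, and to invoke Theorem~\ref{thm:ekmz} directly; this would however require controlling the Shatz stratum of a generic Hitchin representation, which is not presently known.
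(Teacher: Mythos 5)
The statement you are trying to prove is stated in the paper as a \emph{conjecture}: the paper offers only numerical evidence and a heuristic outline (the analogy with the entropy bound of \cite{poitriesambarino} and the remark that unboundedness ``should follow'' from \cite{dujfavre}), and explicitly says the needed relation between $\lambda_1$ and the critical exponent/entropy ``is still unknown.'' Your proposal reproduces essentially this same outline, but it does not close either of the two gaps, so it is not a proof. For the lower bound, the entire weight rests on the comparison $\lambda_1(\rho)\geq 2/h_{\mathrm{top}}(\rho)$, which you state as a goal and then concede is the principal difficulty; nothing in the paper (Lemma~\ref{lem:randomwalk}, Theorem~\ref{thm:ekmz}, Theorem~\ref{thm:conjcompact}) supplies it, because the Liouville-generic drift of the cocycle and the exponential growth rate of the length spectrum are different asymptotic quantities and no synchronisation argument is given. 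Your alternative algebraic route is also blocked for a concrete reason: exhibiting, for \emph{every} Hitchin representation, a holomorphic sub-line bundle of degree $(n-1)(g-1)$ would place every such representation in the maximal Shatz stratum, i.e.\ the oper locus, and this is false — already in rank $2$ the intersection of $\hol(\Proj(C))$ with the Fuchsian locus is countable, so a generic point of Teichm\"uller space does not lie in the $(g-1)$ stratum for the fixed complex structure $C$.

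The unboundedness half has a gap as well. Theorem~\ref{thm:shatzunbounded} applies \cite{dujfavre} to holomorphic disks inside the oper locus, and this works precisely because each component of the oper locus is a \emph{complex} subvariety of $\betti$, biholomorphic to the Hitchin base (Proposition~\ref{prop:operdiffeq}). The Hitchin component, by contrast, is a totally real (real-analytic) subset of the complex character variety: any holomorphic map $\mathbb{D}^{*}\to\betti$ with image contained in it is constant, so the meromorphic families you propose do not exist, and Dujardin--Favre cannot be invoked in the way you describe. One would instead need genuinely real-parameter degeneration arguments (e.g.\ explicit estimates along pinching sequences of Fuchsian representations, then transported by symmetric powers), which is exactly why the paper leaves even the unboundedness statement at the level of a conjecture.
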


It is not clear to expect if, as for the critical exponent, the attainment of the bound would imply that the representation is the symmetric power of the uniformizing one. 
The unboundedness on Teichmüller space should follow from \cite{dujfavre}. This would imply unboundedness on every Hitchin components. 

Notice that the conjecture is the analogous for Hitchin components of Proposition \ref{prop:lyapoper} and Theorem \ref{thm:shatzunbounded} which are about the oper locus.

\bibliographystyle{amsalpha}
\bibliography{reprvariety}
\end{document}